\renewcommand{\gamma}{\upgamma}
\newcommand{\sign}{{\mathrm{sign}}}
\renewcommand{\d}{\delta}
\newcommand{\D}{{\mathcal{D}}}
\newcommand{\e}{\varepsilon}
\newcommand{\R}{\mathbb R}
\newcommand{\1}{\mathbf 1}
\newcommand{\eqdef}{\stackrel{\mathrm{def}}{=}}
\newcommand{\conv}{\mathrm{conv}}
\renewcommand{\le}{\leqslant}
\renewcommand{\ge}{\geqslant}
\renewcommand{\leq}{\leqslant}
\newtheorem{theorem}{Theorem}[section]
\newtheorem{proposition}[theorem]{Proposition}
\newtheorem{lemma}[theorem]{Lemma}
\newtheorem{corollary}[theorem]{Corollary}
\theoremstyle{remark}
\newtheorem{remark}[theorem]{Remark}
\newtheorem{question}[theorem]{Question}
\theoremstyle{definition}
\newcommand{\n}{\{1,\ldots,n\}}
\newcommand{\m}{\{1,\ldots,m\}}
\renewcommand{\subset}{\subseteq}
\newcommand{\E}{\mathbb{E}}
\newcommand{\F}{\mathbb F}
\newcommand{\N}{\mathbb N}
\newcommand{\Z}{\mathbb Z}
\renewcommand{\setminus}{\smallsetminus}
\newcommand{\BMW}{\mathrm{BMW}}
\newcommand{\Lip}{\mathrm{Lip}}
\newcommand{\f}{\phi}
\DeclareMathOperator{\diam}{diam}
\begin{document}

\title{Comparison of metric spectral gaps}
\thanks{Supported by
NSF grant CCF-0832795, BSF grant 2010021, the Packard Foundation and
the Simons Foundation.}

\author{Assaf Naor}
\address{Courant Institute\\ New York University}
\email{naor@cims.nyu.edu}

\date{}

\begin{abstract}
Let $A=(a_{ij})\in M_n(\R)$ be an $n$ by $n$ symmetric stochastic matrix.  For $p\in [1,\infty)$ and a metric space $(X,d_X)$, let $\gamma(A,d_X^p)$ be the infimum over those $\gamma\in (0,\infty]$ for which every $x_1,\ldots,x_n\in X$ satisfy
 $$
 \frac{1}{n^2} \sum_{i=1}^n\sum_{j=1}^n d_X(x_i,x_j)^p\le \frac{\gamma}{n}\sum_{i=1}^n\sum_{j=1}^n a_{ij} d_X(x_i,x_j)^p.
 $$
 Thus $\gamma(A,d_X^p)$ measures the magnitude of the {\em nonlinear spectral gap} of the matrix $A$ with
 respect to the kernel $d_X^p:X\times X\to [0,\infty)$. We study pairs of metric spaces $(X,d_X)$ and
 $(Y,d_Y)$ for which there exists $\Psi:(0,\infty)\to (0,\infty)$ such that
 $\gamma(A,d_X^p)\le \Psi\left(\gamma(A,d_Y^p)\right)$ for every symmetric stochastic $A\in M_n(\R)$
 with $\gamma(A,d_Y^p)<\infty$. When $\Psi$ is linear a complete geometric characterization is obtained.

 Our estimates on nonlinear spectral gaps yield new embeddability results as well as new nonembeddability results. For example, it is shown that if $n\in \N$ and $p\in (2,\infty)$
 then for every $f_1,\ldots,f_n\in L_p$ there exist $x_1,\ldots,x_n\in L_2$ such that
 \begin{equation}\label{eq:p factor}
 \forall\, i,j\in \{1,\ldots,n\},\quad \|x_i-x_j\|_2\lesssim p\|f_i-f_j\|_p,
 \end{equation}
 and
 $$
 \sum_{i=1}^n\sum_{j=1}^n \|x_i-x_j\|_2^2=\sum_{i=1}^n\sum_{j=1}^n \|f_i-f_j\|_p^2.
 $$
 This statement is impossible for $p\in [1,2)$, and the asymptotic dependence on $p$ in~\eqref{eq:p factor}
 is sharp. We also obtain the best known lower bound on the $L_p$ distortion of Ramanujan graphs,
 improving over the work of Matou\v{s}ek. Links to Bourgain--Milman--Wolfson type and a conjectural nonlinear
Maurey--Pisier theorem are studied.
\end{abstract}

\subjclass[2010]{51F99,05C12,05C50,46B85}
\keywords{Metric
embeddings, nonlinear spectral gaps, expanders, nonlinear type}

\maketitle

\vspace{-0.3in}


\tableofcontents

\vspace{-0.33in}

\section{Introduction}

 The decreasing rearrangement of the eigenvalues of an $n$ by $n$ symmetric stochastic matrix
 $A=(a_{ij})\in M_n(\R)$ is denoted below by
 $$
 1=\lambda_1(A)\ge \lambda_2(A)\ge\ldots\ge \lambda_n(A).
 $$
We also set
\begin{equation}\label{eq:def lambda}
\lambda(A)\eqdef \max_{i\in \{2,\ldots,n\}}|\lambda_i(A)|=\max\left\{\lambda_2(A),-\lambda_n(A)\right\}.
\end{equation}
 For $p\in [1,\infty)$ and a metric space $(X,d_X)$, we denote by $\gamma(A,d_X^p)$ the infimum over those $\gamma\in (0,\infty]$ for which every $x_1,\ldots,x_n\in X$ satisfy
 \begin{equation}\label{eq:def gamma}
 \frac{1}{n^2} \sum_{i=1}^n\sum_{j=1}^n d_X(x_i,x_j)^p\le \frac{\gamma}{n}\sum_{i=1}^n\sum_{j=1}^n a_{ij} d_X(x_i,x_j)^p.
 \end{equation}
 Thus for $p=2$ and $X=(\R,d_\R)$, where we denote $d_\R(x,y)\eqdef |x-y|$ for every $x,y\in \R$, we have
 \begin{equation}\label{eq:R case}
 \gamma(A,d_\R^2)=\frac{1}{1-\lambda_2(A)}.
 \end{equation}
 For this reason one thinks of $\gamma(A,d_X^p)$ as measuring the magnitude
 of the {\em nonlinear spectral gap} of the matrix $A$ with respect
 to the kernel $d_X^p:X\times X\to [0,\infty)$. See~\cite{MN-towards} for more information on the topic of nonlinear spectral
gaps.

Suppose that $(X,d_X)$ is a metric space with $|X|\ge 2$ and fix
distinct points $a,b\in X$. Fix also $p\in [1,\infty)$, $n\in \N$,
and an $n$ by $n$ symmetric stochastic matrix $A=(a_{ij})$. By
considering  $x_1,\ldots,x_n\in \{a,b\}$ in~\eqref{eq:def gamma}, we
see that
$$
\gamma(A,d_X^p)\ge \max_{\emptyset\neq S\subsetneq \n}\frac{|S|(n-|S|)}{n\sum_{(i,j)\in S\times (\n\setminus S)}a_{ij}}.
$$
It therefore follows from Cheeger's inequality~\cite{Che70} (in our context,
see~\cite{JS88} and~\cite{LS88}) that
\begin{equation}\label{eq:cheeger}
\gamma(A,d_X^p)\gtrsim \frac{1}{\sqrt{1-\lambda_2(A)}}.
\end{equation}
Consequently, any finite upper bound on $\gamma(A,d_X^p)$
immediately implies a spectral gap estimate for the matrix $A$. The
ensuing discussion always tacitly assumes that metric spaces
contain at least two points.

In~\eqref{eq:cheeger}, and in what follows, the notations $U\lesssim
V$ and $V\gtrsim U$ mean that $U\le CV$ for some universal constant
$C\in (0,\infty)$. If we need to allow $C$ to depend on parameters,
we indicate this by subscripts, thus e.g. $U \lesssim_{\beta} V$
means that $U \leq C(\beta) V$ for some $C(\beta)\in (0,\infty)$
which is allowed to depend only on the parameter $\beta$. The
notation $U\asymp V$ stands for $(U\lesssim V) \wedge  (V\lesssim
U)$, and correspondingly the notation $U\asymp_\beta V$ stands for
$(U\lesssim_\beta V) \wedge (V\lesssim_\beta U)$.

A simple application of the triangle inequality
(see~\cite[Lem.~2.1]{MN-towards}) shows that $\gamma(A,d_X^p)$ is
finite if and only if $\lambda_2(A)<1$ (equivalently, the matrix $A$
is ergodic, or the graph on $\{1,\ldots,n\}$  whose edges are the
pais $\{i,j\}$ for which $a_{ij}>0$ is connected).

It is often quite difficult to obtain good estimates on nonlinear
spectral gaps. This difficulty is exemplified by several problems in
metric geometry that can be cast as estimates on nonlinear spectral
gaps: see~\cite{Laff08,Laf09,MN-towards,MN12-random,Lia13,deSal13}
for some specific examples, as well as the ensuing discussion on
nonlinear type. In this general direction, here we investigate the
following basic ``meta-problem."
\begin{question}[Comparison of nonlinear spectral gaps]\label{Q:meta}
Given $p\in [1,\infty)$, characterize those pairs of metric spaces $(X,d_X)$ and $(Y,d_Y)$ for which there exists an increasing function $\Psi=
\Psi_{X,Y}:(0,\infty)\to (0,\infty)$ such that for every $n\in \N$ and every ergodic symmetric stochastic $A\in M_n(\R)$ we have
\begin{equation}\label{eq:Q meta}
\gamma\!\left(A,d_X^p\right)\le \Psi\left(\gamma\!\left(A,d_Y^p\right)\right).
\end{equation}
\end{question}

The case $p=2$ and $(Y,d_Y)=(\R,d_\R)$  of Question~\ref{Q:meta}  is especially important, so
we explicitly single it out as follows. See also the closely related question that Pisier posed as Problem~3.1 in~\cite{Pis10-interpolation}.
\begin{question}[Bounding nonlinear gaps by linear gaps]\label{Q:l2 case}
Characterize those metric spaces $(X,d_X)$ for which there exists an increasing function $\Psi=\Psi_X:(0,\infty)\to (0,\infty)$ such that for every $n\in \N$ and every ergodic symmetric stochastic $A\in M_n(\R)$ we have
\begin{equation}\label{eq:Psi euclidean}
\gamma\!\left(A,d_X^p\right)\le \Psi\left(\frac{1}{1-\lambda_2(A)}\right).
\end{equation}
\end{question}

Question~\ref{Q:meta} and Question~\ref{Q:l2 case} seem to be difficult, and they might not have a useful simple-to-state
answer. As an indication of this, in~\cite{MN12-random} it is shown
that there exists a $CAT(0)$ metric space $(X,d_X)$,
and for each $k\in \N$ there
exist $n_k\in \N$ with $\lim_{k\to\infty} n_k=\infty$ such that there are symmetric stochastic matrices $A_k,B_k\in
M_{n_k}(\R)$
 with $$\sup_{k\in \N} \lambda_2(A_k)<1\qquad \mathrm{and}\qquad \sup_{k\in
\N}\lambda_2(B_k)<1,$$
yet $$\sup_{k\in \N}
\gamma(A_k,d_X^2)<\infty\qquad \mathrm{and}\qquad \sup_{k\in \N}
\gamma(B_k,d_X^2)=\infty.$$ Such questions are difficult even in the
setting of Banach spaces: a well-known open question (see
e.g.~\cite{Pis10-interpolation}) asks whether~\eqref{eq:Psi
euclidean} holds true when $X$ is a uniformly convex Banach space.
If true, this would yield a remarkable ``linear to nonlinear
transference principle for spectral gaps," establishing in
particular the existence of super-expanders (see~\cite{MN-towards})
with logarithmic girth, a result which could then be used in
conjunction with Gromov's random group
construction~\cite{Gromov-random-group} to rule out the success of
an approach to the Novikov conjecture that was discovered by
Kasparov and Yu~\cite{KY06}. There is little evidence, however, that
every uniformly convex Banach space admits an inequality such
as~\eqref{eq:Psi euclidean}, and we suspect that~\eqref{eq:Psi
euclidean} fails for some uniformly convex Banach spaces.

When the function $\Psi$ appearing in~\eqref{eq:Q meta} can be taken
to be linear, i.e., $\Psi(t)=Kt$ for some $K\in (0,\infty)$,
Theorem~\ref{thm:duality} below provides the following geometric
answer to Question~\ref{Q:meta}. Given $p\in [1,\infty)$ and a
metric space $(Y,d_Y)$, for every $m\in \N$ we denote by
$\ell_p^m(Y)$ the metric space $Y^m$ equipped with the metric
$$
\forall\, x,y\in Y^m,\qquad d_{\ell_p^m(Y)}(x,y)\eqdef \left(\sum_{i=1}^m d_Y(x_i,y_i)^p\right)^{\frac{1}{p}}.
$$
Recall also the standard notation $\ell_p^m=\ell_p^m(\R)$. A
coordinate-wise application of~\eqref{eq:def gamma} shows that every
symmetric stochastic $A$ satisfies
\begin{equation}\label{eq:lift to ellp}
\forall\, m\in \N,\qquad \gamma\!\left(A,d_Y^p\right)=\gamma\!\left(A,d_{\ell_p^m(Y)}^p\right).
\end{equation}

Fixing $D\in [1,\infty)$, suppose that $(X,d_X)$ is a metric space such that for every $n\in \N$ and $x_1,\ldots,x_n\in X$ there exists $m\in \N$ and a non-constant mapping $f:\{x_1,\ldots,x_n\}\to \ell_p^m(Y)$ such that
\begin{equation}\label{eq:average dist D}
\sum_{i=1}^n\sum_{j=1}^n d_{\ell_p^m(Y)}(f(x_i),f(x_j))^p\ge \frac{\|f\|_{\Lip}^p}{D}\sum_{i=1}^n\sum_{j=1}^n d_X(x_i,x_j)^p,
\end{equation}
where $\|f\|_{\Lip}$ denotes the Lipschitz constant of $f$, i.e.,
\begin{equation*}\label{eq:def lip const}
\|f\|_{\Lip}\eqdef \max_{\substack{x,y\in \{x_1,\ldots,x_n\}\\x\neq y}} \frac{d_{\ell_p^m(Y)}(f(x),f(y))}{d_X(x,y)}.
\end{equation*}
Then for every symmetric stochastic matrix $A\in M_n(\R)$,
\begin{eqnarray}\label{eq:duality trivial dir}
\nonumber\frac{1}{n^2}\sum_{i=1}^n\sum_{j=1}^n d_X(x_i,x_j)^p&\stackrel{\eqref{eq:average dist D}}{\le}& \frac{D}{n^2\|f\|_{\Lip}^p}\sum_{i=1}^n\sum_{j=1}^n d_{\ell_p^m(Y)}(f(x_i),f(x_j))^p\\&\stackrel{\eqref{eq:lift to ellp}}{\le}&\nonumber
\frac{D\gamma(A,d_Y^p)}{n}\sum_{i=1}^n\sum_{j=1}^n a_{ij}\frac{d_{\ell_p^m(Y)}(f(x_i),f(x_j))^p}{\|f\|_{\Lip}^p}\\&\le& \frac{D\gamma(A,d_Y^p)}{n}\sum_{i=1}^n\sum_{j=1}^n a_{ij} d_X(x_i,x_j)^p.
\end{eqnarray}
Consequently, $\gamma(A,d_X^p)\le D\gamma(A,d_Y^p)$.

The above simple
argument, combined with standard metric embedding methods, already
implies that a variety of metric spaces satisfy the spectral
inequality~\eqref{eq:Psi euclidean} with $\Psi$ linear. This holds
in particular when $(X,d_X)$ belongs to one of the following classes
of metric spaces:  doubling metric spaces, compact Riemannian
surfaces, Gromov hyperbolic spaces of bounded local geometry,
Euclidean buildings, symmetric spaces, homogeneous Hadamard
manifolds, and forbidden-minor (edge-weighted) graph families; this
topic is treated in Section~\ref{sec:average}. We will also see
below that the same holds true for certain Banach spaces, including
$L_p(\mu)$ spaces for $p\in [2,\infty)$.

The following theorem asserts that the above reasoning using metric
embeddings is the only way to obtain an inequality such
as~\eqref{eq:Q meta} with $\Psi$ linear. Its proof is a duality
argument that is inspired by the proof of a lemma of K.
Ball~\cite[Lem.~1.1]{Ball} (see also~\cite[Lem~5.2]{MN13-ext}).

\begin{theorem}\label{thm:duality}
Fix $n\in \N$ and $p,K\in [1,\infty)$. Given two metric spaces  $(X,d_X)$ and $(Y,d_Y)$, the following assertions are equivalent.
\begin{enumerate}
\item For every symmetric stochastic $n$ by $n$ matrix $A$ we have \begin{equation}\label{eq:linear bound}
\gamma(A,d_X^p)\le K\gamma(A,d_Y^p).
\end{equation}
\item For every $D\in (K,\infty)$ and every $x_1,\ldots,x_n\in X$ there exists $m\in \N$ and  a nonconstant mapping $f:\{x_1,\ldots,x_n\}\to \ell_p^m(Y)$ that satisfies
$$
\sum_{i=1}^n\sum_{j=1}^n d_{\ell_p^m(Y)}(f(x_i),f(x_j))^p\ge \frac{\|f\|_{\Lip}^p}{D}\sum_{i=1}^n\sum_{j=1}^n d_X(x_i,x_j)^p.
$$
\end{enumerate}
\end{theorem}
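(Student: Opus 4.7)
The plan is to prove the two implications separately. The direction (2)$\Rightarrow$(1) is already contained in~\eqref{eq:duality trivial dir}: for each $D\in(K,\infty)$, applying (2) and~\eqref{eq:duality trivial dir} gives $\gamma(A,d_X^p)\le D\,\gamma(A,d_Y^p)$ for every symmetric stochastic $A$, so sending $D\downarrow K$ yields (1). For the converse (1)$\Rightarrow$(2), I will argue by contraposition via a conic linear programming duality, in the spirit of the Ball-lemma reference cited above.

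Fix $x_1,\ldots,x_n\in X$ (distinct, after quotienting out equal points) and set $T_{ij}\eqdef d_X(x_i,x_j)^p$. Let $V$ denote the finite-dimensional space of symmetric $n\times n$ real matrices with zero diagonal, and let $\mathcal{K}\subset V$ be the closure of the cone generated by all matrices of the form $\bigl(d_{\ell_p^m(Y)}(y_i,y_j)^p\bigr)_{i,j}$ as $m\in\N$ and $y\in\ell_p^m(Y)^n$ vary; direct sums in $\ell_p^m(Y)$ show this family is closed under addition, so $\mathcal{K}$ is a closed convex cone whose polar inside $V$ is
$$
\mathcal{K}^{*}=\Bigl\{\mu\in V:\textstyle\sum_{i,j}\mu_{ij}\,d_Y(z_i,z_j)^p\ge 0\text{ for all }z_1,\ldots,z_n\in Y\Bigr\}.
$$
Now suppose (2) fails at some $D\in(K,\infty)$ for these $x_i$'s. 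Dividing a candidate nonconstant $f$ by its Lipschitz constant turns this failure into $S^{*}\le\frac{1}{D}\sum_{i,j}T_{ij}$, where
$$
S^{*}\ \eqdef\ \max\bigl\{\textstyle\sum_{i,j}E_{ij}\,:\,E\in\mathcal{K},\ E_{ij}\le T_{ij}\bigr\}.
$$
Pick $D'\in(K,D)$. Since $E=0$ is strictly feasible (as $T_{ij}>0$ for $i\ne j$), Slater's condition holds and conic-LP strong duality furnishes $\lambda\in V$ with $\lambda_{ij}\ge 0$, $\lambda-\mathbf{1}\in\mathcal{K}^{*}$, and $\sum_{i,j}\lambda_{ij}T_{ij}=S^{*}<\frac{1}{D'}\sum_{i,j}T_{ij}$. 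Unpacking $\lambda-\mathbf{1}\in\mathcal{K}^{*}$ yields
$$
\forall\,z_1,\ldots,z_n\in Y:\qquad \sum_{i,j}\lambda_{ij}\,d_Y(z_i,z_j)^p\ \ge\ \sum_{i,j}d_Y(z_i,z_j)^p, \qquad(\ast)
$$
and $\lambda\ne 0$ (since $|Y|\ge 2$ forces $\mathcal{K}\ne\{0\}$, and $(\ast)$ applied to a nonzero element forces some $\lambda_{ij}>0$).

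To finish, I will manufacture a symmetric stochastic matrix from $\lambda$. Set $\lambda_{ii}=0$ (harmless, since $T_{ii}=d_Y(z_i,z_i)=0$), let $M\eqdef\max_i\sum_j\lambda_{ij}>0$, and define $A_{ij}\eqdef\lambda_{ij}/M$ for $i\ne j$ together with $A_{ii}\eqdef 1-\sum_{j\ne i}A_{ij}\ge 0$. Then $A\in M_n(\R)$ is symmetric and stochastic, and a direct calculation gives
$$
\gamma(A,d_X^p)\ \ge\ \frac{\sum_{i,j}T_{ij}}{n\sum_{i,j}A_{ij}T_{ij}}\ =\ \frac{M\sum_{i,j}T_{ij}}{n\sum_{i,j}\lambda_{ij}T_{ij}}\ >\ \frac{MD'}{n},
$$
while $(\ast)$ gives $\sum_{i,j}A_{ij}d_Y(z_i,z_j)^p\ge M^{-1}\sum_{i,j}d_Y(z_i,z_j)^p$ and hence $\gamma(A,d_Y^p)\le M/n$. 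Combining, $\gamma(A,d_X^p)>D'\,\gamma(A,d_Y^p)$ with $D'>K$, contradicting~(1). The main technical obstacle I foresee is the strong-duality step: one must take $\mathcal{K}$ to be the \emph{closed} cone generated by the realizable $p$-th-power distance matrices (the raw set is closed only under integer copying via~\eqref{eq:lift to ellp}, not under arbitrary positive scalars) and verify that its polar inside $V$ coincides exactly with the $Y$-spectral inequality in $(\ast)$. Once this is arranged, Slater's condition is immediate, and the normalization $\lambda\mapsto A$ with the slack absorbed on the diagonal is the slick Ball-type gadget that turns a generic nonnegative symmetric dual variable into a bona fide symmetric stochastic matrix.
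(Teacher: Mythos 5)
Your plan is correct and rests on the same duality principle as the paper's proof, but the conic-LP packaging is a genuine simplification: arguing by contraposition, you never have to manufacture $f$. The paper instead shows that $(K+2\e)\bigl(d_X(x_i,x_j)^p\bigr)$ lies in $M=\conv(C+P)$ via Hahn--Banach separation, then extracts an explicit convex combination and performs the rational-approximation step~\eqref{eq:rational} to convert real weights into integer multiplicities, which is what produces a map into $\ell_p^m(Y)$ with $m$ finite. Passing to the closed cone and proving nonconstructively is precisely what lets you skip that step, since (by~\eqref{eq:lift to ellp}) the realizable distance-power matrices are closed under $\ell_p$-direct-sum addition but not under arbitrary positive scaling. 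Your renormalization $\lambda\mapsto A$ with the slack absorbed on the diagonal is the same gadget the paper builds when it perturbs the separating functional $H$ by $\d\1$ and normalizes.

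The obstacle, however, is not quite where you place it. Your Slater claim is wrong as stated: $E=0$ is the apex of $\mathcal{K}$, not a relative-interior point (and $|Y|\ge 2$ forces $\mathcal{K}\ne\{0\}$), so it does not certify strong duality or dual attainment. The fix is local: take any $E_0\in\mathrm{relint}(\mathcal{K})$ and scale, since $\mathrm{relint}$ of a cone is dilation-invariant and $T_{ij}>0$ off the diagonal makes $\e E_0$ strictly dominated by $T$ for small $\e$; the polyhedral constraint $E\le T$ requires no strictness. Relatedly, the parenthetical reduction to distinct $x_i$'s is not free, because $n$ is a fixed parameter of the theorem; if some off-diagonal $T_{ij}=0$, then $\mathrm{relint}(\mathcal{K})$ may miss the feasible set and Slater can fail outright. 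You would need either to restrict the LP to the subspace $\{E: E_{ij}=0\ \text{whenever}\ T_{ij}=0\}$ with the corresponding restricted cone and polar, or to perturb $T$ off the diagonal and take a limit. The paper avoids this issue entirely because Hahn--Banach separation requires no interiority hypothesis; its $\d$-perturbation of $H$ is there only to make the resulting $A$ ergodic, something your argument gets for free from the bound $\gamma(A,d_Y^p)\le M/n<\infty$.
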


It is worthwhile to single out the special case of Theorem~\ref{thm:duality} that corresponds to Question~\ref{Q:l2 case}, in which case the embeddings are into $\ell_2$.

\begin{corollary}\label{coro:linear gap}
Fix $n\in \N$ and  $K\in [1,\infty)$. Given a metric space $(X,d_X)$, the following assertions are equivalent.
\begin{enumerate}
\item For every symmetric stochastic matrix $A\in M_n(\R)$ we have $$\gamma(A,d_X^2)\le \frac{K}{1-\lambda_2(A)}.$$
\item For every $D\in (K,\infty)$ and every $x_1,\ldots,x_n\in X$ there exists a nonconstant mapping $f:\{x_1,\ldots,x_n\}\to \ell_2$ that satisfies
$$
\sum_{i=1}^n\sum_{j=1}^n \|f(x_i)-f(x_j)\|_{\ell_2}^2\ge \frac{\|f\|_{\Lip}^2}{D}\sum_{i=1}^n\sum_{j=1}^n d_X(x_i,x_j)^2.
$$
\end{enumerate}
\end{corollary}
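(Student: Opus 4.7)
The plan is to derive Corollary \ref{coro:linear gap} as a direct specialization of Theorem \ref{thm:duality} to the parameters $p=2$ and $(Y,d_Y) = (\R,d_\R)$. Two small identifications are needed: one to rewrite condition (1) in terms of $\lambda_2(A)$, and one to match the target space $\ell_2^m(\R)$ appearing in Theorem \ref{thm:duality} with the target space $\ell_2$ appearing in the corollary.

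For condition (1), I would invoke the identity \eqref{eq:R case}, which gives $\gamma(A,d_\R^2) = 1/(1-\lambda_2(A))$. Substituting into \eqref{eq:linear bound} of Theorem \ref{thm:duality} with $Y=\R$ and $p=2$ immediately yields the inequality $\gamma(A,d_X^2) \le K/(1-\lambda_2(A))$. So the first conditions in the two statements are literally identical once \eqref{eq:R case} is applied.

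For condition (2), I would observe that $\ell_2^m(\R) = \ell_2^m$ isometrically, and that $\ell_2^m$ is a subspace of $\ell_2$. Hence any nonconstant map $f : \{x_1,\ldots,x_n\} \to \ell_2^m$ satisfying the conclusion of Theorem \ref{thm:duality}(2) at exponent $p=2$ composes with the canonical isometric inclusion $\ell_2^m \hookrightarrow \ell_2$ to give a map with the same pairwise distances and the same Lipschitz constant, and therefore satisfies the condition in Corollary \ref{coro:linear gap}(2). Conversely, given a nonconstant $f: \{x_1,\ldots,x_n\} \to \ell_2$, its image spans a finite-dimensional Euclidean subspace of dimension $m \le n-1$, which is linearly isometric to $\ell_2^m$ (apply Gram--Schmidt to any basis of $\mathrm{span}\,\{f(x_i)-f(x_1)\}_{i=2}^n$); composing with this linear isometry produces a nonconstant map into $\ell_2^m$ with the same pairwise distances and Lipschitz constant, which verifies Theorem \ref{thm:duality}(2).

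In summary, no genuine obstacle arises: the corollary is a textbook specialization of Theorem \ref{thm:duality}, with the only content beyond the theorem being the trivial interchangeability of ``maps into $\ell_2^m$ for some $m$'' and ``maps into $\ell_2$'' on an $n$-point domain. All the real work lives in the proof of Theorem \ref{thm:duality} itself, which is the duality argument modeled on \cite[Lem.~1.1]{Ball}.
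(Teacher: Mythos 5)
Your proof is correct and is exactly the specialization the paper intends: Corollary~\ref{coro:linear gap} is obtained from Theorem~\ref{thm:duality} by setting $p=2$, $Y=\R$, using~\eqref{eq:R case} to rewrite $\gamma(A,d_\R^2)$ as $1/(1-\lambda_2(A))$, and identifying $\ell_2^m(\R)=\ell_2^m\subset\ell_2$ isometrically (with the reverse direction following since the image of $n$ points spans a finite-dimensional Euclidean subspace). The paper does not spell out these routine identifications, so your write-up is a faithful expansion of the same argument.
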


In Section~\ref{sec:use markov type} below we prove the following
theorem.

\begin{theorem}\label{thm:p bound gamma}
For every $p\in [2,\infty)$, every $n\in \N$ and every symmetric
stochastic matrix $A\in M_n(\R)$ we have
\begin{equation}\label{eq:p^2 therem}
\gamma\!\left(A,\|\cdot\|_{\ell_p}^2\right)\lesssim \frac{p^2}{1-\lambda_2(A)}.
\end{equation}
\end{theorem}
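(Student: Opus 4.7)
My approach is to invoke the Markov type $2$ property of $L_p$: for $p \geq 2$, the space $L_p$ has Markov type $2$ with constant $M_2(L_p) \lesssim \sqrt{p}$, by Naor--Peres--Schramm--Sheffield, ultimately a consequence of the $2$-uniform smoothness constant $\sqrt{p-1}$ of Ball--Carlen--Lieb. Concretely, for any stationary reversible Markov chain $(Z_k)_{k \geq 0}$ on $\{1,\ldots,n\}$ with symmetric stochastic transition matrix $A$ (started from the uniform stationary distribution) and any $x_1,\ldots,x_n \in L_p$,
\begin{equation*}
\E\bigl\|x_{Z_t} - x_{Z_0}\bigr\|_p^2 \lesssim p\cdot t\cdot \frac{1}{n}\sum_{i,j=1}^n a_{ij}\|x_i - x_j\|_p^2 \qquad (t \in \N).
\end{equation*}

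The plan is then to convert this time-$t$ displacement bound into a bound on $\gamma(A,\|\cdot\|_p^2)$ by choosing $t$ to be a multiple of the $\ell_2$ mixing time of $A$. First I would replace $A$ by its lazy variant $\tilde A := (I+A)/2$, which has nonnegative spectrum, satisfies $1-\lambda_2(\tilde A) = (1-\lambda_2(A))/2$, and whose edge energies control those of $A$ up to a factor of $2$. Choosing $t \asymp 1/(1-\lambda_2(\tilde A)) \asymp 1/(1-\lambda_2(A))$ guarantees $\|\tilde A^t - \tfrac{1}{n}\mathbf{1}\mathbf{1}^\top\|_{\mathrm{op}} \leq 1/e$. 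It then remains to leverage this $\ell_2$-operator-norm closeness to $\tfrac{1}{n}\mathbf{1}\mathbf{1}^\top$ into the inequality
\begin{equation*}
\E\bigl\|x_{Z_t} - x_{Z_0}\bigr\|_p^2 \gtrsim \frac{1}{p}\cdot \frac{1}{n^2}\sum_{i,j=1}^n\|x_i-x_j\|_p^2.
\end{equation*}
Combining the two displays yields $\tfrac{1}{n^2}\sum_{i,j}\|x_i - x_j\|_p^2 \lesssim p\cdot p\cdot t\cdot \tfrac{1}{n}\sum_{i,j}a_{ij}\|x_i - x_j\|_p^2 \lesssim \tfrac{p^2}{1-\lambda_2(A)}\cdot \tfrac{1}{n}\sum_{i,j}a_{ij}\|x_i - x_j\|_p^2$, which is the required estimate.

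\textbf{Main obstacle.} The crux is the spectral-to-$\ell_p$ conversion in the second display. In a Hilbert space the identity $\sum_{i,j}[(\tilde A^t)_{ij} - 1/n]\|x_i - x_j\|^2 = -2\langle f, (\tilde A^t - \tfrac{1}{n}\mathbf{1}\mathbf{1}^\top)f\rangle$ (with $f = (x_1,\ldots,x_n)$ and the inner product over the appropriate tensorization) reduces the problem to a single operator-norm estimate and is lossless. For $L_p$ with $p > 2$ no such bilinear identity is available, and I expect the passage to go through either a Clarkson-type inequality, exploiting the $2$-smoothness of $L_p$, or the Mazur map between the unit spheres of $L_p$ and $L_2$, which is $O(1/\sqrt p)$-Lipschitz and therefore introduces a factor of $p$ on squared norms. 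That is precisely where the extra factor of $p$ should enter, beyond the $p = M_2(L_p)^2$ coming from the Markov type inequality itself.
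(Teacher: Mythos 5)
Your plan correctly accounts for the two factors of $p$, but step (c) — the lower bound $\E\|x_{Z_t}-x_{Z_0}\|_p^2\gtrsim \frac1p\cdot\frac{1}{n^2}\sum_{i,j}\|x_i-x_j\|_p^2$, i.e.\ $\gamma(\tilde A^t,\|\cdot\|_{\ell_p}^2)\lesssim p$, at the $\ell_2$ mixing time $t\asymp 1/(1-\lambda_2(A))$ — is a genuine gap, and it is where all the difficulty sits. The $\ell_2$ spectral information $\lambda(\tilde A^t)\le 1/e$ says nothing by itself about $\gamma(\tilde A^t,\|\cdot\|_W^2)$ for a generic Banach space $W$: that passage is precisely what inequality~\eqref{eq:Psi euclidean} asks for, and it is open in general. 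For $W=\ell_p$ the only available handle is the interpolation $\ell_p=[\ell_2,\ell_\infty]_{2/p}$, but interpolating the Hilbertian operator-norm decay $\lambda(\tilde A)^t$ against the trivial $O(1)$ bound on $L_2^n(\ell_\infty)$ gives a bound of order $2^{1-2/p}\lambda(\tilde A)^{2t/p}\approx 2e^{-2/p}$ on $L_2^n(\ell_p)$, which exceeds $1$ for large $p$: at the $\ell_2$ mixing time the interpolated estimate is vacuous and no control on $\gamma(\tilde A^t,\|\cdot\|_{\ell_p}^2)$ follows. Neither of your suggested remedies fills this in: the Mazur map is $O(p)$-Lipschitz from $B_{\ell_p}$ to $B_{\ell_2}$ and only $(2/p)$-H\"older in the reverse direction (not $O(1/\sqrt p)$-Lipschitz), and that route is exactly Ozawa's localization argument (Theorem~\ref{thm:ozawa-ineq}), which the paper shows is inherently exponentially lossy in $p$ (Section~\ref{sec:no ozawa}).

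The paper's actual proof (Theorem~\ref{thm:smoothness mtype}) instead runs the walk for $m\asymp p/\log(1/\lambda(A))$ steps, i.e.\ $p$ times your $t$, precisely so that the interpolated operator norm $2^{1-2/p}\lambda(A)^{2m/p}$ drops below $1/2$; a Neumann-series perturbation argument in $L_2^n(\ell_p)$ then yields the $p$-\emph{independent} bound $\gamma(A^m,\|\cdot\|_{\ell_p}^2)\le 16$. The Markov type inequality $M_2(\ell_p)\lesssim\sqrt p$ is used only afterwards, to pass from $A^m$ back to $A$, and contributes $M_2(\ell_p)^2\cdot m\asymp p\cdot\frac{p}{1-\lambda_2(A)}$. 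The right fix to your plan is therefore to take $t$ to be the $L_2^n(\ell_p)$ mixing time determined by interpolation — a factor of $p$ longer than the $\ell_2$ one — after which step (c) holds with constant $1$ instead of $1/p$, and the second factor of $p$ migrates from the conversion step into the walk length.
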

Inequality~\eqref{eq:p^2 therem} is proved in Section~\ref{sec:use
markov type} via a direct argument using analytic and probabilistic
techniques, but once this inequality is established one can use
duality through Corollary~\ref{coro:linear gap} to deduce  the
following new Hilbertian embedding result for arbitrary finite subsets
of $\ell_p$.
\begin{corollary}\label{coro:p av embed}
If $n\in \N$ and $p\in (2,\infty)$ then for every $x_1,\ldots,x_n\in
\ell_p$ there exist $y_1,\ldots,y_n\in \ell_2$ such that
 \begin{equation}\label{eq:p yaya}
 \forall\, i,j\in \{1,\ldots,n\},\qquad \|y_i-y_j\|_{\ell_2}\lesssim p\|x_i-x_j\|_{\ell_p},
 \end{equation}
 and
 $$
 \sum_{i=1}^n\sum_{j=1}^n \|y_i-y_j\|_{\ell_2}^2=\sum_{i=1}^n\sum_{j=1}^n \|x_i-x_j\|_{\ell_p}^2.
 $$
\end{corollary}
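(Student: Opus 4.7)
The plan is to derive Corollary~\ref{coro:p av embed} formally from Theorem~\ref{thm:p bound gamma} via the duality supplied by Corollary~\ref{coro:linear gap}, followed by an elementary rescaling. Fix $p\in (2,\infty)$. Theorem~\ref{thm:p bound gamma} asserts that the metric space $\ell_p$ (with its norm-metric) satisfies assertion~(1) of Corollary~\ref{coro:linear gap} with $K=Cp^2$ for some universal constant $C\in (0,\infty)$. Invoking the implication (1)$\Rightarrow$(2) of that corollary with, say, $D=2Cp^2$ produces for every $x_1,\ldots,x_n\in \ell_p$ a nonconstant map $f:\{x_1,\ldots,x_n\}\to \ell_2$ satisfying
$$
\sum_{i=1}^n\sum_{j=1}^n \|f(x_i)-f(x_j)\|_{\ell_2}^2\ge \frac{\|f\|_{\Lip}^2}{2Cp^2}\sum_{i=1}^n\sum_{j=1}^n \|x_i-x_j\|_{\ell_p}^2.
$$

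Next I would rescale: define $y_i\eqdef \alpha f(x_i)$, where $\alpha\in(0,\infty)$ is the unique value for which $\sum_{i,j}\|y_i-y_j\|_{\ell_2}^2=\sum_{i,j}\|x_i-x_j\|_{\ell_p}^2$. This $\alpha$ is well defined because $f$ is nonconstant, so the denominator $\sum_{i,j}\|f(x_i)-f(x_j)\|_{\ell_2}^2$ is positive. The equality required in Corollary~\ref{coro:p av embed} then holds by construction. For the bound~\eqref{eq:p yaya}, substitute the displayed lower bound into the defining identity of $\alpha$: this yields $\alpha^2\|f\|_{\Lip}^2\le 2Cp^2$, hence $\alpha\|f\|_{\Lip}\lesssim p$, and therefore
$$
\|y_i-y_j\|_{\ell_2}=\alpha\|f(x_i)-f(x_j)\|_{\ell_2}\le \alpha\|f\|_{\Lip}\|x_i-x_j\|_{\ell_p}\lesssim p\|x_i-x_j\|_{\ell_p}.
$$

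I do not foresee any genuine obstacle in this argument: once Theorem~\ref{thm:p bound gamma} is granted, the corollary is a formal consequence of the linear-gap duality in Corollary~\ref{coro:linear gap}, and the normalization constant $\alpha$ is essentially forced by the equality constraint. All the real work is hidden in Theorem~\ref{thm:p bound gamma}; Corollary~\ref{coro:p av embed} simply unpacks its strength through the geometric characterization of linearly-controlled nonlinear gaps.
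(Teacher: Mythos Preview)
Your argument is correct and is precisely the approach the paper indicates: the text states that once Theorem~\ref{thm:p bound gamma} is established, Corollary~\ref{coro:p av embed} follows by duality through Corollary~\ref{coro:linear gap}, and you have spelled out exactly that deduction together with the obvious rescaling. The only trivial omission is the degenerate case in which all the $x_i$ coincide, where one simply takes all $y_i$ equal.
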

The dependence on $p$ in~\eqref{eq:p yaya} is sharp up to the
implicit universal constant, and the conclusion of
Corollary~\ref{coro:p av embed} is false for $p\in [1,2)$ even if
one allows any dependence on $p$ in~\eqref{eq:p yaya} (provided it
is independent of $n$): for the former statement see
Lemma~\ref{lem:p sharp} below and for the latter statement see
Lemma~\ref{lem:no cube} below. It would be interesting to find a
constructive proof of Corollary~\ref{coro:p av embed}, i.e., a
direct proof that does not rely on duality to show that the desired
embedding exists.

\begin{remark}
Questions in the spirit of Theorem\ref{thm:p bound gamma} have been
previously studied by Matou\v{s}ek~\cite{Mat97}, who proved that
there exists a universal constant $C\in (0,\infty)$ such that for
every $n\in \N$ and every $n$ by $n$ symmetric stochastic matrix $A\in M_n(\R)$,
\begin{equation}\label{eq:matousek extrapolation}
p\in [2,\infty)\implies
\gamma\!\left(A,\|\cdot\|_{\ell_p}^p\right)\le \frac{(Cp)^p}{\left(1-\lambda_2(A)\right)^{p/2}}.
\end{equation}
See~\cite[Lem.~5.5]{BLMN05} and~\cite[Lem.~4.4]{NS11} for the
formulation and proof of this fact (which is known as Matou\v{s}ek's
extrapolation lemma for Poincar\'e inequalities) in the form stated
in~\eqref{eq:matousek extrapolation}. In order to obtain an
embedding result such as Corollary~\ref{coro:p av embed} one needs to
bound $\gamma (A,\|\cdot\|_{\ell_p}^2)$ rather than
$\gamma(A,\|\cdot\|_{\ell_p}^p)$ by a quantity that grows linearly
with $1/(1-\lambda_2(A))$. We do not see how to use Matou\v{s}ek's
approach in~\cite{Mat97} to obtain such an estimate, and we
therefore use an entirely different method (specifically, complex
interpolation and Markov type) to prove Theorem~\ref{thm:p bound
gamma}.
\end{remark}

As stated above, when $p\in [1,2)$ the analogue of Theorem~\ref{thm:p bound gamma} can
hold true only if we allow the right hand side of~\eqref{eq:p^2
therem} depend on $n$. Specifically, we ask the following question.

\begin{question}\label{Q:p ARV}
Is it true that for every $p\in [1,2]$, every $n\in \N$ and every
$n$ by $n$ symmetric stochastic matrix $A$ we have
$$
\gamma(A,\|\cdot\|_{\ell_p}^2)\lesssim \frac{(\log n)^{\frac{2}{p}-1}}{1-\lambda_2(A)}\ ?
$$
\end{question}

Due to Theorem~\ref{thm:duality}, an affirmative answer to
Question~\ref{Q:p ARV} is equivalent to the assertion that if $p\in
[1,2]$ then for every $x_1,\ldots,x_n\in \ell_p$ there exist
$y_1,\ldots,y_n\in \ell_2$ that satisfy
$$
\sum_{i=1}^n\sum_{j=1}^n \|y_i-y_j\|_2^2=\sum_{i=1}^n\sum_{j=1}^n \|x_i-x_j\|_p^2,
$$
and
$$
\forall\, i,j\in \{1,\ldots,n\},\qquad \|y_i-y_j\|_2\lesssim (\log n)^{\frac1{p}-\frac12}\|x_i-x_j\|_p.
$$
We conjecture that the answer to Question~\ref{Q:p ARV} is positive.
As partial motivation for this conjecture, we note that by an
important theorem of Arora, Rao and Vazirani~\cite{ARV09} the answer
is positive when $p=1$. A possible approach towards proving this
conjecture for $p\in (1,2)$ is to investigate whether the quantity
$\gamma(A,\|\cdot\|_X^2)$ behaves well under interpolation. In our
case one would write
$\frac{1}{p}=\frac{\theta}{2}+\frac{1-\theta}{1}$ for an appropriate
$\theta\in (0,1)$ and ask whether or not it is true that for every
$n\in \N$ every $n$ by $n$ symmetric stochastic matrix $A$ satisfies
\begin{equation}\label{eq:interpolation hope}
\gamma(A,\|\cdot\|_p^2)\lesssim \gamma(A,\|\cdot\|_2^2)^\theta\cdot
\gamma(A,\|\cdot\|_1^2)^{1-\theta}.
\end{equation}
 Investigating the possible
validity such interpolation inequalities for nonlinear spectral gaps
is interesting in its own right; in Section~\ref{sec:interpolation}
we derive a weaker interpolation inequality in this spirit.

\subsection{Applications to bi-Lipschitz
embeddings}\label{sec:embedding application} The (bi-Lipschitz)
distortion of a metric space $(X,d_X)$ in a metric space $(Y,d_Y)$,
denoted $c_Y(X)$, is define to be the infimum over those $D\in
[1,\infty)$ for which there exists $s\in (0,\infty)$ and a mapping
$f:X\to Y$ that satisfies
$$
\forall\, x,y\in X,\qquad sd_X(x,y)\le d_Y(f(x),f(y))\le Dsd_X(x,y).
$$
Set $c_Y(x)=\infty$ if no such $D$ exists. When $Y=\ell_p$ for some $p\in [1,\infty]$ we use the simpler notation $c_p(X)=c_{\ell_p}(X)$. The parameter $c_2(X)$ is known in the literature as the Euclidean distortion of $X$.

The  Fr\'echet--Kuratowski embedding  (see e.g.~\cite{Hei01}) shows
that $c_\infty(X)=1$ for every separable metric space $X$. We
therefore define $p(X)$ to be the infimum over those $p\in
[2,\infty]$ such that $c_p(X)< 10$. The choice of the number $10$
here is arbitrary, and one can equally consider any fixed number
bigger than $1$ in place of $10$ to define the parameter $p(X)$; we
made this arbitrary choice rather than adding an additional
parameter only for the sake of notational simplicity.

For $n\in \N$ and $d\in \{3,\ldots,n-1\}$ let $p(n,d)$ be the
expectation of  $p(G)$ when $G$ is distributed uniformly at random
over all connected $n$-vertex $d$-regular graphs, equipped with the
shortest-path metric.
Thus, if $p=p(n,d)$ then in expectation a
connected $n$-vertex $d$-regular graph $G$ satisfies $c_{p}(G)\le
10$. In~\cite{Mat97} Matou\v{s}ek evaluated  the asymptotic
dependence of the largest possible distortion of an $n$-point metric
space in $\ell_p$, yielding the estimate $p(n,d)\gtrsim \log_d n$,
which is an asymptotically sharp bound if $d=O(1)$. As a consequence
of our proof of Theorem~\ref{thm:p bound gamma}, it turns out that
Matou\v{s}ek's bound is not sharp as a function of the degree $d$.
Specifically, in Section~\ref{sec:p index} we prove the following
result.

\begin{proposition}\label{prop:better than matousek pnd}
For every $n\in \N$ and $d\in \{3,\ldots,n-1\}$ we have
\begin{equation}\label{eq:d small 2/3}
d\le e^{(\log n)^{2/3}}\implies p(n,d)\gtrsim \frac{\log n}{\sqrt{\log d}},
\end{equation}
and
\begin{equation}\label{eq:d big 2/3 intro}
d\ge e^{(\log n)^{2/3}}\implies p(n,d)\gtrsim \left(\log_d n\right)^2.
\end{equation}
\end{proposition}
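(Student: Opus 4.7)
The plan is to combine Theorem~\ref{thm:p bound gamma}, Matou\v{s}ek's extrapolation~\eqref{eq:matousek extrapolation}, and classical facts about random $d$-regular graphs. By Friedman's theorem, a uniformly random $n$-vertex $d$-regular graph $G$ has $1-\lambda_2(A_G)\gtrsim 1$ uniformly in $d$ with probability $1-o(1)$, where $A_G$ denotes the normalized adjacency matrix. Moreover, Moore-bound arguments guarantee that with high probability $\diam(G)=\Theta(\log_d n)$ and $\frac{1}{n^2}\sum_{u,v}d_G(u,v)^q\asymp(\log_d n)^q$ for every $q\geq 1$. It therefore suffices to bound $p(G)$ from below for every graph $G$ with these two properties.

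Suppose that $c_p(G)\leq 10$, witnessed by $s>0$ and $f\colon V(G)\to\ell_p$ satisfying $sd_G(u,v)\leq \|f(u)-f(v)\|_p\leq 10sd_G(u,v)$. Directly applying Theorem~\ref{thm:p bound gamma} to $A_G$ yields
$$\frac{1}{n^2}\sum_{u,v}\|f(u)-f(v)\|_p^2 \;\lesssim\; p^2\cdot\frac{1}{n}\sum_{u,v}(A_G)_{uv}\|f(u)-f(v)\|_p^2,$$
whose LHS is $\gtrsim s^2(\log_d n)^2$ and whose RHS is $\lesssim p^2 s^2$ (as the edges of $G$ have graph distance $1$). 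This recovers the baseline $p\gtrsim\log_d n$, which is Matou\v{s}ek's bound~\cite{Mat97}; the two target inequalities~\eqref{eq:d small 2/3} and~\eqref{eq:d big 2/3 intro} strengthen this by a factor of $\sqrt{\log d}$ and by an exponent of $2$ respectively.

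To sharpen the basic argument, I would replace the single-step matrix $A_G$ by a more delicate auxiliary symmetric stochastic matrix $B$ built from $G$---for instance the $r$-step walk matrix $A_G^r$, or a ball-uniform matrix at an intermediate scale $r$---and then optimise over $r$. Since $\lambda_2(A_G^r)=\lambda_2(A_G)^r$, the matrix $B$ retains a spectral gap of order $1$ once $r\gtrsim 1$, so Theorem~\ref{thm:p bound gamma} still gives $\gamma(B,\|\cdot\|_{\ell_p}^2)\lesssim p^2$. The RHS of the Poincar\'e inequality then becomes a weighted average of $\|f(u)-f(v)\|_p^2$ over pairs whose typical graph distance is governed by the second moment $\Avg[d_G(Z_0,Z_r)^2]$ of the $r$-step displacement. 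On the locally tree-like $G$ this displacement concentrates around $((d-2)/d)r$ for $r$ below the girth scale, which should yield finer control than the trivial bound. Combining this with the $\ell_p^p$ extrapolation~\eqref{eq:matousek extrapolation} in the small-$d$ regime, and with the dual averaged-embedding statement of Corollary~\ref{coro:p av embed} in the large-$d$ regime, should produce~\eqref{eq:d small 2/3} and~\eqref{eq:d big 2/3 intro} respectively.

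The main obstacle is the optimisation itself: one must track carefully how $\Avg[d_G(Z_0,Z_r)^2]$ and $1-\lambda_2(A_G^r)$ depend jointly on $r$ and $d$, and then identify the correct value of $r$ (which, at the crossover $d=e^{(\log n)^{2/3}}$, is presumably around $(\log n)^{1/3}$). A secondary obstacle is justifying the tree-like approximation of the random walk up to the chosen scale of $r$, which should follow from standard local-structure results for random $d$-regular graphs. Once these two technical points are settled, the remaining algebra reduces to extracting the appropriate $p$-th roots and reading off the two regimes in~\eqref{eq:d small 2/3}--\eqref{eq:d big 2/3 intro}.
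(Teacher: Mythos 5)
Your baseline computation is correct: applying Theorem~\ref{thm:p bound gamma} to $A_G$ together with the Linial--London--Rabinovich argument recovers Matou\v{s}ek's bound $p\gtrsim \log_d n$, and you correctly identify that the Proposition claims gains of $\sqrt{\log d}$ (small-$d$) and a squaring of the exponent (large-$d$) over this baseline. However, the refinement you propose---replacing $A_G$ by $A_G^r$ or a ball-uniform matrix and optimising over $r$---does not produce these gains, and is not the mechanism the paper uses. The difficulty is that increasing the walk scale $r$ degrades both sides of the argument in lockstep: the pairs $(i,j)$ supported by $A_G^r$ have $d_G(i,j)\lesssim r$, so the right-hand side of the Poincar\'e inequality picks up a factor of $r^2$ (exactly what the tree-like displacement estimate $\Avg[d_G(Z_0,Z_r)^2]\asymp r^2$ says), while on the other side $\gamma(A_G^r,\|\cdot\|_{\ell_p}^2)$ is still only bounded by $p^2$---Theorem~\ref{thm:p bound gamma} applied to $A_G^r$ cannot do better than $p^2/(1-\lambda_2(A_G)^r)\gtrsim p^2\cdot\text{const}$ once the base gap is of order $1$, and $\gamma$ is in any case bounded below by $1-\tfrac1n$. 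The net effect of the optimisation is $p\gtrsim \diam(G)/r$, which is best at $r=1$: you are back to Matou\v{s}ek.

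The ingredient you are missing is the sharper, $\lambda$-resolved version of the spectral bound, stated in the introduction as~\eqref{lambda p bound in intro} and proved as Corollary~\ref{coro:gamma p by lambda}:
\begin{equation*}
\gamma\!\left(A,\|\cdot\|_{\ell_p}^2\right)\lesssim \frac{p}{1-\lambda(A)^{2/p}}.
\end{equation*}
This is genuinely different from the weaker $p^2/(1-\lambda_2(A))$ form of Theorem~\ref{thm:p bound gamma}: the exponent $2/p$ on $\lambda(A)$ is inherited from the complex-interpolation identity $\ell_p=[\ell_2,\ell_\infty]_{2/p}$, which gives $\|T\otimes I_{\ell_p}\|_{L_2^n(\ell_p)\to L_2^n(\ell_p)}\le 2^{1-2/p}\lambda(A)^{2/p}$. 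For a Ramanujan graph or a uniformly random $d$-regular graph one has $\lambda(G)\lesssim d^{-c}$ for a universal $c>0$ (here the paper invokes~\cite{BS87}, not Friedman), so
\begin{equation*}
1-\lambda(G)^{2/p}\asymp \min\!\left\{1,\ \frac{\log d}{p}\right\}.
\end{equation*}
Feeding this into the LLR average-distortion bound $c_p(G)\gtrsim \sqrt{1-\lambda(G)^{2/p}}\cdot\diam(G)/\sqrt{p}$ (Proposition~\ref{prop:better matousek lambda}) yields $c_p(G)\gtrsim (\log n)/(\sqrt{p}\log d)$ when $p\le\log d$ and $c_p(G)\gtrsim (\log n)/(p\sqrt{\log d})$ when $p\ge\log d$, which is exactly Proposition~\ref{prop:ramanujan}; solving $c_p(G)\le 10$ in each regime produces the crossover at $d= e^{(\log n)^{2/3}}$ and the two displays~\eqref{eq:d small 2/3} and~\eqref{eq:d big 2/3 intro}. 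In short, the $\sqrt{\log d}$ gain is a consequence of the $\lambda(A)^{2/p}$ dependence in Corollary~\ref{coro:gamma p by lambda}, not of any optimisation over random-walk scales; without that sharper spectral input, your plan stalls at Matou\v{s}ek's exponent.
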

The significance of the quantity $\log_d n$ appearing in
Matou\v{s}ek's bound is that up to universal constant factors it is
the typical diameter of a uniformly random connected $n$-vertex
$d$-regular graph~\cite{BF82}. Thus, in~\eqref{eq:d small 2/3} there
must be some restriction on the size of $d$ relative to $n$ since
when $d$ is at least a constant power of $n$ the typical diameter of
$G$  is $O(1)$, and therefore $c_p(G)\le c_2(G)=O(1)$.
Both~\eqref{eq:d small 2/3} and~\eqref{eq:d big 2/3 intro}  assert
that $p(n,d)$ tends to $\infty$ faster than the typical diameter of
$G$ when $n^{o(1)}=d\to \infty$ (note also  that~\eqref{eq:d big 2/3
intro} is consistent with the fact that $p(n,d)$ must become bounded
when $d$ is large enough). While we initially expected
Matou\v{s}ek's bound to be sharp, Proposition~\eqref{prop:better
than matousek pnd} indicates that the parameter $p(n,d)$, and more
generally the parameter $p(X)$ for a finite metric space $(X,d_X)$,
deserves further investigation. In particular, we make no claim that
Proposition~\eqref{prop:better than matousek pnd} is sharp.

The link between Theorem~\ref{thm:p bound gamma} and Proposition~\ref{prop:better than matousek pnd} is that our proof of Theorem~\ref{thm:p bound gamma} yields the following bound, which holds for every $n\in \N$, every $n$ by $n$ symmetric stochastic matrix $A$, and every $p\in [2,\infty)$.
\begin{equation}\label{lambda p bound in intro}
\gamma\!\left(A,\|\cdot\|_{\ell_p}^2\right)\lesssim \frac{p}{1-\lambda(A)^{2/p}},
\end{equation}
where we recall that $\lambda(A)$ was defined in~\eqref{eq:def
lambda}. Proposition~\ref{prop:better than matousek pnd} is deduced
in Section~\ref{sec:p index} from~\eqref{lambda p bound in intro}
through a classical argument of Linial, London and
Rabinovich~\cite{LLR}. The bounds appearing in Proposition~\ref{prop:better than matousek pnd} hold true with $p(n,d)$ replaced by $p(G)$ when $G$ is an $n$-vertex $d$-regular Ramanujan graph~\cite{LPS,Mar88}, and it
is an independently interesting open question to evaluate $c_p(G)$
up to universal constant factors when $G$ is Ramanujan. Some
estimates in this direction are obtained in Section~\ref{sec:p
index}, where a similar question is also studied for Abelian
Alon--Roichman graphs~\cite{AR94}.

\subsection{Ozawa's localization argument for Poincar\'e inequalities} Theorem~\ref{thm:ozawa-ineq} below provides a partial answer to
Question~\ref{Q:meta} when $X$ and $Y$ are certain Banach spaces.
Its proof builds on an elegant idea of Ozawa~\cite{Ozawa} that was
used in~\cite{Ozawa} to rule out coarse embeddings of expanders into
certain Banach spaces.  While Ozawa's original argument did not
yield a nonlinear spectral gap inequality in the form that we
require here, it can be modified so as to yield
Theorem~\ref{thm:ozawa-ineq}; this is carried out in
Section~\ref{sec:ozawa}.

Throughout this article the unit ball of a Banach space
$(X,\|\cdot\|_X)$ is denoted by $$ B_X\eqdef \{x\in X:\ \|x\|_X\le
1\}.$$

\begin{theorem}\label{thm:ozawa-ineq}
Let $(X,\|\cdot\|_X)$ and $(Y,\|\cdot\|_Y)$ be Banach spaces. Suppose that $\alpha,\beta:[0,\infty)\to [0,\infty)$ are increasing functions, $\beta$ is concave, and $$\lim_{t\to 0} \alpha(t)=\lim_{t\to 0}\beta(t)=0.$$ Suppose also that there exists a mapping $\phi:B_X\to Y$ that satisfies
\begin{equation}\label{eq;alpha beta}
\forall\, x,y\in B_X,\quad \alpha\left(\|x-y\|_X\right)\le \|\f(x)-\f(y)\|_Y\le \beta\left(\|x-y\|_X\right).
\end{equation}
Then for every $q\in [1,\infty)$, every $n\in \N$, and every
symmetric stochastic matrix $A\in M_n(\R)$ we have
\begin{equation}\label{eq:ozawa ineq}
\gamma\!\left(A,\|\cdot\|_X^q\right)\le 8^{q+1}\gamma\!\left(A,\|\cdot\|_Y^q\right)+\frac{8^q}{\beta^{-1}\left(\frac{\alpha(1/4)}{8\gamma\!\left(A,\|\cdot\|_Y^q\right)^{1/q}}\right)^q}.
\end{equation}
\end{theorem}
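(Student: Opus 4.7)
The plan is to follow Ozawa's localization argument. Fix $x_1,\ldots,x_n\in X$; by translation and by $q$-homogeneity of the $X$-Poincar\'e inequality, I may assume $x_1=0$ and $\max_i\|x_i\|_X\le 1$, so $\phi_i:=\phi(x_i)\in Y$ is defined for every $i$. Applying the hypothesized $Y$-Poincar\'e inequality to $\phi_1,\ldots,\phi_n$ and sandwiching by the moduli in \eqref{eq;alpha beta} yields the master estimate
\[
\frac{1}{n^2}\sum_{i,j}\alpha(\|x_i-x_j\|_X)^q \;\le\; \frac{\Gamma}{n}\sum_{i,j}a_{ij}\beta(\|x_i-x_j\|_X)^q, \qquad \Gamma:=\gamma(A,\|\cdot\|_Y^q).
\]

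Next I would upper bound the right-hand side using concavity of $\beta$: since $\beta(0)=0$ and $\beta$ is concave, $r\mapsto\beta(r)/r$ is non-increasing, so splitting pairs at a threshold $s>0$ gives
\[
\tfrac{1}{n}\sum_{i,j}a_{ij}\beta(\|x_i-x_j\|_X)^q \le \beta(s)^q + (\beta(s)/s)^{q}\cdot\mathrm{edge}_X,\qquad \mathrm{edge}_X := \tfrac{1}{n}\sum_{i,j}a_{ij}\|x_i-x_j\|_X^q.
\]
For the left, using $\alpha(r)\ge\alpha(1/4)\mathbf{1}[r\ge 1/4]$ together with $\|x_i-x_j\|_X\le 2$ (forced by $x_i\in B_X$), a Markov-type level-set count shows that the number of ordered pairs with $\|x_i-x_j\|_X\ge 1/4$ is at least $n^2(\mathrm{avg}_X-4^{-q})_+/2^q$, where $\mathrm{avg}_X:=n^{-2}\sum\|x_i-x_j\|_X^q$; each such pair contributes at least $\alpha(1/4)^q$ to the left side. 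Choosing $s:=\beta^{-1}\!\bigl(\alpha(1/4)/(8\Gamma^{1/q})\bigr)$ so that $\Gamma\beta(s)^q=\alpha(1/4)^q/8^q$ balances the concavity constant against the level-set mass; rearranging produces an inhomogeneous estimate of the form $\mathrm{avg}_X \le 2\cdot 4^{-q} + (4^q s^q)^{-1}\mathrm{edge}_X$ valid under our normalization.

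The final step is to convert this inhomogeneous, normalization-dependent bound into the scale-invariant Poincar\'e constant stated in the theorem. Unwinding the normalization replaces $2\cdot 4^{-q}$ by $2R^q/4^q$ with $R:=\max_i\|x_i\|_X$, and I would then case-split on whether $R^q$ is comparable to $\mathrm{avg}_X$ (the ``balanced'' regime, where the first term can be absorbed into a multiple of $\mathrm{edge}_X$ using an auxiliary application of the $Y$-Poincar\'e inequality to the $1$-Lipschitz real-valued map $i\mapsto\|x_i\|_X\in\R\subset Y$) or much larger (the ``outlier'' regime, where the pair structure forces $\mathrm{edge}_X$ itself to be large and the ratio is controlled directly). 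Tracking the absolute constants of $8$ and $1/4$ through the chain then yields the advertised additive decomposition, with the $8^{q+1}\Gamma$ summand arising from the absorbed $R^q$-contribution and the $8^q/\beta^{-1}(\cdot)^q$ summand from the local modulus chosen at the scale $\alpha(1/4)/(8\Gamma^{1/q})$.

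I expect the main obstacle to be precisely this last conversion: the concavity-plus-level-set step produces an inequality with an additive constant that depends on the chosen normalization, and Ozawa's localization is exactly the device that lets one absorb this constant into the edge-sum without picking up any $n$-dependence. The delicate point is ensuring that the auxiliary bound on $R^q$ is compatible with the particular scale $s$ dictated by the choice $\Gamma\beta(s)^q=\alpha(1/4)^q/8^q$, so that the two contributions combine cleanly into the absolute constants stated.
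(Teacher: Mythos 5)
Your outline contains several of the right ingredients (a concavity bound on $\beta$, a level-set/Markov count on the $\alpha$ side, and a real-valued Poincar\'e as an auxiliary tool), but the normalization you pick derails the argument at exactly the step you flag as the ``main obstacle.'' You fix $x_1=0$ and $\max_i\|x_i\|_X\le 1$ and then work at the \emph{fixed} cutoff $1/4$. This cutoff is tied to the diameter normalization, not to the intrinsic scale of the point set. If the $x_i$ are clustered at a much smaller scale than $1$ (which is generic under your normalization: only the largest $\|x_i\|_X$ is pinned at $1$, while the median distance can be arbitrarily small), then $\mathrm{avg}_X<4^{-q}$ and your level-set count $n^2(\mathrm{avg}_X-4^{-q})_+/2^q$ is zero, so the $\alpha$-side of the master estimate produces nothing. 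Your resulting inhomogeneous bound then degenerates to $\mathrm{avg}_X\le 4^{-q}$, a constant that is in no way comparable to a multiple of $\mathrm{edge}_X$ (indeed $\mathrm{edge}_X$ can be made arbitrarily small under your normalization while $\mathrm{avg}_X$ stays bounded away from zero, e.g.\ a path graph with slowly drifting points). The ``outlier regime'' sketch claims $\mathrm{edge}_X$ must then be large, but this is false; what is large is $\gamma(A,\|\cdot\|_Y^q)$ itself, and your chain of inequalities does not extract that.

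What the paper does differently, and what is genuinely missing from your proposal, is an \emph{adaptive} choice of scale: after normalizing the edge sum $\frac{1}{n}\sum a_{ij}\|x_i-x_j\|_X^q=1$ (not the diameter), one defines, for each $i$, the median radius $r_i$ (smallest $r$ with at least $n/2$ points within $r$ of $x_i$), selects an index with \emph{minimal} $r_i$ (call it $r_1$), and then performs the radial retraction $\rho$ at scale $r_1$ before applying $\phi$. The $Y$-Poincar\'e applied to $\phi$ of the retracted points, combined with the $\alpha$-lower bound, shows that a $\ge n/4$ fraction of the indices are within $r_1\alpha^{-1}(2(4\gamma)^{1/q}\beta(2/r_1))$ of each other. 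One then applies the real-valued Poincar\'e to the \emph{truncated} distance $v_i=\max\{0,\|x_i-x_k\|_X-r\}$ (truncation at that adaptive radius is essential, since $v$ vanishes on a set of size $\ge n/4$), and crucially uses the \emph{minimality} $r_1\le r_k$ to close the loop and bound $r_1$ itself. None of these three devices---the edge-sum normalization, the minimal median radius, and the truncation at the adaptive scale---appears in your sketch, and the fixed cutoff $1/4$ cannot substitute for them. Your ``balanced vs.\ outlier'' case-split does not supply the needed conversion; the constant additive term in your inhomogeneous estimate simply does not become a multiple of $\mathrm{edge}_X$.
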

The key point of Theorem~\ref{thm:ozawa-ineq} is that one can use local information such as~\eqref{eq;alpha beta} in order to deduce a Poincar\'e-type inequality such as~\eqref{eq:def gamma}.

Certain classes of Banach spaces $X$ are known to satisfy the
assumptions of  Theorem~\ref{thm:ozawa-ineq} when $Y$ is a Hilbert
space. These include: $L_p(\mu)$ spaces for $p\in [1,\infty)$, as
shown by Mazur~\cite{Maz29} (see~\cite[Ch.~9, Sec.~1]{BL}); Banach
spaces of finite cotype with an unconditional basis, as shown by
Odell and Schlumprecht~\cite{OS94}; more generally, Banach lattices
of finite cotype, as shown by Chaatit~\cite{Cha94} (see~\cite[Ch.~9,
Sec.~2]{BL}); Schatten classes of finite cotype (and more general
noncommutative $L_p$ spaces), as shown by Raynaud~\cite{Ray02}. For
these classes of Banach spaces Theorem~\ref{thm:ozawa-ineq}
furnishes a positive answer to Question~\ref{Q:l2 case} with $\Psi$
given by the right hand side of~\eqref{eq:ozawa ineq}.

Mazur proved~\cite{Maz29} that for every $p\in [1,\infty)$, if $X=\ell_p$ and $Y=\ell_2$ then  there
exists a mapping $\f:X\to Y$ that
satisfies~\eqref{eq;alpha beta} with $\alpha(t)=2(t/2)^{p/2}$ and
$\beta(t)=pt$ if $p\in [2,\infty)$ and $\alpha(t)=t/3$ and
$\beta(t)=2t^{p/2}$ if $p\in [1,2]$ (these estimates are recalled in
Section~\ref{sec:duality}). Therefore, it follows from
Theorem~\ref{thm:ozawa-ineq} that for every $p\in [1,\infty)$, every
$n\in \N$ and every symmetric stochastic matrix $A\in M_n(\R)$ we
have
\begin{equation}\label{eq:p<2}
1\le p\le 2\implies \gamma\!\left(A,\|\cdot\|_{\ell_p}^2\right)\lesssim \frac{1}{(1-\lambda_2(A))^{2/p}},
\end{equation}
and
\begin{equation}\label{eq:p>2}
2\le p<\infty\implies \gamma\!\left(A,\|\cdot\|_{\ell_p}^2\right)\lesssim \frac{p^28^p}{1-\lambda_2(A)}.
\end{equation}

As explained in Section~\ref{sec:BMW intro}, both~\eqref{eq:p<2}
and~\eqref{eq:p>2} are sharp in terms of the asymptotic dependence
on $1-\lambda_2(A)$, but in terms of the dependence on $p$
 the bound~\eqref{eq:p>2} is exponentially worse than the (sharp)
bound~\eqref{eq:p^2 therem}. In Section~\ref{sec:no ozawa} we show
that this exponential loss is inherent to Ozawa's method, in the
sense that for $p\in (2,\infty)$, if $\f:\ell_p\to \ell_2$
satisfies~\eqref{eq;alpha beta} with $\beta(t)=Kt$ for some $K\in
(0,\infty)$ and every $t\in (0,\infty)$ (this is to ensure
that~\eqref{eq:ozawa ineq} yields an upper bound on
$\gamma(A,\|\cdot\|_{\ell_p}^2)$ that grows linearly with
$(1-\lambda_2(A))^{-1}$), then necessarily $K/\alpha(1/4)\gtrsim
2^{3p/2}$.

Note that~\eqref{eq:p>2} suffices via duality (i.e.,
Corollary~\ref{coro:linear gap}) to obtain an embedding result for
arbitrary subsets of $\ell_p$ as in Corollary~\ref{coro:p av embed},
with exponentially worse dependence on $p$. Yet another proof of
such an embedding statement appears in Section~\ref{sec:average},
though it also yields a bound in terms of $p$ that is exponentially
worse than Corollary~\ref{coro:p av embed}. We do not know how to
prove the sharp statement of Corollary~\ref{coro:p av embed} other
than through Theorem~\ref{thm:p bound gamma}, whose proof is not as
elementary as the above mentioned proofs that yield an exponential
dependence on $p$.

\subsection{Average distortion embeddings and nonlinear type}\label{sec:MP}
For $p,q\in (0,\infty)$ the {\em $(p,q)$-average distortion} of
$(X,d_X)$ into $(Y,d_Y)$, denoted $Av^{(p,q)}_Y(X)\in [1,\infty)$,
is the infimum over those $D\in [1,\infty]$ such that for every
$n\in \N$ and every $x_1,\ldots,x_n\in X$ there exists a nonconstant
Lipschitz function $f:\{x_1,\ldots,x_n\}\to Y$ that satisfies
$$
\left(\frac{1}{n^2}\sum_{i=1}^n\sum_{j=1}^n d_Y(f(x_i),f(x_j))^p\right)^{\frac{1}{p}}\ge
\frac{\|f\|_{\Lip}}{D}\left(\frac{1}{n^2}\sum_{i=1}^n\sum_{j=1}^n d_X(x_i,x_j)^q\right)^{\frac{1}{q}}.
$$
When $p=q$ we use the simpler notation $Av^{(p)}_Y(X)\eqdef
Av^{(p,p)}_Y(X)$.

The notion of average distortion and its relevance to approximation
algorithms was brought to the fore in the influential
work~\cite{Rab08} of Rabinovich. Parts of Section~\ref{sec:average}
below are inspired by Rabinovich's ideas in~\cite{Rab08}. Earlier
applications of this notion include the work of Alon, Boppana and
Spencer~\cite{ABS98} that related average distortion to
asymptotically sharp isoperimetric theorems on products spaces; see
Remark~\ref{rrem:spread constant} below. In the
linear theory of Banach spaces average distortion embeddings have
been studied in several contexts; see e.g. the work on random sign
embeddings in~\cite{Elt83,FJS88}.

With the above terminology, Theorem~\ref{thm:duality} asserts that
the linear dependence~\eqref{eq:linear bound} holds true if and only
if for every finite subset $S\subseteq X$ there exists $m\in \N$
such that
$$
Av_{\ell_p^m(Y)}^{(p)}(S,d_X)\le K^{1/p}.
$$
By Corollary~\ref{coro:p av embed}, if $p\in [2,\infty)$ then
\begin{equation}\label{eq:pAv}
 Av_{\ell_2}^{(2)}(\ell_p)\lesssim p.
\end{equation}
As stated earlier, the estimate~\eqref{eq:pAv} cannot be improved
(up to the implicit constant factor): this is a special case of the
following lemma.

\begin{lemma}\label{lem:p sharp} For every $p,q,r,s\in [1,\infty)$ with $2\le q\le p$ and every $n\in \N$ there
exist $x_1,\ldots,x_n\in \ell_p$ such that if $y_1,\ldots,y_n\in
\ell_q$ satisfy
\begin{equation}\label{eq:expander rs norm}
\left(\frac{1}{n^2}\sum_{i=1}^n\sum_{j=1}^n\|y_i-y_j\|_{\ell_q}^r\right)^{\frac{1}{r}}=
\left(\frac{1}{n^2}\sum_{i=1}^n\sum_{j=1}^n\|x_i-x_j\|_{\ell_p}^s\right)^{\frac{1}{s}},
\end{equation}
then there exist $i,j\in \n$ such that
$$
\|y_i-y_j\|_{\ell_q}\gtrsim \frac{p}{q+r}\cdot \|x_i-x_j\|_{\ell_p}>0.
$$
\end{lemma}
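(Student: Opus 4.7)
The plan is to use Theorem~\ref{thm:duality} in reverse, taking the $x_i$ as sharpness witnesses for the nonlinear spectral gap on a Ramanujan graph that separates $\ell_p$ from $\ell_q$ spectrally. In the case $r=s$ the statement reduces to showing that any $y_1,\ldots,y_n\in\ell_q$ matching the average $r$-th power of the $\ell_p$-distances among the $x_i$ must have $\ell_p$-to-$\ell_q$ Lipschitz constant at least a universal multiple of $p/(q+r)$; the case $r\ne s$ reduces to $r=s$ by rescaling the target configuration and absorbing a factor that depends only on $r$ and $s$.

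Concretely, I would fix a constant-degree Ramanujan graph on $n$ vertices, with normalized adjacency matrix $A$, so that $1-\lambda_2(A)\gtrsim 1$. Two ingredients are then needed: an upper bound $\gamma(A,\|\cdot\|_{\ell_q}^r)\lesssim (q+r)^r$, which follows from Theorem~\ref{thm:p bound gamma} (in the case $r=2$) together with H\"older interpolation to cover all $r\in[1,\infty)$, or alternatively from the Matou\v{s}ek extrapolation~\eqref{eq:matousek extrapolation}; and a matching lower bound $\gamma(A,\|\cdot\|_{\ell_p}^s)\gtrsim p^s$, witnessed by a concrete configuration of $x_i\in\ell_p$. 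The natural construction of the witnesses is to compose the Laplacian eigenvector embedding of the graph into $\ell_2^n$ with a Mazur-type power map $\ell_2\to\ell_p$, and then to verify by direct calculation that this amplifies the ordinary spectral gap by a factor of order $p^s$ under the $\ell_p^s$-Poincar\'e functional.

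Given such witnesses, for any $y_i\in\ell_q$ satisfying~\eqref{eq:expander rs norm}, writing $L=\max_{i\ne j}\|y_i-y_j\|_{\ell_q}/\|x_i-x_j\|_{\ell_p}$ for the Lipschitz constant, one has
$$
\sum_{i,j}\|x_i-x_j\|_{\ell_p}^s = \sum_{i,j}\|y_i-y_j\|_{\ell_q}^r \le n\,\gamma\!\left(A,\|\cdot\|_{\ell_q}^r\right) \sum_{i,j} a_{ij}\|y_i-y_j\|_{\ell_q}^r \le n\,\gamma\!\left(A,\|\cdot\|_{\ell_q}^r\right) L^r \sum_{i,j} a_{ij}\|x_i-x_j\|_{\ell_p}^r,
$$
by the normalization, the Poincar\'e inequality for $A$ in $\ell_q^r$, and the pointwise Lipschitz bound. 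Comparing this with the spectral-gap lower bound on the left-hand side, and using that the ratio of weighted edge-averages $\sum a_{ij}\|x_i-x_j\|_{\ell_p}^s/\sum a_{ij}\|x_i-x_j\|_{\ell_p}^r$ is of order $1$ for the suitably rescaled witnesses, yields $L\gtrsim p/(q+r)$, which is precisely the desired conclusion.

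The main obstacle is the explicit construction of the witness vectors $x_i\in\ell_p$ realizing the sharpness $\gamma(A,\|\cdot\|_{\ell_p}^s)\gtrsim p^s$ on the Ramanujan graph, which is a nonlinear spectral-gap sharpness statement in the spirit of (but going beyond) Matou\v{s}ek's work. Verifying it requires a careful combinatorial/spectral calculation that tracks both the expander mixing estimate for the average pairwise $\ell_p^s$-distances of the embedded graph and the Mazur-map amplification of the edge differences under the chosen power map. Once this spectral witness is in hand, the remainder of the argument is essentially the three-line comparison displayed above.
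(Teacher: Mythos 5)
The final ``three-line comparison'' you outline is the same comparison the paper uses, but your choice of the witness points $x_i$ diverges from the paper's and is, as you yourself flag, the unverified step on which the whole argument rests.

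The paper does not construct the $x_i$ by passing a Laplacian eigenvector embedding through a Mazur power map. It instead takes $x_i$ to be the image of the vertices of a bounded-degree expander $G_n$ under Matou\v{s}ek's $\ell_p$-variant of Bourgain's embedding, which achieves distortion $\lesssim 1+\frac{\log n}{p}$ (this is~\eqref{eq:matousek bourgain}, a ready-made theorem, so nothing remains to verify). With $n \ge e^p$ this gives a configuration in $\ell_p$ that reproduces the expander metric up to a constant factor, so the leftmost inequality in~\eqref{eq:log n /p embedding} together with the standard fact that a constant fraction of pairs in $G_n$ are at distance $\gtrsim \log n$ immediately yields the lower bound~\eqref{eq:at least log n in lemma} on the $L_r$-average of the $\|y_i-y_j\|_{\ell_q}$. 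The matching upper bound on that average then comes from the Poincar\'e inequality for $\gamma(G_n,\|\cdot\|_{\ell_q}^r)$ together with the per-edge Lipschitz bound $\|y_i-y_j\|_{\ell_q}\le D\|x_i-x_j\|_{\ell_p}\lesssim D(\log n)/p$. Comparing these two estimates gives $D\gtrsim p/(q+r)$ directly, without ever needing a nonlinear spectral-gap \emph{lower} bound on $\gamma(G_n,\|\cdot\|_{\ell_p}^s)$.

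Two specific problems with your proposal. First, the Mazur map $M_{2,p}$ is nonlinear, and composing it with the eigenvector embedding gives no control on how it transforms the Poincar\'e functional: there is no reason the edge-averages and global averages of $\|\cdot\|_{\ell_p}^s$ should scale coherently by $p^s$, and a one-dimensional spectral witness embedded into $\ell_p$ via the inclusion $\R\hookrightarrow\ell_p$ actually gives $\gamma(A,\|\cdot\|_{\ell_p}^2)\ge 1/(1-\lambda_2)$ with no $p$-dependence at all. Second, for the bound $\gamma(G_n,\|\cdot\|_{\ell_q}^r)\lesssim (q+r)^r$ the paper needs a genuine case split: for $r\le q$ it uses Matou\v{s}ek extrapolation~\eqref{eq:matousek extrapolation} (giving $\lesssim q^r$ after comparing $L_r$ to $L_q$ averages), and for $r>q$ it needs its interpolation Theorem~\ref{thm:general interpolation min} to get $\gamma(G_n,\|\cdot\|_{\ell_q}^r)^{1/r}\lesssim r$; ``H\"older interpolation'' alone is not sufficient here. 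Replacing your ad hoc construction by the Matou\v{s}ek embedding and making the $r\le q$ versus $r>q$ dichotomy explicit would repair the argument and bring it in line with the paper.
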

The proof of Lemma~\ref{lem:p sharp} is given in Section~\ref{sec:interpolation}. It suffices to say at this juncture that the points
$x_1,\ldots, x_n\in \ell_p$ are the images of the vertices of a
bounded degree expanding $n$-vertex regular graph under Matou\v{s}ek's $\ell_p$-variant~\cite{Mat97}
of Bourgain's embedding~\cite{Bourgain-embed}.

We also stated earlier that Corollary~\ref{coro:p av embed}  fails
for $p\in [1,2)$: this is a special case of the following lemma.
\begin{lemma}\label{lem:no cube}
Fix $p,q,r,s\in [1,\infty)$ with $p\in [1,2)$ and $q\in (p,\infty)$.
For arbitrarily large $n\in \N$ there exist $x_1,\ldots,x_n\in
\ell_p$ such that for every $y_1,\ldots,y_n\in \ell_q$ with
$$
\left(\frac{1}{n^2}\sum_{i=1}^n\sum_{j=1}^n\|y_i-y_j\|_{\ell_q}^r\right)^{\frac{1}{r}}=
\left(\frac{1}{n^2}\sum_{i=1}^n\sum_{j=1}^n\|x_i-x_j\|_{\ell_p}^s\right)^{\frac{1}{s}},
$$
there exist $i,j\in \{1,\ldots, n\}$ such that
\begin{equation}\label{eq:p+r}
 \|y_i-y_j\|_{\ell_q}\gtrsim \frac{(\log n)^{\frac{1}{p}-\frac{1}{\min\{q,2\}}}}{\sqrt{q+r}}\cdot \|x_i-x_j\|_{\ell_p}>0.
\end{equation}
\end{lemma}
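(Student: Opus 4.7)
\emph{Proof plan.} I would take $x_1,\ldots,x_n$ to be the vertices of the Hamming cube $\{-1,1\}^k\subseteq\ell_p^k\subseteq\ell_p$ with $n=2^k$ (so $k\asymp\log n$), echoing Enflo's classical nonembeddability argument for the cube. Concentration of the Hamming distance around $k/2$ immediately yields the normalization
\begin{equation*}
\left(\frac{1}{n^2}\sum_{i,j}\|x_i-x_j\|_{\ell_p}^s\right)^{\!1/s}\asymp k^{1/p},
\end{equation*}
uniformly in $s\in[1,\infty)$. Given any $y_1,\ldots,y_n\in\ell_q$, define the Lipschitz constant $L\eqdef \max_{i\ne j}\|y_i-y_j\|_{\ell_q}/\|x_i-x_j\|_{\ell_p}$; equivalently, $\varepsilon\mapsto y_\varepsilon$ is $L$-Lipschitz from the cube (whose edges have $\ell_p$-length $2$) into $\ell_q$. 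The heart of the proof is to establish the matching upper bound
\begin{equation}\label{eq:cube_nocube_target}
\left(\frac{1}{n^2}\sum_{i,j}\|y_i-y_j\|_{\ell_q}^r\right)^{\!1/r}\lesssim \sqrt{q+r}\cdot k^{1/\min\{q,2\}}\cdot L,
\end{equation}
since combining~\eqref{eq:cube_nocube_target} with the hypothesized equality of the two averages forces $L\gtrsim k^{1/p-1/\min\{q,2\}}/\sqrt{q+r}$, and because the maximum defining $L$ is attained at some pair $(i,j)$ with $\|x_i-x_j\|_{\ell_p}>0$, the conclusion~\eqref{eq:p+r} follows.

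To prove~\eqref{eq:cube_nocube_target}, I would first control the first moment of $\|y_\varepsilon-y_\eta\|_{\ell_q}$ (with $\varepsilon,\eta$ independent uniform on the cube) by applying Pisier's martingale type inequality in $\ell_q$ to the Doob martingale $\bar y_i\eqdef\E[y_\varepsilon\mid\varepsilon_1,\ldots,\varepsilon_i]$, whose successive differences have $\ell_q$-norm at most $L$ (obtained by averaging the Lipschitz constraint across the remaining coordinates). In the regime $q\ge 2$, the martingale type $2$ constant of $\ell_q$ grows like $O(\sqrt{q})$ (Pisier's sharp bound), yielding $(\E\|y_\varepsilon-\E y\|_{\ell_q}^2)^{1/2}\lesssim \sqrt{qk}\,L$. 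In the regime $q\in[1,2]$, the martingale type $q$ constant of $\ell_q$ is bounded uniformly, yielding $(\E\|y_\varepsilon-\E y\|_{\ell_q}^q)^{1/q}\lesssim k^{1/q}\,L$. The elementary inequality $\|a-b\|^t\le 2^{t-1}(\|a\|^t+\|b\|^t)$ (applied with $t=\min\{q,2\}$) followed by Jensen's inequality (using $\min\{q,2\}\ge 1$) consolidates both regimes into
$$
\E\|y_\varepsilon-y_\eta\|_{\ell_q}\lesssim \sqrt{q}\cdot k^{1/\min\{q,2\}}\cdot L.
$$

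To lift this first-moment bound to an $r$-th moment bound, I would observe that $g(\varepsilon,\eta)\eqdef\|y_\varepsilon-y_\eta\|_{\ell_q}$ is a function on the product cube $\{-1,1\}^{2k}$ that changes by at most $2L$ when any single coordinate is flipped; McDiarmid's bounded-differences inequality therefore shows $g$ is sub-Gaussian about its mean with parameter $O(L\sqrt{k})$, and the standard sub-Gaussian moment estimate gives $(\E g^r)^{1/r}\le \E g+O(L\sqrt{rk})$. Combining this with the first-moment bound and the elementary inequality $\sqrt{rk}\le\sqrt{r}\,k^{1/\min\{q,2\}}$ (valid since $\min\{q,2\}\le 2$) yields~\eqref{eq:cube_nocube_target} after absorbing $\sqrt{q}+\sqrt{r}\lesssim\sqrt{q+r}$.

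The main technical obstacle is invoking Pisier's martingale type inequality with the correct constants in both regimes: the $\sqrt{q}$ growth of the martingale type $2$ constant for $q\ge 2$ is what produces the $\sqrt{q+r}$ denominator in the conclusion, while the exponent $1/q$ appearing when $q\in[1,2]$ is a consequence of using the full strength of martingale type $q$ rather than the weaker type $1$ available in any Banach space. Everything else is routine cube arithmetic and standard concentration of measure on the discrete cube.
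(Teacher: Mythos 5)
Your proof is correct, and the overall frame---take the Hamming cube $\{-1,1\}^k$ inside $\ell_p^k$ with $n=2^k$, observe that every $L^s$-average of pairwise $\ell_p$-distances is $\asymp k^{1/p}$, and then bound the $L^r$-average of pairwise $\ell_q$-distances of the images by $\sqrt{q+r}\,k^{1/\min\{q,2\}}$ times the edge-Lipschitz constant---is exactly the frame of the paper's argument, which deduces the lemma from Lemma~\ref{lem:uniformly smooth type} applied to $Y=\ell_q$ with $x_1,\ldots,x_{2^m}$ an enumeration of $\F_2^m$. The difference is in how the key moment bound is obtained. The paper proves (in Lemma~\ref{lem:uniformly smooth type}) a single vector-valued $L^r$ inequality, the cube Poincar\'e inequality~\eqref{mtype to cube type}, by the Markov-type/martingale argument of~\cite{NPSS06,KN06}: one runs a $Y$-valued martingale in $L^r(\F_2^m;Y)$ and invokes the $q$-smoothness martingale inequality~\eqref{eq:martingale with constant}, whose constant $r^{1/q}+S_q(Y)$ directly produces the $\sqrt{q+r}$ when $q\ge2$. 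You instead split the task in two: a first-moment bound from the martingale type $\min\{q,2\}$ constant of $\ell_q$ applied to the Doob martingale $\bar y_i=\E[y_\e\mid\e_1,\ldots,\e_i]$ (whose increments are bounded by the edge-Lipschitz constant after averaging across the remaining coordinates), followed by a \emph{scalar} McDiarmid bounded-differences estimate for $g(\e,\eta)=\|y_\e-y_\eta\|_{\ell_q}$ on $\{-1,1\}^{2k}$ to upgrade to $L^r$ via sub-Gaussian moment growth $\E g+O(L\sqrt{rk})$. Both are martingale arguments at heart, but your decomposition replaces the vector-valued $L^r$ smoothness inequality by the conjunction of a cheaper $L^{\min\{q,2\}}$ vector inequality and a purely scalar concentration step specific to product spaces. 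The paper's route is what generalizes to arbitrary symmetric stochastic matrices (Markov type), which is why it is the one proved there; your route is a bit more elementary and ad hoc to the cube, and for $q<2$ it actually gives the stated $\sqrt{q+r}$ denominator cleanly (via $\sqrt{rk}\le\sqrt{r}\,k^{1/q}$), whereas a literal application of Lemma~\ref{lem:uniformly smooth type} with smoothness exponent $q<2$ gives $r^{1/q}$, which is formally worse. One cosmetic point: the quantity you should feed into the martingale-increment bound and into~\eqref{eq:cube_nocube_target} is the edge-Lipschitz constant rather than the global Lipschitz constant $L$; this is what your parenthetical ``averaging the Lipschitz constraint across the remaining coordinates'' implicitly does, and the conclusion of the lemma is then witnessed by the maximizing edge, whose $\ell_p$-length is $2>0$.
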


\subsubsection{Bourgain--Milman--Wolfson type}\label{sec:BMW intro} The reason for the validity of the lower bound~\eqref{eq:p+r} is
best explained in the context of nonlinear type: a metric
invariant that furnishes an obstruction to the existence of average
distortion embeddings. Let $\F_2$ be the field of cardinality $2$
and for $n\in \N$ let $e_1,\ldots,e_n$ be the standard basis of
$\F_2^n$. We also write $e=e_1+\ldots+e_n$. Following Bourgain,
Milman and Wolfson~\cite{BMW}, given $p,T\in (0,\infty)$, a metric
space $(X,d_X)$ is said to have BMW type $p$ with constant $T$ if
for every $n\in \N$ every $f:\F_2^n\to X$ satisfies
\begin{equation}\label{eq:def BMW}
\sum_{x\in \F_2^n} d_Y(f(x),f(x+e))^2\le T^2n^{\frac{2}{p}-1}\sum_{i=1}^n\sum_{x\in\F_2^n}
d_Y\left(f(x),f(x+e_i)\right)^2.
\end{equation}
$(X,d_X)$ has BMW type $p$ if it has BMW type $p$ with constant $T$
for some $T\in (0,\infty)$; in this case the infimum over those
$T\in (0,\infty)$ for which~\eqref{eq:def BMW} holds true is denoted
$\BMW_p(X)$. For background on this notion we refer to~\cite{BMW},
as well as~\cite{Pisier-type,NS02,Nao12}. These references also
contain a description of the closely related important notion of Enflo
type~\cite{Enf76}, a notion whose definition is recalled below but
will not be further investigated here.

The simple proof of the following lemma appears in
Section~\ref{sec:type}.
\begin{lemma}\label{lem:av type}
Fix $p\in (0,\infty)$. For every two metric spaces $(X,d_X)$ and
$(Y,d_Y)$ we have
$$
\BMW_p(X)\le  2Av_Y^{(2)}(X)\cdot \BMW_p(Y).
$$
\end{lemma}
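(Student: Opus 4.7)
The plan is to compose the average distortion embedding with $g$, apply a sub-cube refinement of BMW on $Y$, and return to $X$ via the average distortion hypothesis combined with a simple triangle inequality.

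Fix $g:\F_2^n\to X$ and set $N = 2^n$. Given any $D > Av:= Av_Y^{(2)}(X)$, applying the definition of $Av_Y^{(2)}(X)$ to the multiset $\{g(x) : x\in \F_2^n\}$ yields a Lipschitz map $f$, with $L := \|f\|_{\Lip}$, such that $F := f\circ g$ satisfies
\[
\sum_{x,y \in \F_2^n} d_Y(F(x),F(y))^2 \ge \frac{L^2}{D^2}\sum_{x,y \in \F_2^n} d_X(g(x),g(y))^2. \qquad (\ast)
\]
Combined with the triangle inequality $d_X(g(x),g(x+e))^2 \le 2\, d_X(g(x),g(y))^2 + 2\, d_X(g(y),g(x+e))^2$ --- summed over $x,y \in \F_2^n$ and using that $y \mapsto y+e$ is a bijection --- this gives $N\sum_x d_X(g(x),g(x+e))^2 \le 4\sum_{x,y} d_X(g(x),g(y))^2$. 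Therefore, by $(\ast)$, it suffices to upper-bound $\sum_{x,y} d_Y(F(x),F(y))^2$ by a universal multiple of $N\, T^2 n^{2/p-1} \sum_i\sum_x d_Y(F(x),F(x+e_i))^2$, where $T := \BMW_p(Y)$.

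For this, apply BMW type $p$ of $Y$ on each coordinate sub-cube. For each $z\in \F_2^n\setminus\{0\}$ with Hamming weight $k:=|z|$ and each coset $C$ of $\mathrm{span}\{e_i : z_i = 1\}$, the restriction $F|_C$ is a map $\F_2^k\to Y$ in which $z$ plays the role of the antipode; BMW of $Y$ gives
\[
\sum_{x\in C} d_Y(F(x),F(x+z))^2 \le T^2 k^{2/p-1} \sum_{i \in \supp(z)}\sum_{x\in C} d_Y(F(x),F(x+e_i))^2.
\]
Summing over cosets (which partition $\F_2^n$) and then over $z$, and combining with the combinatorial estimate $\sum_{z : z_i = 1}|z|^{2/p-1} \le N n^{2/p-1}$ for each $i$ --- a binomial computation using Jensen's inequality for $p\le 2$ and the identity $\mathbb{E}(k'+1)^{-\alpha}\le (2/n)^\alpha$ with $k'\sim \mathrm{Binom}(n-1,1/2)$ for $p\ge 2$ --- yields $\sum_{x,y} d_Y(F(x),F(y))^2 \le T^2 N n^{2/p-1}\sum_i\sum_x d_Y(F(x),F(x+e_i))^2$.

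Finally, the Lipschitz estimate $d_Y(F(x),F(x+e_i))\le L\, d_X(g(x),g(x+e_i))$ returns us to $X$-edges; chaining all of the inequalities and letting $D \to Av^+$ produces $\BMW_p(X)\le 2\, Av_Y^{(2)}(X)\, \BMW_p(Y)$, where the factor of $2$ comes from the $(a+b)^2 \le 2(a^2+b^2)$ step. The main obstacle is the sub-cube averaging step of the third paragraph, which is essentially a strengthened Poincar\'e inequality on $\F_2^n$ valid in BMW-type-$p$ metric spaces; the other steps reduce to routine manipulations.
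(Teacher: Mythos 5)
Your proof is correct and uses essentially the same mechanism as the paper: compose the $Av^{(2)}_Y$ embedding with the cube map, bound the full diagonal term by the average pairwise term via the triangle inequality, and control the average pairwise term on the $Y$ side by applying BMW type $p$ of $Y$ on the $|z|$-dimensional subcubes spanned by $\supp(z)$. The paper packages this last step as the quantities $E_k$ and its Lemma~\ref{lem:partition ki} (with $k_1=\cdots=k_m=1$), whereas you restrict $F$ to cosets directly; after the change of variable $\sum_{z:z_i=1}|z|^{2/p-1}=\frac1n\sum_k\binom{n}{k}k^{2/p}$ your combinatorial estimate reduces to the paper's crude bound $k^{2/p}\le n^{2/p}$, so the Jensen/binomial-identity refinement you invoke is not actually needed.
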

The case $r=s=2$ of Lemma~\ref{lem:no cube} follows  from
Lemma~\ref{lem:av type} and the computations of BMW type that
appear in the literature. The remaining cases of Lemma~\ref{lem:no
cube} are proved using similar ideas.

 The Hamming cube is the Cayely graph on $\F_2^n$ corresponding to
the set of generators $\{e_1,\ldots,e_n\}$. The shortest path metric
on this graph coincides with the $\ell_1^n$ metric under the
identification of $\F_2^n$ with $\{0,1\}^n\subset \R^n$. Let $H_n$
be the $2^n$ by $2^n$ symmetric stochastic matrix which is the
normalized adjacency matrix of the Hamming cube. Thus for $x,y\in
\F_2^n$ the $(x,y)$-entry of $H_n$ equals $0$ unless $x-y\in
\{e_1,\ldots,e_n\}$, in which case it equals  $1/n$. It is well
known (and easy to check) that $\lambda_2(H_n)=1-2/n$, so
$\gamma(H_n,d_\R^2)\asymp n$. A simple argument (that is explained
in Section~\ref{sec:type}) shows that the definition~\eqref{eq:def
BMW} is equivalent to the requirement that
$\gamma(H_n,d_X^2)\lesssim_X n^{2/p}$ for every $n\in \N$. The
notion of Enflo type $p$  that was mentioned above is equivalent to
the requirement that $\gamma(H_n,d_X^p)\lesssim_X n$ for every $n\in
\N$.

For $p\in [1,2]$, by considering the identity mapping of $\F_2^n$
into $\ell_p^n$ (observe that $\|x-y\|_{p}^p=\|x-y\|_{1}$ for
every $x,y\in \F_2^n$) one sees that
$\gamma(H_n,\|\cdot\|_{p}^2)\gtrsim n^{2/p}\asymp
1/(1-\lambda_2(H_n))^{2/p}$. Thus~\eqref{eq:p<2} is sharp.

\subsubsection{Towards a nonlinear Maurey--Pisier
theorem}\label{se:MP intro} Every metric space has BMW type $1$ and
no metric space has BMW type greater than $2$; see
Remark~\ref{rem:p<2 always} below. Thus, for a metric space
$(X,d_X)$ define
\begin{equation}\label{eq:def p_X BMW}
 p_X\eqdef
\sup\left\{p\in [1,2]:\ \BMW_p(X)<\infty\right\}.
\end{equation}
Maurey and Pisier~\cite{MP-type-cotype} associate a quantity $p_X$
to every Banach space $X$, which is defined analogously
to~\eqref{eq:def p_X BMW} but with BMW type replaced by Rademacher
type. The (linear) notion of Rademacher type is recalled in
Section~\ref{sec:no average} below; at this juncture we just want to
state, for the sake of readers who are accustomed to the standard
Banach space terminology, that despite the apparent conflict of
notation between~\eqref{eq:def p_X BMW} and~\cite{MP-type-cotype}, a
beautiful theorem of Bourgain, Milman and Wolfson~\cite{BMW} asserts
that actually the two quantities coincide.

The following theorem is due to Bourgain, Milman and Wolfson.
\begin{theorem}[\cite{BMW}]\label{thm:BMW1} Suppose that $(X,d_X)$ is a metric space with $p_X=1$.
Then $c_X(\F_2^n,\|\cdot\|_1)=1$ for every $n\in \N$.
\end{theorem}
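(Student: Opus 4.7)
The statement is the metric Maurey--Pisier theorem of Bourgain--Milman--Wolfson: if every BMW-type exponent $p > 1$ fails, $X$ must contain near-isometric copies of every Hamming cube. The plan is a combinatorial sub-cube extraction from witness maps of bad BMW ratios, followed by an iterative refinement to upgrade aggregate control to uniform control.

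Fix $n \in \N$ and $\varepsilon > 0$; we seek $g:\F_2^n \to X$ with bi-Lipschitz distortion at most $1+\varepsilon$ for the $\ell_1$ metric on $\F_2^n$. Pick a small $\eta > 0$ and a large $T$, both depending on $n$ and $\varepsilon$. Since $p_X = 1$ we have $\BMW_{1+\eta}(X) = \infty$, so there exist $N \in \N$ (assume $n \mid N$) and $f : \F_2^N \to X$ violating~\eqref{eq:def BMW} at exponent $p = 1+\eta$ with constant $T$:
$$
\sum_{x \in \F_2^N} d_X(f(x), f(x+e))^2 \;>\; T^2\, N^{\frac{2}{1+\eta}-1} \sum_{i=1}^N \sum_{x \in \F_2^N} d_X(f(x), f(x+e_i))^2.
$$
Normalise so that the average single-edge squared length is $1$; then the average diagonal squared length is comparable to $T^2 N^{2/(1+\eta)}$, which, as $\eta \to 0$, approaches the maximum $T^2 N^2$ allowed by the triangle inequality along any Hamming geodesic. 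Thus $f$ nearly saturates the Enflo-type inequality on $\F_2^N$.

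From $f$ we extract a sub-cube mapping: partition $\{1,\dots,N\}$ into $n$ equal blocks $B_1,\dots,B_n$ and, for $x_0 \in \F_2^N$, define $g_{x_0}:\F_2^n \to X$ by $g_{x_0}(y) = f\bigl(x_0 + \sum_{j:\,y_j=1}\chi_{B_j}\bigr)$. For a typical partition and translate, each edge $g_{x_0}(y)\to g_{x_0}(y+e_j)$ is a block-flip of $f$ that decomposes via the triangle inequality into $N/n$ single-coordinate $f$-flips; averaging over $x_0$ bounds the total edge squared-length of $g_{x_0}$ by an explicit multiple of the corresponding sum for $f$. The full diagonal $g_{x_0}(0) \to g_{x_0}(\mathbf{1})$ is a genuine diagonal of $f$, whose averaged squared length recovers the diagonal sum of $f$ exactly. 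The failed BMW inequality for $f$ therefore transfers, with an explicit scaling in $N,n,\eta$, to a failed BMW inequality for $g_{x_0}$ on the fixed cube $\F_2^n$; equivalently, $g_{x_0}$ nearly saturates the Enflo-type bound on $\F_2^n$.

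The main obstacle is that a single random-partition step controls only the two aggregate quantities appearing in~\eqref{eq:def BMW}, whereas a $(1+\varepsilon)$-embedding requires uniform control on all $\binom{2^n}{2}$ pairwise distances. The natural fix is an iterative refinement: at each stage, use the inherited bad BMW ratio on a current subcube to produce a further subcube on which one more Hamming-distance class is stabilised, in the spirit of a Krivine/Ramsey stability argument. Equivalently, pass to an ultralimit $X_\infty$ of rescaled images $f_k$ chosen so that $\eta_k \to 0$ and $T_k \to \infty$; in $X_\infty$ the saturation of all Enflo-type inequalities becomes exact, and the rigidity of equality cases of the triangle inequality on $\F_2^n$ forces an isometric copy of $(\F_2^n,\|\cdot\|_1)$ in $X_\infty$. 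A finite approximation argument (using that $\F_2^n$ is finite) then transfers this isometric copy to a $(1+\varepsilon)$-embedding into $X$, completing the proof. The delicate step is precisely this extraction/iteration, since the triangle-inequality slack introduced by expanding block-flips into single-coordinate flips must not overwhelm the gain contained in the large parameter $T$.
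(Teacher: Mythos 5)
The paper does not prove this result; it is quoted from~\cite{BMW}, so there is no in-paper proof to compare against. Your skeleton---submultiplicativity of $\BMW_p^n$, passage to near-maximizers, subcube refinement---is the right shape, and it parallels what the paper actually does for the different and weaker Theorem~\ref{thm:our MP intro}. But there are two concrete problems. The normalization step is unsound as written: after scaling so the averaged single-edge squared length of $f:\F_2^N\to X$ is $1$, a violation of~\eqref{eq:def BMW} at exponent $1+\eta$ with constant $T$ gives averaged diagonal squared length $>T^2N^{2/(1+\eta)}$, and you assert that as $\eta\to 0$ this ``approaches the maximum $N^2$ allowed by the triangle inequality.'' However, $N$ must be taken large enough that $T^2<N^{2\eta/(1+\eta)}$ (otherwise the violation contradicts the a priori bound $E_N(f)\le NE_1(f)$), and then $T^2N^{2/(1+\eta)}/N^2=T^2N^{-2\eta/(1+\eta)}$ need not be close to $1$; the near-saturation you want does not follow from the hypotheses as stated. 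The clean route to a near-saturating map on the {\em target} cube is the submultiplicativity argument of Lemma~\ref{lem:use sub-multiplicativity}: since $p_X=1$ and $\BMW_1(X)=1$ trivially, one gets $\BMW_1^n(X)=1$ for every $n$, hence for every $\delta>0$ a map $f:\F_2^n\to X$ with $E_n(f)>(1-\delta)nE_1(f)$, with no $\eta,T,N$ bookkeeping and no need to pass through a larger cube.

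More importantly, the step you describe as ``iterative refinement,'' ``Krivine/Ramsey stability,'' or ``rigidity of equality cases''---upgrading the aggregate bounds $E_k(f)\approx kE_1(f)$ for all $k$ (which follow from Lemma~\ref{lem:partition ki}) to pointwise distortion control on an actual subcube---is the entire technical content of the Bourgain--Milman--Wolfson theorem, and your proposal leaves it completely unspecified. Near-equality in the averaged triangle inequality constrains moments, not individual pairs, and converting the former into the latter requires a genuine stability-and-extraction argument (e.g.\ controlling variances at each Hamming level and iterating over subcubes), none of which is supplied. The ultralimit variant does not sidestep this: an ultralimit of rescaled images lives in a new space $X_\infty$, not in $X$, and pulling an isometric subcube back into $X$ still requires a finite quantitative argument of exactly the missing kind. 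As written this is a strategy, not a proof.
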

Thus, the only possible obstruction to a metric space $(X,d_X)$
having BMW type $p$ for some $p>1$ is the presence of arbitrarily
large Hamming cubes. This is a metric analogue of a classical
theorem of Pisier~\cite{Pisier-type-1} asserting that the only
obstruction to a Banach space having nontrivial Rademacher type is
the presence of $\ell_1^n$ for every $n\in \N$.

In light of the Maurey--Pisier theorem~\cite{MP-type-cotype} for
Rademacher type, it is natural to ask if a similar result holds true
for a general metric space $X$ even when $p_X>1$: Is it true that
for every metric space $(X,d_X)$ we have $$\sup_{n\in \N}
c_X(\F_2^n,\|\cdot\|_{p_X})<\infty?$$ The answer to this question is
negative if $p_X=2$. Indeed, we have $p_\R=2$ yet
$c_\R(\F_2^n,\|\cdot\|_2)$ tends to infinity exponentially fast as
$n\to \infty$ because there is an exponentially large subset $S$ of
$\F_2^n$ with the property that $\|x-y\|_2\asymp \sqrt{n}$ for every
distinct $x,y\in S$. If, however, $p_X\in (1,2)$ then the above
question, called the {\em Maurey--Pisier problem for BMW type},
remains open.

The Maurey--Pisier problem for BMW type was posed by Bourgain,
Milman and Wolfson in~\cite{BMW}, where they obtained a partial
result about it: they gave a condition on a metric space $(X,d_X)$
that involves its BMW type as well as an additional geometric
restriction that ensures that $\sup_{n\in \N}
c_X(\F_2^n,\|\cdot\|_{p_X})<\infty$; see Section~4 of~\cite{BMW}.

In Section~\ref{sec:MP actual} we prove the following theorem.

\begin{theorem}\label{thm:our MP intro} For every metric space $(X,d_X)$ and every $d\in \N$ there exists $N=N(d,X)\in \N$ such that
$$
 c_{\ell_2^N(X)}\left(\F_2^d,\|\cdot\|_{p_X}\right)\le \mathrm{BMW}_{p_X}(X)^2.
$$
\end{theorem}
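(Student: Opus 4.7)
Let $p = p_X$ and $K = \BMW_{p_X}(X)$; we may assume $K < \infty$ since otherwise the claim is vacuous. The plan is to construct $F \colon \F_2^d \to \ell_2^N(X)$ by averaging a near-extremal BMW witness over the full symmetry group $\mathcal G = \F_2^n \rtimes S_n$ of an ambient large cube $\F_2^n$ (with $n$ a large multiple of $d$), and to read off the distortion bound $K^2$ from the BMW inequality applied twice.

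First, I would use the supremum definition of $K$ to select, for any $\varepsilon > 0$, a suitable such $n$ and a mapping $\phi \colon \F_2^n \to X$ that nearly saturates \eqref{eq:def BMW}, i.e.\ that satisfies $\sum_x d_X(\phi(x),\phi(x+e))^2 \ge (K^2-\varepsilon)\, n^{2/p-1} \sum_i \sum_x d_X(\phi(x),\phi(x+e_i))^2$. After symmetrizing $\phi$ under the $S_n$-action (which cannot decrease the ratio above), I may assume the axis sum $A_1 := \sum_x d_X(\phi(x),\phi(x+e_i))^2$ is independent of $i$. Let $\iota \colon \F_2^d \hookrightarrow \F_2^n$ be the block-replication inclusion that sends each coordinate of $\F_2^d$ to the indicator vector of one of $d$ equal-size blocks of $\{1,\ldots,n\}$, and set $F(x) := \bigl(\phi(g\cdot\iota(x))\bigr)_{g\in \mathcal G}\in X^{|\mathcal G|}$, so that $N = |\mathcal G|$. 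By $\mathcal G$-invariance, $\|F(x)-F(y)\|_{\ell_2^N(X)}^2$ depends only on $k := |x-y|$ and equals, up to an explicit constant, the quantity $\Psi(k) := \mathrm{avg}_{|z|=kn/d} \sum_{u} d_X(\phi(u),\phi(u+z))^2$.

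The distortion bound then reduces to two-sided control of $\Psi(k)/k^{2/p}$. The upper bound $\Psi(k) \le K^2 (kn/d)^{2/p}\, A_1$ follows by summing \eqref{eq:def BMW} applied to each $(kn/d)$-dimensional sub-cube of $\F_2^n$ and averaging, and (since $\|x-y\|_p^2 = k^{2/p}$) furnishes upper Lipschitz constant at most $K\sqrt{A_1(n/d)^{2/p}}$. For the matching lower bound $\Psi(k) \ge K^{-2}(kn/d)^{2/p}\, A_1$, which yields lower Lipschitz constant at least $K^{-1}\sqrt{A_1(n/d)^{2/p}}$, I would propagate the near-saturation of BMW from the top scale $n$ down to the intermediate scale $kn/d$. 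The ratio of the two Lipschitz constants is $K^2$, and the $O(\varepsilon)$ slack disappears upon letting $\varepsilon \to 0$.

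The delicate step is the lower bound. Since \eqref{eq:def BMW} is intrinsically an upper bound, near-saturation at the top scale $n$ need not automatically propagate to every sub-cube dimension; one must engineer $\phi$ so that near-saturation holds at many scales at once. Two viable routes present themselves: (a) a compactness/ultraproduct construction that replaces $\phi$ by an element of an ultralimit realizing near-saturation at every scale $kn/d$ simultaneously, using that BMW type is preserved under ultrapowers; or (b) a self-improvement step showing that a substantial failure of saturation at some intermediate scale would, via a tensoring/amplification construction, contradict the very definition of $K = \BMW_{p_X}(X)$ as a supremum. Either route channels the second factor of $K$ into the lower Lipschitz estimate and delivers the claimed distortion $K^2$.
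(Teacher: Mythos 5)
Your general strategy --- symmetrize a near-extremal BMW witness $\phi$ on a large cube $\F_2^n$ over the affine group $\F_2^n\rtimes S_n$, land in $\ell_2^N(X)$, and derive the distortion from two uses of the BMW inequality --- does match the paper at the level of outline. But the lower bound, which you yourself call "the delicate step", is left open, and both routes you sketch around it have genuine problems.

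First, the near-extremal witness you posit is not available. You ask for $\phi\colon\F_2^n\to X$ with $E_n(\phi)\ge (K-o(1))\,n^{1/p}E_1(\phi)$, $K=\BMW_{p_X}(X)$, \emph{in a dimension $n$ of your choosing} (a large multiple of $d$). The supremum $K=\sup_n\BMW_{p_X}^n(X)$ need only be approached along a sparse sequence of $n$, so such a $\phi$ need not exist at your chosen $n$. Moreover, using saturation constant $K$ rather than $1$ is both too strong and, if it were achievable, would produce distortion $K$ rather than the claimed $K^2$; your own displayed bound $\Psi(k)\ge K^{-2}(kn/d)^{2/p}A_1$ is what $1$-saturation gives, so the hypotheses and the computation are internally inconsistent. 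What the paper actually uses is the normalized saturation $E_n(f)\ge (1-\e)n^{1/p}E_1(f)$, and the crucial point is that this is achievable in \emph{every} dimension $n$. This is not automatic: it is exactly the content of Lemma~\ref{lem:use sub-multiplicativity}, which applies Pisier's sub-multiplicativity trick to show $\BMW_{p_X}^n(X)\ge 1$ for all $n$. Your proposal does not invoke any such mechanism, and without it you cannot even start.

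Second, the block-replication embedding $\iota\colon\F_2^d\hookrightarrow\F_2^n$ is structurally inadequate for the lower bound. The paper instead embeds $\F_2^d$ as the first $d$ coordinates of $\F_2^n$ with $n=\lceil\BMW_p(X)^4d/\e\rceil\gg d$, so that every scale $\ell=\|x-y\|_1\le d$ in the embedded cube is tiny compared to $n$. The propagation of near-saturation from scale $n$ down to scale $\ell$ goes through the generalized partition inequality Lemma~\ref{lem:partition ki}, $E_{k_1+\cdots+k_m}(f)\le \BMW_p(X)\,m^{1/p-1/2}(\sum_j E_{k_j}(f)^2)^{1/2}$, applied with $n=a\ell+b$, $m=a+1$, $a$ parts of size $\ell$ and one remainder $b<\ell$. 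The loss incurred is controlled because $a\approx n/\ell\ge n/d\gg 1$ and the remainder contributes a term of relative size $\BMW_p(X)^4\ell/n\le\e$. Block replication instead gives scales $\ell=kn/d$ that are comparable to $n$ for $k$ near $d$; then $m=O(1)$ and the $m^{1/p-1/2}$ factor costs a fixed constant $>1$ (for $p<2$) that no choice of $n$ can absorb. Concretely, for $d=3$, $k=2$, $p=3/2$, the resulting inequality would force $(1-\e)^2\ge 2^{2/p-1}=2^{1/3}$, which is false, and more generally forces $K<1$, contradicting $K\ge 1$. So even granting a witness $\phi$ and the partition lemma, block replication cannot deliver the bound $K^2$. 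Neither of your two suggested routes (ultraproduct closure, or "self-improvement") engages with this obstruction; the self-improvement idea is indeed the correct spirit, but its mechanism in the paper is precisely Lemma~\ref{lem:partition ki} combined with coordinate inclusion into a cube of dimension $n$ much larger than $d$.
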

Thus, if $\BMW_{p_X}(X)<\infty$, i.e., the supremum defining $p_X$
in~\eqref{eq:def p_X BMW} is attained, then for every $d\in \N$ one
can embed $(\F_2^d,\|\cdot\|_{p_X})$ into $\ell_2^N(X)$ for
sufficiently large $N\in \N$. Note that it follows immediately
from~\eqref{eq:def p_X BMW} that $\BMW_p(\ell_2^N(X))=\BMW_p(X)$ for
every $N\in \N$, so Theorem~\ref{thm:our MP intro} is a complete
metric characterization of the parameter $p_X$ when the supremum defining $p_X$
in~\eqref{eq:def p_X BMW} is attained. Note also that by passing to
$\ell_2^N(X)$ we overcome the issue that was described above if
$p_X=2$, since trivially $(\F^n_2,\|\cdot\|_2)$ is isometric to a
subset of $\ell_2^n(\R)$. This indicates why Theorem~\ref{thm:our MP
intro} is easier than the actual Maurey--Pisier problem for BMW
type,  whose positive solution would require using the assumption
$p_X<2$. We therefore ask whether or not it is true that for every metric
space $(X,d_X)$ and every $p\in (1,2)$,  if there is $K\in
(0,\infty)$ such that for every $d\in \N$ there exists $N\in \N$ for
which $(\F_2^d,\|\cdot\|_{p})$ embeds with distortion $K$ into
$\ell_2^N(X)$, then $\sup_{d\in \N}
c_X(\F_2^d,\|\cdot\|_{p})<\infty$? For $p=1$ the answer to this
question is positive due to Theorem~\ref{thm:BMW1}, and by virtue of
Theorem~\ref{thm:our MP intro} a positive answer to this question
would imply an affirmative solution of the Maurey--Pisier problem for BMW
type.

Recalling Lemma~\ref{lem:av type}, given the relation between BMW
type and average distortion embeddings, it is natural to study the
following weaker version of the Maurey--Pisier problem for BMW type:
Is it true that for every metric space $(X,d_X)$ we have
\begin{equation}\label{eq:Av Maurey Pisier}
p_X<2\implies \sup_{n\in \N} Av^{(2)}_X(\F_2^n,\|\cdot\|_{p_X})<\infty?
\end{equation}
In~\eqref{eq:Av Maurey Pisier} we restrict to $p_X<2$ because one
can show that
\begin{equation}\label{eq:av cube fourth root}
Av_\R^{(2)}(\F_2^n,\|\cdot\|_2)\asymp \sqrt[4]{n}.
\end{equation}
See Remark~\ref{rem:sqrt[4]n} below for the proof of~\eqref{eq:av cube
fourth root}.

By Theorem~\ref{thm:BMW1} and Lemma~\ref{lem:av type}, for every
metric space $(X,d_X)$,
\begin{equation}\label{eq:average imples bilip}
\sup_{n\in \N} Av^{(2)}_X\left(\F_2^n,\|\cdot\|_1\right)
<\infty\implies \sup_{n\in \N} c_X\left(\F_2^n,\|\cdot\|_1\right)=1.
\end{equation}
Given $p\in (1,\infty)$, it is therefore natural to ask whether or
not for every metric space $(X,d_X)$ we have
\begin{equation}\label{eq:average imples bilip}
\sup_{n\in \N} Av^{(2)}_X\left(\F_2^n,\|\cdot\|_p\right)
<\infty\implies \sup_{n\in \N} c_X\left(\F_2^n,\|\cdot\|_p\right)<\infty?
\end{equation}
If~\eqref{eq:average imples bilip} were true for every $p\in (1,2)$ then a positive answer to the question that appears in~\eqref{eq:Av Maurey Pisier} would imply a positive solution to the Maurey--Pisier problem for BMW type.

More generally, in light of the availability of results such as~\eqref{eq:average imples bilip}, it would be of interest to relate average distortion embeddings to bi-Lipschitz embeddings. For example, is it true that if a metric space $(X,d_X)$ satisfies $Av_{\ell_2}^{(2)}(X)<\infty$ then for every finite subset $S\subset X$ we have $c_2(S)=o_X(\log |S|)$? If the answer to this question is positive then Corollary~\ref{coro:p av embed} would imply that for $p\in (2,\infty)$ any $n$-point subset of $\ell_p$ embeds into Hilbert space with distortion $o_p(\log n)$. No such improvement over Bourgain's embedding theorem~\cite{Bourgain-embed} is known for finite subsets of $\ell_p$ if $p\in (2,\infty)$; for $p\in [1,2)$ see~\cite{CGR08,ALN08}.

\bigskip

\noindent{\bf Acknowledgements.} I thank Noga Alon, Manor Mendel, and especially Yuval Rabani for very helpful discussions.  I am also grateful for the hospitality of Universit\'e Pierre et
Marie Curie, Paris, France, where part of this work was completed.

\section{Absolute spectral gaps}  Fix $n\in \N$ and an $n$ by $n$ symmetric stochastic matrix $A=(a_{ij})$. Following~\cite{MN-towards}, for $p\in [1,\infty)$ and a metric space $(X,d_X)$, denote by $\gamma_+(A,d_X^p)$ the infimum over those $\gamma_+\in (0,\infty]$ for which every $x_1,\ldots,x_n,y_1,\ldots,y_n\in X$ satisfy
 \begin{equation}\label{eq:def gamma+}
 \frac{1}{n^2} \sum_{i=1}^n\sum_{j=1}^n d_X(x_i,y_j)^p\le \frac{\gamma_+}{n}\sum_{i=1}^n\sum_{j=1}^n a_{ij} d_X(x_i,y_j)^p.
 \end{equation}
Note that by definition $\gamma(A,d_X^p)\le \gamma_+(A,d_X^p)$.
Recalling~\eqref{eq:def lambda}, we have
 $$
  \gamma_+(A,d_\R^2)=\frac{1}{1-\lambda(A)}.
 $$
 For this reason one thinks of $\gamma_+(A,d_X^p)$ as measuring the magnitude
 of the nonlinear {\em absolute} spectral gap of the matrix $A$ with respect
 to the kernel $d_X^p:X\times X\to [0,\infty)$.

 The parameter $\gamma_+(A,d_X^p)$ is useful in various contexts (see~\cite{MN-towards}), and in particular it will be used in some of the ensuing arguments. It is natural to ask for the analogue of Question~\ref{Q:meta} with  $\gamma(\cdot,\cdot)$ replaced by $\gamma_+(\cdot,\cdot)$. However, it turns out that this question is essentially the same as Question~\ref{Q:meta}, as explained in Proposition~\ref{prop:gamma gamma+ the same} below.

\begin{proposition}\label{prop:gamma gamma+ the same} Fix $p\in [1,\infty)$ and  metric spaces $(X,d_X), (Y,d_Y)$. Suppose that there exists an increasing function $\Psi:(0,\infty)\to (0,\infty)$ such that for every $n\in \N$ and every $n$ by $n$ symmetric stochastic matrix $A$ we have
\begin{equation}\label{eq:assum Psi gamma}
\gamma\!\left(A,d_X^p\right)\le \Psi\left(\gamma\!\left(A,d_Y^p\right)\right).
\end{equation}
Then for every $n\in \N$ and every $n$ by $n$ symmetric stochastic matrix $A$ we also have
\begin{equation}\label{eq:deduce Psi gamma+}
\gamma_+\!\left(A,d_X^p\right)\le 2\Psi\left(2^p\gamma_+\!\left(A,d_Y^p\right)\right).
\end{equation}

Conversely, suppose that $\Phi:(0,\infty)\to (0,\infty)$ is an increasing function  such that for every $n\in \N$ and every $n$ by $n$ symmetric stochastic matrix $A$ we have
\begin{equation}\label{eq:assume Phi gamma+}
\gamma_+\!\left(A,d_X^p\right)\le \Phi\left(\gamma_+\!\left(A,d_Y^p\right)\right).
\end{equation}
Then for every $n\in \N$ and every $n$ by $n$ symmetric stochastic matrix $A$ we also have
\begin{equation}\label{eq:deduce Phi gamma}
\gamma\!\left(A,d_X^p\right)\le \frac12 \Phi\left(2^{2p+1}\gamma\!\left(A,d_Y^p\right)\right).
\end{equation}
\end{proposition}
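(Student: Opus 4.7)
My plan is to prove both implications by introducing auxiliary symmetric stochastic matrices whose nonlinear spectral gaps stand in a controlled relationship with those of $A$, and then feeding those matrices into the given hypothesis.

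\medskip

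\noindent\textbf{First implication.} The auxiliary matrix is the bipartite double $\tilde A = \bigl(\begin{smallmatrix} 0 & A \\ A & 0 \end{smallmatrix}\bigr)$, which is a $2n\times 2n$ symmetric stochastic matrix. I first check that
$$
\gamma_+\!\left(A,d_X^p\right) \le 2\,\gamma\!\left(\tilde A,d_X^p\right),
$$
by substituting $z_i=x_i$ for $i\le n$ and $z_i=y_{i-n}$ for $i>n$ into the Poincar\'e inequality that defines $\gamma(\tilde A,d_X^p)$ and discarding the ``within $x$'' and ``within $y$'' pieces of the all-pairs sum (which are non-negative), so that only the cross sum $\sum_{i,j}d_X(x_i,y_j)^p$ survives on the left. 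In the converse direction, I fix arbitrary $z_1,\ldots,z_{2n}\in Y$, set $x_i=z_i$ and $y_i=z_{i+n}$, and use the $p$-th power triangle inequality $d_Y(x_i,x_j)^p\le 2^{p-1}\bigl(d_Y(x_i,y_k)^p+d_Y(y_k,x_j)^p\bigr)$ averaged over $k\in\{1,\ldots,n\}$ (and its analogue for $d_Y(y_i,y_j)^p$) to bound both ``within'' sums by the cross sum. Combining this with the definition of $\gamma_+(A,d_Y^p)$ applied to the cross sum gives
$$
\gamma\!\left(\tilde A,d_Y^p\right)\le \tfrac{2^p+1}{2}\,\gamma_+\!\left(A,d_Y^p\right)\le 2^p\,\gamma_+\!\left(A,d_Y^p\right).
$$
Applying~\eqref{eq:assum Psi gamma} to $\tilde A$ and chaining these two estimates (using that $\Psi$ is increasing) produces~\eqref{eq:deduce Psi gamma+}.

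\medskip

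\noindent\textbf{Second implication.} The auxiliary matrix here is the ``lazy'' matrix $M=\tfrac12(I+A)$, which is symmetric stochastic (and, incidentally, positive semidefinite). Because $d_X(x_i,x_i)=0$, the diagonal of $M$ contributes nothing to the single-sequence Poincar\'e expression, so
$$
\sum_{i,j=1}^n M_{ij}\,d_X(x_i,x_j)^p=\tfrac12\sum_{i,j=1}^n A_{ij}\,d_X(x_i,x_j)^p,
$$
which yields the exact identity $\gamma(M,d_X^p)=2\gamma(A,d_X^p)$ and hence
$$
\gamma\!\left(A,d_X^p\right)=\tfrac12\gamma\!\left(M,d_X^p\right)\le \tfrac12\gamma_+\!\left(M,d_X^p\right).
$$
The crux of the proof is the reverse-direction estimate
$$
\gamma_+\!\left(M,d_Y^p\right)\le 2^{2p+1}\,\gamma\!\left(A,d_Y^p\right).
$$
To obtain it, I start from the triangle inequality $d_Y(x_i,y_j)\le d_Y(x_i,x_j)+d_Y(x_j,y_j)$, pass to $p$-th powers via convexity, sum over $i,j$, apply $\gamma(A,d_Y^p)$ to the $x$-sequence to bound $\sum d_Y(x_i,x_j)^p$ by $n\gamma(A,d_Y^p)\sum A_{ij}d_Y(x_i,x_j)^p$, and then apply the triangle inequality a second time, now to $d_Y(x_i,x_j)\le d_Y(x_i,y_j)+d_Y(y_j,x_j)$, inside the $A$-weighted sum. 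The ``diagonal residues'' $\sum_j d_Y(x_j,y_j)^p$ generated by both applications of the triangle inequality recombine exactly with $\sum A_{ij}d_Y(x_i,y_j)^p$ to reconstitute $2\sum M_{ij}d_Y(x_i,y_j)^p$, yielding the claimed constant. Feeding this into~\eqref{eq:assume Phi gamma+} applied to $M$, and again using that $\Phi$ is increasing, gives~\eqref{eq:deduce Phi gamma}.

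\medskip

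\noindent\textbf{Expected obstacle.} The routine ingredient is the bipartite-double argument of the first implication; the delicate accounting is in the second implication, where one must verify that the ``off-$A$'' mass picked up by the two successive triangle inequalities equals precisely the diagonal contribution $M_{ii}=\tfrac12(1+A_{ii})$ to $\sum M_{ij}d_Y(x_i,y_j)^p$. It is this exact matching that makes the lazy matrix $M$ (rather than some other modification of $A$) the correct auxiliary matrix, and it is what lets the $\gamma$-to-$\gamma_+$ transfer go through with a universal constant of the form $2^{2p+1}$ rather than a quantity that depends on $n$ or on further structure of $A$.
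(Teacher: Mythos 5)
Your proof follows the same architecture as the paper: the bipartite double $\tilde A$ for the first direction, the lazy matrix $M=\tfrac12(I+A)$ for the second, together with triangle-inequality estimates relating $\gamma$ and $\gamma_+$. The paper cites~\cite[Lem.~2.3]{MN-towards} for the first reduction and proves a lemma ($\gamma_+(\frac{I+A}{2},d_X^p)\le 2^{2p+1}\gamma(A,d_X^p)$) for the second; you re-derive both, which is fine and gives the same constants.

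One clarification on the second implication. Your account suggests that the two ``diagonal residues'' $S=\tfrac1n\sum_j d_Y(x_j,y_j)^p$ recombine \emph{exactly} with $E=\tfrac1n\sum_{i,j}a_{ij}d_Y(x_i,y_j)^p$ to form the $M$-weighted sum, and that this exactness is ``what lets the transfer go through with a universal constant.'' That is slightly too optimistic. Carrying out your two triangle inequalities gives
$$
\frac{1}{n^2}\sum_{i,j}d_Y(x_i,y_j)^p \;\le\; 2^{2p-2}\gamma\, E \;+\; \bigl(2^{2p-2}\gamma + 2^{p-1}\bigr)S,
\qquad \gamma:=\gamma(A,d_Y^p),
$$
so the coefficients on $E$ and $S$ do \emph{not} match: there is an extra additive $2^{p-1}$ in front of $S$. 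What does hold exactly is $S+E=\tfrac2n\sum_{i,j}M_{ij}d_Y(x_i,y_j)^p$, so taking the larger coefficient for both terms yields $\gamma_+(M,d_Y^p)\le 2^p + 2^{2p-1}\gamma$. To conclude $\gamma_+(M,d_Y^p)\le 2^{2p+1}\gamma$ you still must absorb the additive $2^p$ into a multiple of $\gamma$, which requires the a priori lower bound $\gamma(A,d_Y^p)\ge 1-\tfrac1n\ge\tfrac12$ (the paper's Lemma on lower bounds for $\gamma$). The paper invokes this explicitly at the end of the proof of its Lemma, whereas your write-up omits it. This is an easy fix, but it should be stated: the role of $M$'s diagonal weight $\ge\tfrac12$ and the a priori lower bound $\gamma\ge\tfrac12$ are both needed, and neither produces an identity on its own.

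With that small addition the argument is complete and matches the paper's.
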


Before passing to the (simple) proof of Proposition~\ref{prop:gamma gamma+ the same}, we record for future use some basic facts about nonlinear spectral gaps.

\begin{lemma}\label{lem:lower a priori}
Fix $p\in [1,\infty)$, $n\in \N$ and a metric space $(X,d_X)$. Then every symmetric stochastic matrix $A=(a_{ij})\in M_n(\R)$ satisfies
\begin{equation}\label{eq:lower bounds gammas}
\gamma\!\left(A,d_X^p\right)\ge 1-\frac{1}{n}\qquad\mathrm{and}\qquad \gamma_+(A,d_X^p)\ge 1.
\end{equation}
\end{lemma}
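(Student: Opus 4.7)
The plan is to prove both lower bounds by simply testing the defining Poincar\'e inequalities \eqref{eq:def gamma} and \eqref{eq:def gamma+} on well-chosen configurations of two points. Recall that by the convention in the paper every metric space contains at least two distinct points, so we may fix $a,b\in X$ with $d_X(a,b)>0$.

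For the bound $\gamma_+(A,d_X^p)\ge 1$, I would take $x_1=\ldots=x_n=a$ and $y_1=\ldots=y_n=b$. Then $d_X(x_i,y_j)^p=d_X(a,b)^p$ for all $i,j$, the left-hand side of \eqref{eq:def gamma+} equals $d_X(a,b)^p$, and the right-hand side equals
$$
\frac{\gamma_+}{n}\sum_{i=1}^n\sum_{j=1}^n a_{ij}d_X(a,b)^p=\gamma_+ d_X(a,b)^p,
$$
because $A$ is stochastic, so $\sum_{i,j}a_{ij}=n$. Dividing by $d_X(a,b)^p>0$ yields $\gamma_+\ge 1$.

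For the bound $\gamma(A,d_X^p)\ge 1-1/n$, I would take the ``singleton versus rest'' test configuration $x_1=a$ and $x_i=b$ for $i\in\{2,\ldots,n\}$. Then $d_X(x_i,x_j)^p=d_X(a,b)^p$ when exactly one of $i,j$ equals $1$, and equals $0$ otherwise. Hence
$$
\sum_{i=1}^n\sum_{j=1}^n d_X(x_i,x_j)^p=2(n-1)d_X(a,b)^p,
$$
while, using symmetry of $A$ and the fact that its first row sums to $1$,
$$
\sum_{i=1}^n\sum_{j=1}^n a_{ij}d_X(x_i,x_j)^p=2d_X(a,b)^p\sum_{j=2}^n a_{1j}=2d_X(a,b)^p(1-a_{11})\le 2d_X(a,b)^p.
$$
Plugging these into \eqref{eq:def gamma} forces
$$
\gamma(A,d_X^p)\ge \frac{n-1}{n(1-a_{11})}\ge 1-\frac{1}{n},
$$
which is the desired inequality.

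There is no real obstacle here; the only thing to be careful about is applying the symmetry and row-stochasticity of $A$ to reduce the right-hand sums, and choosing the test configuration for $\gamma$ (a single point separated from the others) rather than an equi-distributed one, which would only yield the weaker fact that $\gamma$ bounds an isoperimetric ratio. The same bound could equivalently be obtained from the isoperimetric inequality displayed just before \eqref{eq:cheeger} by taking $S=\{1\}$, but the direct substitution above is self-contained and immediate.
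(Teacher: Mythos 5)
Your proof is correct, and for the $\gamma_+$ lower bound it is identical to the paper's (both substitute $x_1=\ldots=x_n=a$, $y_1=\ldots=y_n=b$). For the bound $\gamma(A,d_X^p)\ge 1-1/n$, however, you take a genuinely different route: the paper draws $x_1,\ldots,x_n$ i.i.d.\ uniformly from $\{a,b\}$ and compares the \emph{expectations} of the two sides of \eqref{eq:def gamma}, using only the crude bound $\sum_{i\ne j}a_{ij}\le n$; your deterministic singleton-versus-rest configuration $x_1=a$, $x_2=\ldots=x_n=b$ tests the inequality on a single explicit choice of points, which, as you correctly note, amounts to instantiating the isoperimetric lower bound displayed just before \eqref{eq:cheeger} with $S=\{1\}$. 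Both arguments yield exactly $1-1/n$; your version is a little more elementary (no averaging is needed) and in fact gives the slightly sharper $\gamma\ge (n-1)/(n(1-a_{11}))$, with the understanding that when $a_{11}=1$ and $n\ge 2$ the right-hand side is $+\infty$ --- which is forced anyway, since in that case vertex $1$ is isolated, $A$ is not ergodic, and $\gamma(A,d_X^p)=\infty$. It would be cleanest to flag that degenerate case explicitly rather than leave $1/(1-a_{11})$ undefined, but the argument as a whole is sound.
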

\begin{proof}
Fix distinct $a,b\in X$ and let $x_1,\ldots,x_n\in X$ be i.i.d. points, each of which is chosen uniformly at random from $\{a,b\}$.  Then every symmetric stochastic $A\in M_n(\R)$ satisfies
$$
\E\left[\frac{1}{n}\sum_{i=1}^n\sum_{j=1}^n a_{ij} d_X(x_i,x_j)^p\right]=\frac{d_X(a,b)^p}{2n}
\sum_{\substack{i,j\in \{1,\ldots,n\}\\i\neq j}} a_{ij}\le \frac{d_X(a,b)^p}{2},
$$
while
$$
\E\left[\frac{1}{n^2}\sum_{i=1}^n\sum_{j=1}^n d_X(x_i,x_j)^p\right]=\frac{n(n-1)}{2n^2}d_X(a,b)^p.
$$
the desired conclusion now follows from the definition~\eqref{eq:def gamma}. The rightmost inequality in~\eqref{eq:lower bounds gammas} follows by substituting $x_1=\ldots =x_n=a$ and $y_1=\ldots =y_n=b$ into~\eqref{eq:def gamma+}.
\end{proof}

\begin{lemma}\label{lem:I+A}
Fix $p\in [1,\infty)$ and a metric space $(X,d_X)$. Then for every integer $n\ge 2$, every $n$ by $n$ symmetric stochastic matrix $A=(a_{ij})$ satisfies
\begin{equation}\label{eq:relation gamma gamma+ I+A}
2\gamma(A,d_X^p)\le \gamma_+\!\left(\frac{I+A}{2},d_X^p\right)\le 2^{2p+1}\gamma(A,d_X^p).
\end{equation}
\end{lemma}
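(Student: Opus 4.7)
Write $C=(I+A)/2$, so $c_{ij}=\delta_{ij}/2+a_{ij}/2$. The left-hand inequality is immediate: apply the defining inequality~\eqref{eq:def gamma+} of $\gamma_+(C,d_X^p)$ to any $x_1,\ldots,x_n\in X$ with $y_i=x_i$. Since $d_X(x_i,x_i)=0$, the diagonal contribution to $\sum_{i,j} c_{ij}d_X(x_i,x_j)^p$ vanishes, leaving $\frac12\sum_{i,j} a_{ij}d_X(x_i,x_j)^p$. The resulting inequality is precisely~\eqref{eq:def gamma} with constant $\gamma_+(C,d_X^p)/2$, so $\gamma(A,d_X^p)\le \gamma_+(C,d_X^p)/2$.

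For the right-hand inequality, fix $x_1,\ldots,x_n,y_1,\ldots,y_n\in X$ and write $\gamma=\gamma(A,d_X^p)$. The plan is a two-step triangle-inequality argument combined with the one-sided Poincar\'e inequality applied separately to the $x_i$'s and to the $y_i$'s. First, inserting $x_j$ (respectively $y_i$) as an intermediate point in $d_X(x_i,y_j)$ and applying the $p$-convexity inequality $d_X(a,c)^p\le 2^{p-1}(d_X(a,b)^p+d_X(b,c)^p)$, then summing and averaging the two bounds, yields
$$
\sum_{i,j} d_X(x_i,y_j)^p\le 2^{p-2}\!\left[\sum_{i,j}d_X(x_i,x_j)^p+\sum_{i,j}d_X(y_i,y_j)^p\right]+2^{p-1}n\sum_{i}d_X(x_i,y_i)^p.
$$
Second, apply~\eqref{eq:def gamma} separately to $(x_i)$ and $(y_i)$ to bound the first bracket by $\gamma n\left[\sum_{i,j}a_{ij}d_X(x_i,x_j)^p+\sum_{i,j}a_{ij}d_X(y_i,y_j)^p\right]$. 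Third, use the $p$-convexity inequality once more, with intermediate point $y_j$ in $d_X(x_i,x_j)$ and $x_j$ in $d_X(y_i,y_j)$, together with the symmetry of $A$ and the fact that each row and column of $A$ sums to $1$, to obtain
$$
\sum_{i,j} a_{ij}d_X(x_i,x_j)^p+\sum_{i,j} a_{ij}d_X(y_i,y_j)^p\le 2^p\sum_{i,j} a_{ij}d_X(x_i,y_j)^p+2^p\sum_j d_X(x_j,y_j)^p.
$$

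Combining these estimates gives
$$
\sum_{i,j} d_X(x_i,y_j)^p\le 2^{2p-2}\gamma n\sum_{i,j} a_{ij}d_X(x_i,y_j)^p+\bigl(2^{2p-2}\gamma+2^{p-1}\bigr)n\sum_{i}d_X(x_i,y_i)^p.
$$
The final step is to absorb the additive $2^{p-1}$ into a multiple of $\gamma$: by Lemma~\ref{lem:lower a priori}, $\gamma\ge 1-1/n\ge 1/2$ whenever $n\ge 2$, and one checks that this forces both coefficients on the right to be bounded by $2^{2p}\gamma$. The right-hand side then rearranges to $2^{2p+1}\gamma n\sum_{i,j}c_{ij}d_X(x_i,y_j)^p$, proving $\gamma_+(C,d_X^p)\le 2^{2p+1}\gamma$.

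The only subtle point is the constant bookkeeping in the last absorption step (especially the borderline $p=1$ case, where one needs $\gamma\ge 1/3$, which is covered by $\gamma\ge 1-1/n\ge 1/2$). All other steps are routine applications of the $p$-power triangle inequality together with symmetry and stochasticity of $A$.
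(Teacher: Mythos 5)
Your proof is correct and follows essentially the same route as the paper: both establish the left inequality by noting that the diagonal of $(I+A)/2$ contributes nothing when restricted to the diagonal setting, and both prove the right inequality via the same three-step chain — a $p$-power triangle inequality bounding $\sum d_X(x_i,y_j)^p$ by $\sum d_X(x_i,x_j)^p$, $\sum d_X(y_i,y_j)^p$, and a diagonal term; an application of $\gamma(A,d_X^p)$ to each of $(x_i)$ and $(y_i)$; and a reverse triangle inequality to return to $\sum a_{ij}d_X(x_i,y_j)^p$, absorbing constants via $\gamma\ge 1/2$. The only cosmetic differences are that you use two (rather than four) triangle-inequality decompositions in the return step and let symmetry of $A$ restore the balance, and your left inequality sets $y_i=x_i$ directly instead of passing through the identity $\gamma((I+A)/2,d_X^p)=2\gamma(A,d_X^p)$; both arrive at the identical intermediate estimates and constants.
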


\begin{proof}
Since the diagonal entries of $A$ play no role in the
definition of $\gamma(A,d_X^2)$, it
follows immediately from~\eqref{eq:def gamma} that
\begin{equation}\label{eq:average identity easy gamma}
\gamma\!\left(\frac{I+A}{2},d_X^p\right)=2\gamma(A,d_X^p).
\end{equation}
Because $\gamma_+\!\left(\frac{I+A}{2},d_X^p\right)\ge \gamma\!\left(\frac{I+A}{2},d_X^p\right)$, this implies the leftmost inequality in~\eqref{eq:relation gamma gamma+ I+A}.

Next, fix $x_1,\ldots,x_n,y_1,\ldots,y_n\in X$. By the triangle
inequality and the convexity of $t\mapsto |t|^p$, for every $i,j\in
\n$ we have
\begin{equation}\label{eq:xiyj option 1}
d_X(x_i,y_j)^p\le 2^{p-1}\left( d_X(x_i,x_j)^p+d_X(x_j,y_j)^p\right),
\end{equation}
and
\begin{equation}\label{eq:xiyj option 2}
d_X(x_i,y_j)^p\le 2^{p-1}\left( d_X(x_i,y_i)^p+d_X(y_i,y_j)^p\right).
\end{equation}
By averaging~\eqref{eq:xiyj option 1} and~\eqref{eq:xiyj option 2}, and then averaging the resulting inequality over $i,j\in \n$, we see that
\begin{align}\label{eq:first averaged triangle}
&\nonumber\frac{1}{n^2}\sum_{i=1}^n
\sum_{j=1}^nd_X(x_i,y_j)^p\\&\le \frac{2^{p}}{2n}
\sum_{i=1}^n d_X(x_i,y_i)^p+\frac{2^{p}}{4n^2}\sum_{i=1}^n\sum_{j=1}^n\left(d_X(x_i,x_j)^p+d_X(y_i,y_j)^p\right).
\end{align}
Now, by the definition of $\gamma(A,d_X^p)$ we have
\begin{multline}\label{eq:use gamma xi and yi}
\frac{1}{n^2}\sum_{i=1}^n\sum_{j=1}^n\left(d_X(x_i,x_j)^p+d_X(y_i,y_j)^p\right)\\\le
\frac{\gamma(A,d_X^p)}{n}\sum_{i=1}^n\sum_{j=1}^na_{ij}\left(d_X(x_i,x_j)^p+d_X(y_i,y_j)^p\right).
\end{multline}
Next, for every $i,j\in \n$ we have
\begin{equation}\label{eq:xixj option 1}
d_X(x_i,x_j)^p\le 2^{p-1}\left( d_X(x_i,y_j)^p+d_X(y_j,x_j)^p\right),
\end{equation}
\begin{equation}\label{eq:xixj option 2}
d_X(x_i,x_j)^p\le 2^{p-1}\left( d_X(x_i,y_i)^p+d_X(y_i,x_j)^p\right),
\end{equation}
\begin{equation}\label{eq:yiyj option 1}
d_X(y_i,y_j)^p\le 2^{p-1}\left( d_X(y_i,x_j)^p+d_X(x_j,y_j)^p\right),
\end{equation}
and
\begin{equation}\label{eq:yiyj option 2}
d_X(y_i,y_j)^p\le 2^{p-1}\left( d_X(y_i,x_i)^p+d_X(x_i,y_j)^p\right).
\end{equation}
By averaging~\eqref{eq:xixj option 1}, \eqref{eq:xixj option 2}, \eqref{eq:yiyj option 1} and~\eqref{eq:yiyj option 2} we see that
\begin{multline}\label{eq:to multiply aij}
d_X(x_i,x_j)^p+d_X(y_i,y_j)^p\\\le 2^{p-1}\left(d_X(x_i,y_j)^p+d_X(y_i,x_j)^p+d_X(x_i,y_i)^p+d_X(x_j,y_j)^p\right).
\end{multline}
By multiplying~\eqref{eq:to multiply aij} by $a_{ij}/n$ and summing over $i,j\in \n$ while using the fact that $A$ is symmetric and stochastic we conclude that
\begin{multline}\label{eq:second averaged triangle}
\frac{1}{n}\sum_{i=1}^n\sum_{j=1}^na_{ij}\left(d_X(x_i,x_j)^p+d_X(y_i,y_j)^p\right)\\
\le \frac{2^{p}}{n}\sum_{i=1}^n d_X(x_i,y_i)^p+\frac{2^{p}}{n}\sum_{i=1}^n\sum_{j=1}^n a_{ij}d_X(x_i,y_j)^p.
\end{multline}
A substitution of~\eqref{eq:second averaged triangle} into~\eqref{eq:use gamma xi and yi}, and then a substitution of the resulting inequality into~\eqref{eq:first averaged triangle}, yields the following estimate.
\begin{align}\label{eq:for gamma at least 1/2}
\nonumber &\frac{1}{n^2}\sum_{i=1}^n\sum_{j=1}^nd_X(x_i,y_j)^p\\\nonumber&\le \frac{2^{p-1}+2^{2p-2}\gamma(A,d_X^p)}{n}\sum_{i=1}^n d_X(x_i,y_i)^p\\\nonumber&\qquad +\frac{2^{2p-2}\gamma(A,d_X^p)}{n}\sum_{i=1}^n\sum_{j=1}^n a_{ij} d_X(x_i,y_j)^p\\
&\le \frac{2^p+2^{2p-1}\gamma(A,d_X^p)}{n}\sum_{i=1}^n\sum_{j=1}^n \left(\frac{I+A}{2}\right)_{ij} d_X(x_i,y_j)^p.
\end{align}
Since~\eqref{eq:for gamma at least 1/2} holds true for every $x_1,\ldots,x_n,y_1,\ldots,y_n\in X$, we deduce from the definition of $\gamma_+(\cdot,\cdot)$ in~\eqref{eq:def gamma+} that
\begin{equation}\label{eq:2p+1}
\gamma_+\!\left(\frac{I+A}{2},d_X^p\right)\le 2^p+2^{2p-1}\gamma(A,d_X^p).
\end{equation}
The rightmost inequality in~\eqref{eq:relation gamma gamma+ I+A} is a consequence of~\eqref{eq:2p+1} since by Lemma~\ref{lem:lower a priori} we have $\gamma(A,d_X^p)\ge \frac12$.
\end{proof}

\begin{proof}[Proof of Proposition~\ref{prop:gamma gamma+ the same}]
Fix $n\in \N$ and an $n$ by $n$ symmetric stochastic matrix $A$. Assuming the validity of~\eqref{eq:assume Phi gamma+} for the matrix $(I+A)/2$, we deduce~\eqref{eq:deduce Phi gamma}. Next, by~\cite[Lem.~2.3]{MN-towards} for every metric space $(W,d_W)$ we have
\begin{equation}\label{eq:anti diagonal}
\frac{2}{2^p+1}\gamma\!\left(\left ( \begin{smallmatrix}
  0 & A \\
   A & 0
   \end{smallmatrix} \right ),d_W^p\right)\le \gamma_+(A,d_W^p)\le 2\gamma\!\left(\left ( \begin{smallmatrix}
  0 & A \\
   A & 0
   \end{smallmatrix} \right ),d_W^p\right).
\end{equation}
Therefore, assuming the validity of~\eqref{eq:assum Psi gamma} for the matrix $\left ( \begin{smallmatrix}
  0 & A \\
   A & 0
   \end{smallmatrix} \right )\in M_{2n}(\R)$, the desired estimate~\eqref{eq:deduce Psi gamma+} follows from~\eqref{eq:anti diagonal}.
\end{proof}

\section{Duality}\label{sec:duality}

The implication $(2)\implies (1)$ of Theorem~\ref{thm:duality} was
already proved in~\eqref{eq:duality trivial dir}. Below we prove the
more substantial implication $(1)\implies (2)$.

\begin{proof}[Proof of Theorem~\ref{thm:duality}] Fix $D\in (K,\infty)$ and define $\e\in (0,\infty)$ by
\begin{equation*}\label{eq:def epsilon}(1+\e)(K+2\e)=D.\end{equation*}

Let $C\subseteq M_n(\R)$ be the set of $n$ by $n$ symmetric matrices $C=(c_{ij})$ for which there exists $y_1,\ldots,y_n\in Y$ with $|\{y_1,\ldots,y_n\}|\ge 2$ and
$$
\forall\, i,j\in \{1,\ldots,n\},\qquad c_{ij}=\frac{\sum_{r=1}^n\sum_{s=1}^n d_X(x_r,x_s)^p}{\sum_{r=1}^n\sum_{s=1}^n d_Y(y_r,y_s)^p}\cdot d_Y(y_i,y_j)^p.
$$
Letting $P\subseteq M_n(\R)$ be the set of all symmetric $n$ by $n$ matrices with nonnegative entries and vanishing diagonal, denote
$$
M\eqdef \conv\left(C+P\right).
$$

For every $i,j\in \{1,\ldots, n\}$ write $t_{ij}\eqdef (K+2\e) d_X(x_i,x_j)^p$. We first claim that the matrix $T=(t_{ij})\in M_n(\R)$ belongs to $M$. Indeed, if this were not the case then by the separation theorem (Hahn--Banach) there would exist a symmetric matrix $H=(h_{ij})\in M_n(\R)$ which has at least one nonzero off-diagonal entry and whose diagonal vanishes, satisfying
\begin{equation}\label{eq:use duality}
\inf_{B=(b_{ij})\in M} \sum_{i=1}^n\sum_{j=1}^n h_{ij}b_{ij}\ge (K+2\e)\sum_{i=1}^n\sum_{j=1}^n h_{ij}d_X(x_i,x_j)^p.
\end{equation}

Since $P\subseteq M$, the fact that the left hand side of~\eqref{eq:use duality} is bounded from below implies that $h_{ij}\ge 0$ for all $i,j\in \{1,\ldots,n\}$.
Fixing $\d\in (0,1)$, if we define
$$
\sigma\eqdef \max_{i\in \{1,\ldots,n\}} \sum_{j\in \{1,\ldots,n\}\setminus \{i\}} (h_{ij}+\d),
$$
and for every $i,j\in \{1,\ldots,n\}$,
$$
a_{ij}\eqdef \frac{1}{2\sigma}\left\{\begin{array}{ll}2\sigma-\sum_{r\in \{1,\ldots,n\}\setminus \{i\}} (h_{ir}+\d)& \mathrm{if\ }i=j,\\
h_{ij}+\d&\mathrm{if\ } i\neq j,\end{array}\right.
$$
then, provided $\d\in (0,1)$ is small enough, $A=(a_{ij})\in M_n(\R)$ is a symmetric stochastic matrix all of whose entries are positive  such that
\begin{eqnarray}\label{eq:use duality2}
\nonumber\inf_{B=(b_{ij})\in M} \sum_{i=1}^n\sum_{j=1}^n a_{ij}b_{ij}&\stackrel{\eqref{eq:use duality}}{\ge}& (K+\e)\sum_{i=1}^n\sum_{j=1}^n a_{ij}d_X(x_i,x_j)^p \\&\stackrel{\eqref{eq:linear bound}}{\ge}&\frac{K+\e}{n\gamma(A,d_X^p)}\sum_{i=1}^n\sum_{j=1}^n d_X(x_i,x_j)^p.
\end{eqnarray}
By the definition of $C\subseteq M_n(\R)$ and $\gamma(A,d_Y^p)$ we have
\begin{align}\label{eq:recall def C}
&\nonumber\inf_{B=(b_{ij})\in C}\sum_{i=1}^n\sum_{j=1}^n a_{ij}b_{ij}\\&\nonumber
=\inf_{\substack{y_1,\ldots,y_n\in Y\\|\{y_1,\ldots,y_n\}|\ge 2}} \frac{\sum_{r=1}^n\sum_{s=1}^n d_X(x_r,x_s)^p}{\sum_{r=1}^n\sum_{s=1}^n d_Y(y_r,y_s)^p}\cdot\sum_{i=1}^n \sum_{j=1}^n a_{ij} d_Y(y_i,y_j)^p\\&=\frac{1}{n\gamma(A,d_Y^p)}\sum_{r=1}^n\sum_{s=1}^n d_X(x_r,x_s)^p.
\end{align}
Because $M\supseteq C$ it follows from~\eqref{eq:use duality2} and~\eqref{eq:recall def C} that
\begin{multline}\label{eq:furnishes}
\frac{1}{n\gamma(A,d_Y^p)}\sum_{r=1}^n\sum_{s=1}^n d_X(x_r,x_s)^p
\ge \frac{K+\e}{n\gamma(A,d_X^p)}\sum_{i=1}^n\sum_{j=1}^n
d_X(x_i,x_j)^p
\\\ge \frac{K+\e}{Kn\gamma(A,d_Y^p)}\sum_{i=1}^n\sum_{j=1}^n
d_X(x_i,x_j)^p,
\end{multline}
Since all the entries of $A$ are positive, $\gamma(A,d_Y^p)\in
(0,\infty)$, and therefore~\eqref{eq:furnishes} furnishes the
desired contradiction.

Having proved that $T\in M$, it follows that there exist $N\in \N$ and $\mu_1,\ldots,\mu_N\in (0,1]$ with $\sum_{k=1}^N\mu_k=1$, and for every $k\in \{1,\ldots,N\}$ there are $y_1^k,\ldots,y_n^k\in Y$ with $|\{y_1^k,\ldots,y_n^k\}|\ge 2$, such that for every $i,j\in \{1,\ldots,n\}$ we have
\begin{equation}\label{eq:in hull}
(K+2\e)d_X(x_i,x_j)^p\ge  \sum_{k=1}^N \mu_k\frac{\sum_{r=1}^n\sum_{s=1}^n d_X(x_r,x_s)^p}{\sum_{r=1}^n\sum_{s=1}^n d_Y(y^k_r,y^k_s)^p}\cdot d_Y(y^k_i,y^k_j)^p.
\end{equation}
There are integers $q_1,\ldots,q_N,Q\in \N$ such that
\begin{equation}\label{eq:rational}
\forall\, k\in \{1,\ldots,N\},\quad \frac{q_k}{Q}\le \mu_k\frac{\sum_{r=1}^n\sum_{s=1}^n d_X(x_r,x_s)^p}{\sum_{r=1}^n\sum_{s=1}^n d_Y(y_r^k,y_s^k)^p}\le (1+\e)\frac{q_k}{Q}.
\end{equation}
Setting $q_0\eqdef 0$ and $m\eqdef \sum_{k=1}^Nq_k$, define $f:\{x_1,\ldots,x_n\}\to \ell_p^{m}(Y)$ by
\begin{equation}\label{eq:define f}
\forall\, k\in \{1,\ldots,N\},\ \forall\, u\in \left[1+\sum_{j=0}^{k-1}q_j,\sum_{j=0}^k q_j\right]\cap \N,\quad f(x_i)_u=y_i^k.
\end{equation}
Then for every $i,j\in \{1,\ldots,n\}$,
\begin{eqnarray*}
d_{\ell_p^m(Y)}(f(x_i),f(x_j))&\stackrel{\eqref{eq:define f}}{=}&\left(\sum_{k=1}^N q_k d_Y(y_i^k,y_j^k)^p\right)^{\frac{1}{p}}\\&\stackrel{\eqref{eq:in hull}\wedge\eqref{eq:rational}}{\le}& Q^{1/p}(K+2\e)^{1/p}d_X(x_i,x_j).
\end{eqnarray*}
Consequently,
\begin{equation}\label{eq:lip const}
\|f\|_{\Lip}\le Q^{1/p}(K+2\e)^{1/p}.
\end{equation}
Hence,
\begin{align*}
&\sum_{i=1}^n\sum_{j=1}^n d_Y(f(x_i),f(x_j))^p\stackrel{\eqref{eq:define f}}{=}\sum_{i=1}^n\sum_{j=1}^n
\sum_{k=1}^N q_k d_Y(y_i^k,y_j^k)^p\\
&\stackrel{\eqref{eq:rational}}{\ge} \frac{Q}{1+\e}\sum_{i=1}^n\sum_{j=1}^n d_X(x_i,x_j)^p\stackrel{\eqref{eq:lip const}}{\ge} \frac{\|f\|_{\Lip}^p}{(1+\e)(K+2\e)}\sum_{i=1}^n\sum_{j=1}^n d_X(x_i,x_j)^p.
\end{align*}
Since $(1+\e)(K+2\e)=D$, the proof of Theorem~\ref{thm:duality} is complete.
\end{proof}

\section{Interpolation and Markov type}\label{sec:use markov type}

Fix $p\in [1,\infty)$ and $m\in \N$.  Following K. Ball~\cite{Ball},
given a metric space $(X,d_X)$ define its {\em Markov type $p$
constant at time $m$}, denoted $M_p(X;m)$, to be the infimum over
those $M\in (0,\infty)$ such that for every $n\in \N$,  every
$x_1,\ldots,x_n\in X$ and every $n$ by $n$ symmetric stochastic
matrix $A=(a_{ij})\in M_n(\R)$ we have
\begin{equation}\label{eq:def mtype m}
\sum_{i=1}^n\sum_{j=1}^n (A^m)_{ij}d_X(x_i,x_j)^p\le M^pm\sum_{i=1}^n\sum_{j=1}^n a_{ij}d_X(x_i,x_j)^p.
\end{equation}
$X$ is said to have Markov type $p$ if $M_p(X)\eqdef \sup_{m\in \N}
M_p(X;m)<\infty$. Note that it follows from the triangle inequality
that
$$
\forall\, m\in \N,\quad M_p(X;m)\le m^{1-\frac{1}{p}}.
$$

\begin{remark}\label{rem:enflo}
In Section~\ref{sec:MP} we recalled the notions of BMW type and
Enflo type. The link between these notions and Ball's notion of
Markov type is that Markov type $p$ implies Enflo type $p$
(see~\cite{NS02}). One can also define natural variants of Markov
type that imply BMW type (see the inequalities appearing in
Theorem~4.4 of~\cite{NPSS06}). Recently Kondo proved~\cite{Kondo}
that there exists a Hadamard space (see e.g.~\cite{BH-book}) that
fails to have Markov type $p$ for any $p>1$, answering a question
posed in~\cite{NPSS06}. Since Hadamard spaces have Enflo type $2$
(see~\cite{Oht09}), this yields the only known example of a metric
space that has Enflo type $2$ but fails to have nontrivial Markov
type (observe that the notions of Enflo type $2$ and BMW type $2$
coincide).
\end{remark}

\begin{lemma}\label{lem:mtype lower decay gap}
Fix $p\in [1,\infty)$ and $m,n\in \N$. Let $(X,d_X)$ be a metric space and $A=(a_{ij})\in M_n(\R)$ be a symmetric stochastic matrix. Then
$$
\gamma\!\left(A,d_X^p\right)\le M_p(X;m)^pm\gamma\!\left(A^m,d_X^p\right).
$$
\end{lemma}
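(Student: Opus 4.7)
The plan is to obtain the desired inequality by a direct composition of two defining inequalities, applied to arbitrary points $x_1,\ldots,x_n \in X$. The two ingredients are the Poincaré-type inequality defining $\gamma(A^m, d_X^p)$ and the Markov-type inequality~\eqref{eq:def mtype m} for the matrix $A$ at time $m$; the lemma is simply their concatenation, so the proof is essentially a one-line chain of inequalities.

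Concretely, I would fix $x_1,\ldots,x_n \in X$ and start from the definition of $\gamma(A^m, d_X^p)$, which yields
$$
\frac{1}{n^2} \sum_{i=1}^n \sum_{j=1}^n d_X(x_i,x_j)^p \le \frac{\gamma(A^m, d_X^p)}{n} \sum_{i=1}^n \sum_{j=1}^n (A^m)_{ij}\, d_X(x_i,x_j)^p.
$$
Next, I would apply the defining inequality~\eqref{eq:def mtype m} of $M_p(X;m)$ to the right-hand side, which replaces $(A^m)_{ij}$ by $a_{ij}$ at the price of an extra factor of $M_p(X;m)^p\, m$:
$$
\sum_{i=1}^n \sum_{j=1}^n (A^m)_{ij}\, d_X(x_i,x_j)^p \le M_p(X;m)^p\, m \sum_{i=1}^n \sum_{j=1}^n a_{ij}\, d_X(x_i,x_j)^p.
$$
Combining these two displays gives
$$
\frac{1}{n^2} \sum_{i=1}^n \sum_{j=1}^n d_X(x_i,x_j)^p \le \frac{M_p(X;m)^p\, m\, \gamma(A^m, d_X^p)}{n} \sum_{i=1}^n \sum_{j=1}^n a_{ij}\, d_X(x_i,x_j)^p,
$$
and since this holds for every choice of $x_1,\ldots,x_n \in X$, the definition~\eqref{eq:def gamma} of $\gamma(A, d_X^p)$ yields the asserted bound.

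There is no genuine obstacle here: the statement is designed precisely so that the Markov type inequality at time $m$ bridges $\gamma(A^m, d_X^p)$ and $\gamma(A, d_X^p)$. The only minor point to verify is that the Markov type inequality~\eqref{eq:def mtype m} may be applied with the matrix $A$ (which is symmetric stochastic) and the given points, which is immediate from the hypothesis. This lemma will be useful in Section~\ref{sec:use markov type} because iterating a lower bound on $\gamma(A^m, d_X^p)$ together with a control on $M_p(X;m)$ yields the sought-after linear bound on $\gamma(A, d_X^p)$ in terms of the linear spectral gap of $A$ (via estimates on $\lambda_2(A^m) = \lambda_2(A)^m$).
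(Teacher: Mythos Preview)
Your proof is correct and essentially identical to the paper's own argument: both apply the definition of $\gamma(A^m,d_X^p)$ followed by the Markov type inequality~\eqref{eq:def mtype m}, then conclude from the definition of $\gamma(A,d_X^p)$.
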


\begin{proof}
This is immediate from the definitions: for every $x_1,\ldots,x_n\in X$ we have
\begin{align*}
&\frac{1}{n^2}\sum_{i=1}^n\sum_{j=1}^n d_X(x_i,x_j)^p\stackrel{\eqref{eq:def gamma}}{\le} \frac{\gamma\!\left(A^m,d_X^p\right)}{n}\sum_{i=1}^n\sum_{j=1}^n (A^m)_{ij}d_X(x_i,x_j)^p\\
&\quad\quad\stackrel{\eqref{eq:def mtype m}}{\le} \frac{M_p(X;m)^pm\gamma\!\left(A^m,d_X^p\right)}{n}\sum_{i=1}^n\sum_{j=1}^n a_{ij}d_X(x_i,x_j)^p.\qedhere
\end{align*}
\end{proof}

The {modulus of uniform smoothness} of a Banach space $(X,\|\cdot\|_X)$ is defined for
$\tau\in (0,\infty)$ as
\begin{equation*}
\rho_X(\tau)\eqdef \sup\left\{\frac{\|x+\tau y\|_X+\|x-\tau
y\|_X}{2}-1:\ x,y\in B_X\right\}.
\end{equation*}
$X$ is said to be {uniformly smooth} if $\lim_{\tau\to
0}\rho_X(\tau)/\tau=0$. Furthermore, $X$ is said to have modulus of
smoothness of power type $q\in (0,\infty)$ if there exists a
constant $C\in (0,\infty)$ such that $\rho_X(\tau)\le C\tau^q$ for
all $\tau\in (0,\infty)$. It is straightforward to check that in
this case necessarily $q\in [1,2]$. It is shown in~\cite{BCL} that
$X$ has modulus of smoothness of power type $q$ if and only if there
exists a constant $S\in [1,\infty)$ such that for every $x,y\in X$
\begin{equation}\label{eq:two point smooth}
\frac{\|x+y\|_X^q+\|x-y\|_X^q}{2}\le \|x\|_X^q+S^q\|y\|_X^q.
\end{equation}
The infimum over those $S\in [1,\infty)$ for which \eqref{eq:two point smooth}
holds true is called the $q$-smoothness constant of $X$, and is denoted
$S_q(X)$. Observe that every Banach space satisfies $S_1(X)=1$.

The following theorem is due to~\cite{NPSS06}.
\begin{theorem}\label{thm:npss}
Fix $q\in [1,2]$ and $p\in [q,\infty)$. Suppose that $(X,\|\cdot\|_X)$ is a Banach space whose modulus of smoothness has power type $q$. Then
\begin{equation}\label{eq:mtype}
\forall\, m\in \N,\qquad M_p(X;m)\lesssim \left(p^{\frac{1}{q}}+S_q(X)\right)m^{\frac{1}{q}-\frac{1}{p}}.
\end{equation}
\end{theorem}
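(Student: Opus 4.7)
The plan is to recast inequality \eqref{eq:def mtype m} probabilistically and reduce it to a martingale inequality in the smooth Banach space $X$. Let $(Z_t)_{t \ge 0}$ be the stationary Markov chain on $\{1, \ldots, n\}$ driven by $A$: since $A$ is symmetric and stochastic, $Z_0$ is uniform and the chain is reversible. Writing $f(i) = x_i$, the target becomes
$$ \E\|f(Z_m)-f(Z_0)\|_X^p \,\lesssim\, \bigl(p^{1/q}+S_q(X)\bigr)^p\, m^{p/q}\, \E\|f(Z_1)-f(Z_0)\|_X^p. $$

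First I would decompose $f(Z_m)-f(Z_0)$ into forward and backward martingale sums. Let $d_t^+ \eqdef f(Z_{t+1})-(Af)(Z_t)$ be the forward martingale differences with respect to $\sigma(Z_0,\ldots,Z_t)$, and let $d_t^- \eqdef f(Z_t)-(Af)(Z_{t+1})$ be martingale differences with respect to the backward filtration $\sigma(Z_m,\ldots,Z_{t+1})$ — the latter being a martingale sequence precisely because the chain is reversible. The telescoping identity $f(Z_{t+1})-f(Z_t) = d_t^+ + ((Af)(Z_t)-f(Z_t))$ and its backward counterpart, when added and summed over $t=0,\ldots,m-1$, eliminate the ``drift'' $(Af-f)(Z_t)$ terms up to boundary contributions whose $p$-th moments are dominated by $\E\|f(Z_1)-f(Z_0)\|_X^p$ via Jensen, leaving essentially
$$ f(Z_m)-f(Z_0) \,=\, \tfrac{1}{2}\sum_{t=0}^{m-1} d_t^+ \,-\, \tfrac{1}{2}\sum_{t=0}^{m-1} d_t^- \,+\, \mathrm{boundary}. $$

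The main step is the Pisier--Burkholder martingale inequality in Banach spaces of modulus of smoothness of power type $q$: for every $p \ge q$ and every $X$-valued martingale $(M_t)_{t=0}^m$,
$$ \bigl(\E\|M_m-M_0\|_X^p\bigr)^{1/p} \,\lesssim\, p^{1/q}\, S_q(X)\, \biggl\|\Bigl(\sum_{t=1}^m \|M_t-M_{t-1}\|_X^q\Bigr)^{1/q}\biggr\|_{L_p}. $$
The $q$-th moment version $\E\|M_m-M_0\|_X^q \le S_q(X)^q \sum_t \E\|M_t-M_{t-1}\|_X^q$ is a direct iteration of the two-point inequality \eqref{eq:two point smooth}; extending it to the $L_p$-norm with the prefactor $p^{1/q}$ is effected by a Doob-type maximal extrapolation.

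Two routine estimates then close the proof. An iid-copy (symmetrization) argument paired with the triangle inequality, applied through the identity $(A^2)_{ij}=\sum_k A_{ik}A_{kj}$, yields $\E\|d_t^\pm\|_X^p \lesssim \E\|f(Z_1)-f(Z_0)\|_X^p$ for every $t$. The pointwise power-mean inequality $(\sum_t a_t^q)^{1/q} \le m^{1/q-1/p}(\sum_t a_t^p)^{1/p}$ then converts the $L_p(\ell_q)$ norm of the differences into a sum of $L_p$-norms, producing the factor $m^{p/q}$ after raising to the $p$-th power. Substituting these into the martingale inequality, applied once each to $\sum d_t^+$ and $\sum d_t^-$, yields the desired estimate.

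The main obstacle is the Pisier--Burkholder inequality with the explicit $p^{1/q}$ constant: the $q$-th moment form is a soft consequence of \eqref{eq:two point smooth}, but boosting it to an $L_p$ inequality without incurring a spurious factor of order $m^{1-1/p}$ requires a genuinely martingale-theoretic Doob-type maximal argument. A secondary subtlety is the elimination of the drift term $\sum_t (Af-f)(Z_t)$, which is fatal for a naive triangle-inequality approach and is instead dispatched through the reversibility-enabled cancellation between the forward and backward martingale decompositions.
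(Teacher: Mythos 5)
Your forward/backward martingale decomposition exploiting reversibility, the telescoping cancellation of the drift terms $(Af-f)(Z_t)$, the iid-copy symmetrization bound $\E\|d_t^\pm\|_X^p\lesssim 2^p\,\E\|f(Z_1)-f(Z_0)\|_X^p$, and the power-mean step all match the strategy of \cite{NPSS06}, which is exactly what the paper invokes. So the skeleton is right.

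The gap is the constant in your martingale inequality. You assert
$\bigl(\E\|M_m-M_0\|_X^p\bigr)^{1/p}\lesssim p^{1/q}\,S_q(X)\,\bigl\|(\sum_t\|M_t-M_{t-1}\|_X^q)^{1/q}\bigr\|_{L_p}$, i.e.\ the \emph{product} $p^{1/q}S_q(X)$, whereas the theorem you are asked to prove has the \emph{sum} $p^{1/q}+S_q(X)$. These are genuinely different when both $p$ and $S_q(X)$ are large, and the paper explicitly flags this: the remark following the theorem states that \cite{NPSS06} only gives an unspecified multiplicative dependence on $p,q$, and that the whole reason to revisit the proof is to get the sharp additive form (it is used later with large $p$, e.g.\ in Theorem~\ref{thm:general interpolation min} and Lemma~\ref{lem:uniformly smooth type}, where the product would produce a strictly worse bound). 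Your sketch --- iterate the two-point inequality to get the $L_q$ estimate with constant $S_q(X)$, then ``Doob-type maximal extrapolate'' to $L_p$ picking up a factor $p^{1/q}$ --- can at best produce the product, because you are stacking the two contributions multiplicatively. Moreover Doob's inequality is not the source of the $p^{1/q}$: its constant $p/(p-1)$ is bounded for $p\ge 2$. The $p^{1/q}$ really comes from the $q$-smoothness of $L_p$ itself.

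What the paper actually does is show that the Bochner space $L_p(\mu,X)$ has $q$-smoothness constant $S_q(L_p(\mu,X))\lesssim p^{1/q}+S_q(X)$ --- this is the refinement of \cite[Lem.~6.3]{MN-towards} (whose stated bound is the weaker product $p^{1/q}S_q(X)$) obtained by tracking constants in the argument of \cite{Nao12-azuma}. One then applies the Lindenstrauss/Pisier equivalence between $q$-smoothness and the $q$-moment martingale inequality in $L_p(\mu,X)$, which produces the stated $L_p$-moment martingale inequality with constant $p^{1/q}+S_q(X)$, after which your decomposition and power-mean step close the argument. So you have the right architecture, but you need to replace the ``Doob-type extrapolation'' step with the sharpened Bochner-space smoothness estimate; as written, your proof establishes only the weaker product form, not the statement of the theorem.
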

The statement corresponding to Theorem~\ref{thm:npss}
in~\cite{NPSS06} (specifically, see Theorem~4.4 there), allows for a
multiplicative constant with unspecified dependence on $p$ and $q$,
while in~\eqref{eq:mtype} we stated an explicit dependence on these
parameters that will serve us later on several occasions. We shall
therefore proceed to sketch the proof of Theorem~\ref{thm:npss} so
as to explain why the dependence on $p$ and $q$ in~\eqref{eq:mtype}
is indeed valid.
\begin{proof}[Proof of Theorem~\ref{thm:npss} (sketch)]
For every measure space $(\Omega,\mu)$ we have
\begin{equation}\label{eq:restate S_q L_p}
S_q\left(L_p(\mu,X)\right)\lesssim p^{\frac{1}{q}}+S_q(X).
\end{equation}
The case $q=2$ of~\eqref{eq:restate S_q L_p} appears
in~\cite{Nao12-azuma}, and the proof for general $q\in [1,2]$
follows mutatis mutandis from the proof in~\cite{Nao12-azuma}. This
has been carried out explicitly in Lemma~6.3 of~\cite{MN-towards},
whose statement asserts the weaker bound
$S_q\left(L_p(\mu,X)\right)\lesssim p^{1/q}S_q(X)$, but the proof
of~\cite[Lem.~6.3]{MN-towards} without any change whatsoever
yields~\eqref{eq:restate S_q L_p}.

As explained in the proof of Theorem~4.4 in~\cite{NPSS06}, by a result
of~\cite{Lin63} (see also~\cite[Prop.~2.2]{Pisier-martingales}) it
follows from~\eqref{eq:restate S_q L_p}  that  every $X$-valued
martingale  $\{M_k\}_{k=0}^m$ satisfies
\begin{align}\label{eq:martingale with constant}
&\nonumber\E\left[\|M_m-M_0\|_X^p\right]\\\nonumber&\le
K^p\left(p^{\frac{p}{q}}+S_q(X)^p\right)
\left(\sum_{k=1}^m \left(\E\left[\|M_k-M_{k-1}\|_X^p\right]\right)^{\frac{q}{p}}\right)^{\frac{p}{q}}\\
&\le K^p\left(p^{\frac{p}{q}}+S_q(X)^p\right)m^{\frac{p}{q}-1}
\sum_{k=1}^m\E\left[\|M_k-M_{k-1}\|_X^p\right],
\end{align}
where $K\in (0,\infty)$ is a universal constant.  Now, substituting
the martingale inequality~\eqref{eq:martingale with constant} into
the proof of Theorem~2.3 in~\cite{NPSS06}, in place of the use of
Pisier's martingale inequality~\cite{Pisier-martingales},
yields~\eqref{eq:mtype}.
\end{proof}

We record for future use the following corollary, which is an immediate consequence of Lemma~\ref{lem:mtype lower decay gap} and Theorem~\ref{thm:npss}.
\begin{corollary}\label{cor:smoothness}
Fix $q\in (1,2]$ and $p\in [q,\infty)$. Suppose that $(X,\|\cdot\|_X)$ is a Banach space whose modulus of smoothness has power type $q$. Then for every  $m,n\in \N$ and every  symmetric stochastic $A=(a_{ij})\in M_n(\R)$,
\begin{equation}\label{eq:gamma comparison smoothness}
\gamma\!\left(A,\|\cdot\|_X^p\right)\lesssim  C^p\left(p^{\frac{p}{q}}+S_q(X)^p\right) m^{\frac{p}{q}}  \gamma\!\left(A^m,\|\cdot\|_X^p\right),
\end{equation}
where $C\in (0,\infty)$ is a universal constant.
\end{corollary}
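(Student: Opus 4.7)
The plan is to combine Lemma~\ref{lem:mtype lower decay gap} with Theorem~\ref{thm:npss} and then simplify the resulting expression in $p$ and $q$. This is essentially a mechanical composition, as the excerpt states, so I will spell out the bookkeeping.

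First, I would apply Lemma~\ref{lem:mtype lower decay gap} to $(X,\|\cdot\|_X)$, which gives directly
\begin{equation*}
\gamma\!\left(A,\|\cdot\|_X^p\right)\le M_p(X;m)^p\, m\, \gamma\!\left(A^m,\|\cdot\|_X^p\right).
\end{equation*}
Next, since $X$ has modulus of smoothness of power type $q$, Theorem~\ref{thm:npss} applies and yields
\begin{equation*}
M_p(X;m)\le K\!\left(p^{\frac{1}{q}}+S_q(X)\right)m^{\frac{1}{q}-\frac{1}{p}}
\end{equation*}
for some universal constant $K\in(0,\infty)$.

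Raising this to the $p$-th power gives
\begin{equation*}
M_p(X;m)^p\le K^p\!\left(p^{\frac{1}{q}}+S_q(X)\right)^{\!p} m^{\frac{p}{q}-1}.
\end{equation*}
The elementary inequality $(a+b)^p\le 2^{p-1}(a^p+b^p)$, valid for $a,b\ge 0$ and $p\ge 1$, then allows us to replace $(p^{1/q}+S_q(X))^p$ by (a universal constant times) $2^p(p^{p/q}+S_q(X)^p)$. Multiplying through by $m\,\gamma(A^m,\|\cdot\|_X^p)$ cancels the $m^{-1}$ factor and produces the claimed bound
\begin{equation*}
\gamma\!\left(A,\|\cdot\|_X^p\right)\lesssim C^p\!\left(p^{\frac{p}{q}}+S_q(X)^p\right)m^{\frac{p}{q}}\,\gamma\!\left(A^m,\|\cdot\|_X^p\right),
\end{equation*}
where $C\in(0,\infty)$ absorbs both $K$ and the constant $2$ from the convexity inequality.

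There is no real obstacle here: the only mildly subtle point is tracking the constants, and in particular the separation of $(p^{1/q}+S_q(X))^p$ into $p^{p/q}+S_q(X)^p$ so that the final statement isolates the two contributions in the form used later in the paper. The assumption $q\in(1,2]$ is needed exactly so that Theorem~\ref{thm:npss} applies (it requires $q\in[1,2]$, and the strictness $q>1$ is used implicitly through the fact that $p\ge q>1$ so that the convexity inequality $(a+b)^p\le 2^{p-1}(a^p+b^p)$ can be applied).
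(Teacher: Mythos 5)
Your proof is correct and follows exactly the route the paper intends: compose Lemma~\ref{lem:mtype lower decay gap} with Theorem~\ref{thm:npss} and separate the binomial via $(a+b)^p\le 2^{p-1}(a^p+b^p)$. (One minor nit: that convexity inequality requires only $p\ge 1$, so the restriction $q>1$ is not actually needed to apply it — the corollary simply excludes $q=1$ because there the bound becomes vacuous.)
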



We refer to~\cite{BL76} for the background on complex
interpolation that is used below. We also recall the definition of
$\lambda(A)$ in~\eqref{eq:def lambda}. The following theorem is the
main result of this section.

\begin{theorem}\label{thm:smoothness mtype} Let $(H,Z)$ be a compatible pair of Banach spaces with
$H$ a Hilbert space. Suppose that $\theta\in [0,1]$ and consider the
complex interpolation space $X=[H,Z]_\theta$. Fix $q\in [1,2]$ and
suppose that $X$ has modulus of smoothness of power type $q$. Then
for every $n\in \N$ and every $n$ by $n$ symmetric stochastic matrix
$A\in M_n(\R)$ we have
\begin{equation}\label{eq:smoothness interpolation}
\gamma\!\left(A,\|\cdot\|_X^2\right)\lesssim \frac{S_q(X)^2}{\left(1-\lambda(A)^{\theta}\right)^{2/q}}.
\end{equation}
\end{theorem}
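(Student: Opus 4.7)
The plan is to prove the theorem by combining the smoothness-based Markov-type extrapolation of Corollary~\ref{cor:smoothness} with a complex-interpolation bound that uses the structure $X=[H,Z]_\theta$. Roughly, Corollary~\ref{cor:smoothness} turns a bound on $\gamma(A^m,\|\cdot\|_X^2)$ into one on $\gamma(A,\|\cdot\|_X^2)$, at the cost of a factor $m^{2/q}$; this is where the exponent $2/q$ in the theorem comes from. The interpolation will supply a bound on $\gamma(A^m,\|\cdot\|_X^2)$ involving $\lambda(A)^{m\theta}$, so that $m$ can be tuned to match the desired final form.

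Concretely, I would first invoke Corollary~\ref{cor:smoothness} with $p=2$ to get
$$\gamma(A,\|\cdot\|_X^2)\lesssim S_q(X)^2\,m^{2/q}\,\gamma(A^m,\|\cdot\|_X^2)\qquad (m\in\N).$$
Then I would establish the interpolation-based bound
$$\gamma\!\left(B,\|\cdot\|_X^2\right)\lesssim \frac{1}{1-\lambda(B)^\theta} \qquad (\ast)$$
for every symmetric stochastic matrix $B$ and for $X=[H,Z]_\theta$. To prove $(\ast)$, I would use the isometric identification $\ell_2^n([H,Z]_\theta)=[\ell_2^n(H),\ell_2^n(Z)]_\theta$ and apply operator complex interpolation to the powers $(B-J/n)^k=B^k-J/n$ on $\ell_2^n(V)$, where $J$ is the all-ones matrix. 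On $\ell_2^n(H)$ this operator has norm exactly $\lambda(B)^k$ (since $H$ is Hilbert), while on $\ell_2^n(Z)$ the stochasticity bound yields norm at most $2$ using nothing about $Z$. Operator interpolation then produces a bound on $\|B^k-J/n\|_{\ell_2^n(X)}$, and summing the Neumann series on the mean-zero subspace converts this into $(\ast)$ after the exponent rearrangement. Along the way I would pass to $(I+B)/2$ via Lemma~\ref{lem:I+A} to replace $\lambda_2$ by the absolute spectral radius $\lambda$ and to tame negative eigenvalues.

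Combining the two ingredients with $B=A^m$ and using $\lambda(A^m)\asymp \lambda(A)^m$ (after the $(I+A)/2$ trick) yields
$$\gamma(A,\|\cdot\|_X^2)\lesssim \frac{S_q(X)^2\,m^{2/q}}{1-\lambda(A)^{m\theta}}.$$
Choosing $m\asymp 1/(\theta\log(1/\lambda(A)))\asymp 1/(1-\lambda(A)^\theta)$ makes $\lambda(A)^{m\theta}$ a universal constant bounded away from $1$, so the denominator is $\Theta(1)$ while $m^{2/q}$ contributes a factor of $1/(1-\lambda(A)^\theta)^{2/q}$, yielding the claimed inequality~\eqref{eq:smoothness interpolation}. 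The case where $\lambda(A)$ is already bounded away from $1$ is trivial, with $m=1$.

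The main obstacle is step $(\ast)$. The operator-interpolation machinery is standard, but one must translate operator norms of $B^k-J/n$ on $\ell_2^n(X)$ into a bound on the Poincar\'e-type constant $\gamma(B,\|\cdot\|_X^2)$, which is phrased in terms of $\|x_i-x_j\|_X^2$ rather than as an operator statement. The non-Hilbert nature of $X$ means the clean Hilbert identity $\sum b_{ij}\|x_i-x_j\|^2=2\langle(I-B)x,x\rangle$ must be replaced by a more delicate argument that exploits the complex-interpolation structure; this is where the bulk of the technical work lies, and the choice of which of $\theta$ versus $1-\theta$ ends up in the final exponent has to be tracked carefully.
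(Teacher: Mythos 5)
Your proposal follows essentially the same route as the paper: identify $\ell_2^n(X)\cong[\ell_2^n(H),\ell_2^n(Z)]_\theta$, bound the averaging operator $T=A-J/n$ by $\lambda(A)$ on the Hilbert side and by a universal constant on the $Z$ side, interpolate to get a factor $\lambda(A)^\theta$, pass to the power $A^m$ with $m\asymp 1/(\theta\log(1/\lambda(A)))$ to make this small, and then extrapolate back from $\gamma(A^m,\cdot)$ to $\gamma(A,\cdot)$ via Markov type through Corollary~\ref{cor:smoothness}; the $m^{2/q}$ loss in that corollary is exactly what produces the exponent $2/q$. Two small remarks on the details.

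First, your intermediate claim $(\ast)$, namely $\gamma(B,\|\cdot\|_X^2)\lesssim 1/(1-\lambda(B)^\theta)$ for \emph{every} symmetric stochastic $B$, has the wrong power. Tracking the argument: interpolating each $T^k=(B-J/n)^k=B^k-J/n$ (using that $B^k$ is again symmetric stochastic, so the crude $Z$-side bound $\le 2$ holds uniformly in $k$) gives $\|T^k\otimes I_X\|\le 2^{1-\theta}\lambda(B)^{k\theta}$, hence $\|(I-T\otimes I_X)^{-1}\|\lesssim 1/(1-\lambda(B)^\theta)$ on the mean-zero subspace; combining this with the convexity bound $\|(I-T\otimes I_X)\overline{x}\|^2\le\frac1n\sum_{ij}b_{ij}\|x_i-x_j\|_X^2$ and $\frac1{n^2}\sum_{ij}\|x_i-x_j\|_X^2\le 4\|\overline{x}\|^2$ then yields $\gamma(B,\|\cdot\|_X^2)\lesssim 1/(1-\lambda(B)^\theta)^2$, i.e.\ a square. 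This is harmless for your argument since you only apply $(\ast)$ with $B=A^m$, where $\lambda(B)^\theta$ is bounded away from $1$ and both versions give a universal bound on $\gamma(A^m,\|\cdot\|_X^2)$; but it would break the (superfluous) claim that $(\ast)$ holds in the stated form for all $B$.

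Second, a genuinely pleasant point in your proposal compared to the paper's execution: the paper interpolates $T$ only once, getting $\|T\otimes I_X\|\le 2^{1-\theta}\lambda(A)^\theta$, and inverts $I-T\otimes I_X$ via the reverse triangle inequality, which forces $2^{1-\theta}\lambda(A)^\theta<1$ and so already presupposes the power trick. Your variant interpolates each power $T^k$ separately, obtaining a Neumann series that converges whenever $\lambda(B)<1$ with no smallness assumption. This buys nothing in the end (the $m^{2/q}$ gain from Markov type still requires the same choice of $m$), but it is a cleaner way to phrase the middle step. Also note the ``more delicate argument'' you anticipate for passing from operator norms to $\|x_i-x_j\|_X^2$ is in fact very short: convexity of $\|\cdot\|_X^2$ together with $\sum_j a_{ij}=1$ gives the needed inequality in any Banach space, so this is not where the difficulty lies.
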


Before proving Theorem~\ref{thm:smoothness mtype}  we present some
of its immediate corollaries. First, since in the setting of
Theorem~\ref{thm:smoothness mtype} we always have $S_q(X)\lesssim 1$
for $q=2/(1+\theta)$ (see~\cite{Pis79,CR82}), the following
corollary is a special case of Theorem~\ref{thm:smoothness mtype}.

\begin{corollary}\label{cor:use interpolation with theta smoothness}
Under the assumptions of Theorem~\ref{thm:smoothness mtype} we have
$$
\gamma\!\left(A,\|\cdot\|_X^2\right)\lesssim \frac{1}{\left(1-\lambda(A)^{\theta}\right)^{1+\theta}}.
$$
\end{corollary}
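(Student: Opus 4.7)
The proof will be a one-line deduction from Theorem~\ref{thm:smoothness mtype}, so the plan is essentially to verify that the exponents and constants collapse correctly when one feeds in the optimal value of $q$.

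First I would choose $q = 2/(1+\theta)$. Since $\theta \in [0,1]$, this gives $q \in [1,2]$, which is the admissible range in Theorem~\ref{thm:smoothness mtype}. The crucial input is that for a complex interpolation space $X = [H,Z]_\theta$ between a Hilbert space $H$ and an arbitrary Banach space $Z$, one has $S_q(X) \lesssim 1$ for precisely this value of $q$; this is the Pisier/Cornejo--Roberts uniform smoothness estimate for interpolation with a Hilbert endpoint, cited in the paragraph following the theorem as \cite{Pis79,CR82}. So there is nothing to prove here beyond invoking these results to verify the hypotheses of Theorem~\ref{thm:smoothness mtype}.

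Now I would simply plug these choices into~\eqref{eq:smoothness interpolation}. The exponent on the right-hand side becomes
\[
\frac{2}{q} = \frac{2(1+\theta)}{2} = 1+\theta,
\]
and $S_q(X)^2 \lesssim 1$. Thus
\[
\gamma(A, \|\cdot\|_X^2) \lesssim \frac{S_q(X)^2}{(1-\lambda(A)^\theta)^{2/q}} \lesssim \frac{1}{(1-\lambda(A)^\theta)^{1+\theta}},
\]
which is the desired conclusion.

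There is really no obstacle here; the only thing one must be careful about is that $q = 2/(1+\theta) \ge 1$ so that the power-type smoothness inequality makes sense, and that the $S_q(X) \lesssim 1$ bound cited from \cite{Pis79,CR82} applies with a constant independent of $\theta$ (or with a harmless dependence absorbed into the $\lesssim$). Both are standard facts from the interpolation theory of uniformly smooth spaces with a Hilbert endpoint, so the whole argument is a clean specialization of the preceding theorem.
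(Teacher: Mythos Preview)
Your proposal is correct and is exactly the paper's argument: choose $q=2/(1+\theta)$, invoke the bound $S_q(X)\lesssim 1$ from~\cite{Pis79,CR82}, and substitute into~\eqref{eq:smoothness interpolation} so that $2/q=1+\theta$. The paper presents this as a one-line specialization immediately preceding the corollary.
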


In order for the above results to fit into the framework of
Question~\ref{eq:Psi euclidean}, we need to bound
$\gamma\!\left(A,\|\cdot\|_X^2\right)$ in terms of $\lambda_2(A)$
rather than $\lambda(A)$. This is the content of the next corollary.

\begin{corollary}\label{coro:pass to lambda2}
Under the assumptions of Theorem~\ref{thm:smoothness mtype} we have
\begin{equation}\label{eq:back to lambda 2}
\gamma\!\left(A,\|\cdot\|_X^2\right)\lesssim \frac{S_q(X)^2}{\theta^{2/q}}\cdot \frac{1}{(1-\lambda_2(A))^{2/q}}.
\end{equation}
\end{corollary}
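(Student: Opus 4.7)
The plan is to apply Theorem~\ref{thm:smoothness mtype} not to $A$ itself but to the ``lazy'' version $B\eqdef (I+A)/2$, and then transfer the bound back to $A$ using the identity $\gamma(B,\|\cdot\|_X^p)=2\gamma(A,\|\cdot\|_X^p)$ from~\eqref{eq:average identity easy gamma}. The point is that $\lambda(B)$ coincides with $\lambda_2(B)$, and $\lambda_2(B)$ is in turn a controllable function of $\lambda_2(A)$.

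First I would record the eigenvalue computation. Because $A$ is symmetric stochastic its spectrum lies in $[-1,1]$, so the spectrum of $B=(I+A)/2$ lies in $[0,1]$. Consequently $-\lambda_n(B)\le \lambda_2(B)$, which gives
$$
\lambda(B)=\lambda_2(B)=\frac{1+\lambda_2(A)}{2}.
$$
Next I would invoke Theorem~\ref{thm:smoothness mtype} for $B$ to obtain
$$
\gamma\!\left(B,\|\cdot\|_X^2\right)\lesssim \frac{S_q(X)^2}{\left(1-\left(\frac{1+\lambda_2(A)}{2}\right)^{\!\theta}\right)^{2/q}},
$$
and then use~\eqref{eq:average identity easy gamma} in the form $\gamma(A,\|\cdot\|_X^2)=\tfrac12\gamma(B,\|\cdot\|_X^2)$.

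The remaining (elementary) step is to bound the denominator from below by $\theta(1-\lambda_2(A))/2$ up to an absolute constant. For this I would use the concavity of $s\mapsto s^\theta$ on $[0,1]$: since $\theta\in[0,1]$, the tangent-line inequality at $s=1$ yields $s^\theta\le 1-\theta(1-s)$, and hence $1-s^\theta\ge \theta(1-s)$ for all $s\in[0,1]$. Applying this with $s=(1+\lambda_2(A))/2$ gives
$$
1-\left(\frac{1+\lambda_2(A)}{2}\right)^{\!\theta}\ge \theta\cdot\frac{1-\lambda_2(A)}{2}.
$$
Substituting this into the previous inequality produces~\eqref{eq:back to lambda 2} with an appropriate universal constant.

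There is no genuine obstacle here; the only point requiring care is to make sure one picks up exactly a $\theta^{2/q}$ factor in the denominator (not something worse), and the concavity estimate above delivers precisely this. All other ingredients — the passage $A\leadsto (I+A)/2$, identity~\eqref{eq:average identity easy gamma}, and Theorem~\ref{thm:smoothness mtype} — are already available.
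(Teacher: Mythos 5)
Your proof is correct and follows the same route as the paper: pass to the lazy matrix $(I+A)/2$ so that $\lambda$ and $\lambda_2$ coincide, invoke Theorem~\ref{thm:smoothness mtype} together with the doubling identity~\eqref{eq:average identity easy gamma}, and finish with the elementary bound $1-\bigl(\tfrac{1+x}{2}\bigr)^{\theta}\ge \tfrac{\theta(1-x)}{2}$. The only cosmetic difference is that you derive this last inequality via the tangent-line bound for the concave function $s\mapsto s^\theta$, while the paper simply states it.
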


\begin{proof}
Since $A$ is symmetric and stochastic, all of its eigenvalues are in the interval $[-1,1]$. Consequently, all the eigenvalues of the symmetric stochastic matrix $(I+A)/2$ are nonnegative, and hence
\begin{equation}\label{eq:I+A/2}
\lambda\left(\frac{I+A}{2}\right)=\frac{1+\lambda_2(A)}{2}.
\end{equation}
An application of Theorem~\ref{thm:smoothness mtype} to the matrix $(I+A)/2$ while taking into account the identities~\eqref{eq:I+A/2} and~\eqref{eq:average identity easy gamma} implies that
$$
\gamma(A,\|\cdot\|_X^2)\lesssim \frac{S_q(X)^2}{\left(1-\left(\frac{1+\lambda_2(A)}{2}\right)^{\theta}\right)^{2/q}}.
$$
This yields the desired estimate~\eqref{eq:back to lambda 2} due to the elementary inequality
\begin{equation*}
\forall\, x\in [-1,1],\qquad 1-\left(\frac{1+x}{2}\right)^{\theta}\ge \frac{\theta(1-x)}{2}.\qedhere
\end{equation*}
\end{proof}

We can now complete the proof of Theorem~\ref{thm:p bound gamma},
and consequently also its corresponding dual statement
Corollary~\ref{coro:p av embed}.

\begin{proof}[Proof of Theorem~\ref{thm:p bound gamma}]
Fix $p\in [2,\infty)$ and apply~\eqref{coro:pass to lambda2} to  $X=\ell_p$. Then $X=[\ell_2,\ell_\infty]_{\theta}$ for $\theta=2/p$. Moreover,
by~\cite{Fiegel76,BCL} we have $S_2(\ell_p)=\sqrt{p-1}$, and
therefore the desired estimate~\eqref{eq:p^2 therem} follows
from~\eqref{eq:back to lambda 2} (with $q=2$).
\end{proof}

As in the above proof of Theorem~\ref{thm:p bound gamma}, by
specializing Theorem~\ref{thm:smoothness mtype} to $X=\ell_p$ for
$p\in [2,\infty)$, we obtain the following corollary, which was
stated in the Introduction as inequality~\eqref{lambda p bound in
intro}.

\begin{corollary}\label{coro:gamma p by lambda}
For every $p\in [2,\infty)$, every $n\in \N$ and every $n$ by $n$
symmetric stochastic matrix $A$ we have
$$
\gamma\!\left(A,\|\cdot\|_{\ell_p}^2\right)\lesssim \frac{p}{1-\lambda(A)^{2/p}}.
$$
\end{corollary}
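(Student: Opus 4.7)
The plan is to obtain Corollary~\ref{coro:gamma p by lambda} as a direct specialization of Theorem~\ref{thm:smoothness mtype}, exactly in parallel with how Theorem~\ref{thm:p bound gamma} was deduced from Corollary~\ref{coro:pass to lambda2}. The only difference is that here we want to keep the bound in terms of $\lambda(A)$, so we appeal to Theorem~\ref{thm:smoothness mtype} directly rather than to its corollary involving $\lambda_2(A)$.

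Concretely, first I would identify $\ell_p$ as a complex interpolation space with Hilbert endpoint: for $p\in [2,\infty)$ one has the standard identity
$$
\ell_p=[\ell_2,\ell_\infty]_\theta,\qquad \theta=\frac{2}{p},
$$
so the hypothesis of Theorem~\ref{thm:smoothness mtype} is satisfied with $(H,Z)=(\ell_2,\ell_\infty)$ and $X=\ell_p$. Second, I would take the smoothness exponent $q=2$: by the classical Fiegel--Ball--Carlen--Lieb computation (cited in the excerpt as~\cite{Fiegel76,BCL}) $\ell_p$ has modulus of smoothness of power type $2$ with
$$
S_2(\ell_p)=\sqrt{p-1},\qquad\text{hence}\qquad S_2(\ell_p)^2\le p.
$$
Third, I would simply plug these values of $\theta$, $q$ and $S_q(X)$ into the conclusion~\eqref{eq:smoothness interpolation} of Theorem~\ref{thm:smoothness mtype}, which yields
$$
\gamma\!\left(A,\|\cdot\|_{\ell_p}^2\right)\lesssim \frac{S_2(\ell_p)^2}{\bigl(1-\lambda(A)^{2/p}\bigr)^{2/2}}\lesssim \frac{p}{1-\lambda(A)^{2/p}},
$$
which is the desired inequality.

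Since Theorem~\ref{thm:smoothness mtype} is taken as proved, there is no genuine obstacle here; the content lies entirely in recognizing that the three ingredients (the interpolation representation of $\ell_p$, its power-type-$2$ smoothness, and the correct value of $S_2(\ell_p)$) all fit Theorem~\ref{thm:smoothness mtype} with $q=2$ and $\theta=2/p$. The only minor point worth flagging is that in the proof of Theorem~\ref{thm:p bound gamma} one passed through Corollary~\ref{coro:pass to lambda2} in order to convert $\lambda(A)^\theta$ into $\lambda_2(A)$, which introduces the elementary estimate $1-((1+x)/2)^\theta\ge \theta(1-x)/2$ and produces the extra factor of $\theta^{-2/q}=p^2/4$ that appears in~\eqref{eq:p^2 therem}; for the present statement we skip this conversion step, keeping the $\lambda(A)^{2/p}$ form and the single factor of $p$.
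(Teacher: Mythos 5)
Your proposal is correct and is essentially identical to the paper's argument: the paper also obtains Corollary~\ref{coro:gamma p by lambda} by specializing Theorem~\ref{thm:smoothness mtype} directly to $X=\ell_p=[\ell_2,\ell_\infty]_{2/p}$ with $q=2$ and $S_2(\ell_p)=\sqrt{p-1}$, keeping the $\lambda(A)$ form rather than passing to $\lambda_2(A)$. Your remark about where the extra factor of $p$ in Theorem~\ref{thm:p bound gamma} comes from is also accurate.
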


We now proceed to prove Theorem~\ref{thm:smoothness mtype}.

\begin{proof}[Proof of Theorem~\ref{thm:smoothness mtype}] In what follows, given a Banach space $(Y,\|\cdot\|_Y)$ and $n\in \N$ we let $L_2^n(Y)$ denote the Banach space whose underlying vector space is $Y^n$, equipped with the norm
$$
\forall\,y=(y_1,\ldots,y_n)\in Y^n,\quad \|y\|_{L_2^n(Y)}\eqdef \left(\frac{1}{n}\sum_{i=1}^n \|y_i\|_Y^2\right)^{\frac12}.
$$
If $H$ is a Hilbert space with scalar product $\langle \cdot,\cdot\rangle_H$, then $L_2^n(H)$ is a Hilbert space whose scalar product is always understood to be given by
$
\langle x,y\rangle_{L_2^n(H)}= \frac{1}{n}\sum_{i=1}^n \langle x_i,y_i\rangle_H
$
for every $x,y\in H$.

Let $e_1(A),\ldots,e_n(A)\in L_2^n(\R)$ be an orthonormal eigenbasis of $A$ with $e_1(A)=(1,\ldots,1)$ and $Ae_i(A)=\lambda_i(A)e_i(A)$ for all $i\in \{1,\ldots,n\}$. Define an operator $T:L_2^n(\R)\to L_2^n(\R)$ by setting for every $x\in \R^n$ and $i\in \{1,\ldots,n\}$,
$$
(Tx)_i\eqdef \sum_{j=1}^n a_{ij}\left(x_j-\frac{1}{n}\sum_{k=1}^n x_k\right).
$$
Equivalently,
\begin{equation}\label{eq:spectral T}
Tx=\sum_{i=2}^n \lambda_i(A)\langle x,e_i(A)\rangle_{L_2^n(\R)}e_i(A).
\end{equation}
The operator $T\otimes I_Y:L_2^n(Y)\to L_2^n(Y)$, where $I_Y$ denotes the identity on $Y$, is then given by
\begin{equation}\label{eq:tensor formula}
\forall\, i\in \{1,\ldots,n\},\qquad ((T\otimes I_Y)y)_i\eqdef \sum_{j=1}^n a_{ij}\left(y_j-\frac{1}{n}\sum_{k=1}^n y_k\right).
\end{equation}

Recalling~\eqref{eq:def lambda} and~\eqref{eq:spectral T}, we have
the following operator norm bounds.
\begin{equation}\label{eq:hilbertian operator norm}
\left\|T\otimes I_H\right\|_{L_2^n(H)}=\|T\|_{L_2^n(\R)\to L_2^n(\R)}=\lambda(A).
\end{equation}
The norm of $T\otimes I_Z:L_2^n(Z)\to L_2^n(Z)$ can be bounded crudely by using the fact that $A$ is a symmetric stochastic matrix. Indeed, for every $z\in L_2^n(Z)$ we have
\begin{align}
\nonumber\|Tz\|_{L_2^n(Z)}^2&=\frac{1}{n}\sum_{i=1}^n \left\|\sum_{j=1}^n a_{ij}\left(z_j-\frac{1}{n}\sum_{k=1}^n z_k\right) \right\|_Z^2 \\
&\le \frac{1}{n^2}\sum_{i=1}^n\sum_{j=1}^n\sum_{k=1}^n a_{ij}\|z_j-z_k\|_Z^2\nonumber\\
&\le \frac{2}{n^2}\sum_{i=1}^n\sum_{j=1}^n\sum_{k=1}^n a_{ij}\left(\|z_j\|_Z^2+\|z_k\|_Z^2\right)=4\|z\|_{L_2^n(Z)}^2\label{eq:norm T1}.
\end{align}

An interpolation of~\eqref{eq:hilbertian operator norm} and~\eqref{eq:norm T1} (see~\cite{BL76}) shows that
\begin{equation}\label{eq:interpolated bound}
\|T\otimes I_X\|_{L_2^n(X)\to L_2^n(X)}\le 2^{1-\theta}\lambda(A)^{\theta}.
\end{equation}
For every $x\in L_2^n(X)$ let $\overline{x}\in L_2^n(X)$ be the
vector whose $i$th coordinate equals $x_i-S(x)$, where
$S(x)=\frac{1}{n}\sum_{k=1}^n x_k$. Then
\begin{eqnarray}\label{eq:overline upper}
\nonumber\left\|(I_{L_2^n(X)}-T\otimes I_X)\overline{x}\right\|_{L_2^n(X)}^2&\stackrel{\eqref{eq:tensor formula}}{=}&\frac{1}{n}\sum_{i=1}^n\left\|\sum_{j=1}^n a_{ij}(x_i-x_j)\right\|_X^2\\&\le& \frac{1}{n}\sum_{i=1}^n\sum_{j=1}^n a_{ij}\|x_i-x_j\|_X^2.
\end{eqnarray}
At the same time,
\begin{eqnarray}\label{eq:overline lower}
\nonumber\left\|(I_{L_2^n(X)}-T\otimes I_X)\overline{x}\right\|_{L_2^n(X)}&\ge&
\|\overline{x}\|_{L_2^n(X)}-\left\|(T\otimes I_X)\overline{x}\right\|_{L_2^n(X)}\\
&\stackrel{\eqref{eq:interpolated bound}}{\ge}& \left(1-2^{1-\theta}\lambda(A)^{\theta}\right)\|\overline{x}\|_{L_2^n(X)}.
\end{eqnarray}
Hence, if we suppose that
\begin{equation}\label{eq:suppsoe}
 2^{1-\theta}\lambda(A)^{\theta}< 1 \iff \lambda(A)<\frac{1}{2^{(1-\theta)/\theta}},
\end{equation}
then
\begin{multline*}
\frac{1}{n^2}\sum_{i=1}^n\sum_{j=1}^n \|x_i-x_j\|_X^2\le \nonumber \frac{2}{n^2}\sum_{i=1}^n\sum_{j=1}^n \left(\left\|x_i-S(x)\right\|_X^2+\left\|x_j-S(x)\right\|_X^2\right)
\\=4\|\overline{x}\|_{L_2^n(X)}^2\stackrel{\eqref{eq:overline upper}
\wedge\eqref{eq:overline lower}}{\le}
\frac{4}{\left(1-2^{1-\theta}\lambda(A)^{\theta}\right)^2}\cdot
\frac{1}{n}\sum_{i=1}^n\sum_{j=1}^n a_{ij}\|x_i-x_j\|_X^2.
\end{multline*}
Equivalently,
\begin{equation}\label{eq:gamma bound theta}
\gamma\!\left(A,\|\cdot\|_X^2\right)\le \frac{4}{\left(1-2^{1-\theta}\lambda(A)^{\theta}\right)^2}.
\end{equation}

We shall now apply a trick that was used by Pisier in~\cite{Pis10-interpolation}, where it is attributed to V. Lafforgue: we can ensure that the condition~\eqref{eq:suppsoe} holds true if we work with a large enough power of $A$. We will then be able to return back to an inequality that involves $A$ rather than its power by using Markov type through Corollary~\ref{cor:smoothness}. Specifically, define
\begin{equation}\label{eq:our m power}
m\eqdef \left\lceil  \frac{(2-\theta)\log 2}{\theta\log(1/\lambda(A))}\right\rceil.
\end{equation}
This choice of $m$ ensures that
$$
2^{1-\theta} \lambda(A^m)^{\theta}=2^{1-\theta}\lambda(A)^{m\theta}\le \frac12,
$$
so we may apply~\eqref{eq:gamma bound theta} with $A$ replaced by $A^m$ to get the estimate
\begin{equation}\label{eq:mth power bound gamma}
\gamma(A^m,\|\cdot\|_X^2)\le 16.
\end{equation}
An application of Corollary~\ref{cor:smoothness} with $p=2$ now implies that
\begin{multline}\label{eq:use mtype}
\gamma(A,\|\cdot\|_X^2)\stackrel{\eqref{eq:gamma comparison
smoothness}\wedge\eqref{eq:mth power bound gamma}} {\lesssim}
m^{\frac{2}{q}}S_q(X)^2\\\stackrel{\eqref{eq:our m power}}{\lesssim}
S_q(X)^2\left(1+\frac{1}{\theta\log\left(1/\lambda(A)\right)}\right)^{\frac{2}{q}}\lesssim
\frac{S_q(X)^2}{\left(1-\lambda(A)^{\theta}\right)^{2/q}},
\end{multline}
where in~\eqref{eq:use mtype} we used the elementary inequality $$\forall\, x\in [0,1],\qquad 1+\frac{1}{\log(1/x)}\le \frac{2}{1-x},$$
which holds true because $\exp\left(-\frac{1-x}{1+x}\right)\ge 1- \frac{1-x}{1+x}=\frac{2x}{1+x}\ge x$. \end{proof}

\subsection{Ramanujan graphs and Alon--Roichman graphs}\label{sec:p
index} Given a connected $n$-vertex graph $G=(\n,E_G)$, let
$d_G(\cdot,\cdot)$ denote the shortest path metric that $G$ induces
on $\n$. The diameter of the metric space $(\n,d_G)$ will be denoted
below by $\diam(G)$. Suppose that $d\in \{3,\ldots,n-1\}$ and that
$G$ is $d$-regular, i.e., for every $i\in \n$ the number of $j\in
\n$ such that $\{i,j\}\in E_G$ equals $d$. The normalized adjacency
matrix of $G$ will be denoted $A_G$, i.e.,
$(A_G)_{ij}=\frac{1}{d}\1_{\{i,j\}\in E_G}$ for every $i,j\in \n$.
Thus $A_G$ is an $n$ by $n$ symmetric stochastic matrix. We denote
$\lambda_i(G)=\lambda_i(A_G)$ for every $i\in \n$, and we
correspondingly set $\lambda(G)=\lambda(A_G)$ and
$\gamma(G,d_X^q)=\gamma(A_G,d_X^q)$ for every metric space $(X,d_X)$
and $q\in [1,\infty)$.

Setting $c_X(G)\eqdef c_X(\n,d_G)$, an important idea of Linial,
London and Rabinovich~\cite{LLR} relates $\gamma(G,d_X^q)$ to a
lower bound on $c_X(G)$ as follows. Let $f:\n\to X$ be a nonconstant
function and apply~\eqref{eq:def gamma} with $A=A_G$ and $x_i=f(i)$
for every $i\in \n$, thus obtaining the estimate
\begin{eqnarray}\label{eq:LLRp}
\nonumber\frac{1}{n^2}\sum_{i=1}^n\sum_{j=1}^n d_X(f(i),f(j))^q&\le& \frac{\gamma(G,d_X^q)}{dn}
\sum_{\substack{(i,j)\in \n^2\\\{i,j\}\in E_G}} d_X(f(i),f(j))^q\\&\le&
\gamma(G,d_X^q)\|f\|_{\Lip}^q.
\end{eqnarray}
Denoting $Av_X^{(q)}(G)\eqdef Av_X^{(q)}(\n,d_G)$, it follows
from~\eqref{eq:LLRp} that
\begin{equation}\label{eq:Avp lower bound gamma}
c_X(G)\ge Av_X^{(q)}(G)\ge \frac{1}{\gamma(G,d_X^q)^{1/q}} \left(\frac{1}{n^2}
\sum_{i=1}^n\sum_{j=1}^n d_G(i,j)^q\right)^{\frac1{q}}.
\end{equation}

For notational simplicity we will assume from now on that $G$ is a vertex-transitive graph, since in this case we have
\begin{equation}\label{eq:p norm vertex transitive}
\frac{\diam(G)}{2^{1+1/q}}\le \left(\frac{1}{n^2}\sum_{i=1}^n\sum_{j=1}^n d_G(i,j)^q\right)^{\frac1{q}}\le \diam(G).
\end{equation}
One verifies the validity of~\eqref{eq:p norm vertex transitive} by
arguing as in the proof of Proposition~3.4 of~\cite{NR09}: for every
$i\in \n$ and $r\in (0,\infty)$ let $B_G(i,r)=\{j\in \n:\
d_G(i,j)\le r\}$ be the closed ball of radius $r$ centered at $i$ in
the metric $d_G$. Since $G$ is vertex-transitive, the cardinality of
$B_G(i,r)$ is independent of $i$. Hence, if we let $r_*$ be the
minimum $r\in \N$ such that $|B_G(i,r)|>n/2$ for every $i\in \n$,
then for every $i\in \n$ we have $|\n\setminus B_G(i,r_*-1)|\ge
n/2$. In other words, for every $i\in \n$ there are at least $n/2$
vertices $j\in \n$ with $d_G(i,j)\ge r_*$. Hence
$\sum_{i=1}^n\sum_{j=1}^n d_G(i,j)^q\ge n^2r_*^q/2$. At the same
time, by the definition of $r_*$ we have $B_G(i,r_*)\cap
B_G(j,r_*)\neq \emptyset$ for every $i,j\in \n$, and therefore
$\diam(G)\le 2r_*$. This proves the leftmost inequality in~\ref{eq:p
norm vertex transitive} (the remaining inequality in~\eqref{eq:p
norm vertex transitive} is trivial).

By combining~\eqref{eq:Avp lower bound gamma} and~\eqref{eq:p norm
vertex transitive}, Matou\v{s}ek's argument in~\cite{Mat97} deduces
from his bound~\eqref{eq:matousek extrapolation} that if
$G=(\n,E_G)$ is a vertex-transitive graph such that $\lambda_2(G)$
is bounded away from $1$ by a universal constant then for every
$p\in [2,\infty)$ we have
\begin{equation}\label{eq:Matousek cp bound}
c_p(G)\gtrsim \frac{\diam(G)}{p}.
\end{equation}
Denote $p(G)\eqdef p(\n,d_G)$, where we recall that in
Section~\ref{sec:embedding application} we defined for a separable
metric space $(X,d_X)$ the quantity $p(X,d_X)$ (or simply $p(X)$ if
the metric is clear from the context) to be the infimum over those
$p\in [2,\infty]$ for which $c_p(X)\le 10$. It follows
from~\eqref{eq:Matousek cp bound} that $p(G)\gtrsim \diam(G)$ (still
under the assumption that $\lambda_2(G)$ is bounded away from $1$).
Using Corollary~\ref{coro:gamma p by lambda}, we now show that it is
possible to improve over this estimate.

\begin{proposition}\label{prop:better matousek lambda}
Fix $p\in [2,\infty)$ and let $G=(\n,E_G)$ be a vertex-transitive
graph. Then
$$
p\le \log\left(\frac{1}{\lambda(G)}\right)\implies c_p(G)\gtrsim \frac{\diam(G)}{\sqrt{p}},
$$
and
$$
p\ge \log\left(\frac{1}{\lambda(G)}\right)\implies c_p(G)\gtrsim \frac{\diam(G)}{p}
\sqrt{\log\left(\frac{1}{\lambda(G)}\right)}.
$$
\end{proposition}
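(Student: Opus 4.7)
The plan is to apply the Linial--London--Rabinovich argument (inequality~\eqref{eq:Avp lower bound gamma}) with exponent $q=2$ and $X=\ell_p$, so that Corollary~\ref{coro:gamma p by lambda} becomes available as the relevant upper bound on the nonlinear spectral gap. Combining these yields
$$
c_p(G)\ge \frac{1}{\gamma\!\left(G,\|\cdot\|_{\ell_p}^2\right)^{1/2}}\left(\frac{1}{n^2}\sum_{i=1}^n\sum_{j=1}^n d_G(i,j)^2\right)^{\!1/2}\gtrsim \diam(G)\sqrt{\frac{1-\lambda(G)^{2/p}}{p}},
$$
where the average-distance factor is handled by invoking~\eqref{eq:p norm vertex transitive} (with $q=2$), which uses the vertex-transitivity of $G$.

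The remaining work is purely an analysis of the elementary function $p\mapsto 1-\lambda(G)^{2/p}=1-e^{-(2/p)\log(1/\lambda(G))}$. Writing $u\eqdef (2/p)\log(1/\lambda(G))$, I would use the elementary two-sided estimate $1-e^{-u}\asymp \min\{u,1\}$ for $u\in [0,\infty)$. In the regime $p\le \log(1/\lambda(G))$ we have $u\ge 2$, hence $1-\lambda(G)^{2/p}\gtrsim 1$, yielding the first inequality $c_p(G)\gtrsim \diam(G)/\sqrt{p}$. In the regime $p\ge \log(1/\lambda(G))$ we have $u\le 2$, so $1-\lambda(G)^{2/p}\gtrsim \log(1/\lambda(G))/p$, which gives the second inequality $c_p(G)\gtrsim \diam(G)\sqrt{\log(1/\lambda(G))}/p$ after taking the square root.

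There is no genuine obstacle here: the argument is a direct chaining of \eqref{eq:Avp lower bound gamma}, \eqref{eq:p norm vertex transitive}, and Corollary~\ref{coro:gamma p by lambda}, all of which have been established. The only point worth flagging is that one must use these inputs with $q=2$ rather than $q=p$ in the Linial--London--Rabinovich inequality: this is precisely what makes the bound stronger than Matou\v{s}ek's \eqref{eq:Matousek cp bound}, since Corollary~\ref{coro:gamma p by lambda} controls $\gamma(A,\|\cdot\|_{\ell_p}^2)$ by $p/(1-\lambda(A)^{2/p})$, yielding the $\sqrt{p}$ (rather than $p$) denominator in the first regime and the extra $\sqrt{\log(1/\lambda(G))}$ gain in the second.
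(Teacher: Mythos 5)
Your proposal is correct and follows exactly the paper's own argument: combine the Linial--London--Rabinovich bound~\eqref{eq:Avp lower bound gamma} with $q=2$, the vertex-transitivity estimate~\eqref{eq:p norm vertex transitive}, and Corollary~\ref{coro:gamma p by lambda} to get $c_p(G)\gtrsim \diam(G)\sqrt{(1-\lambda(G)^{2/p})/p}$, and then split into the two regimes via $1-e^{-u}\asymp\min\{u,1\}$. The only difference is that the paper states the single displayed inequality and leaves the elementary case analysis implicit, whereas you spell it out.
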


\begin{proof}
By combining~\eqref{eq:Avp lower bound gamma} and~\eqref{eq:p norm
vertex transitive} (for $q=2$) with Corollary~\ref{coro:gamma p by
lambda} we see that
\begin{equation*}
c_p(G)\ge Av_X^{(2)}(G)\gtrsim \frac{\sqrt{1-\lambda(G)^{2/p}}}{\sqrt{p}}\diam(G).\qedhere
\end{equation*}
\end{proof}

By the definition of $p(G)$, the following corollary is a formal
consequence of Proposition~\eqref{prop:better matousek lambda}.

\begin{corollary}
Let $G=(\n,E_G)$ be a vertex-transitive graph. Then
$$
\lambda(G)\le e^{-\diam(G)^2}\implies p(G)\gtrsim \diam(G)^2,
$$
and
$$
\lambda(G)\ge e^{-\diam(G)^2}\implies p(G)\gtrsim \diam(G)\sqrt{\log\left(\frac{1}{\lambda(G)}\right)}.
$$
\end{corollary}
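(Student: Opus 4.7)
The plan is to invert the two lower bounds from Proposition~\ref{prop:better matousek lambda} (the preceding ``better matousek lambda'' proposition) by unpacking the definition $p(G) = \inf\{p\in [2,\infty]:\ c_p(G)\le 10\}$. Write $L = \log(1/\lambda(G))$. The Proposition gives two universal constants $K_1,K_2\in (0,\infty)$ such that
\[
c_p(G)\ge K_1\cdot \frac{\diam(G)}{\sqrt{p}}\quad\text{for } p\le L,\qquad c_p(G)\ge K_2\cdot \frac{\diam(G)\sqrt{L}}{p}\quad\text{for } p\ge L.
\]
So to prove $p(G)\gtrsim B$ for some target quantity $B$ in terms of $\diam(G)$ and $L$, it suffices to show that for all $p<cB$ (with a small absolute constant $c$) the appropriate lower bound forces $c_p(G)>10$.

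For the first assertion, assume $\lambda(G)\le e^{-\diam(G)^2}$, so that $L\ge \diam(G)^2$. Then every $p\le \diam(G)^2$ automatically satisfies $p\le L$, so the first bound of the Proposition applies and gives $c_p(G)\ge K_1\diam(G)/\sqrt{p}$. Requiring this to be at most $10$ forces $p\ge (K_1/10)^2\diam(G)^2$. Thus any $p$ with $c_p(G)\le 10$ obeys $p\gtrsim \diam(G)^2$, yielding $p(G)\gtrsim \diam(G)^2$.

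For the second assertion, assume $\lambda(G)\ge e^{-\diam(G)^2}$, so $L\le \diam(G)^2$; we want $p(G)\gtrsim \diam(G)\sqrt L$. Split according to the regime of $p$. If $p\ge L$, the second bound of the Proposition gives $c_p(G)\ge K_2\diam(G)\sqrt{L}/p$, so $c_p(G)\le 10$ forces $p\ge (K_2/10)\diam(G)\sqrt L$, which is exactly the desired bound. If instead $p\le L$, then the first bound gives $c_p(G)\ge K_1\diam(G)/\sqrt p$, and $c_p(G)\le 10$ forces $p\ge (K_1/10)^2\diam(G)^2$. Combined with $p\le L\le \diam(G)^2$, this is only consistent when $L\asymp \diam(G)^2$ (up to the absolute constant $(K_1/10)^2$), and in that case the lower bound $p\gtrsim \diam(G)^2$ dominates $\diam(G)\sqrt L$, so the required inequality still holds. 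In both regimes, every $p$ with $c_p(G)\le 10$ satisfies $p\gtrsim \diam(G)\sqrt L$, hence $p(G)\gtrsim \diam(G)\sqrt{L}=\diam(G)\sqrt{\log(1/\lambda(G))}$.

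No step presents a real obstacle; the whole argument is a case analysis using monotonicity and the two inequalities already established in Proposition~\ref{prop:better matousek lambda}. The only mild care needed is tracking which regime ($p\le L$ vs.\ $p\ge L$) is active and verifying that the boundary case $L\asymp \diam(G)^2$ of Case 2 still yields the claimed bound, which it does because $\diam(G)\sqrt L\le \diam(G)^2$ throughout.
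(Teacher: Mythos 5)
Your proof is correct and follows the approach the paper intends when it calls the corollary a ``formal consequence'' of Proposition~\ref{prop:better matousek lambda}: unwind the definition of $p(G)$ and run the two lower bounds on $c_p(G)$ through the two regimes $p\lessgtr\log(1/\lambda(G))$. One small streamlining for the sub-case $p\le L$ of the second assertion: the digression about $L\asymp\diam(G)^2$ is unnecessary, since $L\le\diam(G)^2$ gives $\diam(G)^2\ge\diam(G)\sqrt{L}$ directly, so the deduced bound $p\gtrsim\diam(G)^2$ already implies $p\gtrsim\diam(G)\sqrt{L}$ without any consistency analysis.
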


For every $n\in \N$ and $d\in \{3,\ldots,n-1\}$, if $G=(\n,E_G)$ is
a $d$-regular graph then by~\cite{Chu89} we have
\begin{equation*}\label{eq:chung}
\frac{\log n}{\log d}\lesssim \diam(G)\le 1+\frac{\log n}{\log(1/\lambda(G))}.
\end{equation*}
Consequently, if $1/\lambda(G)$ is at least $d^c$ for some universal
constant $c>0$ then $\diam(G)\asymp \log_d n$. This happens in
particular when $G$ is a Ramanujan graph, i.e., $\lambda(G)\le
2\sqrt{d-1}/d$. Such graphs have been constructed
in~\cite{LPS,Mar88}. It is natural to ask for the asymptotic
evaluation of $c_p(G)$ when $G$ is an $n$-vertex $d$-regular
Ramanujan graph. While this question remains open,
Proposition~\ref{prop:ramanujan} below contains a lower bound on
$c_p(G)$ that improves over Matou\v{s}ek's bound.

\begin{proposition}\label{prop:ramanujan}
Fix $n\in \N$ and $d\in \{3,\ldots,n-1\}$. Suppose that $G=(\n,E_G)$
is a Ramanujan graph. Then
\begin{equation}\label{eq:ramanujan1}
2\le p\le \log d\implies c_p(G)\gtrsim \frac{\log n}{\sqrt{p}\log d}.
\end{equation}
\begin{equation}\label{eq:ramanujan2}
p\ge \log d\implies c_p(G)\gtrsim \frac{\log n}{p\sqrt{\log d}}.
\end{equation}
\end{proposition}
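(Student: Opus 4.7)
The plan is to obtain Proposition~\ref{prop:ramanujan} as a direct specialization of Proposition~\ref{prop:better matousek lambda} to a Ramanujan graph, together with the standard asymptotic identity $\diam(G)\asymp \log n/\log d$. We may assume $d$ is at least a sufficiently large universal constant, since small values of $d$ are absorbed into the implicit constants.

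First I would record two numerical consequences of the Ramanujan condition $\lambda(G)\le 2\sqrt{d-1}/d$. On the one hand, $\lambda(G)\le 2/\sqrt{d}$, so
\[
\log\!\left(\frac{1}{\lambda(G)}\right)\asymp \log d.
\]
On the other hand, the diameter of any connected $d$-regular $n$-vertex graph satisfies $\diam(G)\gtrsim \log_d n$ by a trivial volume count ($|B_G(i,r)|\le 1+d(d-1)+\cdots+d(d-1)^{r-1}$), while the upper bound $\diam(G)\le 1+\log n/\log(1/\lambda(G))$ recalled just before the proposition, combined with the estimate above, yields $\diam(G)\lesssim \log n/\log d$. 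Hence $\diam(G)\asymp \log n/\log d$.

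Next I would invoke Proposition~\ref{prop:better matousek lambda}. The Ramanujan graphs constructed in~\cite{LPS,Mar88} are Cayley graphs, hence vertex-transitive, so the hypothesis of that proposition is satisfied. In the regime $2\le p\le \log d$ we have $p\lesssim \log(1/\lambda(G))$, so the first clause of Proposition~\ref{prop:better matousek lambda} gives
\[
c_p(G)\gtrsim \frac{\diam(G)}{\sqrt{p}}\asymp \frac{\log n}{\sqrt{p}\,\log d},
\]
which is~\eqref{eq:ramanujan1}. In the regime $p\ge \log d$ we have $p\gtrsim \log(1/\lambda(G))$, so the second clause gives
\[
c_p(G)\gtrsim \frac{\diam(G)}{p}\sqrt{\log\!\left(\frac{1}{\lambda(G)}\right)}\asymp \frac{\log n}{p\log d}\cdot\sqrt{\log d}=\frac{\log n}{p\sqrt{\log d}},
\]
which is~\eqref{eq:ramanujan2}.

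There is essentially no substantive obstacle; the only point that requires a moment's care is matching the two regimes. Proposition~\ref{prop:better matousek lambda} splits at the threshold $p=\log(1/\lambda(G))$, whereas Proposition~\ref{prop:ramanujan} splits at $p=\log d$. Because these thresholds differ only by a universal multiplicative constant, any $p$ that lies between them can be handled by either clause with only a constant-factor loss, which is absorbed by the $\gtrsim$ notation.
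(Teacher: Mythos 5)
Your regime analysis and the arithmetic identifying $\log(1/\lambda(G))\asymp\log d$ and $\diam(G)\asymp\log n/\log d$ are correct, and the way you reconcile the two thresholds is fine. The problem is the route through Proposition~\ref{prop:better matousek lambda}: that proposition carries a vertex-transitivity hypothesis, coming from~\eqref{eq:p norm vertex transitive}, while Proposition~\ref{prop:ramanujan} is stated for an arbitrary Ramanujan graph, i.e., any connected $d$-regular graph with $\lambda(G)\le 2\sqrt{d-1}/d$. You try to dispose of the hypothesis by noting that the Ramanujan graphs of~\cite{LPS,Mar88} are Cayley graphs, but that only covers those particular constructions, not all graphs satisfying the spectral bound; there is no reason a Ramanujan graph should be vertex-transitive. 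So, as written, your argument proves a strictly weaker statement than Proposition~\ref{prop:ramanujan}.

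The paper avoids this by not going through the diameter at all. It combines~\eqref{eq:Avp lower bound gamma} (with $q=2$) and Corollary~\ref{coro:gamma p by lambda} to get
$$
c_p(G)\gtrsim \frac{\sqrt{1-\lambda(G)^{2/p}}}{\sqrt{p}}\left(\frac{1}{n^2}\sum_{i=1}^n\sum_{j=1}^n d_G(i,j)^2\right)^{1/2},
$$
and then lower-bounds the second factor by $\log_d n$ using the elementary fact, valid for \emph{every} connected $n$-vertex $d$-regular graph (a volume count shows a constant fraction of pairs are at distance $\gtrsim\log_d n$; see~\cite{Mat97}), that $\frac1{n^2}\sum_{i,j} d_G(i,j)^2\gtrsim(\log_d n)^2$. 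Substituting $\lambda(G)\le 2/\sqrt{d}$ and splitting into the two regimes as you did then finishes the proof without any transitivity assumption. Your write-up would be fixed by replacing the appeal to Proposition~\ref{prop:better matousek lambda} with the direct use of~\eqref{eq:Avp lower bound gamma}, Corollary~\ref{coro:gamma p by lambda}, and the Matou\v{s}ek average-distance bound.
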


\begin{proof}
By combining~\eqref{eq:Avp lower bound gamma} and~\eqref{eq:p norm
vertex transitive} with Corollary~\ref{coro:gamma p by lambda} we
see that
\begin{eqnarray*}\label{eq:use Ramanujan}
\nonumber c_p(G)&\gtrsim&
\frac{\sqrt{1-\lambda(G)^{2/p}}}{\sqrt{p}}
\left(\frac{1}{n^2}\sum_{i=1}^n\sum_{j=1}^n
d_G(i,j)^2\right)^{\frac12}\\&\gtrsim&
\frac{\sqrt{1-\left(2/\sqrt{d}\right)^{2/p}}}{\sqrt{p}}\log_d
n,
\end{eqnarray*}
where we used the fact that $\frac1{n^2}\sum_{i=1}^n\sum_{j=1}^n
d_G(i,j)^2\gtrsim (\log_d n)^2$ and that $\lambda(G)\le 2/\sqrt{d}$.
The latter bound uses the fact that $G$ is a Ramanujan graph (in
fact, weaker bounds on $\lambda(G)$ suffice for our purposes), and
the former bound holds true for any connected $n$-vertex $d$-regular
graph (see~\cite{Mat97} for a simple proof of this).
\end{proof}


If $G$ is a uniformly random $n$-vertex $d$-regular graph then
by~\cite{BS87} $\lambda(G)\le 2/\sqrt[4]{d}$ with high probability
(for the best known bound on $\lambda(G)$ when $G$ is a random
$d$-regular graph, see~\cite{Fri08}). By arguing identically to the
proof of Proposition~\ref{prop:ramanujan},  we see that with high
probability~\eqref{eq:ramanujan1} and~\eqref{eq:ramanujan2} hold
true for such $G$, implying Proposition~\ref{prop:better than
matousek pnd}.

Corollary~\ref{coro:gamma p by lambda} also implies new distortion
bounds for Abelian Alon--Roichman graphs~\cite{AR94}. These are
graphs that are obtained from the following random construction. Let
$\Gamma$ be a finite Abelian group, and think of $\Gamma$ as the set
$\n$, equipped with an Abelian group operation. Fix $\e\in (0,1/2)$
and set $k=\left\lceil \frac{3}{\e^2} \log n\right\rceil$. Let
$g_1,\ldots,g_k\in \Gamma$ be chosen independently and uniformly at
random. This induces a random Cayley graph $G$ whose generating
multi-set is $\{g_1,g_1^{-1},\ldots,g_k,g_k^{-1}\}$. As explained
in~\cite{AR94}, with probability that tends to $1$ as $n\to \infty$
the graph $G$ is connected. Note that since $G$ is a Cayley graph it
is vertex-transitive. It follows from~\cite{CM08} that provided $n$
is large enough we have $\lambda(G)\le \e$ with probability at least
$\frac12$. Moreover, by~\cite[Prop.~3.5]{NR09} we have
$\diam(G)\gtrsim (\log n)/(\log 1/\e)$. A substitution of these
estimates into Proposition~\ref{prop:better matousek lambda} shows
that
$$
2\le p\le \log(1/\e)\implies c_p(G)\gtrsim \frac{\log n}{\sqrt{p}\log(1/\e)},
$$
and
$$
p\ge \log(1/\e)\implies c_p(G)\gtrsim \frac{\log n}{p\sqrt{\log(1/\e)}}.
$$

\begin{remark}
We warn that there is some subtlety in the definition of the
parameter $p(X)$ for a separable metric space $(X,d_X)$. Given that
$X$ is isometric to a subset of $\ell_\infty$, it is indeed natural to ask
for the smallest $p\in [2,\infty]$ such that $X$ embeds with bounded
distortion, say, distortion $10$, into $\ell_p$. As an example of an application that was shown to us by Yuval Rabani, one can use the methods of~\cite{KOR00} to prove that subsets of $\ell_p$ admit an efficient approximate nearest neighbor data structure with approximation guarantee $e^{O(p)}$, so the parameter $p(X)$ relates to approximate nearest neighbor search in $X$ (it would be very interesting to determine the correct asymptotic dependence on $p$ here). But, understanding the set of $p\in
[2,\infty)$ for which $X$ admits a bi-Lipschitz embedding into
$\ell_p$ can be subtle. In particular, it is not true that if
$X$ embeds into $\ell_p$ then for every $q>p$ it also embeds into
$\ell_q$. In fact, we have the following estimates for every $n\in
\N$ and $p,q\in (2,\infty)$.
\begin{equation}\label{eq:q<p cotype}
2<q<p\implies c_q\left(\ell_p^n\right)\asymp n^{\frac{1}{q}-\frac{1}{p}},
\end{equation}
and
\begin{equation}\label{eq:FJS}
2<p<q\implies c_q\left(\ell_p^n\right)\asymp_{p,q} n^{\frac{(q-p)(p-2)}{(q-2)p^2}}.
\end{equation}
The asymptotic identity~\eqref{eq:q<p cotype} is a standard
consequence of the fact that $L_q$ has Rademacher cotype $q$ (see
e.g.~\cite{Woj91}). The remarkable asymptotic
identity~\eqref{eq:FJS} is due to~\cite{FJS88} (using a computation
of~\cite{GPP80}). The implicit dependence on $p,q$ in~\eqref{eq:FJS}
is unknown, and it would be of interest to evaluate it up to
a universal constant factor. Observe that the exponent of $n$
in~\eqref{eq:FJS} tends to $(p-2)/p^2>0$ as $q\to \infty$, and
therefore the implicit constant in~\eqref{eq:FJS} must tend to $0$
as $q\to \infty$.
\end{remark}

\subsection{Curved Banach spaces in the sense of Pisier}\label{sec:curved}
Motivated by his work on nonlinear spectral gaps~\cite{Laff08}, V. Lafforgue associated the following modulus to a Banach space $(X,\|\cdot\|_X)$, a modulus has been   investigated extensively by Pisier in~\cite{Pis10-interpolation}. Given $\e\in (0,\infty)$ let $\Delta_X(\e)$ denote the infimum over those $\Delta\in (0,\infty)$ such that for every $n\in \N$, every matrix $T=(t_{ij})\in M_n(\R)$ with $$\|T\|_{L_2^n(\R)\to L_2^n(\R)}\le \e\qquad \mathrm{and}\qquad \|\mathrm{abs}(T)\|_{L_2^n(\R)\to L_2^n(\R)}\le 1,$$ where $\mathrm{abs}(T)\eqdef (|t_{ij}|)$ is the entry-wise absolute value of $T$, satisfies
$$
 \|T\otimes I_X\|_{L_2^n(X)\to L_2^n(X)}\le \Delta.
$$

Pisier introduced the following terminology in~\cite{Pis10-interpolation}: $X$ is said to be {\em curved} if $\Delta_X(\e)<1$ for some $\e\in (0,1)$. $X$ is said to be {\em fully curved} if $\Delta_X(\e)<1$ for all $\e\in (0,1)$, and $X$ is said to be {\em uniformly curved} if $\lim_{\e\to 0} \Delta_X(\e)=0$. It is shown in~\cite{Pis10-interpolation} that if $X$ is either fully curved or uniformly curved then it admits an equivalent uniformly convex norm. A remarkable characterization of Pisier~\cite{Pis10-interpolation} shows that $\Delta_X(\e)\lesssim \e^\alpha$ for some $\alpha\in (0,\infty)$ if and only if $X$ arises from complex interpolation with Hilbert space: formally, this happens if and only if $X$ is isomorphic to a quotient of a subspace of an ultraproduct of $\theta$-Hilbertian Banach spaces for some $\theta\in (0,1)$; we refer to~\cite{Pis10-interpolation} for the definition of these notions. A more complicated structural characterization of uniformly curved spaces (based on real interpolation) is also obtained in~\cite{Pis10-interpolation}.

One can use the above notions to give a generalized abstract treatment of results in the spirit of Theorem~\ref{thm:smoothness mtype}. Fix $\e\in (0,1)$ and suppose that $\Delta_X(\e)<\frac12$. Let $A\in M_n(\R)$ be symmetric and stochastic and let $T=(t_{ij})\in M_n(\R)$ be given as in~\eqref{eq:spectral T}.  By~\eqref{eq:hilbertian operator norm}  we have $\|T\|_{L_2^n(\R)\to L_2^n(\R)}= \lambda(A)$. Moreover, since $\mathrm{abs}(T)=(|a_{ij}-1/n|)$ and $A$ is symmetric and stochastic, it is immediate to check that $\|\mathrm{abs}(T)\|_{L_2^n(\R)\to L_2^n(\R)}\le 2$. By the definition of the modulus $\Delta_X(\cdot)$ we therefore have $\|T\otimes I_X\|_{L_2^n(X)\to L_2^n(X)}\le 2\Delta_X(\lambda(A)/2)$. Define
\begin{equation}\label{eq:our m power pisier}
m\eqdef \left\lceil \frac{\log(1/(2\e))}{\log(1/\lambda(A))}\right\rceil,
\end{equation}
so that $\lambda(A^m)/2=\lambda(A)^m/2\le \e$, and apply the above
reasoning with $A$ replaced by $A^m$. Arguing as
in~\eqref{eq:overline upper} and~\eqref{eq:overline lower}, we
obtain the estimate
$$
\gamma\!\left(A^m,\|\cdot\|_X^2\right)\le \frac{4}{\left(1-2\Delta_X(\e)\right)^2}.
$$
We can now use the notion of Markov type through Lemma~\ref{lem:mtype lower decay gap} to deduce the following statement.
\begin{theorem}\label{thm:curved general}
Fix $\e\in (0,1)$ and let $(X,\|\cdot\|_X)$ be a Banach that satisfies $\Delta_X(\e)<\frac12$. Then for every $n\in \N$ and every symmetric stochastic matrix $A\in M_n(\R)$ we have
\begin{multline*}
\gamma\!\left(A,\|\cdot\|_X^2\right)\\\le
\frac{4}{\left(1-2\Delta_X(\e)\right)^2}\cdot M_2\left(X;\left\lceil
\frac{\log(1/(2\e))}{\log(1/\lambda(A))}\right\rceil\right)^2\cdot\left\lceil
\frac{\log(1/(2\e))}{\log(1/\lambda(A))}\right\rceil .
\end{multline*}
In particular, using Theorem~\ref{thm:npss} and arguing as in the proof of Corollary~\ref{coro:pass to lambda2}, if $X$ has modulus of smoothness of power type $2$ then every symmetric stochastic matrix $A\in M_n(\R)$ satisfies
$$
\gamma\!\left(A,\|\cdot\|_X^2\right)\lesssim_X \frac{1}{1-\lambda_2(A)}.
$$
\end{theorem}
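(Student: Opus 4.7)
The plan is to follow verbatim the template of the proof of Theorem~\ref{thm:smoothness mtype}, except that the interpolation bound on $\|T \otimes I_X\|_{L_2^n(X) \to L_2^n(X)}$ used there is replaced by the defining property of the Lafforgue--Pisier modulus $\Delta_X(\cdot)$. The preparatory discussion immediately before the theorem statement has already performed the key step: with $T$ defined spectrally from $A$ as in~\eqref{eq:spectral T}, we have $\|T\|_{L_2^n(\R) \to L_2^n(\R)} = \lambda(A)$ and $\|\mathrm{abs}(T)\|_{L_2^n(\R) \to L_2^n(\R)} \le 2$, so by definition of $\Delta_X$ the operator $\frac{1}{2}T$ satisfies $\|\frac{1}{2}T \otimes I_X\|_{L_2^n(X) \to L_2^n(X)} \le \Delta_X(\lambda(A)/2)$, i.e.\ $\|T \otimes I_X\|_{L_2^n(X) \to L_2^n(X)} \le 2\Delta_X(\lambda(A)/2)$.

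The next step is the Lafforgue power trick. Define $m$ as in~\eqref{eq:our m power pisier}, so that $\lambda(A^m)/2 = \lambda(A)^m/2 \le \e$, and apply the above bound with $A$ replaced by $A^m$ to get $\|T^{(m)} \otimes I_X\|_{L_2^n(X)\to L_2^n(X)} \le 2\Delta_X(\e) < 1$, where $T^{(m)}$ is the corresponding spectral operator associated with $A^m$. Now repeat the argument of~\eqref{eq:overline upper}--\eqref{eq:overline lower}: given $x\in L_2^n(X)$ and its centering $\overline{x}$ (with $i$-th coordinate $x_i-\frac{1}{n}\sum_k x_k$), one has
\begin{equation*}
\|(I_{L_2^n(X)} - T^{(m)}\otimes I_X)\overline{x}\|_{L_2^n(X)}^2 \le \frac{1}{n}\sum_{i=1}^n\sum_{j=1}^n (A^m)_{ij}\|x_i-x_j\|_X^2,
\end{equation*}
while simultaneously $\|(I_{L_2^n(X)} - T^{(m)}\otimes I_X)\overline{x}\|_{L_2^n(X)} \ge (1-2\Delta_X(\e))\|\overline{x}\|_{L_2^n(X)}$. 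Combining these with the pointwise bound $\frac{1}{n^2}\sum_{i,j}\|x_i-x_j\|_X^2 \le 4\|\overline{x}\|_{L_2^n(X)}^2$ yields
\begin{equation*}
\gamma\!\left(A^m,\|\cdot\|_X^2\right) \le \frac{4}{(1-2\Delta_X(\e))^2}.
\end{equation*}

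The descent from $A^m$ back to $A$ is then exactly Lemma~\ref{lem:mtype lower decay gap} applied with $p=2$: it inflates the bound by a multiplicative factor of $m \cdot M_2(X;m)^2$, delivering the displayed inequality of the theorem. The potentially delicate step here is a bookkeeping one rather than a conceptual one, namely verifying that the $\|\mathrm{abs}(T)\|$ estimate of $2$ interacts cleanly with the way $\Delta_X$ is normalised; but this is precisely what was checked in the paragraph preceding the theorem.

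For the ``in particular'' clause, assume that $X$ has modulus of smoothness of power type $2$, so by Theorem~\ref{thm:npss} (with $q=p=2$) we have $M_2(X;m) \lesssim_X 1$ uniformly in $m$. Plugging this into the main inequality and choosing, for concreteness, some fixed $\e \in (0,1)$ with $\Delta_X(\e)<\frac12$ (which exists because a uniformly smooth space of power type $2$ is uniformly curved) reduces the bound to $\gamma(A,\|\cdot\|_X^2) \lesssim_X m$. Finally, to replace $\lambda(A)$ by $\lambda_2(A)$, run the argument with $A$ replaced by $(I+A)/2$ using~\eqref{eq:I+A/2} and~\eqref{eq:average identity easy gamma}, and conclude using the elementary inequality $1+1/\log(1/x) \le 2/(1-x)$ for $x \in [0,1]$ (already invoked at the end of the proof of Theorem~\ref{thm:smoothness mtype}) that $m \lesssim_\e 1/(1-\lambda_2(A))$, completing the bound.
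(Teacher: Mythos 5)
Your proof is correct and follows essentially the same route as the paper: the bound $\|T\otimes I_X\|\le 2\Delta_X(\lambda(A)/2)$, the power-trick choice of $m$, and the derivation of $\gamma(A^m,\|\cdot\|_X^2)\le 4/(1-2\Delta_X(\e))^2$ are exactly the paper's preparatory discussion preceding Theorem~\ref{thm:curved general}, and the descent to $\gamma(A,\|\cdot\|_X^2)$ via Lemma~\ref{lem:mtype lower decay gap} with $p=2$ is precisely how the theorem is stated. The conversion from $\lambda(A)$ to $\lambda_2(A)$ by passing to $(I+A)/2$ and using~\eqref{eq:I+A/2}, \eqref{eq:average identity easy gamma}, and the elementary inequality $1+1/\log(1/x)\le 2/(1-x)$ is also the intended argument.

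One caution on the ``in particular'' step: your parenthetical assertion that a uniformly smooth space of power type $2$ is uniformly curved is not established in the paper and is not a known fact --- if true in general it would settle Question~\ref{Q:l2 case} for a very large class of spaces, which the paper treats as open and indeed suspects may fail. Fortunately the assertion is also unnecessary: the hypothesis $\Delta_X(\e)<\frac12$ for a fixed $\e\in(0,1)$ is already part of Theorem~\ref{thm:curved general} and the ``in particular'' clause adds the smoothness assumption on top of it, so you should simply reuse the $\e$ given by the theorem rather than attempt to derive its existence from smoothness alone.
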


In conjunction with Theorem~\ref{thm:duality} we deduce the following geometric embedding result for uniformly curved Banach space that can be renormed so as to have modulus of smoothness of power type $2$.

\begin{corollary}
Suppose that $(X,\|\cdot\|_X)$ is a uniformly curved Banach space
that admits an equivalent norm whose modulus of uniform smoothness
has power type $2$. Then for every $x_1,\ldots,x_n\in X$ there exist
$y_1,\ldots, y_n\in \ell_2$ such that
$$
\sum_{i=1}^n\sum_{j=1}^n \|x_i-x_j\|_X^2=\sum_{i=1}^n\sum_{j=1}^n \|y_i-y_j\|_2^2,
$$
and
$$
\forall\, i,j\in \{1,\ldots,n\},\qquad \|y_i-y_j\|_2\lesssim_X \|x_i-x_j\|_X.
$$
\end{corollary}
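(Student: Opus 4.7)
The plan is to combine Theorem~\ref{thm:curved general} with the duality provided by Corollary~\ref{coro:linear gap}, and then rescale the resulting embedding to match the required equality of squared distance sums. First I would pass to the equivalent norm $|||\cdot|||$ whose modulus of uniform smoothness has power type $2$. Since $(X,\|\cdot\|_X)$ is uniformly curved, so is $(X,|||\cdot|||)$ (the modulus $\Delta_{X}(\e)$ is bi-Lipschitz stable under equivalent renormings up to multiplicative perturbations of $\e$), so the hypotheses of Theorem~\ref{thm:curved general} are in force for $|||\cdot|||$. Consequently, there exists $C=C(X)\in (0,\infty)$ such that every $n\in\N$ and every symmetric stochastic $A\in M_n(\R)$ satisfy
$$
\gamma\!\left(A,|||\cdot|||^2\right)\le \frac{C}{1-\lambda_2(A)}.
$$
Because $|||\cdot|||$ and $\|\cdot\|_X$ are equivalent, the same inequality holds for $\|\cdot\|_X$ with a constant $K=K(X)\in (0,\infty)$ in place of $C$.

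Next I would invoke Corollary~\ref{coro:linear gap}, which is logically equivalent to the above Poincaré-type inequality. Fixing any $D\in (K,\infty)$ depending only on $X$, for every $n\in\N$ and every $x_1,\ldots,x_n\in X$ it yields a nonconstant map $f\colon\{x_1,\ldots,x_n\}\to \ell_2$ satisfying
$$
\sum_{i=1}^n\sum_{j=1}^n \|f(x_i)-f(x_j)\|_{\ell_2}^2 \ge \frac{\|f\|_{\Lip}^2}{D}\sum_{i=1}^n\sum_{j=1}^n \|x_i-x_j\|_X^2,
$$
where the Lipschitz constant is computed with respect to $\|\cdot\|_X$. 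This is precisely the $(2,2)$-average distortion embedding statement corresponding, by Theorem~\ref{thm:duality}, to the linear spectral gap bound we just obtained.

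Finally I would normalize: set
$$
c \eqdef \left(\frac{\sum_{i,j=1}^n \|x_i-x_j\|_X^2}{\sum_{i,j=1}^n \|f(x_i)-f(x_j)\|_{\ell_2}^2}\right)^{\!1/2},
\qquad y_i \eqdef c\, f(x_i) \in \ell_2.
$$
By construction $\sum_{i,j}\|y_i-y_j\|_{\ell_2}^2 = \sum_{i,j}\|x_i-x_j\|_X^2$. From the displayed inequality above we also obtain $c \le \sqrt{D}/\|f\|_{\Lip}$, hence for every $i,j$,
$$
\|y_i-y_j\|_{\ell_2} = c\,\|f(x_i)-f(x_j)\|_{\ell_2} \le c\,\|f\|_{\Lip}\,\|x_i-x_j\|_X \le \sqrt{D}\,\|x_i-x_j\|_X \lesssim_X \|x_i-x_j\|_X,
$$
which is the required Lipschitz control. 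There is no serious obstacle here; the two substantive ingredients (the spectral gap bound and the duality) have already been established earlier in the paper, and the proof consists essentially of assembling them and performing the rescaling. The only point that deserves a sentence of care is the verification that the uniform curvature hypothesis survives the passage to an equivalent norm, which is where the full force of ``uniformly curved'' (rather than merely ``curved'') is used.
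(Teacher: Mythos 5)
Your proof is correct and follows exactly the route the paper intends: pass to the equivalent $2$-smooth norm (for which you correctly note that \emph{uniform} curvature, unlike mere curvature, survives the renorming because $\Delta_X$ changes by at most a constant multiplicative factor under equivalent norms), apply the second part of Theorem~\ref{thm:curved general} to get the linear spectral gap bound, dualize through Corollary~\ref{coro:linear gap}, and rescale to force the equality of the squared distance sums. The paper states this corollary without a written proof, and your argument supplies precisely the intended details, including the observation that the ``uniform'' in the hypothesis is what makes the renorming step work.
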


\subsection{An interpolation inequality for nonlinear spectral gaps}\label{sec:interpolation} The {\em modulus of uniform convexity\/} of a Banach space
$(X,\|\cdot\|_X)$ is defined for $\e\in [0,2]$ as
\begin{equation*}
\delta_X(\e)\eqdef\inf\left\{ 1-\frac{\|x+y\|_X}{2}:\ x,y\in B_X\ \wedge\  \|x-y\|_X=\e\right\}.
\end{equation*}
$X$ is said to be {\em uniformly convex} if $\delta_X(\e)>0$ for all
$\e\in (0,2]$. Furthermore, $X$ is said to have modulus of convexity
of power type $p$ if there exists a constant $c\in (0,\infty)$ such
that $\delta_X(\e)\ge c\,\e^p$ for all $\e\in [0,2]$. It is
straightforward to check that in this case necessarily $p\ge 2$. By
Proposition 7 in~\cite{BCL} (see also~\cite{Fiegel76}), $X$ has
modulus of convexity of power type $p$ if and only if there exists a
constant $K\in [1,\infty)$ such that for every $x,y\in X$
\begin{eqnarray}\label{eq:two point convex}
\left(\|x\|_X^p+\frac{1}{K^p}\|y\|_X^p\right)^{\frac{1}{p}}\le
\left(\frac{\|x+y\|_X^p+\|x-y\|_X^p}{2}\right)^{\frac{1}{p}}.
\end{eqnarray}
The infimum over those $K$ for which~\eqref{eq:two point convex}
holds true is called the $p$-convexity constant of $X$, and is
denoted $K_p(X)$. Note that every Banach space satisfies
$K_\infty(X)=1$. Below we shall use the convention $K_p(X)=\infty$ if $p\in [1,2)$. For $1\le q\le 2\le p$, the $p$-convexity constant of $X$ is related to the
$q$-smoothness constant of $X$ (recall~\eqref{eq:two point smooth}) via the
following duality relation~\cite[Lem.~5]{BCL}.
$$
\frac{1}{p}+\frac{1}{q}=1\implies K_p(X)=S_q(X^*).
$$

\begin{theorem}\label{thm:general interpolation min}
Let $(X,Y)$ be a compatible pair of Banach spaces. Fix $\theta\in
[0,1]$ and consider the complex interpolation space
$Z=[X,Y]_\theta$. Fix also $p,q\in [1,\infty]$ and $r\in [1,2]$.
Then for every $n\in \N$ and every $n$ by $n$ symmetric stochastic
matrix $A$ we have
\begin{multline*}
\frac{\gamma(A,\|\cdot\|_Z^s)}{c^s\left(s^{\frac{s}{r}}+S_r(Z)^s\right)}\\\le \left(\min\left\{\frac{(9K_p(X))^{p}\gamma(A,\|\cdot\|_X^p)}{\theta},
\frac{(9K_q(Y))^{q}\gamma(A,\|\cdot\|_Y^q)}{1-\theta}\right\}\right)^{\frac{s}{r}},
\end{multline*}
where $c\in (0,\infty)$ is a universal constant and  $s\in
[2,\infty]$ is given by
$$
\frac{1}{s}=\frac{\theta}{p}+\frac{1-\theta}{q}.
$$
\end{theorem}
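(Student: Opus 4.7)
The plan is to adapt the proof strategy of Theorem~\ref{thm:smoothness mtype}, replacing the Hilbert endpoint $H$ by the pair $(X,Y)$ of uniformly convex endpoints and the outer $L_2^n$ norm by $L_s^n(Z)$. As in~\eqref{eq:spectral T}, define $T:\R^n\to\R^n$ by $(Tx)_i=\sum_{j}a_{ij}(x_j-\tfrac1n\sum_k x_k)$ and, for any Banach space $W$ and any $\tau\in[1,\infty)$, consider the tensor operator $T\otimes I_W$ on the mean-zero subspace $L_\tau^n(W)_0$. The proof has two asymmetric halves: one converts Poincar\'e inequalities for $X$ and $Y$ into operator norm bounds, the other converts the interpolated operator norm bound back into a Poincar\'e inequality for $Z$.

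The core technical step is a \emph{Poincar\'e to operator norm} lemma: if a Banach space $W$ has modulus of convexity of power type $\tau$ and $\gamma(A,\|\cdot\|_W^\tau)\le\Gamma$, then
$$
\|T\otimes I_W\|_{L_\tau^n(W)_0\to L_\tau^n(W)_0}^\tau\le 1-\frac{1}{(9K_\tau(W))^\tau\Gamma}.
$$
To establish this, I would apply the power-type parallelogram inequality~\eqref{eq:two point convex} pointwise with $x_i=(\overline{u}_i+T\overline{u}_i)/2$ and $y_i=(\overline{u}_i-T\overline{u}_i)/2$ (for $\overline{u}=u-S(u)$), producing a $\tau$-th power inequality of the form $\tfrac12(\|\overline u\|_{L_\tau^n(W)}^\tau+\|T\overline u\|_{L_\tau^n(W)}^\tau)\ge \|\tfrac{\overline u+T\overline u}{2}\|_{L_\tau^n(W)}^\tau+K_\tau(W)^{-\tau}\|\tfrac{(I-T)\overline u}{2}\|_{L_\tau^n(W)}^\tau$. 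Combining with the triangle/Jensen estimate $\|(I-T)\overline{u}\|_{L_\tau^n(W)}^\tau\le\tfrac1n\sum_{ij}a_{ij}\|u_i-u_j\|_W^\tau$ and the Poincar\'e assumption controls the ``gradient'' by a multiple of $\|\overline{u}\|_{L_\tau^n(W)}^\tau$, yielding the displayed bound after bookkeeping that produces the constant $9$.

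With this lemma in hand for both endpoints, use the classical identification $[L_p^n(X),L_q^n(Y)]_\theta=L_s^n(Z)$ with $\tfrac1s=\tfrac{\theta}{p}+\tfrac{1-\theta}{q}$ (see~\cite{BL76}) and complex interpolation of the linear operator $T\otimes I_\bullet$ to obtain
$$
\|T\otimes I_Z\|_{L_s^n(Z)_0\to L_s^n(Z)_0}\le\|T\otimes I_X\|_{L_p^n(X)_0}^{\theta}\|T\otimes I_Y\|_{L_q^n(Y)_0}^{1-\theta}.
$$
Raising to the $s$-th power and using the elementary inequality $(1-u)^\alpha\le 1-\alpha u/2$ for $u,\alpha\in[0,1]$ (applied once for the $X$-factor with $\alpha=\theta s/p$, once for the $Y$-factor with $\alpha=(1-\theta)s/q$) converts the product bound into an additive decay bounded by the minimum of $\tfrac{c}{\theta (9K_p(X))^p\gamma(A,\|\cdot\|_X^p)}$ and its $Y$-symmetric analogue; this is the origin of the $\min$ and of the factors $\theta$ and $1-\theta$ in the denominators.

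To close the argument, apply Lafforgue's power trick exactly as in~\eqref{eq:our m power}: choose $m\in\N$ proportional to the relevant quantity $(9K_p(X))^p\gamma(A,\|\cdot\|_X^p)/\theta$ (or its $Y$-counterpart) so that $\|T_m\otimes I_Z\|_{L_s^n(Z)_0}\le\tfrac12$. The easy direction operator norm $\Rightarrow$ Poincar\'e, analogous to~\eqref{eq:overline upper}--\eqref{eq:overline lower} but carried out in $L_s^n(Z)$ and requiring only Jensen and the triangle inequality, then yields $\gamma(A^m,\|\cdot\|_Z^s)\lesssim 1$. Finally, Corollary~\ref{cor:smoothness} applied with $(p,q,X)$ there replaced by $(s,r,Z)$ transports the bound from $A^m$ back to $A$ at the cost of the factor $c^s(s^{s/r}+S_r(Z)^s)m^{s/r}$, which matches the left-hand normalisation of the theorem after substituting the value of $m$. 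The main obstacle is the quantitative Poincar\'e-to-operator-norm lemma in the uniformly convex case; once that is in place with the correct constants, the remainder is interpolation plus the already-proven Markov-type pass of Corollary~\ref{cor:smoothness}.
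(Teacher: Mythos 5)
Your outline --- convert each endpoint Poincar\'e inequality into an operator norm bound on the mean-zero subspace, interpolate, use Lafforgue's power trick to drive the interpolated norm below $\frac12$, and return from $A^m$ to $A$ via Markov type through Corollary~\ref{cor:smoothness} --- is exactly the skeleton of the paper's proof. However, your core ``Poincar\'e to operator norm'' lemma,
$$
\gamma\!\left(A,\|\cdot\|_W^\tau\right)\le \Gamma\implies \left\|T\otimes I_W\right\|_{L_\tau^n(W)_0\to L_\tau^n(W)_0}^\tau\le 1-\frac{1}{(9K_\tau(W))^\tau\Gamma},
$$
is false as stated. Take $n=2$, $W=\R$, $\tau=2$ and $A=\left(\begin{smallmatrix}0&1\\1&0\end{smallmatrix}\right)$. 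Then $\lambda_2(A)=-1$, so by~\eqref{eq:R case} one has $\gamma(A,d_\R^2)=\frac12$ and the right-hand side of the claimed bound equals $1-\frac{2}{81}<1$; but the mean-zero vector $(1,-1)$ is an eigenvector of $A$ with eigenvalue $-1$, so $\|T\otimes I_\R\|_{L_2^2(\R)_0\to L_2^2(\R)_0}=1$. The problem is structural: $\gamma(A,\cdot)$ is insensitive to the negative part of the spectrum of $A$, while the operator norm on $L_\tau^n(W)_0$ is not. The same failure occurs for bipartite expanders, which have $\gamma(A,d_\R^2)=O(1)$ yet $\lambda_n(A)=-1$.

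The missing ingredient is precisely the step your sketch omits at the outset: one first replaces $A$ by $\frac{I+A}{2}$, whose spectrum lies in $[0,1]$, and passes from $\gamma$ to $\gamma_+$ via Lemma~\ref{lem:I+A}, which gives $\gamma_+\!\left(\frac{I+A}{2},\|\cdot\|_X^p\right)\le 2^{2p+1}\gamma(A,\|\cdot\|_X^p)$ and likewise for $Y$. The operator-norm estimate is then supplied by~\cite[Lem.~6.6]{MN-towards}, quoted in~\eqref{eq:quote 6.6}, namely $\|B\otimes I_W\|_{L_t^n(\R)_0\to L_t^n(\R)_0}\le e^{-2/(t(2K_t(W))^t\gamma_+(B,\|\cdot\|_W^t))}$ for symmetric stochastic $B$; combining this for $B=\frac{I+A}{2}$ with the factor $2$ from the projection $\|Q\otimes I_W\|\le 2$ and the factor $2^{2t+1}$ from Lemma~\ref{lem:I+A} is precisely what produces the constant $9$ in the statement. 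Once you insert this step, the rest of your plan --- complex interpolation via $[L_p^n(X),L_q^n(Y)]_\theta=L_s^n(Z)$, the choice of $m$, the bound $\gamma_+\!\left(\left(\frac{I+A}{2}\right)^m,\|\cdot\|_Z^s\right)\le 9^s$ from~\cite[Lem.~6.1]{MN-towards} once the norm drops below $\frac12$, and the return to $A$ via Corollary~\ref{cor:smoothness} --- coincides with the paper's argument.
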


Observe that for every $a,b\in (0,\infty)$ and every $\theta\in
(0,1)$ we have
$$
\min\left\{\frac{a}{\theta},\frac{b}{1-\theta}\right\}\le \frac{2}{\frac{\theta}{a}+\frac{1-\theta}{b}}\le a^\theta b^{1-\theta}.
$$
Consequently, the conclusion of Theorem~\ref{thm:general
interpolation min} implies that
$$
\gamma(A,\|\cdot\|_Z^s)\lesssim_{X,Y,Z,s} \gamma(A,\|\cdot\|_X^p)^{\theta s/r}
\gamma(A,\|\cdot\|_Y^q)^{(1-\theta)s/r}.
$$
Such an estimate is in the spirit of the interpolation
inequality~\eqref{eq:interpolation hope}, but it is insufficient for
the purpose of addressing Question~\ref{Q:p ARV}.
Theorem~\ref{thm:general interpolation min} does suffice to prove
Lemma~\ref{lem:p sharp}, so we assume the validity of
Theorem~\ref{thm:general interpolation min} for the moment and
proceed now to prove Lemma~\ref{lem:p sharp}.

\begin{proof}[Proof of Lemma~\ref{lem:p sharp}]
Matou\v{s}ek proved in~\cite{Mat97} that if $(X,d_X)$ is an
$n$-point metric space then for $p\in [2,\infty)$ we have
\begin{equation}\label{eq:matousek bourgain}
c_p(X)\lesssim 1+\frac{\log n}{p}.
\end{equation}
The case $p=2$ of~\eqref{eq:matousek bourgain} is Bourgain's
embedding theorem~\cite{Bourgain-embed}. Now, for every $n\in \N$
let $G_n=(\n,E_{G_n})$ be a $4$-regular graph with $\sup_{n\in
\N}\lambda_2(G_n)<1$, i.e., $\{G_n\}_{n=1}^\infty$ forms an expander
sequence. Fixing $n\ge e^p$, by~\eqref{eq:matousek bourgain} we know
that there exist $x_1,\ldots,x_n\in \ell_p$ such that
\begin{equation}\label{eq:log n /p embedding}
\forall\, i,j\in \n,\qquad       d_{G_n}(i,j)\le \|x_i-x_j\|_{\ell_p}\lesssim \frac{\log n}{p}d_{G_n}(i,j).
\end{equation}

Suppose that $y_1,\ldots,y_n\in \ell_q$ satisfy~\eqref{eq:expander
rs norm}, i.e.,
\begin{equation}\label{eq:remind rs assumption}
\left(\frac{1}{n^2}\sum_{i=1}^n\sum_{j=1}^n\|y_i-y_j\|_{\ell_q}^r\right)^{\frac{1}{r}}=
\left(\frac{1}{n^2}\sum_{i=1}^n\sum_{j=1}^n\|x_i-x_j\|_{\ell_p}^s\right)^{\frac{1}{s}},
\end{equation}
If $\|y_i-y_j\|_{\ell_q}\le D\|x_i-x_j\|_{\ell_p}$ for every $i,j\in
\n$ then we need to show that $D\gtrsim p/(q+r)$. Note that since
$G_n$ is $4$-regular, a constant fraction of the pairs $(i,j)\in
\n^2$ satisfy $d_{G_n}(i,j)\gtrsim \log n$ (the standard argument
showing this appears in e.g.~\cite{Mat97}). Hence, due to the
leftmost inequality in~\eqref{eq:log n /p embedding}, it follows
from~\eqref{eq:remind rs assumption} that
\begin{equation}\label{eq:at least log n in lemma}
\left(\frac{1}{n^2}\sum_{i=1}^n\sum_{j=1}^n\|y_i-y_j\|_{\ell_q}^r\right)^{\frac{1}{r}}\gtrsim \log n.
\end{equation}

\medskip
\noindent{\bf Case 1.} $r\le q$. In this case we have
\begin{multline}\label{r->q}
\left(\frac{1}{n^2}\sum_{i=1}^n\sum_{j=1}^n\|y_i-y_j\|_{\ell_q}^r\right)^{\frac{1}{r}}\le
\left(\frac{1}{n^2}\sum_{i=1}^n\sum_{j=1}^n\|y_i-y_j\|_{\ell_q}^q\right)^{\frac{1}{q}} \\
\le \gamma(G_n,\|\cdot\|_{\ell_q}^q)^{\frac{1}{q}}\left(\frac{1}{4n}
\sum_{\substack{(i,j)\in \n^2\\\{i,j\}\in
E_{G_n}}}\|y_i-y_j\|_{\ell_q}^q\right)^{\frac{1}{q}} \lesssim
\frac{Dq\log n}{p},
\end{multline}
where in~\eqref{r->q} we bounded $\gamma(G_n,\|\cdot\|_{\ell_q}^q)$
using~\eqref{eq:matousek extrapolation}, and we used the fact that
if $\{i,j\}\in E_{G_n}$ then $\|y_i-y_j\|_{\ell_q}\le
D\|x_i-x_j\|_{\ell_p}\lesssim D(\log n)/p$, due to~\eqref{eq:log n
/p embedding}. By contrasting~\eqref{r->q} with~\eqref{eq:at least
log n in lemma} we have $D\gtrsim p/q$, as required.

\medskip
\noindent{\bf Case 2.} $r>q$. Write $\theta\eqdef 1/(r-1)$ and $t\eqdef 2q(r-2)/(2r-q-2)$. Recalling that we are assuming in Lemma~\ref{lem:p sharp} that $q\ge 2$, it follows that $\theta\in (0,1)$ and $t\in [q,2r]$. Consequently, $K_{2r}(\ell_t)\le K_t(\ell_t)\le 1$ (see~\cite{BCL}). Note also that $1/q=\theta/2+(1-\theta)/t$, so $\ell_q=[\ell_2,\ell_t]_\theta$. Since $1/r=\theta/2+(1-\theta)/(2r)$ and $S_2(\ell_q)=\sqrt{q-1}$ (see~\cite{BCL}),  it follows from Theorem~\ref{thm:general interpolation min} that  for every $n\in \N$ we have
$$
\gamma(G_n,\|\cdot\|_{\ell_q}^r)^{1/r}\lesssim \sqrt{r\cdot
\min\left\{r\gamma(G_n,\|\cdot\|_{\ell_2}^2),\gamma(G_n,\|\cdot\|_{\ell_t}^{2r})\right\}}\lesssim r.
$$
Hence, if we argue as in~\eqref{r->q} we see that $D\gtrsim p/r$, as
required.
\end{proof}

We now prove Theorem~\ref{thm:general interpolation
min}.

\begin{proof}[Proof of Theorem~\ref{thm:general
interpolation min}] We may assume that $A$ is ergodic, implying that
$\gamma(A,\|\cdot\|_X^p),\gamma(A,\|\cdot\|_Y^q)<\infty$, since
otherwise the conclusion of Theorem~\ref{thm:general interpolation
min} is vacuous. So, by Lemma~\ref{lem:I+A} we have
\begin{equation}\label{eq:I+A p}
\gamma_+\!\left(\frac{I+A}{2},\|\cdot\|_X^p\right)\le 2^{2p+1}\gamma(A,\|\cdot\|_X^p)<\infty,
\end{equation}
and
\begin{equation}\label{eq:I+A q}
\gamma_+\!\left(\frac{I+A}{2},\|\cdot\|_Y^q\right)\le 2^{2q+1}\gamma(A,\|\cdot\|_X^q)<\infty,
\end{equation}

For a Banach space $(W,\|\cdot\|_W)$ and $t\in [1,\infty]$ let
$L_t^n(W)_0$ be the subspace of $L_t^n(W)$ consisting of mean-zero
vectors, i.e.,
$$
L_t^n(W)_0\eqdef \left\{(w_1,\ldots,w_n)\in L_t^n(W):\ \sum_{i=1}^n w_i=0\right\}.
$$
Let $Q:L_t^n(\R)\to L_t^n(\R)_0$ be the canonical projection, i.e,
for every $v\in L_t^n(\R)$ and $i\in \n$,
$$
(Qv)_i\eqdef v_i-\frac{1}{n}\sum_{j=1}^n v_j.
$$
Then by the triangle inequality, $\|Q\otimes I_W\|_{L_t^n(W)\to
L_t^n(W)_0}\le 2$, and consequently for every $B\in M_n(\R)$ such
that $B(L_t^n(\R)_0)\subset L_t^n(\R)_0$,
\begin{equation}\label{eq:projection norm 2}
\left\|\left(BQ\right)\otimes I_W\right\|_{L_t^n(W)\to L_t^n(W)}\le 2\|B\otimes I_W\|_{L_t^n(\R)_0\to
L_t^n(\R)_0}.
\end{equation}
Note that if $B$ is symmetric and stochastic then
$B(L_t^n(\R)_0)\subset L_t^n(\R)_0$, and
by~\cite[Lem.~6.6]{MN-towards} for every $t\in [1,\infty]$ we have
\begin{equation}\label{eq:quote 6.6}
\|B\otimes I_W\|_{L_t^n(\R)_0\to
L_t^n(\R)_0}\le e^{-2/(t(2K_t(W))^t\gamma_+(B,\|\cdot\|_W^t))}.
\end{equation}
(Observe that we always have $\|B\otimes I_W\|_{L_t^n(\R)_0\to
L_t^n(\R)_0}\le 1$ because $B$ is symmetric and stochastic,
so~\eqref{eq:quote 6.6} is most meaningful when $t\in [2,\infty)$
and $K_t(W)<\infty$.) Consequently, for every $m\in \N$ we have
\begin{eqnarray}\label{eq:B^m}
\nonumber \left\|B^m\otimes I_W\right\|_{L_t^n(\R)_0\to L_t^n(\R)_0}&\le&
\left(\|B\otimes I_W\|_{L_t^n(\R)_0\to
L_t^n(\R)_0}\right)^m \\
&\le&
e^{-2m/(t(2K_t(W))^t\gamma_+(B,\|\cdot\|_W^t))}.
\end{eqnarray}

Define $T\in M_n(\R)$ by
$$
T\eqdef \left(\frac{I+A}{2}\right)^mQ.
$$
An application of~\eqref{eq:B^m} and~\eqref{eq:projection norm 2}
with $W=X$ and $t=p$ while using~\eqref{eq:I+A p} shows that
\begin{equation}\label{eq:S on X}
\left\|T\otimes I_X\right\|_{L_p^n(X)\to L_p^n(X)}\le 2e^{-m/(4(9K_p(X))^p\gamma(A,\|\cdot\|_X^p))}.
\end{equation}
The same reasoning applied to $W=Y$ and $t=q$ while
using~\eqref{eq:I+A q}  shows that
\begin{equation}\label{eq:S on Y}
\left\|T\otimes I_Y\right\|_{L_q^n(Y)\to L_q^n(Y)}\le 2e^{-m/(4(9K_q(Y))^q\gamma(A,\|\cdot\|_Y^q))}.
\end{equation}

Interpolation of~\eqref{eq:S on X} and~\eqref{eq:S on Y} yields the
following estimate.
\begin{multline}\label{eq:interpolation used}
\left\|\left(\frac{I+A}{2}\right)^m\otimes
I_Z\right\|_{L_s^n(Z)_0\to L_s^n(Z)_0}\le \left\|T\otimes
I_Y\right\|_{L_s^n(Z)\to L_s^n(Z)}\\
\le
2e^{-m\theta/(4(9K_p(X))^p\gamma(A,\|\cdot\|_X^p))-m(1-\theta)/(4(9K_q(Y))^q\gamma(A,\|\cdot\|_Y^q))}.
\end{multline}

Let $m$ be given by
\begin{equation*}
m\eqdef \left\lceil 7\min\left\{\frac{(9K_p(X))^p\gamma(A,\|\cdot\|_X^p)}{\theta},
\frac{(9K_q(Y))^q\gamma(A,\|\cdot\|_Y^q)}{1-\theta}\right\} \right\rceil.
\end{equation*}
Then by~\eqref{eq:interpolation used} we have
$$
\left\|\left(\frac{I+A}{2}\right)^m\otimes
I_Z\right\|_{L_s^n(Z)_0\to L_s^n(Z)_0}\le \frac12.
$$
By~\cite[Lem.~6.1]{MN-towards} this implies that
\begin{equation}\label{eq:9s}
\gamma\!\left(\left(\frac{I+A}{2}\right)^m,\|\cdot\|_Z^s\right)\le \gamma_+\!\left(\left(\frac{I+A}{2}\right)^m,\|\cdot\|_Z^s\right)\le 9^s.
\end{equation}
Hence, using Corollary~\ref{cor:smoothness} we deduce that
\begin{eqnarray*}
2\gamma(A,\|\cdot\|_Z^s)&\stackrel{\eqref{eq:average identity easy
gamma}}{=}& \gamma\!\left(\frac{I+A}{2},\|\cdot\|_Z^s\right)\\
&\stackrel{\eqref{eq:gamma comparison smoothness}}{\lesssim}&
C^s\left(s^{\frac{s}{r}}+S_r(Z)^s\right)m^{\frac{s}{r}}\gamma\!\left(\left(\frac{I+A}{2}\right)^m,\|\cdot\|_Z^s\right)
\\&\stackrel{\eqref{eq:9s}}{\le}& (9C)^s\left(s^{\frac{s}{r}}+S_r(Z)^s\right)m^{\frac{s}{r}},
\end{eqnarray*}
where $C$ is the universal constant from
Corollary~\ref{cor:smoothness}. Recalling our choice of $m$, this
completes the proof of Theorem~\ref{thm:general interpolation min}.
\end{proof}

\section{Proof of Theorem~\ref{thm:ozawa-ineq}}\label{sec:ozawa}

Let $(X,\|\cdot\|_X)$ and $(Y,\|\cdot\|_Y)$ be Banach spaces. We first note that the assumptions of Theorem~\ref{thm:ozawa-ineq} are equivalent to the assertion that $B_X$ is uniformly homeomorphic to a subset of $Y$, i.e,  that there is an injection $\f:B_X\to Y$ such that both $\phi$ and $\f^{-1}$ are uniformly continuous. Indeed, since $B_X$ is metrically convex, the modulus of continuity of $\f$, namely the mapping $\omega_\f:[0,\infty)\to [0,\infty)$ given by
$$
\omega_\phi(t)\eqdef \sup\{\|\f(x)-\f(y)\|_Y: x,y\in B_X\ \wedge\ \|x-y\|_X\le t\},
$$
is sub-additive (see for example~\cite[Ch.~1, Sec.~1]{BL}). Consequently, as explained in~\cite[Ch.~1, Sec.~2]{BL}, there exists an increasing concave function $\beta:[0,\infty)\to [0,\infty)$ such that $\omega_\phi\le \beta\le 2\omega_\phi$.

Since $\beta:[0,\infty)\to [0,\infty)$ is concave, increasing, and $\beta(0)=0$, for every $q\in [1,\infty)$ the mapping $t\mapsto \beta(t^{1/q})^q$ is concave. Indeed, it suffices to verify this when $\beta$ is differentiable. Denote $f(t)=\beta(t^{1/q})^q$. Then
\begin{equation}\label{eq:simple derivative}
f'(t)=\left(\frac{\beta(t^{1/q})}{t^{1/q}}\right)^{q-1}\beta'(t^{1/q}).
\end{equation}
By our assumptions $\beta'$ is nonnegative and decreasing, and $s\mapsto \beta(s)/s$ is decreasing on $(0,\infty)$. It therefore follows from~\eqref{eq:simple derivative} that $f'$ is decreasing on $(0,\infty)$, as required.

The canonical radial retraction of $X$ onto $B_X$ is denoted below by $\rho:X\to B_X$, i.e.,
\begin{equation}\label{eq:def rho}
\rho(x)\eqdef \left\{\begin{array}{ll}x&\mathrm{if\ } \|x\|_X\le 1,\\
\frac{x}{\|x\|_X}&\mathrm{if\ }\|x\|_X\ge 1.\end{array}\right.
\end{equation}
It is straightforward to check that $\rho$ is $2$-Lipschitz.

\begin{lemma}\label{lem:minimal radius}
Under the assumptions of Theorem~\ref{thm:ozawa-ineq}, fix $n\in \N$, $q\in [1,\infty)$ and $x_1,\ldots,x_n\in X$ with
\begin{equation}\label{eq:equal to 1 assumption}
\frac{1}{n}\sum_{i=1}^n\sum_{j=1}^n a_{ij} \|x_i-x_j\|_X^q=1.
\end{equation}
For every $i\in \{1,\ldots,n\}$ let $r_i\in (0,\infty)$ be the smallest $r>0$ such that
\begin{equation}\label{eq: rj def}
\big|\{j\in \{1,\ldots,n\}:\ \|x_j-x_i\|_X\le r\}\big|\ge \frac{n}{2}.
\end{equation}
Then for every $n$ by $n$ symmetric stochastic matrix $A=(a_{ij})\in M_n(\R)$,
\begin{equation}\label{eq:minimal radius bound}
\min_{i\in \{1,\ldots,n\}} r_i\\\le \max\left\{2(8\gamma(A,\|\cdot\|_Y^q))^{1/q},\frac{2}{\beta^{-1}\left(\frac{\alpha(1/4)}{8\gamma\!\left(A,\|\cdot\|_Y^q\right)^{1/q}}\right)}\right\}.
\end{equation}
\end{lemma}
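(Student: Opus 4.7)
Set $\gamma := \gamma(A, \|\cdot\|_Y^q)$ and $R := \min_i r_i$. If $R \le 2(8\gamma)^{1/q}$ the first term of the maximum already handles the bound, so we may assume $R > 2(8\gamma)^{1/q}$; the task then reduces to showing $\alpha(1/4)^q \le 8^q \gamma \beta(2/R)^q$, which rearranges exactly to the second term of the maximum.

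The plan is an Ozawa-style localization through a multi-center embedding. Fix a scale $s$ comparable to $R$ and define $\Phi : \{x_1, \ldots, x_n\} \to \ell_q^n(Y)$ by $\Phi(x_i)_k \eqdef \phi(\rho((x_i - x_k)/s))$, where $\rho : X \to B_X$ is the $2$-Lipschitz radial retraction introduced before the lemma. Three ingredients drive the estimate. First, the lift identity~\eqref{eq:lift to ellp} gives $\gamma(A, d_{\ell_q^n(Y)}^q) = \gamma$, so the nonlinear Poincar\'e inequality is available in $\ell_q^n(Y)$ with the same constant. Second, the $2$-Lipschitz property of $\rho$ combined with the monotonicity of $\beta$ yields the pointwise bound $\|\Phi(x_i)_k - \Phi(x_j)_k\|_Y \le \beta(2\|x_i - x_j\|_X/s)$. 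Third, since $s \le R \le r_k$ for every $k$, the defining property of $r_k$ guarantees that at least $n/2$ indices $j$ satisfy $\|x_j - x_k\|_X \ge s$, so that $\rho((x_j - x_k)/s)$ is a unit vector in $X$; choosing $k = i$ in such a triple then yields $\|\Phi(x_i)_k - \Phi(x_j)_k\|_Y \ge \alpha(1) \ge \alpha(1/4)$.

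I would then apply Poincar\'e to $\Phi(x_1), \ldots, \Phi(x_n)$ in $\ell_q^n(Y)$: the left hand side is lower bounded by isolating the diagonal contributions $k = i$ (and $k = j$) described above, while the right hand side is upper bounded through the Lipschitz bound together with Jensen's inequality, invoking the concavity of $t \mapsto \beta(t^{1/q})^q$ (established at the start of this section) and the assumed normalization $\frac{1}{n}\sum a_{ij}\|x_i - x_j\|_X^q = 1$. Choosing $s$ as an appropriate constant multiple of $R$ and rearranging then produces the target inequality. The hard part is the constant accounting: the combinatorial count of ``good'' triples on the left must be balanced against the multiplicity coming from summing over $n$ centers on the right, so that no parasitic factor of $n$ survives and the constant $8^q$ emerges precisely as stated. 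The splitting into two terms in the maximum is designed exactly so that the Ozawa-type second term is binding only in the regime where $R$ is too large to be controlled by the trivial first term coming from the normalization alone.
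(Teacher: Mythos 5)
There is a genuine gap, and it sits exactly where you flag it: the "constant accounting" is not a finishing detail but the place where the symmetric multi-center construction breaks down. Write $L := \frac{1}{n^2}\sum_{i,j}\|\Phi(x_i)-\Phi(x_j)\|_{\ell_q^n(Y)}^q$ and $R := \frac{\gamma}{n}\sum_{i,j}a_{ij}\|\Phi(x_i)-\Phi(x_j)\|_{\ell_q^n(Y)}^q$ for the two sides of the Poincar\'e inequality you invoke. On the right, each $\ell_q^n(Y)$ norm is a sum over all $n$ centers and the pointwise bound $\|\Phi(x_i)_k-\Phi(x_j)_k\|_Y\le\beta(2\|x_i-x_j\|_X/s)$ is independent of $k$, so Jensen and the normalization~\eqref{eq:equal to 1 assumption} give $R\le\gamma\,n\,\beta(2/s)^q$ — a parasitic factor of $n$. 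On the left, the only triples $(i,j,k)$ for which you have a lower bound are $k\in\{i,j\}$ with $\|x_i-x_j\|_X>s$: this is $O(n^2)$ good triples out of $n^3$, so $L\gtrsim\alpha(1)^q$ and nothing more. For a generic triple with $\|x_i-x_k\|_X,\|x_j-x_k\|_X>s$ the retraction pushes both points to the sphere and you lose all control on $\|\Phi(x_i)_k-\Phi(x_j)_k\|_Y$; when $\|x_i-x_j\|_X>2s$ there may be no $k$ at all for which both points land in $sB_X+x_k$. Thus the Poincar\'e inequality yields only $\alpha(1)^q\lesssim n\,\gamma\,\beta(2/s)^q$, which degrades with $n$ and cannot be rearranged into~\eqref{eq:minimal radius bound}. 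Switching to the normalized space $L_q^n(Y)$ just moves the factor $n$ to the other side.

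The paper's proof avoids this by being deliberately \emph{asymmetric} and \emph{two-staged}. It picks a single center $x_1$ realizing $\min_i r_i$, forms $z_i=\f((y_i-x_1)/r_1)\in Y$, and applies the Poincar\'e inequality in $Y$ once. Crucially, it then uses Markov's inequality on $\|z_i-w\|_Y$ (where $w$ is the centroid of the $z_i$) to extract a set $C$ of size $\ge 3n/4$ of indices whose images cluster near $w$; intersecting with the ball $B$ around $x_1$ gives a set of size $\ge n/4$ with small diameter, via the lower bound $\alpha$ plus the triangle inequality through $w$. A second, scalar Poincar\'e inequality applied to $v_i=\max\{0,\|x_i-x_k\|_X-r\}$ then converts that diameter bound into a bound on $\frac{1}{n}\sum_i\|x_i-x_k\|_X^q$, and Markov's inequality again produces the estimate on $r_k\ge r_1$. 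The case distinction $(2r)^q\le r_1^q/2$ or not is what produces the two branches of the maximum; your proposal does not explain where the first branch $2(8\gamma)^{1/q}$ would come from either. In short, the missing ingredient is the clustering step (Markov in $Y$ plus a second real-valued Poincar\'e), and without it the multi-center average does not close.
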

\begin{proof}
Note that the fact that the function $x\mapsto \beta(x^{1/q})^q$ is concave on $[0,\infty)$ implies that for every $\lambda\in (0,\infty)$ we have
\begin{multline}\label{eq:beta concavity ineq}
\left(\frac{1}{n}\sum_{i=1}^n\sum_{j=1}^na_{ij}\beta\left(\lambda\|x_i-x_j\|_X\right)^q\right)^{\frac1{q}}\\\le \beta\left(\lambda \left(\frac{1}{n}\sum_{i=1}^n\sum_{j=1}^n a_{ij} \|x_i-x_j\|_X^q\right)^{\frac1{q}}\right)\stackrel{\eqref{eq:equal to 1 assumption}}{=}\beta(\lambda).
\end{multline}
By relabeling the points if necessary we may also assume without loss of generality that
\begin{equation}\label{eq:r1 minimal}
r_1=\min_{i\in \{1,\ldots,n\}} r_i.
\end{equation}
Denote
\begin{equation}\label{eq:def B}
B\eqdef \{j\in \{1,\ldots,n\}:\ \|x_j-x_1\|_X\le r_1\}.
\end{equation}
So, by definition,
\begin{equation}\label{eq: B big}
|B|\ge \frac{n}{2}.
\end{equation}
For the sake of simplicity we denote below
\begin{equation}\label{eq:gamma notataion}
\gamma\eqdef \gamma\!\left(A,\|\cdot\|_Y^q\right).
\end{equation}

For $i\in \{1,\ldots,n\}$ define
$$
y_i\eqdef x_1+r_1\rho\left(\frac{x_i-x_1}{r_1}\right),
$$
where $\rho:X\to B_X$ is given in~\eqref{eq:def rho}. Since $\rho$ is $2$-Lipschitz,
\begin{equation}\label{eq:2Lip}
\forall\, i,j\in \{1,\ldots,n\},\qquad \|y_i-y_j\|_X\le 2\|x_i-x_j\|_X.
\end{equation}
By definition $y_i\in x_1+r_1B_X$, so we may consider the vectors
\begin{equation}\label{eq:def zi}
z_i\eqdef \f\left(\frac{y_i-x_1}{r_1}\right)\in Y.
\end{equation}
Now,
\begin{multline}\label{eq:p convex}
\frac{1}{n^2}\sum_{i=1}^n\sum_{j=1}^n \|z_i-z_j\|_Y^q \stackrel{\eqref{eq:gamma notataion}}{\le} \frac{\gamma}{n}\sum_{i=1}^n\sum_{j=1}^n a_{ij} \|z_i-z_j\|_Y^q
\\\stackrel{\eqref{eq;alpha beta}\wedge\eqref{eq:def zi}\wedge\eqref{eq:2Lip}}{\le} \frac{\gamma}{n}\sum_{i=1}^n\sum_{j=1}^n a_{ij}\beta\left(\frac{2\|x_i-x_j\|_X}{r_1}\right)^q\stackrel{\eqref{eq:beta concavity ineq}}{\le} \gamma\beta\left(\frac{2}{r_1}\right)^q.
\end{multline}

Denoting
$$
w\eqdef \frac{1}{n}\sum_{j=1}^nz_j\in Y,
$$
by the convexity of $\|\cdot\|_Y^q$ we have
\begin{equation}\label{eq:for markov ineq}
\frac{1}{n}\sum_{i=1}^n \|z_j-w\|_Y^q\le \frac{1}{n^2}\sum_{i=1}^n\sum_{j=1}^n \|z_i-z_j\|_Y^q\stackrel{\eqref{eq:p convex}}{\le} \gamma\beta\left(\frac{2}{r_1}\right)^q.
\end{equation}
It follows from~\eqref{eq:for markov ineq} and Markov's inequality that if we set
\begin{equation}\label{eq:def C}
C\eqdef\left\{i\in \{1,\ldots,n\}:\ \|z_i-w\|_Y\le (4\gamma)^{1/q}\beta\left(\frac{2}{r_1}\right)\right\},
\end{equation}
 then $|C|\ge 3n/4$. By~\eqref{eq: B big} it follows that
 \begin{equation}\label{eq:intersection big}
 |B\cap C|\ge \frac{n}{4}.
 \end{equation}

 Recalling the definitions~\eqref{eq:def B}, \eqref{eq:def C} and~\eqref{eq:def zi}, for every $i\in B\cap C$ we have
 \begin{equation}\label{eq:on B intersection C}
 z_i=\f\left(\frac{x_i-x_1}{r_1}\right)\qquad\mathrm{and}\qquad \|z_i-w\|_Y\le (4\gamma)^{1/q}\beta\left(\frac{2}{r_1}\right).
  \end{equation}
  Hence, for every $i,j\in B\cap C$,
\begin{multline*}
\alpha\left(\frac{\|x_i-x_j\|_X}{r_1}\right)\stackrel{\eqref{eq;alpha beta}}{\le} \left\|\f\left(\frac{x_i-x_1}{r_1}\right)-\f\left(\frac{x_j-x_1}{r_1}\right)\right\|_Y\\
\stackrel{\eqref{eq:on B intersection C}}{=}\|z_i-z_j\|_Y
\le \|z_i-w\|_Y+\|z_j-w\|_Y\stackrel{\eqref{eq:on B intersection C}}{\le} 2(4\gamma)^{1/q}\beta\left(\frac{2}{r_1}\right).
\end{multline*}
Consequently,
\begin{equation}\label{eq;diam of intersection}
i,j\in B\cap C\implies \|x_i-x_j\|_X\le r\eqdef r_1\alpha^{-1}\left(2(4\gamma)^{1/q}\beta\left(\frac{2}{r_1}\right)\right).
\end{equation}

Fix an arbitrary index $k\in B\cap C$ and define
\begin{equation}\label{eq:def S}
S\eqdef \{j\in \{1,\ldots,n\}:\ \|x_k-x_j\|_X\le r\}.
\end{equation}
It follows from~\eqref{eq;diam of intersection} that $S\supseteq B\cap C$, and therefore by~\eqref{eq:intersection big} we have
\begin{equation}\label{eq:S cardinality}
|S|\ge \frac{n}{4}.
\end{equation}
Moreover, by the definition~\eqref{eq:def S},
\begin{equation}\label{eq:sum on S}
\frac{1}{n}\sum_{i\in S} \|x_i-x_k\|_X^q\le \frac{|S|r^q}{n}.
\end{equation}

Now, define for every $i\in \{1,\ldots,n\}$,
\begin{equation}\label{eq:def vi}
v_i\eqdef \max\left\{0,\|x_i-x_k\|_X-r\right\}\in [0,\infty).
\end{equation}
Then for every $i\in \{1,\ldots,n\}\setminus S$ and $j\in S$,
\begin{eqnarray}\label{eq:to sum S and complement}
\nonumber\|x_i-x_k\|_X^q&\le& 2^{q-1} \left(\|x_i-x_k\|_X-r\right)^q+2^{q-1}r^q\\
&\stackrel{\eqref{eq:def vi}}{=}&2^{q-1}|v_i-v_j|^q+2^{q-1}r^q.
\end{eqnarray}
Hence,
\begin{align}\label{real poin}
\nonumber&\frac{1}{n}\sum_{i\in \{1,\ldots,n\}\setminus S} \|x_i-x_k\|_X^q\\\nonumber&\stackrel{\eqref{eq:to sum S and complement}}{\le} \frac{2^{q-1}}{n|S|}\sum_{i=1}^n\sum_{j=1}^n |v_i-v_j|^q+\frac{2^{q-1}(n-|S|)r^q}{n}\\\nonumber
&\stackrel{\eqref{eq:S cardinality}}{\le} \frac{2^{q+1}\gamma(A,d_\R^q)}{n}\sum_{i=1}^n\sum_{j=1}^n a_{ij} |v_i-v_j|^q+\frac{2^{q-1}(n-|S|)r^q}{n}\\&\ \le 2^{q+1}\gamma+\frac{2^{q-1}(n-|S|)r^q}{n},
\end{align}
where the last step of~\eqref{eq:S cardinality} uses the trivial fact $\gamma(A,d_\R^q)\le \gamma$ (since $Y$ contains an isometric copy of $\R$) and
$$
\frac{1}{n}\sum_{i=1}^n\sum_{j=1}^n a_{ij} |v_i-v_j|^q\stackrel{\eqref{eq:def vi}}{\le} \frac{1}{n}\sum_{i=1}^n\sum_{j=1}^n a_{ij} \|x_i-x_j\|_X^q\stackrel{\eqref{eq:equal to 1 assumption}}{=}1.
$$

A combination of~\eqref{eq:sum on S} and~\eqref{real poin} yields the estimate
$$
\frac{1}{n}\sum_{i=1}^n \|x_i-x_k\|_X^q\le 2^{q+1}\gamma+2^{q-1}r^q.
$$
Consequently, an application of Markov's inequality shows that
$$
\left|\left\{i\in \{1,\ldots,n\}:\ \|x_i-x_k\|_X\le 2^{1/q}\left(2^{q+1}\gamma+2^{q-1}r^q\right)^{1/q}\right\}\right|\ge \frac{n}{2}.
$$
Recalling the definition of $r_k$ (see~\eqref{eq: rj def}), it follows that
\begin{equation}\label{eq:r1 upper first}
r_1\stackrel{\eqref{eq:r1 minimal}}{\le} r_k\le 2^{1/q}\left(2^{q+1}\gamma+2^{q-1}r^q\right)^{1/q}.
\end{equation}
If $(2r)^q\le r_1^q/2$ then it follows from~\eqref{eq:r1 upper first} that
$
r_1^q\le 2^{q+3}\gamma$, implying the desired estimate~\eqref{eq:minimal radius bound}. So, suppose that $(2r)^q>r_1^q/2$, which, by recalling the definition of $r$ in~\eqref{eq;diam of intersection}, is the same as
$$
\frac{r_1^q}{2}< \left(2r_1\alpha^{-1}\left(2(4\gamma)^{1/q}\beta\left(\frac{2}{r_1}\right)\right)\right)^q,
$$
or
$$
\alpha\left(\frac14\right)\le \alpha\left(\frac{1}{2^{1+1/q}}\right)\le 2(4\gamma)^{1/q}\beta\left(\frac{2}{r_1}\right)\le 8\gamma^{1/p}\beta\left(\frac{2}{r_1}\right),
$$
implying the validity of~\eqref{eq:minimal radius bound} in this case as well.
\end{proof}

\begin{proof}[Proof of Theorem~\ref{thm:ozawa-ineq}]
We continue to use the notation that was introduced in the statement and the proof of Lemma~\ref{lem:minimal radius}. In particular, the assumptions~\eqref{eq:equal to 1 assumption} and~\eqref{eq:r1 minimal} are (without loss of generality) still in force.

Recalling the definition of $B$ in~\eqref{eq:def B}, we have
\begin{equation}\label{eq:sum on B}
\frac{1}{n}\sum_{i\in B} \|x_i-x_1\|_X^q\le \frac{r_1^q|B|}{n}.
\end{equation}
For $i\in \{1,\ldots,n\}$ define
\begin{equation}\label{eq:def ui}
u_i\eqdef \max\{0,\|x_i-x_1\|_X-r_1\}\in [0,\infty).
\end{equation}
Then for every $i\in \{1,\ldots,n\}\setminus B$ and $j\in B$ we have
\begin{eqnarray*}
\nonumber\|x_i-x_1\|_X^q&\le& 2^{q-1} \left(\|x_i-x_1\|_X-r_1\right)^q+2^{q-1}r_1^q\\
&\stackrel{\eqref{eq:def ui}}{=}&2^{q-1}|u_i-u_j|^q+2^{q-1}r_1^q.
\end{eqnarray*}
Arguing exactly as in~\eqref{real poin} (with the use of~\eqref{eq:S cardinality} replaced by the use of~\eqref{eq: B big}), it follows that
\begin{equation}\label{eq:sum off B}
\frac{1}{n}\sum_{i\in \{1,\ldots,n\}\setminus B}\|x_i-x_1\|_X^q\le 2^{q}\gamma+\frac{2^{q-1}(n-|B|)r_1^q}{n}.
\end{equation}
Consequently,
\begin{multline}\label{eq:gamma plus r1}
\frac{1}{n^2}\sum_{i=1}^n\sum_{j=1}^n \|x_i-x_j\|_X^q\le \frac{2^{q-1}}{n^2}\sum_{i=1}^n\sum_{j=1}^n \left(\|x_i-x_1\|_X^q +\|x_1-x_j\|_X^q\right)\\= \frac{2^q}{n}\sum_{i=1}^n \|x_i-x_1\|_X^q\stackrel{\eqref{eq:sum on B}\wedge \eqref{eq:sum off B}}{\le} 4^{q}\gamma+ \frac{4^qr_1^q}{2}.
\end{multline}
The desired estimate~\eqref{eq:ozawa ineq} now follows substituting~\eqref{eq:minimal radius bound} into~\eqref{eq:gamma plus r1}.
\end{proof}

\subsection{Limitations of Ozawa's method}\label{sec:no ozawa}

In the discussion immediately preceding the estimates~\eqref{eq:p<2} and~\eqref{eq:p>2} we stated that for every $p\in [1,\infty)$ there exists a mapping $\f:\ell_p\to \ell_2$ such that for every $x,y\in \ell_p$ we have
\begin{equation}\label{eq:mazur2 p>2}
p\in (2,\infty)\implies \frac{\|x-y\|_{\ell_p}^{p/2}}{2^{(p-2)/2}}\le \|\f(x)-\f(y)\|_{\ell_2}\le p\|x-y\|_{\ell_p},
\end{equation}
and
\begin{equation}\label{eq:mazur2 p<2}
p\in [1,2)\implies \frac{\|x-y\|_{\ell_p}}{3}\le \|\f(x)-\f(y)\|_{\ell_2}\le 2\|x-y\|_{\ell_p}^{p/2}.
\end{equation}
The estimates~\eqref{eq:mazur2 p>2} and~\eqref{eq:mazur2 p<2} are a special case of the following bounds on the modulus of uniform continuity of the Mazur map~\cite{Maz29} (see also~\cite[Ch.9]{BL}). Let $(\Omega,\mu)$ be a measure space and fix $p,q\in [1,\infty)$. Define $M_{p,q}:L_p(\mu)\to L_q(\mu)$ by
$$
\forall\, f\in L_p(\mu),\qquad M_{p,q}(f)\eqdef|f|^{p/q}\sign(f).
$$
If $p\ge q$ then for every $f,g\in L_p(\mu)$ with $\|f\|_{L_p(\mu)},\|g\|_{L_p(\mu)}\le 1$ we have
\begin{equation}\label{eq:Mazur pq}
\frac{\|f-g\|_{L_p(\mu)}^{p/q}}{2^{(p-q)/q}}\le \left\|M_{p,q}(f)-M_{p,q}(g)\right\|_{L_q(\mu)}\le \frac{2^{\frac{1}{q}-\frac{1}{p}}p\|f-g\|_{L_p(\mu)}}{q}.
\end{equation}
Note that~\eqref{eq:mazur2 p>2} is a special case of~\eqref{eq:Mazur pq}, and~\eqref{eq:mazur2 p<2} is also a consequence of~\eqref{eq:Mazur pq} because $M_{p,q}^{-1}=M_{q,p}$.  While the bounds appearing in~\eqref{eq:Mazur pq} are entirely standard, they seem to have been always  stated in the literature while either using implicit multiplicative constant factors, or with suboptimal constant factors. These constants play a role in our context, so we briefly include the proof of~\eqref{eq:Mazur pq}, following the lines of the proof of~\cite[Prop.~9.2]{BL}. The elementary inequality
$$
\left||u|^\theta\sign(u)-|v|^\theta\sign(v)\right|\ge \frac{|u-v|^\theta}{2^{\theta-1}},
$$
which holds for every $u,v\in \R$  and $\theta\in [1,\infty)$, immediately implies (with $\theta=p/q$) the leftmost inequality in~\eqref{eq:Mazur pq}. To prove the rightmost inequality of~\eqref{eq:Mazur pq}, note the following elementary inequality, which also holds for every $u,v\in \R$  and $\theta\in [1,\infty)$.
$$
\left||u|^\theta\sign(u)-|v|^\theta\sign(v)\right|\le \theta|u-v|\max\left\{|u|^{\theta-1},|v|^{\theta-1}\right\}.
$$
Consequently,
\begin{align}
\nonumber &\left\|M_{p,q}(f)-M_{p,q}(g)\right\|_{L_q(\mu)}^q\\\nonumber&\le \frac{p^q}{q^q}\int_\Omega |f-g|^q\max\left\{|f|,|g|\right\}^{p-q}d\mu\\
&\le \frac{p^q}{q^q}\|f-g\|_{L_p(\mu)}^q\cdot \left\|\max\{|f|,|g|\}\right\|_{L_p(\mu)}^{p-q}\label{eq:holder p/q}\\
&\le \frac{p^q\cdot 2^{(p-q)/p}}{q^q}\|f-g\|_{L_p(\mu)}^q\label{eq;norms are less than 1},
\end{align}
where~\eqref{eq:holder p/q} follows from an application of H\"older's inequality with exponents $p/q$ and $p/(p-q)$ and~\eqref{eq;norms are less than 1} holds true because we have $\|\max\{|f|,|g|\}\|_{L_p(\mu)}^p\le \|f\|_{L_p(\mu)}^p+\|g\|_{L_p(\mu)}^p\le 2$.

Returning to Theorem~\ref{thm:ozawa-ineq} (in particular using the notation and assumptions that were introduced in the statement of Theorem~\ref{thm:ozawa-ineq}), if one wants the bound~\eqref{eq:ozawa ineq} to be compatible with the assumption of Theorem~\ref{thm:duality} one needs~\eqref{eq:ozawa ineq} to yield an upper bound on $\gamma(A,\|\cdot\|_X^q)$ that grows linearly with $\gamma(A,\|\cdot\|_Y^q)$. This is equivalent to the requirement
\begin{equation}\label{eq:beta Kt requirement}
\beta^{-1}\left(\frac{\alpha(1/4)}{8\gamma\!\left(A,\|\cdot\|_Y^q\right)^{1/q}}\right)^q\gtrsim_{X,Y} \frac{1}{\gamma\!\left(A,\|\cdot\|_Y^q\right)}.
\end{equation}
Since~\eqref{eq:beta Kt requirement} is supposed to hold for every $n\in \N$ and every $n$ by $n$ symmetric stochastic matrix $A$, \eqref{eq:beta Kt requirement} is the same as requiring that $\beta(t)\lesssim_{X,Y}t$ for every $t\in (0,\infty]$.

Specializing the above discussion to $Y=\ell_2$ and $q=2$, if $\beta(t)\le Kt$ for some $K\in (0,\infty)$ and every $t\in (0,\infty)$ then~\eqref{eq:ozawa ineq} yields the estimate
$$
\gamma\!\left(A,\|\cdot\|_X^q\right)\lesssim \frac{(K/\alpha(1/4))^2}{1-\lambda_2(A)}.
$$
By Corollary~\ref{coro:linear gap}, this implies that $$Av_{\ell_2}^{(2)}(X)\lesssim \frac{K}{\alpha(1/4)}.$$ In particular, if $p\in (2,\infty)$ then due to~\eqref{eq:mazur2 p>2} we get the estimate
$$A_{\ell_2}^{(2)}(\ell_p)\lesssim p2^{3p/2},$$ which is exponentially worse than~\eqref{eq:pAv}.  The following lemma shows that this exponential loss is inherent to the use of Theorem~\ref{thm:ozawa-ineq} for the purpose of obtaining average distortion embedding of finite subsets of $\ell_p$ into $\ell_2$, i.e., that $K/\alpha(1/4)$ must grow exponentially in $p$ as $p\to \infty$.

\begin{lemma}
Suppose that $p\in (2,\infty)$ and that $\f:B_X\to \ell_2$ satisfies
\begin{equation}\label{eq:assuming alpha in lemma}
 \alpha\left(\|x-y\|_{\ell_p}\right)\le
\|\f(x)-\f(y)\|_{\ell_2}\le K\|x-y\|_{\ell_p},
\end{equation}
for every $x,y\in B_{\ell_p}$, where $\alpha:(0,2]\to (0,\infty)$ is
increasing. Then
\begin{equation}\label{eq:exponential in p lower}
\forall\, \e\in (0,2),\qquad \frac{K}{\alpha(2-\e)}\gtrsim \frac{1}{(1-\e/2)^{p/2}}.
\end{equation}
In particular, for $\e=7/4$ we have $K/\alpha(1/4)\gtrsim 2^{3p/2}$.
\end{lemma}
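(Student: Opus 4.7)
The strategy is to exhibit an explicit configuration inside $B_{\ell_p}$ that witnesses the claim, then push it through a Hilbert-space variance identity. Setting $\delta := 1-\varepsilon/2$, I would take the $2n$ points $x_i^\pm := \pm\delta e_i \in B_{\ell_p}$ for $i=1,\ldots,n$. Among these, the $n$ ``diagonal'' antipodal pairs $(x_i^+, x_i^-)$ sit at $\ell_p$-distance exactly $2\delta = 2-\varepsilon$, so that the assumption on $\phi$ forces $\|\phi(x_i^+)-\phi(x_i^-)\|_{\ell_2} \ge \alpha(2-\varepsilon)$, while each of the $4n(n-1)$ ``off-diagonal'' pairs sits at the common, much smaller $\ell_p$-distance $\delta\cdot 2^{1/p}$, whose $\ell_2$-image distance is controlled from above by $K\delta\cdot 2^{1/p}$ via the Lipschitz hypothesis.

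Next I would apply the Hilbert-space variance identity $\sum_{k,l=1}^{2n}\|z_k-z_l\|_{\ell_2}^2 = 4n\sum_k\|z_k-\bar z\|_{\ell_2}^2$ to the images, combined with the convexity bound $\sum_i\|u_i - v_i\|_{\ell_2}^2 \le 2\sum_i(\|u_i-\bar z\|_{\ell_2}^2 + \|v_i-\bar z\|_{\ell_2}^2)$. Together these yield $\sum_i \|\phi(x_i^+)-\phi(x_i^-)\|_{\ell_2}^2 \le \frac{1}{2n}\sum_{k,l}\|\phi(z_k)-\phi(z_l)\|_{\ell_2}^2$. After splitting the right-hand side into the $2n$ diagonal and $4n(n-1)$ off-diagonal contributions, substituting the Lipschitz upper bound $K\delta\cdot 2^{1/p}$ for the off-diagonal ones, and sending $n\to\infty$, I recover the linear-in-$\delta$ estimate $\alpha(2-\varepsilon)^2 \lesssim K^2\delta^2\cdot 2^{2/p}$, and hence $\alpha(2-\varepsilon) \lesssim K\delta\cdot 2^{1/p}$. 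This already handles the regime where $\varepsilon$ is small (i.e.\ $\delta$ close to $1$), where the Mazur exponent is essentially the Lipschitz bound.

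The main obstacle is then to bootstrap this linear estimate up to the Mazur-sharp bound $\alpha(2-\varepsilon) \lesssim K\delta^{p/2}$: for $\varepsilon = 7/4$ the linear version only delivers $K/\alpha(1/4) \gtrsim 8\cdot 2^{-1/p}$, which is exponentially weaker in $p$ than the required $2^{3p/2}$. My plan to close this gap is to iterate the variance identity on a geometrically decreasing sequence of scales: apply the same construction to the rescaled maps $\phi_\lambda(x) := \phi(\lambda x)/\lambda$ (which have Lipschitz constant $K$ and lower modulus $\alpha_\lambda(t) = \alpha(\lambda t)/\lambda$) on progressively smaller sub-balls $B_{\ell_p}(\lambda)$, feeding the improved modulus bound derived at stage $k$ back as the off-diagonal control at stage $k+1$; the scales $\lambda$ are to be chosen in accordance with the $p$-uniform convexity of $B_{\ell_p}$ (power type $p$ for $p>2$), so that after roughly $p/2$ iterations the accumulated exponent on $\delta$ reaches $p/2$. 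This iterative step, where the exponential-in-$p$ improvement inherent in the lemma is generated, is where essentially all of the technical difficulty resides and will require the most delicate bookkeeping.
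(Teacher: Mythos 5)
Your cross-polytope construction does not escape the trivial bound, and the proposed iteration cannot fix this. The Lipschitz upper bound alone already gives $\alpha(2-\e)\le K(2-\e)=2K\delta$; your variance-identity computation refines this only to $\alpha(2-\e)\le \sqrt{2}\cdot 2^{1/p}K\delta$, which for $p>2$ beats the trivial bound by a factor tending to $\sqrt{2}$ --- a bounded, $p$-independent gain. This is forced by geometry: in the $\ell_p$-cross-polytope $\{\pm\delta e_i\}$ the diagonal-to-edge ratio is $2^{1-1/p}\in(\sqrt{2},2)$, so any ``cotype-2''-style variance argument can only detect a constant-factor discrepancy between $\ell_p$ and $\ell_2$, never one that grows in $p$.

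The iteration as described also cannot produce the required gain. When you rescale to a sub-ball $B_{\ell_p}(\lambda)$, the map $\phi_\lambda(x)=\phi(\lambda x)/\lambda$ still has Lipschitz constant exactly $K$, and the cross-polytope inequality on that ball yields precisely $\alpha(2\lambda\delta)\lesssim K\lambda\delta\cdot 2^{1/p}$ --- the same dimensionless statement $\alpha(t)\lesssim Kt$ at every scale, with no accumulation. The key obstacle is that the off-diagonal terms in the variance identity must be bounded from above using $K$, whereas the ``improved modulus'' you propose to feed back is a bound on $\alpha$, i.e., a lower bound on image distances, which cannot serve as an upper bound; there is no mechanism for the exponent on $\delta$ to increase past $1$. (Uniform convexity of power type $p$ for $\ell_p$, $p>2$, also does not supply the missing ingredient --- what is relevant is a cotype inequality, not a convexity modulus.)

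What the paper actually uses is a much higher-dimensional configuration: the scaled discrete torus $\psi(x)_j=\frac{s}{n^{1/p}}e^{2\pi x_j i/m}$, $x\in\{0,\ldots,m-1\}^n$, inside $B_{\ell_p}$, together with the metric cotype-$2$ Poincar\'e inequality for Hilbert space from \cite{MN-cotype}. That inequality compares the $n$ ``half-revolution'' diagonals (each of $\ell_p$-length $2s/n^{1/p}$) against the $\{-1,0,1\}^n$-edges (each of $\ell_p$-length $\lesssim s/m$), and is strong enough that choosing the \emph{dimension} $n\asymp \delta^{-p}$ and $s=\delta n^{1/p}$ yields $\alpha(2-\e)\lesssim K\delta n^{1/p-1/2}=K\delta^{p/2}$. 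The exponential-in-$p$ gain comes entirely from taking $n$ exponentially large in $p$, i.e., from the dimension of the auxiliary configuration. Your proposal contains no configuration whose dimension grows with $p$ and no analogue of the cotype inequality, and this is the missing ingredient.
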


\begin{proof}
Fix $m,n\in \N$ and $s\in (0,1]$. For every $x\in \Z^n$ define
$\psi(x)\in \ell_p$ to be the vector whose $j$th coordinate equals
$$\frac{s}{n^{1/p}} e^{2\pi x_j i/m}$$ if $j\in \n$, and whose remaining
coordinates vanish. Then we have $\|\psi(x)\|_{\ell_p}\le s\le 1$
for every $x\in \Z^n$.

By the results of Section~3 of~\cite{MN-cotype}, if is $m$ divisible
by $4$ and  $m\ge \frac23 \pi\sqrt{n}$ then we have
\begin{multline}\label{eq:cotype hilber quote}
\frac{1}{m^n} \sum_{j=1}^n \sum_{x\in \{0,\ldots,m-1\}^n}
\left\|\f\left(\psi\left(x+\frac{m}{2}e_j\right)\right)-\f(\psi(x))\right\|_{\ell_2}^2\\\lesssim
\frac{m^2}{(3m)^n} \sum_{\omega\in \{-1,0,1\}^n}\sum_{x\in
\{0,\ldots,m-1\}^n}\left\|\f\left(\psi(x+\omega)\right)-\f(\psi(x))\right\|_{\ell_2}^2.
\end{multline}
Now, by the leftmost inequality in~\eqref{eq:assuming alpha in
lemma} for every $x\in \{0,\ldots,m-1\}^n$ and $j\in \n$ we have
\begin{align}\label{eq:lower phi}
\nonumber\left\|\f\left(\psi\left(x+\frac{m}{2}e_j\right)\right)-\f(\psi(x))\right\|_{\ell_2}&\ge
\alpha\left(\frac{s\left|e^{2\pi (x_j+m/2)i/m}-e^{2\pi x_j i/m}\right|}{n^{1/p}}\right)\\
&=\alpha\left(\frac{2s}{n^{1/p}}\right).
\end{align}
Also, by the rightmost inequality in~\eqref{eq:assuming alpha in
lemma}, for every $x\in \{0,\ldots,m-1\}^n$ and $\omega\in
\{-1,0,1\}^n$ we have
\begin{align}\label{eq:upper phi}
\nonumber\left\|\f\left(\psi(x+\omega)\right)-\f(\psi(x))\right\|_{\ell_2}&\le
\frac{Ks}{n^{1/p}}\left(\sum_{j=1}^n \left|e^{2\pi (x_j+\omega_j)i/m}-e^{2\pi x_j i/m}\right|^p\right)^{\frac{1}{p}}\\
&\lesssim \frac{Ks}{m}.
\end{align}
Choose $n\eqdef \lfloor 1/(1-\e/2)^p\rfloor $ and $s\eqdef
(1-\e/2)n^{1/p}\in (0,1]$. Then, if $m$ is the smallest integer that
is divisible by $4$ and satisfies $m\ge \frac23 \pi\sqrt{n}$, by
substituting~\eqref{eq:upper phi} and~\eqref{eq:lower phi}
into~\eqref{eq:cotype hilber quote} we see that
$$
n\alpha\left(2-\e\right)^2\lesssim m^2\cdot \frac{K^2s^2}{m^2}\lesssim n
\frac{K^2(1-\e/2)^2}{n^{1-2/p}}\lesssim nK^2(1-\e/2)^{p},
$$
which simplifies to give the desired estimate~\eqref{eq:exponential
in p lower}.
\end{proof}

\section{Bourgain--Milman--Wolfson type}\label{sec:type}

Here we study aspects of nonlinear type in the sense of Bourgain,
Milman and Wolfson~\cite{BMW}, proving in particular
Lemma~\ref{lem:no cube}, Lemma~\ref{lem:av type} and
Theorem~\ref{thm:our MP intro} that were stated in the Introduction.

\subsection{On the Maurey--Pisier problem for BMW type}\label{sec:MP
actual} In what follows, for every finite set $\Omega$ and $p\in
[1,\infty)$, the space $L_p(\Omega)$ consists of all mappings
$f:\Omega\to \R$, equipped with the norm
$$
\|f\|_{L_p(\Omega)}\eqdef \left(\frac{1}{|\Omega|}\sum_{\omega\in \Omega} |f(\omega)|^p\right)^{\frac{1}{p}}.
$$

For every $k\in \n$ let $\Omega_k^n\subset \F_2^n\times 2^{\F_2^n}$
be defined by
$$
\Omega_k^n\eqdef \F_2^n\times \binom{\n}{k}=\F_2^n\times \left\{I\subset\n:\ |I|=k\right\}.
$$
Thus $|\Omega_k^n|=2^n\binom{n}{k}$.

Fixing a metric space $(X,d_X)$ and $f:\F_2^n\to X$, define
\begin{equation}\label{eq:def I directional}
\forall(z,I)\in \F_2^n\times
2^{\F_2^n},\qquad \D_f(z,I)\eqdef d_X\left(f\left(z+e_I\right),f(z)\right),
\end{equation}
where for $I\in \n$ we set $$e_I\eqdef \sum_{i\in I} e_i\in
\F_2^n.$$ Thus, using the notation of the Introduction, we have
$e_{\n}=e$.

For $q\in (0,\infty)$ define $E_k^{(q)}(f)\in [0,\infty)$ by
\begin{align}\label{eq:def k-edge average}
\nonumber E_k^{(q)}(f)^q&\eqdef \left\|\D_f\right\|_{L_q(\Omega_k^n)}^q\\ &=
\frac{1}{2^n\binom{n}{k}}\sum_{\substack{I\subseteq \n\\|I|=k}}
\sum_{z\in \F_2^n}d_X\left(f\left(z+e_I\right),f(z)\right)^q.
\end{align}
Note that since for every $I,J\subset \n$ with $|I|=|J|$ the number
of permutations $\sigma\in S_n$ satisfying $\sigma(I)=J$ equals
$|I|!(n-|I|)!$,
\begin{equation}\label{eq:fixed I permutation}
\forall\, I\subset \n,
\qquad E_{|I|}^{(q)}(f)^q=\frac{1}{2^nn!}\sum_{x\in \F_2^n}\sum_{\sigma\in S_n}\D_f(x,\sigma(I))^q.
\end{equation}
We also record for future use the following simple consequence of
the triangle inequality in $(X,d_X)$.
\begin{lemma}\label{lem:E subadditivity}
Fix $n\in \N$ and $q\in [1,\infty)$. Suppose that $(X,d_X)$ is a
metric space and $f:\F_2^n\to X$. Then for every $k,m\in \n$ with
$k+m\le n$ we have
\begin{equation}\label{eq:E subadditivity in lemma}
E^{(q)}_{k+m}(f)\le E^{(q)}_k(f)+E^{(q)}_m(f).
\end{equation}
\end{lemma}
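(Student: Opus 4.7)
The plan is to derive the inequality from the pointwise triangle inequality by averaging appropriately and then invoking Minkowski's inequality in $L_q$. Specifically, if $K \subset \n$ with $|K| = k+m$ is written as a disjoint union $K = I \sqcup J$ with $|I| = k$ and $|J| = m$, then the identity $e_K = e_I + e_J$ in $\F_2^n$ gives, for every $z \in \F_2^n$,
\[
\D_f(z,K) \le \D_f(z,I) + \D_f(z + e_I, J).
\]
Averaging this bound over all ordered splittings $K = I \sqcup J$ of a given $K$ (there are $\binom{k+m}{k}$ of them), we obtain a pointwise inequality $\D_f \le \Phi + \Psi$ on $\Omega_{k+m}^n$, where
\[
\Phi(z,K) \eqdef \frac{1}{\binom{k+m}{k}}\!\sum_{\substack{I \sqcup J = K\\|I| = k}}\!\D_f(z,I), \qquad \Psi(z,K) \eqdef \frac{1}{\binom{k+m}{k}}\!\sum_{\substack{I \sqcup J = K\\|I| = k}}\!\D_f(z+e_I, J).
\]

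By the triangle inequality in $L_q(\Omega_{k+m}^n)$ we have $\|\D_f\|_{L_q(\Omega_{k+m}^n)} \le \|\Phi\|_{L_q(\Omega_{k+m}^n)} + \|\Psi\|_{L_q(\Omega_{k+m}^n)}$, and applying Jensen's inequality to $t \mapsto t^q$ inside the averages defining $\Phi$ and $\Psi$ gives
\[
\|\Phi\|_{L_q(\Omega_{k+m}^n)}^q \le \frac{1}{2^n\binom{n}{k+m}\binom{k+m}{k}} \sum_{|K| = k+m}\sum_{\substack{I \sqcup J = K\\|I| = k}}\sum_{z \in \F_2^n} \D_f(z,I)^q,
\]
and the analogous bound for $\Psi$ with $\D_f(z,I)^q$ replaced by $\D_f(z + e_I, J)^q$.

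The final step is a bookkeeping computation. For the $\Phi$ estimate, interchanging the summation over $K$ with the one over $I$ shows that each $I$ with $|I|=k$ is counted $\binom{n-k}{m}$ times (once per choice of $J \subset \n \setminus I$), and the combinatorial identity $\binom{n}{k+m}\binom{k+m}{k} = \binom{n}{k}\binom{n-k}{m}$ reduces the prefactor to $1/(2^n\binom{n}{k})$, yielding exactly $E_k^{(q)}(f)^q$. For the $\Psi$ estimate one argues identically after noting that $w \mapsto w + e_I$ is a bijection of $\F_2^n$, so $\sum_z \D_f(z + e_I, J)^q = \sum_w \D_f(w, J)^q$; the symmetric counting then yields $E_m^{(q)}(f)^q$. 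Combining these bounds gives~\eqref{eq:E subadditivity in lemma}. No step presents a serious obstacle; the only point requiring care is the double counting above, which is why the averages over splittings must be set up with the correct multiplicities rather than summed naively.
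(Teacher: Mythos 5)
Your argument is correct and is essentially the same as the paper's: decompose $\D_f(z,K)\le \D_f(z,I)+\D_f(z+e_I,J)$ over a disjoint splitting, average over all such splittings, apply the $L_q$-triangle inequality to the two resulting averaged functions, and bound each of their $L_q$-norms by Jensen plus the double-counting identity $\binom{n}{k+m}\binom{k+m}{k}=\binom{n}{k}\binom{n-k}{m}$. The only superficial difference is that you take the $k$-element block first whereas the paper takes the $m$-element block first, which is an immaterial relabeling.
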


\begin{proof}
Fix $I\subset \n$ with $|I|=k+m$. Recalling~\eqref{eq:def I
directional}, for every $J\subset I$ with $|J|=k$ and every $z\in
\F_2^n$ we have
$$
\D_f(z,I)\le\D_f(z,I\setminus J)+\D_f(z+e_{I\setminus J},J).
$$
Consequently,
\begin{equation}\label{eq:pointwise z,I}
\forall (z,I)\in \Omega_{k+m}^n,\qquad \D_f(z,I)\le U_f(z,I)+V_f(z,I),
\end{equation}
where
$$
U_f(z,I)\eqdef \frac{1}{\binom{k+m}{k}}\sum_{\substack{J\subset I\\ |J|=k}}\D_f(z,I\setminus J).
$$
and
$$
V_f(z,I)\eqdef
\frac{1}{\binom{k+m}{k}}\sum_{\substack{J\subset I\\
|J|=k}}\D_f(z+e_{I\setminus J},J).
$$

 The point-wise estimate~\eqref{eq:pointwise z,I} combined with the
triangle inequality in $L_q(\Omega_{k+m}^n)$ implies that
\begin{equation}\label{eq:E bounded by U+V}
E^{(q)}_{k+m}(f)=\left\|\D_f\right\|_{L_q(\Omega_{k+m}^n)}\le \left\|U_f\right\|_{L_q(\Omega_{k+m}^n)}
+\left\|V_f\right\|_{L_q(\Omega_{k+m}^n)}.
\end{equation}
By the convexity of $t\mapsto |t|^q$ we have
\begin{multline}\label{eq:Uf bound}
\left\|U_f\right\|_{L_q(\Omega_{k+m}^n)}^q\le
\frac{1}{2^n\binom{n}{k+m}} \sum_{\substack{I\subseteq \n\\|I|=k+m}}
\sum_{z\in \F_2^n}\frac{1}{\binom{k+m}{k}}\sum_{\substack{J\subset I\\ |J|=k}}\D_f(z,I\setminus J)^q\\
= \frac{1}{2^n\binom{n}{k+m}\binom{k+m}{k}}\sum_{z\in
\F_2^n}\sum_{\substack{S\subseteq \n\\|S|=m}}
\binom{n-m}{k}\D_f(z,S)^q=E_m^{(q)}(f)^q,
\end{multline}
and
\begin{multline}\label{eq:Vf bound}
\left\|V_f\right\|_{L_q(\Omega_{k+m}^n)}^q\le
\frac{1}{2^n\binom{n}{k+m}} \sum_{\substack{I\subseteq \n\\|I|=k+m}}
\sum_{z\in \F_2^n}\frac{1}{\binom{k+m}{k}}\sum_{\substack{J\subset
I\\ |J|=k}} \D_f(z+e_{I\setminus J},J)^q\\
=
\frac{1}{2^n\binom{n}{k+m}\binom{k+m}{k}}\sum_{\substack{I\subseteq
\n\\|I|=k+m}}\sum_{\substack{J\subset I\\ |J|=k}} \sum_{w\in
\F_2^n}\D_f(w,J)^q=E_k^{(q)}(f)^q.
\end{multline}
The desired estimate~\eqref{eq:E subadditivity in lemma} now follows
from a substitution of~\eqref{eq:Uf bound} and~\eqref{eq:Vf bound}
into~\eqref{eq:E bounded by U+V}.
\end{proof}

For $m,n\in \N$ with $n\ge m$, for every $f:\F_2^m\to X$ denote its
natural lifting to $\F_2^n$ by $f^{\uparrow n}:\F_2^n\to X$, that
is,
$$
\forall\, z\in \F_2^n,\qquad f^{\uparrow n}(z)\eqdef f(z_1,\ldots,z_m).
$$
We then have the following identity for every $k\in \n$.
\begin{align}
E_k^{(q)}\left(f^{\uparrow n}\right)^q & =\frac{1}{2^n\binom{n}{k}}\sum_{\substack{I\subseteq \n\\|I|=k}}
2^{n-m}\sum_{w\in \F_2^m}\D_f(w,I\cap \{1,\ldots,m\})^q \label{eq:use def E and D}
\\
&=\frac{1}{\binom{n}{k}}\sum_{\ell=\max\{1,m+k-n\}}^{\min\{k,m\}}
\binom{n-m}{k-\ell}\binom{m}{\ell}E_\ell^{(q)}(f)^q, \label{eq:E under lifting}
\end{align}
where~\eqref{eq:use def E and D} uses~\eqref{eq:def I directional}
and~\eqref{eq:def k-edge average}, and the identity~\eqref{eq:E
under lifting} follows by observing that if $I\subset\n$ satisfies
$|I\cap\{1,\ldots,m\}|=\ell$ then necessarily $\ell\ge m+k-n$ and
for each $J\subseteq \{1,\ldots,m\}$ with $|J|=\ell$ the number of
subsets $I\subset\n$ with $I\cap\{1,\ldots,m\}=J$ is
$\binom{n-m}{k-\ell}$. Note in particular the following two special cases
of~\eqref{eq:E under lifting}.
\begin{equation}\label{lifting edges diagonals}
E_n^{(q)}\left(f^{\uparrow n}\right)= E_n^{(q)}(f)\qquad\mathrm{and}\qquad E_1^{(q)}\left(f^{\uparrow n}\right)=\left(\frac{m}{n}\right)^{1/q}E_1^{(q)}(f).
\end{equation}

When $q=2$ in~\eqref{eq:def k-edge average} we write $E_k(f)\eqdef
E_k^{(2)}(f)$. With this notation, a metric space $(X,d_X)$ has BMW
type $p\in (0,\infty)$ if and only if there exists $T\in (0,\infty)$
such that for every $n\in \N$ and every $f:\F_2^n\to X$,
\begin{equation}\label{eq:BMW E notation}
E_n(f)\le Tn^{1/p}E_1(f).
\end{equation}
Let $\BMW_p^n(X)$ denote the infimum over those $T\in (0,\infty)$
for which~\eqref{eq:BMW E notation} holds true for every
$f:\F_2^n\to X$. Thus
$$
\BMW_p(X)=\sup_{n\in \N} \BMW_p^n(X).
$$

\begin{remark}
By definition we have $\BMW_p^n(X)n^{1/p}=\BMW_q^n(X)n^{1/q}$ for
every $p,q\in (0,\infty)$. Also, unless $|X|=1$ we have
$\BMW_p^1(X)=1$.
\end{remark}

\begin{remark}By Lemma~\ref{lem:E subadditivity} we have $\E_n(f)\le
nE_1(f)$, so
$$
\forall\, n\in \N,\qquad \BMW_p^n(X)\le n^{1-\frac{1}{p}}.
$$
Moreover, given $m,n\in \N$ with $n\ge m$ and $f:\F_2^n\to X$ with
$E_1(f)>0$, by~\eqref{lifting edges diagonals} we have
$$
\frac{E_m(f)}{E_1(f)}=\frac{E_n\left(f^{\uparrow n}\right)}{E_1\left(f^{\uparrow n}\right)}
\left(\frac{n}{m}\right)^{1/p}.
$$
Consequently,
\begin{equation}\label{eq:monotonicity bmw}
\forall\, m,n\in \N,\quad m\le n\implies \BMW_p^m(X)m^{1/p}\le \BMW_p^n(X)n^{1/p}.
\end{equation}
\end{remark}

\begin{remark}\label{rem:p<2 always}
Let $(X,d_X)$ be a metric space and $p\in (1,\infty)$ be such that
$X$ has BMW type $p$. Then necessarily $p\le 2$. Indeed, choose
distinct $x_0,x_1\in X$ and for every $n\in\N$ define $f_n:\F_n\to
X$ by $f_n(z)=x_{z_1}$. Then $E_1(f_n)=d_X(x_0,x_1)/\sqrt{n}$
and $E_n(f_n)=d_X(x_0,x_1)$. This means that $n^{1/p}\BMW_p(X)\ge
\sqrt{n}$ for all $n\in \N$, which implies that $p\le 2$. A
straightforward application of the triangle inequality
(see~\cite{BMW}) implies that every metric space has BMW type $1$
with $\BMW_1(X)=1$.
\end{remark}

Recalling the definition of $p_X\in [1,2]$ in~\eqref{eq:def p_X
BMW}, we have the following lemma that relies on a sub-multiplicativity
argument that was introduced by Pisier~\cite{Pisier-type-1} in the
context of Rademacher type of normed spaces, and has been
implemented in the context of nonlinear type by Bourgain, Milman and
Wolfson~\cite{BMW} (see also~\cite{Pisier-type}).

\begin{lemma}\label{lem:use sub-multiplicativity}
For every metric space $(X,d_X)$ we have
$$
\forall\, n\in \N,\qquad \BMW_{p_X}^n(X)\ge 1.
$$
\end{lemma}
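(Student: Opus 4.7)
The natural invariant for this problem is the scale-free quantity $R_n := \BMW_p^n(X)\, n^{1/p}$, which by the Remark at the start of Section~\ref{sec:type} is independent of $p \in (0,\infty)$ and coincides with $\sup_{f:\F_2^n \to X} E_n(f)/E_1(f)$. The lemma is equivalent to the assertion that $R_n \ge n^{1/p_X}$ for every $n \in \N$, and we already know from~\eqref{eq:monotonicity bmw} that $R_n$ is non-decreasing in $n$. The plan is to establish the sub-multiplicativity estimate
\begin{equation}\label{eq:plan submult}
\forall\, n,m \in \N,\qquad R_{nm} \le R_n R_m,
\end{equation}
following the strategy of Pisier~\cite{Pisier-type-1} for Rademacher type, as adapted to nonlinear type by Bourgain, Milman and Wolfson~\cite{BMW}, and then to combine~\eqref{eq:plan submult} with monotonicity to force the desired lower bound on $R_n$.

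For~\eqref{eq:plan submult}, fix $f : \F_2^{nm} \to X$ and index the coordinates of $\F_2^{nm}$ by $\n \times \m$. Let $r_i := \sum_{j=1}^m e_{(i,j)}$ be the indicator of the $i$-th row, so $r_1 + \cdots + r_n$ equals the full all-ones vector, and for each $z \in \F_2^{nm}$ introduce the auxiliary map $\phi_z : \F_2^n \to X$ given by $\phi_z(w) := f\bigl(z + \sum_{i=1}^n w_i r_i\bigr)$. The argument now consists of two applications of BMW type at different scales. First, for each fixed $z$, BMW type at scale $n$ gives $E_n(\phi_z)^2 \le R_n^2 E_1(\phi_z)^2$; exploiting the invariance of $\F_2^{nm}$ under the translation $z \mapsto z + \sum_i w_i r_i$, one checks that averaging both sides over $z$ turns the left-hand side into $E_{nm}(f)^2$ and the right-hand side into $\frac{R_n^2}{n}\sum_{i=1}^n \frac{1}{2^{nm}}\sum_{z} d_X(f(z+r_i), f(z))^2$. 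Second, for each $i$, splitting $z = (y, z')$ with $y \in \F_2^m$ the $i$-th row and $z'$ the remaining coordinates expresses $\frac{1}{2^{nm}}\sum_z d_X(f(z+r_i), f(z))^2$ as the $z'$-average of $E_m(f_{i,z'})^2$, where $f_{i,z'} : \F_2^m \to X$ is the partial function $y \mapsto f(z)$; BMW type at scale $m$ applied to each $f_{i,z'}$, followed by averaging over $z'$ and summing over $i$, returns exactly $R_m^2 E_1(f)^2$. Chaining the two steps delivers $E_{nm}(f)^2 \le R_n^2 R_m^2 E_1(f)^2$, which is~\eqref{eq:plan submult}.

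Granted~\eqref{eq:plan submult}, the lemma follows by a short argument. Suppose for contradiction that $R_n < n^{1/p_X}$ for some $n \ge 2$, and set $\alpha_0 := \log R_n / \log n$, so $\alpha_0 < 1/p_X$; the example $f(z) = x_{z_1}$ from Remark~\ref{rem:p<2 always} gives $R_n \ge \sqrt{n}$, forcing $\alpha_0 \ge 1/2$ and hence $1/\alpha_0 \in (p_X, 2]$. Iterating~\eqref{eq:plan submult} yields $R_{n^k} \le R_n^k = n^{k\alpha_0}$ for every $k \in \N$, and the monotonicity of $R$ extends this to $R_m \le n^{\alpha_0} m^{\alpha_0}$ for every $m \in \N$ by bracketing $m$ between consecutive powers of $n$. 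Thus $\sup_m R_m/m^{\alpha_0} < \infty$, i.e., $\BMW_{1/\alpha_0}(X) < \infty$, contradicting the definition~\eqref{eq:def p_X BMW} of $p_X$ since $1/\alpha_0 > p_X$. The main obstacle is the double-averaging bookkeeping in~\eqref{eq:plan submult}: one must verify that the translation-invariance argument produces \emph{exactly} the product $R_n R_m$ with no extraneous absolute constant, since a factor of even $2$ would prevent iteration from reaching the sharp exponent $1/p_X$.
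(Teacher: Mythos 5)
Your proof is correct and follows the same strategy as the paper's: sub-multiplicativity of the normalized quantity $R_n=\BMW_p^n(X)\,n^{1/p}$, combined with monotonicity~\eqref{eq:monotonicity bmw} and a Pisier-type iteration that contradicts the definition of $p_X$ (indeed your exponent $1/\alpha_0$ is precisely the auxiliary $q$ appearing in the paper's proof). The only difference is that you spell out the proof of the sub-multiplicativity $R_{nm}\le R_n R_m$, which the paper simply cites from~\cite[Lem.~2.3]{BMW}.
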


\begin{proof} Write $p=p_X$ and suppose for the sake of obtaining a contradiction that there exists
$m\in \N$ and $\e\in (0,1)$ such that $\BMW_p^m(X)<\e$. We may also
assume without loss of generality that $\e>1/n^{1/p}$. Since
$\BMW_p^1(X)=1$, we have $m\ge 2$. If we define
\begin{equation}\label{eq:def q bigger than p}
q\eqdef \frac{1}{\frac{1}{p}-\frac{\log(1/\e)}{\log m}}
\end{equation}
then $q\in (p,\infty)$. By~\cite[Lem.~2.3]{BMW} (see
also~\cite[Lem.7.2]{Pisier-type}), for every $k,n\in \N$ we have
$$
\BMW_p^{kn}(X)\le \BMW_p^{k}(X)\cdot \BMW_p^{n}(X).
$$
Consequently, for every $i\in \N$ we have
\begin{equation}\label{eq:at m^i}
\BMW_p^{m^i}(X)\le \BMW_p^m(X)^i<\e^i\stackrel{\eqref{eq:def q bigger than p}}{=}
m^{-i\left(\frac{1}{p}-\frac{1}{q}\right)}.
\end{equation}
For every $n\in \N$ choose $i\in \N$ such that $m^{i-1}\le n<m^i$.
Since, by~\eqref{eq:monotonicity bmw}, $\BMW_p^n(X)n^{1/p}$
increases with $n$, it follows from~\eqref{eq:at m^i} that
$$
\BMW_q^n(X)=\frac{\BMW_p^n(X)n^{1/p}}{n^{1/q}}\le \frac{\BMW_p^{m^i}(X)m^{i/p}}{n^{1/q}}<\frac{m^{i/q}}{n^{1/q}}\le m^{1/q}.
$$
Consequently $\BMW_q(X)<\infty$, i.e., $X$ has BMW type $q$. Since
$q>p$, this contradicts the definition of $p_X=p$.
\end{proof}

\begin{lemma}\label{lem:partition ki}
Fix $p\in [1,2]$, $n\in \N$, and $k_1,\ldots,k_m\in \n$ such that
$k_1+\ldots +k_m\le n$. Then for every metric space $(X,d_X)$ and
every $f:\F_2^n\to X$ we have
\begin{equation}\label{eq:index partition in lemma}
 E_{k_1+\ldots+k_m}(f)\le
\BMW_p(X)m^{\frac{1}{p}-\frac12}\left(\sum_{j=1}^m
E_{k_j}(f)^2\right)^{\frac12}.
\end{equation}
\end{lemma}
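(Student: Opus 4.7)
The plan is to reduce the inequality to a single application of the definition of $\BMW_p(X)$ on $\F_2^m$, applied to a family of ``sub-cube'' restrictions of $f$ that are indexed by an arbitrary partition of a subset of $\n$ into blocks of sizes $k_1,\ldots,k_m$. Fix a sequence $I_1,\ldots,I_m$ of pairwise disjoint subsets of $\n$ with $|I_j|=k_j$, and for every base point $z\in\F_2^n$ define $g_z\colon\F_2^m\to X$ by
\[
g_z(\e)\eqdef f\bigl(z+\textstyle\sum_{j=1}^m \e_j e_{I_j}\bigr).
\]
The disjointness of the $I_j$ gives $\sum_{j=1}^m e_{I_j}=e_{I_1\cup\cdots\cup I_m}$ in $\F_2^n$, so the diagonal increment of $g_z$ encodes $\D_f$ in the direction $I_1\cup\cdots\cup I_m$, while its $j$-th coordinate increment encodes $\D_f$ in the direction $I_j$.

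I would then apply \eqref{eq:BMW E notation} (squared) to $g_z$ and average the resulting inequality
\[
E_m(g_z)^2\le \BMW_p(X)^2\, m^{2/p-1}\sum_{j=1}^m\frac{1}{2^m}\sum_{\e\in\F_2^m}
d_X\bigl(g_z(\e+e_j),g_z(\e)\bigr)^2
\]
over $z\in\F_2^n$ sampled uniformly. Because for each fixed $\e$ the map $z\mapsto z+\sum_j \e_j e_{I_j}$ preserves the uniform probability measure on $\F_2^n$, the $z$-averages on both sides collapse to $L_2(\F_2^n)$-norms of $\D_f$, yielding
\[
\bigl\|\D_f(\cdot,I_1\cup\cdots\cup I_m)\bigr\|_{L_2(\F_2^n)}^2\le \BMW_p(X)^2\, m^{2/p-1}\sum_{j=1}^m \bigl\|\D_f(\cdot,I_j)\bigr\|_{L_2(\F_2^n)}^2
\]
for every admissible disjoint sequence $(I_1,\ldots,I_m)$.

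The final step is to average this inequality over a uniformly random disjoint sequence $(I_1,\ldots,I_m)$ with $|I_j|=k_j$. By symmetry the marginal law of $I_1\cup\cdots\cup I_m$ is uniform on subsets of $\n$ of size $k_1+\cdots+k_m$, and the marginal law of each $I_j$ is uniform on subsets of size $k_j$. Using the identity $E_k^{(2)}(f)^2=\frac{1}{\binom{n}{k}}\sum_{|J|=k}\|\D_f(\cdot,J)\|_{L_2(\F_2^n)}^2$, which is immediate from \eqref{eq:def k-edge average} (and is an equivalent reformulation of \eqref{eq:fixed I permutation}), this produces $E_{k_1+\cdots+k_m}(f)^2$ on the left and $\sum_{j=1}^m E_{k_j}(f)^2$ on the right; taking square roots yields \eqref{eq:index partition in lemma}. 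I do not anticipate a genuine obstacle: the argument never uses whether $k_1+\cdots+k_m$ equals $n$ or is strictly smaller, and the only mildly delicate ingredient is the identification of the two marginal laws of the random disjoint sequence, which is a direct symmetry computation.
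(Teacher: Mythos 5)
Your argument is correct and is essentially the paper's proof. The paper fixes the canonical consecutive blocks $I_1,\ldots,I_m$ and averages the BMW inequality for $\phi_x^\sigma(z)\eqdef f\bigl(x+\sum_{s=1}^m z_s e_{\sigma(I_s)}\bigr)$ over $x\in\F_2^n$ and $\sigma\in S_n$ via \eqref{eq:fixed I permutation}, which --- since a uniform $\sigma$ produces a uniformly random ordered disjoint tuple $(\sigma(I_1),\ldots,\sigma(I_m))$ --- is exactly your averaging over base points and random disjoint tuples.
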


\begin{proof} Write $k_0=0$ and $I=\{1,\ldots, k_1+\ldots+k_m\}\subset \n$. For every
$j\in \m$ set
$I_j=\{k_1+\ldots+k_{j-1}+1,\ldots,k_1+\ldots+k_{j}\}\subset I$.
Fixing $x\in \F_2^n$  and a permutation $\sigma\in S_n$, define
$\f_x^{\sigma}:\F_2^m\to X$ by
$$
\f_x^\sigma(z)\eqdef f\left(x+\sum_{s=1}^m z_s e_{\sigma(I_s)}\right).
$$
An application of the definition of $\BMW_p(X)$ to $\phi_x^\sigma$
yields the inequality
\begin{multline}\label{use BMW partition}
\frac{1}{2^m}\sum_{z\in \F_2^m}\D_f\left(x+\sum_{s=1}^m z_s
e_{\sigma(I_s)},\sigma(I)\right)^2\\\le
\frac{\BMW_p(X)^2m^{\frac{2}{p}-1}}{2^m}\sum_{z\in
\F_2^m}\sum_{j=1}^m \D_f\left(x+\sum_{s=1}^m z_s
e_{\sigma(I_s)},\sigma(I_j)\right)^2.
\end{multline}
Recalling~\eqref{eq:fixed I permutation}, by averaging~\eqref{use
BMW partition} over $x\in \F_2^n$ and $\sigma\in S_{n}$ we obtain
\begin{align*}
E_{k_1+\ldots+k_m}(f)^2&=\frac{1}{2^nn!}\sum_{y\in \F_2^n} \sum_{\sigma\in S_n}\D_f\left(y,\sigma(I)\right)^2
\\&\le
\frac{\BMW_p(X)^2m^{\frac{2}{p}-1}}{2^nn!} \sum_{j=1}^m\sum_{y\in \F_2^n}
\sum_{\sigma\in S_n} \D_f(y,\sigma(I_j))^2\\&=
\BMW_p(X)^2m^{\frac{2}{p}-1}\sum_{j=1}^m E_{k_j}(f)^2.\qedhere
\end{align*}
\end{proof}


\begin{proof}[Proof of Theorem~\ref{thm:our MP intro}] Denote $p_X=p$. Fix $\e\in (0,1/3)$ and define
\begin{equation}\label{eq:n choice as function of d}
n\eqdef \left\lceil\frac{\BMW_p(X)^4}{\e}d\right\rceil.
\end{equation}
Since $n\ge d$, we can consider $\F_p^d$ as a subset of $\F_p^n$
(say, cannonically embedded as the first $d$ coordinates).

By Lemma~\ref{lem:use sub-multiplicativity} we have $\BMW_p^n(X)\ge
1$, and therefore by the definition of $\BMW_p^n(X)$ there exists
$f:\F_p^n\to X$ such that
\begin{equation}\label{eq:f maximizer type}
E_n(f) \ge (1-\e)n^{1/p}E_1(f)>0.
\end{equation}
Write
$$
Y\eqdef \ell_2^{\F_2^n\times S_n}(X)\cong \ell_2^{2^nn!}(X).
$$
Define $F:\F_2^n\to Y$ by setting
$$
\forall (x,z,\pi)\in \F_2^n\times \F_2^n\times S_n,\qquad F(x)_{(z,\pi)}\eqdef
f(\pi(x)+z),
$$
where $\pi(x)\eqdef \sum_{i=1}^n x_{\pi(i)}e_i$.
Recalling~\eqref{eq:def k-edge average}, every $x,y\in
\F_2^n$ satisfy
\begin{equation}\label{eq:F distance constant on levels}
d_Y(F(x),F(y))= \sqrt{2^n n!}\cdot E_{\|x-y\|_1}(f),
\end{equation}

By Lemma~\ref{lem:partition ki} with $m=\|x-y\|_1$ and
$k_1=\ldots=k_m=1$, for every $x,y\in \F_p^n$ we have
\begin{eqnarray}\label{eq:upper dY}
\frac{d_Y(F(x),F(y))}{\sqrt{2^nn!}}\!\!\!&\stackrel{\eqref{eq:index partition in lemma}\wedge \eqref{eq:F distance constant on levels}}
{\le}&\!\!\!
\BMW_p(X)\|x-y\|_1^{\frac{1}{p}-\frac12}\cdot
\sqrt{\|x-y\|_1}E_1(f)\nonumber \\&=&\BMW_p(X)E_1(f)\|x-y\|_p.
\end{eqnarray}
Fixing $x,y\in \F_p^d\subset \F_p^n$, write $n=a\|x-y\|_1+b$ for
appropriate integers $a$ and $b\in [0,\|x-y\|_1)$. By
Lemma~\ref{lem:partition ki} with $m=b$ and $k_1=\ldots=k_m=1$,
\begin{equation}\label{eq:Eb}
E_b(f)\le \BMW_p(X)b^{1/p}E_1(f)\le \BMW_p(X)\|x-y\|_1^{1/p}E_1(f).
\end{equation}
Using Lemma~\ref{lem:partition ki} once more, this time with
$m=a+1$, $k_{1}=b$ and $k_2=\ldots=k_{a+1}=\|x-y\|_1$, and noting
that since $\|x-y\|_1\le d\le \e n$ we have $m\le
(1+\e)n/\|x-y\|_1$, we conclude that
\begin{equation*}
E_n(f)\le
\BMW_p(X)\left(\frac{(1+\e)n}{\|x-y\|_1}\right)^{\frac{1}{p}-\frac12}\sqrt{\frac{n}{\|x-y\|_1}E_{\|x-y\|_1}(f)^2+E_b(f)^2}.
\end{equation*}
In combination with~\eqref{eq:Eb} and our assumption~\eqref{eq:f
maximizer type}, this implies
\begin{multline}\label{eq:to subtract}
(1-\e)^2n^{2/p}E_1(f)^2\le
(1+\e)\BMW_p(X)^2\left(\frac{n}{\|x-y\|_1}\right)^{2/p}E_{\|x-y\|_1}(f)^2\\+(1+\e)n^{2/p}
\BMW_p(X)^4\frac{\|x-y\|_1}{n}E_1(f)^2.
\end{multline}
Recalling~\eqref{eq:n choice as function of d},  we have
$\|x-y\|_1/n\le d/n\le \e \BMW_p(X)^4$ (since $x,y\in \F_2^d$), and
it therefore follows from~\eqref{eq:to subtract} that
\begin{equation}\label{eq:lower dY}
\frac{d_Y(F(x),F(y))}{\sqrt{2^n n!}}\stackrel{\eqref{eq:F distance constant on levels}}{=} E_{\|x-y\|_1}(f)\ge
\frac{\|x-y\|_pE_1(f)}{\BMW_p(X)}\sqrt{\frac{1-3\e}{1+\e}}.
\end{equation}
Since $\e\in (0,1/3)$ can be taken to be arbitrarily small, by
combining~\eqref{eq:upper dY} and~\eqref{eq:lower dY} we conclude
that
\begin{equation*}
c_Y(\F_2^d,\|x-y\|_p)\le \BMW_p(X)^2.\qedhere
\end{equation*}
\end{proof}
\subsection{Obstructions to average distortion embeddings of cubes}\label{sec:no
average} We start by proving Lemma~\ref{lem:av type}, whose proof is
very simple.

\begin{proof}[Proof of Lemma~\ref{lem:av type}]
Fix $D>Av_Y^{(2)}(X)$ and $n\in \N$. If $f:\F_2^n\to X$ then there
exists a nonconstant mapping $g:f(\F_2^n)\to Y$ such that
$$
\sum_{(x,y)\in \F_2^n\times F_2^n} d_Y(g(f(x)),g(f(y)))^2\ge \frac{\|g\|_{\Lip}^2}{D^2}
\sum_{(x,y)\in \F_2^n\times F_2^n}
d_X(f(x),f(y))^2.
$$
Consequently,
\begin{align}\label{eq:f circ g}
\nonumber&\frac{1}{2^n}\sum_{x\in \F_2^n}d_X(f(x),f(x+e))^2\\\nonumber&\le
\frac{2}{4^n}\sum_{(x,y)\in \F_2^n\times \F_2^n}\left(d_X(f(x),f(y))^2+d_X(f(y),f(x+e))^2\right)\\\nonumber&
=\frac{1}{4^{n-1}}\sum_{(x,y)\in \F_2^n\times F_2^n}
d_X(f(x),f(y))^2\\
&\le \frac{D^2}{4^{n-1}\|g\|_{\Lip}^2}\sum_{(x,y)\in \F_2^n\times F_2^n} d_Y(g(f(x)),g(f(y)))^2.
\end{align}
Now,
\begin{multline}\label{eq:BMW on each Ek}
\frac{1}{4^n}\sum_{(x,y)\in \F_2^n\times F_2^n}
d_Y(g(f(x)),g(f(y)))^2 \stackrel{\eqref{eq:def k-edge
average}}{=}\frac{1}{2^n}\sum_{k=1}^n \binom{n}{k}
E_k(g\circ f)^2\\
\stackrel{\eqref{eq:index partition in lemma}}{\le}
\frac{1}{2^n}\sum_{k=1}^n \binom{n}{k} \BMW_p(Y)^2k^{2/p} E_1(g\circ
f)^2.
\end{multline}
Since $E_1(g\circ f)\le \|g\|_{\Lip}E_1(f)$, it follows
from~\eqref{eq:f circ g} and~\eqref{eq:BMW on each Ek} that
$\BMW_p(X)\le 2Av_Y^{(2)}(X)\BMW_p(Y)$.
\end{proof}

Recall that (see e.g.~\cite{Mau03}) a Banach space $(X,\|\cdot\|_X)$
is said to have Rademacher type $p$ constant $T\in (0,\infty)$ if
for every $m\in \N$ and every $x_1,\ldots,x_m\in X$ we have
\begin{equation}\label{eq:rad type}
\left(\E_\e\left[\left\|\sum_{i=1}^m\e_ix_i\right\|_X^2\right]\right)^{\frac12}
\le T\left(\sum_{i=1}^m \|x_i\|_X^p\right)^{\frac{1}{p}},
\end{equation}
where $\E_\e[\cdot]$ is the expectation with respect to i.i.d.
 $\pm 1$ Bernoulli random variables $\e_1,\ldots,\e_n$. The
infimum over those $T\in (0,\infty)$ for which $X$ has Rademacher
type $p$ constant $T$ is denoted $T_p(X)$. If no such $T$ exists
then we write $T_p(X)=\infty$.

\begin{lemma}\label{lem:use Pisier ineq}
Assume that $p\in [1,2)$ and fix $q\in (p,2]$ and $r,s\in
[1,\infty)$. Suppose that $(Y,\|\cdot\|_Y)$ is a Banach space with
$T_q(Y)<\infty$ and that $f:\F_2^m\to Y$ satisfies
\begin{multline}\label{eq:f assumption pisier ineq}
\left(\frac{1}{4^m}\sum_{(x,y)\in \F_2^m\times
\F_2^m}\|f(x)-f(y)\|_Y^r\right)^{\frac{1}{r}}\\=
\left(\frac{1}{4^m}\sum_{(x,y)\in \F_2^m\times
\F_2^m}\|x-y\|_{\ell_p}^s\right)^{\frac{1}{s}}\asymp m^{1/p}.
\end{multline}
Then there exists $x\in \F_2^m$ and $i\in \n$ such that
\begin{equation}\label{eq:lower average cube with logm term}
\|f(x)-f(x+e_i)\|_Y \gtrsim \frac{1}{\sqrt{r}T_q(Y)}\cdot \frac{m^{\frac{1}{p}-\frac{1}{q}}}{\log m}.
\end{equation}
\end{lemma}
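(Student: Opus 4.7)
The plan is to combine Pisier's inequality on the Boolean cube with the Rademacher type--$q$ assumption on $Y$, using the Kahane--Khintchine inequality to pass between moments on the Rademacher side. Write $D_if(x)\eqdef f(x+e_i)-f(x)$ for $x\in\F_2^m$ and $i\in\m$, and set
$$
M\eqdef\max_{(x,i)\in\F_2^m\times\m}\|D_if(x)\|_Y.
$$
The goal is then to show $M\gtrsim m^{1/p-1/q}/\bigl(\sqrt{r}\,T_q(Y)\log m\bigr)$, which yields~\eqref{eq:lower average cube with logm term}.

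First, invoke Pisier's inequality for $Y$-valued functions on $\F_2^m$ (see~\cite{Pisier-type}): for every $r\in[1,\infty)$,
$$
\left(\frac{1}{2^m}\sum_{x\in\F_2^m}\|f(x)-\E f\|_Y^r\right)^{\frac{1}{r}}\lesssim \log m\cdot\left(\frac{1}{2^m}\sum_{x\in\F_2^m}\E_\delta\left\|\sum_{i=1}^m\delta_iD_if(x)\right\|_Y^r\right)^{\frac{1}{r}},
$$
where $\delta_1,\ldots,\delta_m$ are i.i.d.\ symmetric $\pm 1$ random variables. Next, for each fixed $x\in\F_2^m$, apply the Kahane--Khintchine inequality to pass from the $L^r$-norm to the $L^2$-norm of the Rademacher sum (losing a factor of order $\sqrt{r}$ when $r\ge 2$, while Jensen's inequality suffices when $r<2$), followed by the definition~\eqref{eq:rad type} of Rademacher type $q$:
$$
\left(\E_\delta\left\|\sum_{i=1}^m\delta_iD_if(x)\right\|_Y^r\right)^{\frac{1}{r}}\lesssim \sqrt{r}\,T_q(Y)\left(\sum_{i=1}^m\|D_if(x)\|_Y^q\right)^{\frac{1}{q}}\le \sqrt{r}\,T_q(Y)\,m^{1/q}\,M.
$$
Combining these two displays produces
$$
\left(\frac{1}{2^m}\sum_{x\in\F_2^m}\|f(x)-\E f\|_Y^r\right)^{\frac{1}{r}}\lesssim \sqrt{r}\,T_q(Y)\,m^{1/q}\,M\,\log m.
$$

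Finally, bound the left-hand side of the previous display from below using~\eqref{eq:f assumption pisier ineq}. The triangle inequality gives
$$
\left(\frac{1}{4^m}\sum_{x,y\in\F_2^m}\|f(x)-f(y)\|_Y^r\right)^{\frac{1}{r}}\le 2\left(\frac{1}{2^m}\sum_{x\in\F_2^m}\|f(x)-\E f\|_Y^r\right)^{\frac{1}{r}},
$$
so by~\eqref{eq:f assumption pisier ineq} the left-hand side above is $\gtrsim m^{1/p}$. Rearranging produces $M\gtrsim m^{1/p-1/q}/\bigl(\sqrt{r}\,T_q(Y)\log m\bigr)$, as required. The main technical point is the bookkeeping of moment exponents: Pisier's inequality is applied at exponent $r$ to match the hypothesis, while Rademacher type $q$ is naturally an $L^2$-statement on the Rademacher side, and the passage between these two moments via Kahane--Khintchine is exactly what generates the $\sqrt{r}$ factor in the conclusion.
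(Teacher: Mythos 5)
Your proof is correct and follows essentially the same route as the paper: Pisier's inequality on the cube, Kahane--Khintchine to drop the $L^r$ Rademacher moment to $L^2$ (at a $\sqrt{r}$ cost), and then Rademacher type $q$ together with the trivial bound $\bigl(\sum_i\|D_if(x)\|_Y^q\bigr)^{1/q}\le m^{1/q}M$. The only cosmetic difference is that you invoke Pisier's inequality in its mean-zero form and pass to the pair average via the triangle inequality, whereas the paper writes Pisier's inequality directly for $\|f(x)-f(y)\|_Y$; these are equivalent formulations.
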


\begin{proof}
By Pisier's inequality~\cite{Pisier-type} we have
\begin{multline}\label{eq:pisier's ineq first use}
 m^{1/p}\stackrel{\eqref{eq:f assumption pisier
ineq}}\lesssim \left(\frac{1}{4^m}\sum_{(x,y)\in \F_2^m\times
\F_2^m}\|f(x)-f(y)\|_Y^r\right)^{\frac{1}{r}}\\\lesssim \log m
\left(\frac{1}{2^m}\sum_{x\in
\F_2^n}\E_\e\left[\left\|\sum_{i=1}^m\e_i(f(x+e_i)-f(x))\right\|_Y^r\right]\right)^{\frac{1}{r}}.
\end{multline}
For every fixed $x\in \F_2^m$ it follows from Kahane's inequality
(with asymptotically optimal dependence on $r$; see
e.g.~\cite{Tal88}) that
\begin{align*}
&\left(\E_\e \left[\left\|\sum_{i=1}^m\e_i(f(x+e_i)-f(x))\right\|_Y^r\right]\right)^{\frac{1}{r}}
\\&\lesssim \sqrt{r} \left(\E_\e \left[\left\|\sum_{i=1}^m\e_i(f(x+e_i)-f(x))\right\|_Y^2\right]\right)^{\frac12}
\\&\le
\sqrt{r}T_q(Y)\left(\sum_{i=1}^m \|f(x+e_i)-f(x)\|_Y^q\right)^{\frac{1}{q}}.
\end{align*}
Combined with~\eqref{eq:pisier's ineq first use}, this implies that
\begin{equation*}
\max_{\substack{x\in \F_2^m\\i\in \{1,\ldots,m\}}} \|f(x+e_i)-f(x)\|_Y\gtrsim \frac{1}{\sqrt{r}T_q(Y)}\cdot \frac{m^{\frac{1}{p}-\frac{1}{q}}}{\log m}.\qedhere
\end{equation*}
\end{proof}

There are classes of Banach spaces $Y$, including Banach lattices of nontrivial type and UMD spaces, for which it is known that
Pisier's inequality~\eqref{eq:pisier's ineq first use} holds true
with the $\log m$ factor replaced by a constant that may depend on
$Y$ and $r$ but not on $m$; see~\cite{NS02,HN12}. For such spaces we
therefore obtain~\eqref{eq:lower average cube with logm term}
without the $\log m$ term.

\begin{lemma}\label{lem:uniformly smooth type}
Assume that $p\in [1,2)$ and fix $q\in (p,2]$ and $r,s\in
[1,\infty)$. Suppose that $(Y,\|\cdot\|_Y)$ is a Banach space with
$S_q(Y)<\infty$, i.e., $Y$ has modulus of uniform smoothness of
power type $q$. If $f:\F_2^m\to Y$ satisfies~\eqref{eq:f assumption
pisier ineq} then there exists $x\in \F_2^m$ and $i\in \n$ such that
\begin{equation}\label{eq:lower average cube smoothness}
\|f(x)-f(x+e_i)\|_Y \gtrsim \frac{m^{\frac{1}{p}-\frac{1}{q}}}{r^{1/q}+S_{q}(Y)}.
\end{equation}
\end{lemma}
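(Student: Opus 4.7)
The argument parallels the proof of Lemma~\ref{lem:use Pisier ineq}, but in place of Pisier's inequality (which costs a factor of $\log m$) we exploit the Markov type bound supplied by Theorem~\ref{thm:npss} under the hypothesis $S_q(Y)<\infty$. Let $M\eqdef \max_{x\in \F_2^m,\,i\in\n}\|f(x)-f(x+e_i)\|_Y$; the goal is to prove $M\gtrsim m^{1/p-1/q}/(r^{1/q}+S_q(Y))$. The heart of the matter is the Banach-valued Enflo-type inequality
\begin{equation}\label{eq:lr-enflo-plan}
\frac{1}{4^m}\sum_{x,y\in\F_2^m}\|f(x)-f(y)\|_Y^\rho\lesssim \bigl(\rho^{1/q}+S_q(Y)\bigr)^\rho m^{\rho/q}\cdot \frac{1}{m 2^m}\sum_{x,i}\|f(x+e_i)-f(x)\|_Y^\rho,
\end{equation}
valid for every exponent $\rho\ge q$, i.e.\ the quantitative statement $\gamma(H_m,\|\cdot\|_Y^\rho)\lesssim (\rho^{1/q}+S_q(Y))^\rho m^{\rho/q}$.

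Once~\eqref{eq:lr-enflo-plan} is in hand the lemma follows in two lines. When $r\ge q$, apply~\eqref{eq:lr-enflo-plan} with $\rho=r$; combine with the hypothesis $4^{-m}\sum_{x,y}\|f(x)-f(y)\|_Y^r\asymp m^{r/p}$ and the trivial bound $\frac{1}{m 2^m}\sum_{x,i}\|f(x+e_i)-f(x)\|_Y^r\le M^r$ to read off $M\gtrsim m^{1/p-1/q}/(r^{1/q}+S_q(Y))$. When $r<q$, Jensen's inequality upgrades the $L^r$ hypothesis to the $L^q$ lower bound $4^{-m}\sum_{x,y}\|f(x)-f(y)\|_Y^q\gtrsim m^{q/p}$, to which~\eqref{eq:lr-enflo-plan} with $\rho=q$ applies to give the same conclusion (note that $r^{1/q}\le q^{1/q}\le 2\lesssim S_q(Y)$ in this regime). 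To prove~\eqref{eq:lr-enflo-plan} I would follow the blueprint of Theorem~\ref{thm:smoothness mtype}: bypass the bipartiteness of the Hamming cube by passing to the lazy chain $\tilde H_m\eqdef (I+H_m)/2$ (relating the two $\gamma$-quantities via Lemma~\ref{lem:I+A}), then apply Lemma~\ref{lem:mtype lower decay gap} at a power $t\asymp m$, invoking the Markov type $\rho$ bound $M_\rho(Y;t)\lesssim (\rho^{1/q}+S_q(Y))t^{1/q-1/\rho}$ from Theorem~\ref{thm:npss}.

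The hardest part is the avoidance of a logarithmic loss when bounding $\gamma(\tilde H_m^t,\|\cdot\|_Y^\rho)$ at $t\asymp m$. A naive mixing-based approach would wait for $\tilde H_m^t$ to be uniformly close to its stationary projection, which requires $t\asymp m\log m$ steps and would introduce a spurious $(\log m)^{\rho/q}$ factor in~\eqref{eq:lr-enflo-plan}, incompatible with the target. The rescue, exactly as in the proof of Theorem~\ref{thm:smoothness mtype}, is that we only need the operator $\tilde H_m^t\otimes I_Y$ to have norm strictly below $1$ on the mean-zero subspace of $L_\rho^{2^m}(Y)$---a much weaker requirement than full mixing that can be secured already at $t\asymp m$, since $\lambda(\tilde H_m)=1-1/m$ and the interpolation-type bound on $\|\tilde H_m^t\otimes I_Y\|$ survives past one step of contraction.
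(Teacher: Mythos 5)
Your reduction to the Enflo--type inequality~\eqref{eq:lr-enflo-plan}, and the two-line deduction of the lemma from it (including the observation that $r^{1/q}\le q^{1/q}\lesssim S_q(Y)$ when $r<q$), are fine and match the logic of the paper's reduction to~\eqref{mtype to cube type}. The gap is in your proof sketch for~\eqref{eq:lr-enflo-plan} itself, specifically in the claim that $\gamma(\tilde H_m^t,\|\cdot\|_Y^\rho)\lesssim 1$ already at $t\asymp m$ for an arbitrary Banach space $Y$ with $S_q(Y)<\infty$. You justify this by saying the ``interpolation-type bound on $\|\tilde H_m^t\otimes I_Y\|$ survives past one step of contraction,'' but there is no such bound available here. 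In Theorem~\ref{thm:smoothness mtype} the operator norm estimate $\|T\otimes I_X\|\le 2^{1-\theta}\lambda(A)^\theta$ was extracted from the standing hypothesis $X=[H,Z]_\theta$; that complex-interpolation structure is precisely what is \emph{not} assumed in Lemma~\ref{lem:uniformly smooth type}, where the only hypothesis on $Y$ is power-type-$q$ smoothness. The Hilbert-space fact $\lambda(\tilde H_m)=1-1/m$ says nothing about $\|\tilde H_m^t\otimes I_Y\|_{L_\rho^{2^m}(Y)_0\to L_\rho^{2^m}(Y)_0}$ for a general Banach space $Y$ --- controlling this passage from linear spectral data to vector-valued operator norms is exactly the content of the nonlinear spectral gap problem, so invoking it here is circular.

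The only structure-free way to estimate this operator norm uses the fact that $\tilde H_m=\frac1m\sum_{i=1}^m E_i$ is an average of commuting conditional expectations, so $\tilde H_m^t=\frac{1}{m^t}\sum_{(i_1,\ldots,i_t)}E_{i_1}\cdots E_{i_t}$ and each summand that touches all $m$ coordinates annihilates the mean-zero subspace while the rest have norm at most $1$; this gives $\|\tilde H_m^t\otimes I_Y\|_{L_\rho(Y)_0}\le \Pr[\text{coupon collector fails in }t\text{ draws}]$, which drops below $\frac12$ only once $t\gtrsim m\log m$. Feeding $t\asymp m\log m$ into Lemma~\ref{lem:mtype lower decay gap} reintroduces a $(\log m)^{1/q}$ factor, which is precisely what Lemma~\ref{lem:uniformly smooth type} is designed to remove (compare the $\log m$ in Lemma~\ref{lem:use Pisier ineq}, which covers arbitrary $Y$ of nontrivial type). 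The paper's actual proof of~\eqref{mtype to cube type} does \emph{not} iterate a fixed walk at all: it either uses the Doob martingale with respect to the coordinate filtration (as in~\cite[Sec.~5]{KN06}, with the power-type-$q$ martingale inequality~\eqref{eq:martingale with constant} replacing Pisier's inequality), or a time-inhomogeneous walk as in~\cite[Sec.~5]{NS02}; either way the full Hamming-cube diagonal is reached in exactly $m$ steps by construction, sidestepping the mixing-time issue entirely. Your route would need an argument for $\gamma(\tilde H_m^t,\|\cdot\|_Y^\rho)\lesssim_\rho 1$ at $t\asymp m$ that is special to the Hamming cube and does not rely on operator-norm interpolation; as written, that step is missing.
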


\begin{proof} Due to~\eqref{eq:f assumption pisier ineq}, in order to prove~\eqref{eq:lower average cube smoothness} it
suffices to show that for every $h:\F_2^m\to Y$,
\begin{multline}\label{mtype to cube type}
\left(\frac{1}{4^m}\sum_{(x,y)\in \F_2^m\times
\F_2^m}\|h(x)-h(y)\|_Y^r\right)^{\frac{1}{r}}\\ \lesssim
\left(r^{\frac{1}{q}}+S_{q}(Y)\right)m^{\frac{1}{q}}\left(\frac{1}{m2^m}\sum_{i=1}^m\sum_{x\in
\F_2^m} \|h(x+e_i)-h(x)\|_Y^r\right)^{\frac{1}{r}}.
\end{multline}
Note that it suffices to prove~\eqref{mtype to cube type} when $r\ge
2$, since otherwise we could replace $q$ by $r$ and use the fact
that $S_r(Y)\le S_q(Y)$.

By considering the standard random walk on the Hamming cube $\F_2^n$
and arguing mutatis mutandis as in~\cite[Sec.~5]{NS02}, \eqref{mtype
to cube type} is a formal consequence of the Markov type estimate of
Theorem~\ref{thm:npss}. Alternatively, once can deduce~\eqref{mtype
to cube type} directly via the martingale argument
in~\cite[Sec.~5]{KN06}, the only difference being the use of the
martingale inequality~\eqref{eq:martingale with constant} in place
of Pisier's inequality~\cite{Pisier-martingales}.
\end{proof}

\begin{proof}[Proof of Lemma~\ref{lem:no cube}]
Since for $q\in [2,\infty)$ we have $S_2(\ell_q)\le \sqrt{q-1}$,
Lemma~\ref{lem:no cube} is a special case  of
Lemma~\ref{lem:uniformly smooth type} with $Y=\ell_q$, $n=2^m$ and
$x_1,\ldots,x_{2^m}$ being an arbitrary enumeration of
$\F_2^m\subseteq Y$.
\end{proof}

\begin{remark}\label{rem:sqrt[4]n}
As promised in the Introduction, here we justify~\eqref{eq:av cube
fourth root}. The fact that $Av_\R(\F_2^n,\|\cdot\|_2)\lesssim
\sqrt[4]{n}$ is simple: consider the mapping $\f:\F_2^n\to \R$ given
by $\f(x)=\sqrt{\max\{\|x\|_1-n/2,0\}}$. Then $\f$ is $1$-Lipschitz with
respect to the metric induced on $\F_2^n$ by the Euclidean norm
$\|\cdot\|_2$. By the central limit
theorem the average of $|\f(x)-\f(y)|^2$ over $(x,y)\in \F_2^n\times
\F_2^n$ is of order $\sqrt{n}$.

The corresponding lower bound $Av_\R(\F_2^n,\|\cdot\|_2)\gtrsim
\sqrt[4]{n}$ is an example of a lower bound on the average
distortion of the cube $\F_2^n$ that is not proved through the use
on nonlinear type. Suppose that $f:\F_2^n\to \R$ satisfies
$|f(x)-f(y)|\le \|x-y\|_2$ for every $x,y\in \F_2^n$. Suppose also that
$S\subset \F_2^n$ satisfies $|S|\ge 2^{n-1}$. Then by Harper's
inequality~\cite{Har66} (see also~\cite[Thm.~2.11]{Led01}), for
every $t\in (0,\infty)$ we have
\begin{multline*}
\frac{\left|\left\{x\in \F_2^n:\ \forall\, y\in S,\ \|x-y\|_2\ge
t\right\}\right|}{2^n}\\= \frac{\left|\left\{x\in \F_2^n:\ \forall\,
y\in S,\ \|x-y\|_1\ge t^2\right\}\right|}{2^n}\le e^{-2t^4/n}.
\end{multline*}
Consequently, if $M_f\in \R$ is a median of $f$ then
by~\cite[Prop.1.3]{Led01} we have $\left|\left\{x\in \F_2^n:\
|f(x)-M_f|\ge t\right\}\right|/2^n\le 2e^{-2t^4/n}$.  Hence,
\begin{align*}
\left(\frac{1}{4^n}\sum_{(x,y)\in \F_2^n\times
\F_2^n}|f(x)-f(y)|^2\right)^{\frac12} &\le
2\left(\frac{1}{2^n}\sum_{(x,y)\in \F_2^n\times
\F_2^n}|f(x)-M_f|^2\right)^{\frac12}\\  &\le \left(\int_0^\infty
4te^{-2t^4/n} dt\right)^{\frac12}\lesssim \sqrt[4]{n}.
\end{align*}
\end{remark}

\begin{remark} Additional obstructions to average distortion
embeddings that do not fall into the framework described in this
section have been obtained in the context of integrality gap lower
bounds for the Goemans--Linial semidefinite relaxation for the
Uniform Sparsest Cut Problem. The best known result in this
direction is due to~\cite{KM13} (improving over the
works~\cite{DKSV06,KR09}), where it is shown that for arbitrarily
large $n\in \N$ there exists an $n$-point metric space $(X,d_X)$
such that the metric space $(X,\sqrt{d_X})$ emebds isometrically
into $\ell_2$, yet $Av_{\ell_1}^{(1)}(X)\ge
\exp\left(c\sqrt{\log\log n}\right)$,
where $c\in (0,\infty)$ is a universal constant. Finding the correct asymptotic dependence here remains open.
\end{remark}

\section{Existence of average distortion embeddings}\label{sec:average}

The main purpose of this section is to state criteria for the
existence of average distortion embeddings. In what follows we often
discuss probability distributions over random subsets or random
partitions of metric spaces. To avoid measurability issues we focus
our discussion on finite metric spaces. Such topics can be treated
for infinite spaces as well, as done in~\cite{LN-extension}.

\subsection{Random zero sets}\label{sec:random zero}
Fix $\Delta,\zeta\in (0,\infty)$ and $\d\in (0,1)$.
Following~\cite{ALN08}, a
finite metric space $(X,d_X)$ is said to admit a {\em random zero
set} at scale $\Delta$ which is $\zeta$-spreading with probability
$\d$ if there exists a probability distribution $\mu$ over $2^X$
such that every $x,y\in X$ satisfy
\begin{equation}\label{eq:def zero set}
d_X(x,y)\ge \Delta\implies
\mu\left(\left\{Z\in 2^X:\ x\in Z\ \wedge \ d_X(y,Z)\ge \frac{\Delta}{\zeta}\right\}\right)\ge \d.
\end{equation}
We denote by $\zeta(X;\d)$ the infimum over those $\zeta\in
(0,\infty)$ such that for every scale $\Delta\in (0,\infty)$ the
finite metric space $(X,d_X)$ admits a random zero set at scale
$\Delta$ which is $\zeta$-spreading with probability $\d$. If
$(X,d_X)$ is an infinite metric space then we write
\begin{equation}\label{eq:zeta for infinite}
\zeta(X;\d)\eqdef \sup_{\substack{S\subseteq X\\ |S|<\infty}}\zeta(X;\d).
\end{equation}

The following proposition asserts that random zero sets can be used to
obtain embeddings into the real line $\R$ with low average
distortion.
\begin{proposition}\label{prop:into R}
Fix $n\in \N$ and $\d\in (0,1)$. Suppose that $(X,d_X)$ is a metric
space with $\zeta(X;\d)<\infty$. Then for every $p\in [1,\infty)$
and every $x_1,\ldots,x_n\in X$  there exists a $1$-Lipschitz
function $f:X\to \R$ such that
$$
\sum_{i=1}^n\sum_{j=1}^n |f(x_i)-f(x_j)|^p\ge \frac{\d}{2^{11p}\zeta(X;\d)^p} \sum_{i=1}^n\sum_{j=1}^n d_X(x_i,x_j)^p.
$$
Thus, using the notation of Section~\ref{sec:MP},  $
Av_\R^{(p)}(X)\lesssim \zeta(X;\d)/\d^{1/p}$.
\end{proposition}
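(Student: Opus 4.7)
The plan is to construct $f$ as a truncated distance to a random zero set at a carefully chosen dyadic scale. Set $\zeta\eqdef\zeta(X;\delta)<\infty$ and fix a scale $\Delta>0$ (to be chosen). Applying the spreading property to the finite set $S\eqdef\{x_1,\ldots,x_n\}$ gives a distribution $\mu_\Delta$ on $2^S$ under which, for every $x,y\in S$ with $d_X(x,y)\ge\Delta$, the event ``$x\in Z$ and $d_X(y,Z)\ge\Delta/\zeta$'' has probability at least $\delta$. Define $f_Z(x)\eqdef\min\{d_X(x,Z),\Delta/\zeta\}$ for every $x\in X$; as a truncation of the $1$-Lipschitz map $x\mapsto d_X(x,Z)$ it is itself $1$-Lipschitz on $X$, so no extension step is required. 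For each ordered pair $(i,j)$ with $d_X(x_i,x_j)\ge\Delta$, on the good event $f_Z(x_i)=0$ and $f_Z(x_j)=\Delta/\zeta$, giving $|f_Z(x_i)-f_Z(x_j)|^p\ge(\Delta/\zeta)^p$. Summing and taking expectation over $Z\sim\mu_\Delta$,
\[
\E_Z\Big[\sum_{i,j=1}^n|f_Z(x_i)-f_Z(x_j)|^p\Big]\ge\frac{\delta\,\Delta^p}{\zeta^p}\,N(\Delta),\qquad N(t)\eqdef|\{(i,j):d_X(x_i,x_j)\ge t\}|,
\]
so a standard derandomization extracts a specific $f=f_Z$ satisfying the same lower bound deterministically.

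It remains to choose $\Delta$ so that $\Delta^pN(\Delta)\ge 2^{-11p}\sum_{ij}d_X(x_i,x_j)^p$. Starting from the layer-cake identity $\sum_{ij}d_X(x_i,x_j)^p=p\int_0^\infty t^{p-1}N(t)\,dt$ and comparing the integral with the dyadic sum $\sum_k2^{kp}N(2^k)$, I would identify a dyadic scale $\Delta=2^{k^*}$ for which $\Delta^pN(\Delta)$ captures a $2^{-O(p)}$ fraction of the total mass. The key structural input is the monotonicity $N(2^{k+1})\le N(2^k)$, which forces $k\mapsto 2^{kp}N(2^k)$ to grow by at most a factor of $2^p$ per step and hence concentrates the dyadic sum to within a bounded ratio of its maximum. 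The cumulative $2^p$ losses — from dyadic rounding of $\Delta$, from truncation at $\Delta/\zeta$ rather than at $\Delta$, and from the triangle-inequality step converting the two-point zero-set guarantee into the pointwise lower bound on $|f_Z(x_i)-f_Z(x_j)|^p$ — are what accumulate to the factor $2^{11p}$ in the stated bound.

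The main obstacle is precisely this scale-selection step: while monotonicity bounds $k\mapsto 2^{kp}N(2^k)$ on the left of its maximum, a steep drop of $N$ on the right side of the maximum can in principle be spread over many dyadic scales and contribute additively to the total mass, costing a factor of $\log(D_{\max}/D_{\min})$ in a naive pigeonhole. Resolving this will require either (i) exploiting the triangle inequality in $(X,d_X)$ to propagate large distances through intermediate points, thereby bounding how fast $N$ can decay on the right, or (ii) replacing the single-scale construction by a multi-scale one: use independent zero sets $\{Z_k\}$ at dyadic scales to form $F(x)=\sum_k c_k\varepsilon_k\min\{d_X(x,Z_k),2^k/\zeta\}$ with weights $c_k$ chosen so that $F$ is $1$-Lipschitz, and recover the pair-by-pair lower bound via Khintchine--Kahane averaging over the random signs $\varepsilon_k\in\{\pm 1\}$.
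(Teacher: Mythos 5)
Your plan is the right skeleton — random zero set at a well-chosen scale, derandomize — and your diagnosis of the obstacle (scale selection when the $p$-mass of distances is spread over many dyadic levels) is exactly where the work lies. But as written there is a genuine gap: you never execute either of the two escape routes you propose, and neither is quite what the paper does. In particular route~(ii) — a multi-scale sum $\sum_k c_k\varepsilon_k\min\{d_X(\cdot,Z_k),2^k/\zeta\}$ — would require choosing weights so that the sum is $1$-Lipschitz, and once you normalize by $(\sum_k c_k^2)^{1/2}$ or $\sum_k c_k$ the Khintchine step only recovers an $\ell_2$ combination of per-scale contributions, not the single-scale guarantee you need; making the constant come out to $2^{-O(p)}$ independently of the number of scales is not automatic. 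Route~(i) as stated is too vague to assess.

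The paper avoids scale selection entirely by setting up a dichotomy at a single canonical scale. Let $r\eqdef\min_i\bigl(\frac1n\sum_j d_X(x_i,x_j)^p\bigr)^{1/p}$, let $x_k$ attain the minimum, and let $B=\{i:d_X(x_i,x_k)\le 4r\}$, $M=\{(i,j)\in B\times B:\ d_X(x_i,x_j)\ge r/8\}$. If $|M|>n^2/2^{7p}$, run exactly your argument at the fixed scale $\Delta=r/8$ (no truncation is even needed: take $f(x)=d_X(x,Z)$, and for $(i,j)\in M$ on the good event $|f(x_i)-f(x_j)|\ge r/(8\zeta)$). If instead $|M|\le n^2/2^{7p}$, no zero set is used at all: a counting argument (Lemma~\ref{lem:AB dichotomy}) shows the $p$-mass must then live in the tail $\{j:d_X(x_j,x_k)>4r\}$, and the elementary $1$-Lipschitz \emph{radial} map $f(x)=\max\{0,d_X(x,x_k)-2r\}$ already achieves the bound (Lemma~\ref{lem:si good}). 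The second branch is the missing idea in your write-up: it is the deterministic fallback that kills the potential $\log(D_{\max}/D_{\min})$ loss, because the only way a single scale $r/8$ can fail to capture $\gtrsim 2^{-O(p)}$ of the pairs inside a ball of radius $4r$ is if the mass has escaped to the tail, where a radial function does the job with no zero set at all. I would suggest you add this radial-function case to close the argument; with it, your choice $\Delta=r/8$ suffices and the dyadic sweep over scales can be dropped entirely.
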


Proposition~\ref{prop:into R} will be proven in
Section~\ref{sec:prop proofs} below. We will now explain how
Proposition~\ref{prop:into R} can be applied to a variety of metric
spaces. Due to the discussion preceding Theorem~\ref{thm:duality},
such spaces will satisfy the spectral inequality~\eqref{eq:Psi
euclidean} with $\Psi$ linear.

\subsubsection{Random partitions}\label{sec:part} Many spaces are known to admit good random zero sets. Such examples often (though not always) arise from metric spaces for which one can construct {\em random padded partitions}. If $(X,d_X)$ is a finite metric space  let $\mathscr{P}(X)$ denote the set of all partitions of $X$. For $P\in \mathscr{P}(X)$ and $x\in X$, the unique element of $P$ to which $x$ belongs is denoted $P(x)\subseteq X$. Given $\e,\d\in (0,1)$, the metric space $(X,d_X)$ is said to admit an $\e$-padded random partition with probability $\d$ if for every $\Delta\in (0,\infty)$ there exists a probability distribution $\mu_\Delta$ over partitions of $X$ with the following properties.
\begin{itemize}
\item $\forall\, P\in \mathscr{P}(X),\quad \mu_\Delta(P)>0\implies \max_{x\in X} \diam P(x)\le \Delta$.
\item For every $x\in X$ we have
$$
\mu_\Delta\left(\left\{P\in \mathscr{P}(X):\ B_X(x,\e\Delta)\subseteq P(x)\right\}\right)\ge \d,
$$
where $B_X(x,r)\eqdef \{y\in X:\ d_X(x,y)\le r\}$ for every
$r\in [0,\infty)$.
\end{itemize}
Let $\e(X;\d)$ denote the supremum over those $\e\in (0,1)$ for
which $(X,d_X)$ admits an $\e$-padded random partition with
probability $\d$. As in~\eqref{eq:zeta for infinite}, we extend this
definition to infinite metric spaces $(X,d_X)$ by setting
\begin{equation*}
\e(X;\d)\eqdef \inf_{\substack{S\subseteq X\\ |S|<\infty}}\e(X;\d).
\end{equation*}

Fact~3.4 in~\cite{ALN08} (which itself uses an idea of~\cite{Rao99})
asserts that for every $\d\in (0,1)$, if $(X,d_X)$ is a finite
metric space then
\begin{equation}\label{eq:eps zeta}
\e(X;\d)\cdot\zeta\left(X;\frac{\d}{4}\right)\le 1.
\end{equation}
(\cite[Fact~3.4]{ALN08} states this for the arbitrary choice
$\d=\frac12$, but its proof does not use this specific value of $\d$
in any way.) One should interpret~\eqref{eq:eps zeta} as asserting
that a lower bound on $\e(X;\d)$ implies an upper bound on
$\zeta(X,\d/4)$. The following classes of metric spaces $(X,d_X)$
are known to satisfy $\e(X;\d)>0$ for some $\d\in (0,1)$: doubling
metric spaces, compact Riemannian surfaces, Gromov hyperbolic spaces
of bounded local geometry, Euclidean buildings, symmetric spaces,
homogeneous Hadamard manifolds, and forbidden-minor (edge-weighted)
graph families. The case of doubling spaces goes back
to~\cite{Ass83}, with subsequent improved bounds on $\e(X;\d)>0$
obtained in~\cite{GKL03}. The case of forbidden-minor graph families
is due to~\cite{KPR93}, with subsequent improved bounds on
$\e(X;\d)>0$ obtained in~\cite{FT03}. The case of compact Riemannian
surfaces is due to~\cite{LN-extension}, with subsequent improved
bounds on $\e(X;\d)>0$ obtained in~\cite{LS10}. The remaining cases
follow from the general fact~\cite{NS11} that if $(X,d_X)$ has
bounded Nagata dimension then $\e(X;\d)>0$ for some $\d\in (0,1)$
(see~\cite{LS05} for more information on Nagata dimension of metric
spaces). We single out the following two consequences of
Proposition~\ref{prop:into R} and (the easy direction of) Theorem~\ref{thm:duality}, with
explicit quantitative bounds arising from the estimates on
$\e(X;\d)$ obtained in~\cite{GKL03,LS10}.

\begin{corollary}\label{cor:doubling}
Suppose that $(X,d_X)$ is a metric space that is doubling with
constant $K\in [2,\infty)$. Then for every $n\in \N$ and every
symmetric stochastic matrix $A\in M_n(\R)$ we have
$$
\gamma\!\left(A,d_X^2\right)\lesssim \frac{(\log K)^2}{1-\lambda_2(A)}.
$$
\end{corollary}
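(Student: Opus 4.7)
The plan is to combine three ingredients that are already available: the existence of good random padded partitions for doubling spaces, Proposition~\ref{prop:into R} converting such partitions into low-average-distortion real-valued embeddings, and the duality Theorem~\ref{thm:duality} converting average-distortion embeddings into nonlinear spectral gap comparisons.

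First, I would invoke the random partition theorem of Gupta--Krauthgamer--Lee~\cite{GKL03} for metric spaces with doubling constant $K$. Their result furnishes a universal $\delta \in (0,1)$ such that every such space satisfies $\e(X;\delta) \gtrsim 1/\log K$. (Morally, this is the point of the whole argument, since all subsequent steps are abstract machinery; Assouad's original bound~\cite{Ass83} would yield the same qualitative statement but with a polynomial dependence on $K$.) By the inequality~\eqref{eq:eps zeta} of Abraham--Bartal--Linial--Neiman, this yields
$$\zeta\!\left(X;\frac{\delta}{4}\right) \le \frac{1}{\e(X;\delta)} \lesssim \log K.$$

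Next, I would apply Proposition~\ref{prop:into R} with $p=2$ to every finite subset $\{x_1,\ldots,x_n\} \subseteq X$. The proposition produces a $1$-Lipschitz function $f \colon X \to \R$ satisfying
$$\sum_{i=1}^n\sum_{j=1}^n |f(x_i)-f(x_j)|^2 \gtrsim \frac{1}{(\log K)^2}\sum_{i=1}^n\sum_{j=1}^n d_X(x_i,x_j)^2,$$
where the implicit constant absorbs $\delta$ and the universal constant in Proposition~\ref{prop:into R}. Since $f$ is non-constant (as soon as the $x_i$ are not all equal) and $\ell_2^1(\R) = \R$, this verifies condition~(2) of Theorem~\ref{thm:duality} with $Y=\R$, $p=2$, and any $D$ bounded above by a constant multiple of $(\log K)^2$.

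Finally, I would invoke Theorem~\ref{thm:duality} (equivalently Corollary~\ref{coro:linear gap}) to pass from condition~(2) to condition~(1): for every symmetric stochastic $A \in M_n(\R)$,
$$\gamma\!\left(A,d_X^2\right) \lesssim (\log K)^2 \cdot \gamma\!\left(A,d_\R^2\right) \stackrel{\eqref{eq:R case}}{=} \frac{(\log K)^2}{1-\lambda_2(A)}.$$
Since all three ingredients are already black-boxed in the paper, the only genuinely substantive input is the GKL bound on $\e(X;\delta)$; once that is granted, the remainder of the proof is essentially tautological. The ``hard part'' in spirit is therefore entirely offloaded to the random partition construction for doubling metrics, which needs no reworking here.
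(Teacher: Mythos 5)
Your proposal is correct and follows essentially the same route as the paper: the paper explicitly states that Corollary~\ref{cor:doubling} is a ``consequence of Proposition~\ref{prop:into R} and (the easy direction of) Theorem~\ref{thm:duality}, with explicit quantitative bounds arising from the estimates on $\e(X;\d)$ obtained in~\cite{GKL03,LS10},'' which is exactly the chain you lay out (GKL bound on $\e(X;\d)$, inequality~\eqref{eq:eps zeta} to control $\zeta$, Proposition~\ref{prop:into R} with $p=2$ to get $Av_\R^{(2)}(X)\lesssim \log K$, then the easy direction of duality with $D\lesssim (\log K)^2$). One tiny nit: the inequality~\eqref{eq:eps zeta} the paper cites from~\cite{ALN08} is from Arora--Lee--Naor rather than the authors you name, but this does not affect the argument.
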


\begin{corollary}\label{cor:surface}
Suppose that $(X,d_X)$ is a two dimensional Riemannian manifold of
genus $g\in \N\cup \{0\}$. Then for every $n\in \N$ and every
symmetric stochastic matrix $A\in M_n(\R)$ we have
$$
\gamma\!\left(A,d_X^2\right)\lesssim \frac{(\log(g+1))^2}{1-\lambda_2(A)}.
$$
\end{corollary}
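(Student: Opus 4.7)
The proof will proceed by assembling three ingredients that have already been set up in the paper. The target inequality has exactly the shape of Corollary~\ref{cor:doubling} with $\log K$ replaced by $\log(g+1)$, so the strategy is the same: produce a linear-in-$(1-\lambda_2(A))^{-1}$ bound on $\gamma(A,d_X^2)$ by exhibiting a good average-distortion embedding into Hilbert space, and use Corollary~\ref{coro:linear gap} to convert that into the spectral inequality.

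Concretely, the plan is as follows. First, I would invoke the random partition theorem of Lee--Sidiropoulos~\cite{LS10} for compact Riemannian surfaces, which gives a uniform lower bound
$$
\e(X;\d_0)\gtrsim \frac{1}{\log(g+1)}
$$
for some absolute constant $\d_0\in(0,1)$, valid for every finite subset of a genus-$g$ surface and thus, via the $\inf$ in the extension to infinite metric spaces, for $X$ itself. Second, I would feed this into inequality~\eqref{eq:eps zeta} to obtain the corresponding random-zero-set bound
$$
\zeta\!\left(X;\tfrac{\d_0}{4}\right)\le \frac{1}{\e(X;\d_0)}\lesssim \log(g+1).
$$
Third, Proposition~\ref{prop:into R} with $p=2$ and $\d=\d_0/4$ then produces, for every finite configuration $x_1,\dots,x_n\in X$, a $1$-Lipschitz map $f:X\to\R$ with
$$
\sum_{i=1}^n\sum_{j=1}^n |f(x_i)-f(x_j)|^2\gtrsim \frac{1}{(\log(g+1))^2}\sum_{i=1}^n\sum_{j=1}^n d_X(x_i,x_j)^2,
$$
i.e., $Av_{\R}^{(2)}(X)\lesssim \log(g+1)$. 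Since $\R$ embeds isometrically into $\ell_2$, we also obtain $Av_{\ell_2}^{(2)}(X)\lesssim \log(g+1)$.

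Finally, I would apply the implication $(2)\Rightarrow(1)$ of Corollary~\ref{coro:linear gap} (the $p=2$, $Y=\ell_2$ specialization of Theorem~\ref{thm:duality}) with $K\asymp (\log(g+1))^2$, which yields precisely
$$
\gamma\!\left(A,d_X^2\right)\lesssim \frac{(\log(g+1))^2}{1-\lambda_2(A)}
$$
for every $n\in\N$ and every symmetric stochastic $A\in M_n(\R)$. There is essentially no hard step: all the nontrivial content is imported, the padded-partition bound of~\cite{LS10} being the only externally invoked ingredient. The main technical point to verify is simply that Proposition~\ref{prop:into R} and Corollary~\ref{coro:linear gap}, which are stated for finite configurations $x_1,\dots,x_n$ in $X$, interact correctly with the sup/inf definitions used to extend $\zeta(\cdot;\d)$ and $\e(\cdot;\d)$ from finite to infinite metric spaces; since $\gamma(A,d_X^2)$ is itself determined by $n$-tuples in $X$, this is automatic.
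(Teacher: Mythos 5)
Your proposal is correct and follows exactly the route the paper indicates: it states Corollaries~\ref{cor:doubling} and~\ref{cor:surface} as ``consequences of Proposition~\ref{prop:into R} and (the easy direction of) Theorem~\ref{thm:duality}, with explicit quantitative bounds arising from the estimates on $\e(X;\d)$ obtained in~\cite{GKL03,LS10},'' which is precisely the chain $\e(X;\d_0)\gtrsim 1/\log(g+1)$ from~\cite{LS10}, then~\eqref{eq:eps zeta}, then Proposition~\ref{prop:into R}, then Corollary~\ref{coro:linear gap}. Your bookkeeping (the squaring of the average distortion when passing through Corollary~\ref{coro:linear gap}, producing $(\log(g+1))^2$ rather than $\log(g+1)$) is also consistent with the paper's convention that $Av^{(p)}\le K^{1/p}$ corresponds to $\gamma\le K\gamma$.
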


The fact that the conclusion of Proposition~\ref{prop:into R} holds
true under the assumption that $\e(X;\d)>0$ for every $\d\in
(0,\infty)$ (as follows by combining Proposition~\ref{prop:into R}
with~\eqref{eq:eps zeta}) was proved by Rabinovich in~\cite{Rab08}
in the case $p=1$. It has long been well known to experts (and
stated explicitly in~\cite{BLR10}), that the original proof of
Rabinovich extends mutatis mutandis to every $p\in [1,\infty)$. The
(simple) proof of Proposition~\ref{prop:into R} below builds on the ideas
of Rabinovich in~\cite{Rab08}.

An example of a class of metric spaces that admits good random zero
sets for reasons other than the existence of random padded
partitions is the class of spaces that admit a quasisymmetric
embedding into Hilbert space. We refer to~\cite{Hei01} and the
references therein for more information on quasisymmetric
embeddings; it suffices to say here that $L_1(\mu)$ spaces provide
such examples (see~\cite{DZ}). It follows from~\cite{ALN08} (using in part
ideas of~\cite{ARV09,NRS05,Lee05,CGR08}) that if $(X,d_X)$ is a metric
space that admits a quasisymmetric embedding into Hilbert space then
there exist $\e,\d\in (0,1)$ (depending only on the modulus of
quasisymmetry of the implicit embedding) such that for every $n\in
\N$, any $n$-point subset $S\subseteq X$ satisfies $\e(S;\d)\ge
\e/\sqrt{\log n}$. Consequently we have the following statement.

\begin{corollary}\label{cor:quasisymmetric}
Suppose that $(X,d_X)$ is a metric space that admits a
quasisymmetric embedding into a Hilbert space. Then there exists a
constant $C\in (0,\infty)$ (depending only on the modulus of
quasisymmetry of the implicit embedding) such that for every $n\in
\N$ and every symmetric stochastic matrix $A\in M_n(\R)$ we have
\begin{equation}\label{eq:quasi}
\gamma\!\left(A,d_X^2\right)\le \frac{C\log n}{1-\lambda_2(A)}.
\end{equation}
\end{corollary}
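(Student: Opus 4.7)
The plan is to combine three ingredients that are already in hand: the quasisymmetric-to-padded-partition estimate from~\cite{ALN08} that is quoted just before the corollary, the relation~\eqref{eq:eps zeta} between padded partitions and random zero sets, and Proposition~\ref{prop:into R}, which converts random zero sets into real-valued average-distortion embeddings. Once a real-valued embedding with the right average-distortion guarantee is produced, the easy direction of Theorem~\ref{thm:duality} (carried out explicitly in~\eqref{eq:duality trivial dir}, taking $(Y,d_Y)=(\R,d_\R)$, $p=2$, and $m=1$) translates it into the desired Poincar\'e-type inequality.

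Concretely, I would proceed as follows. Fix $n\in\N$, a symmetric stochastic matrix $A\in M_n(\R)$, and points $x_1,\ldots,x_n\in X$; my aim is to show that
$$
\frac{1}{n^2}\sum_{i=1}^n\sum_{j=1}^n d_X(x_i,x_j)^2 \;\le\; \frac{C\log n}{n(1-\lambda_2(A))}\sum_{i=1}^n\sum_{j=1}^n a_{ij}d_X(x_i,x_j)^2.
$$
Set $S\eqdef\{x_1,\ldots,x_n\}$ and consider $S$ as a metric space with the metric inherited from $d_X$. By the quoted result of~\cite{ALN08} there exist constants $\e,\d\in(0,1)$ depending only on the modulus of quasisymmetry of the implicit embedding of $X$ into Hilbert space such that $\e(S;\d)\ge\e/\sqrt{\log n}$. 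Combined with~\eqref{eq:eps zeta}, this yields
$$
\zeta\!\left(S;\frac{\d}{4}\right)\le\frac{1}{\e(S;\d)}\le\frac{\sqrt{\log n}}{\e}.
$$

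Next, I apply Proposition~\ref{prop:into R} with $p=2$ to the finite metric space $S$ and the probability parameter $\d/4$. This furnishes a $1$-Lipschitz function $f:S\to\R$ satisfying
$$
\sum_{i=1}^n\sum_{j=1}^n\bigl(f(x_i)-f(x_j)\bigr)^2 \;\ge\; \frac{\d/4}{2^{22}\,\zeta(S;\d/4)^{2}}\sum_{i=1}^n\sum_{j=1}^n d_X(x_i,x_j)^{2} \;\gtrsim\; \frac{1}{\log n}\sum_{i=1}^n\sum_{j=1}^n d_X(x_i,x_j)^2,
$$
where the implicit constant depends only on $\e$ and $\d$, hence only on the quasisymmetric modulus. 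Since $\|f\|_{\Lip}\le 1$, viewing $f$ as a map into $\ell_2^1(\R)=\R$ realises condition (2) of Theorem~\ref{thm:duality} (with $Y=\R$, $p=2$) at distortion $D\lesssim\log n$. Plugging $f$ directly into the chain of inequalities~\eqref{eq:duality trivial dir} — with $\gamma(A,d_\R^2)=1/(1-\lambda_2(A))$ by~\eqref{eq:R case} — produces the bound~\eqref{eq:quasi}.

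There is no genuine obstacle here: the two quoted external inputs (the ALN08 estimate $\e(S;\d)\gtrsim 1/\sqrt{\log n}$ and Proposition~\ref{prop:into R}) do the substantive work, and the passage from a one-dimensional embedding to a Poincar\'e inequality is the trivial direction of the duality theorem. The only point that requires minor care is that $\zeta(S;\cdot)$ and $\e(S;\cdot)$ are defined intrinsically for the finite subset $S$, so Proposition~\ref{prop:into R} must be applied to $(S,d_X|_S)$ rather than to $X$ itself; this is harmless because the left-hand side of~\eqref{eq:def gamma} depends only on $x_1,\ldots,x_n$, and the resulting $f:S\to\R$ is all that is needed to invoke~\eqref{eq:duality trivial dir}.
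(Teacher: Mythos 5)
Your proof is correct and follows precisely the route the paper intends: the paper states the $\e(S;\d)\gtrsim 1/\sqrt{\log n}$ bound from~\cite{ALN08}, says ``Consequently we have the following statement,'' and leaves the reader to chain~\eqref{eq:eps zeta}, Proposition~\ref{prop:into R} (with $p=2$ and probability $\d/4$), and the easy direction~\eqref{eq:duality trivial dir} of Theorem~\ref{thm:duality} — which is exactly what you do.
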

Note that Bourgain's embedding theorem~\cite{Bourgain-embed} implies
that~\eqref{eq:quasi} holds true for {\em every} metric space
$(X,d_X)$ if one replaces the term $\log n$ by $(\log n)^2$ (in
which case $C$ can be taken to be a universal constant).

\subsection{Localized weakly bi-Lipschitz embeddings}\label{sec:weak} Following the terminology of~\cite{NPSS06},
for $D\in [1,\infty)$ say that a metric space $(X,d_X)$ admits a
weakly bi-Lipschitz embedding with distortion $D$ into a metric
space $(Y,d_Y)$ if for every $\Delta\in (0,\infty)$ there exists a
non-constant Lipschitz mapping $f_\Delta:X\to Y$ such that for every
$x,y\in X$,
\begin{equation}\label{eq:weak def}
d_X(x,y)\ge \Delta\implies d_Y\left(f_\Delta(x),f_{\Delta}(y)\right)\ge \frac{\left\|f_{\Delta}\right\|_{\Lip}}{D} \Delta.
\end{equation}

The origin of this terminology is that such embeddings preserve (by
design) {\em weak} $(p,q)$ metric Poincar\'e inequalities.
Specifically, a standard way by which one rules out the existence of
bi-Lipschitz embeddings is via generalized Poincar\'e-type
inequalities as follows. Suppose that $n\in \N$ and $p,q,K\in
(0,\infty)$, and there exist two measures $\mu,\nu$ on
$\{1,\ldots,n\}^2$ such that every $y_1,\ldots,y_n\in Y$ satisfy
\begin{multline}\label{eq:mu nu}
\left(\sum_{i=1}^n\sum_{j=1}^n
d_Y(y_i,y_j)^p\mu(i,j)\right)^{\frac{1}{p}}\le
\left(\sum_{i=1}^n\sum_{j=1}^n
d_Y(y_i,y_j)^q\nu(i,j)\right)^{\frac{1}{q}}.
\end{multline}
Clearly if $f:X\to Y$ is a bi-Lipschitz embedding then the
inequality~\eqref{eq:mu nu} holds for $(X,d_X)$ as well, with the
right hand side of~\eqref{eq:mu nu} multiplied by
$\|f\|_{\Lip}\|f^{-1}\|_{\Lip}$.  Thus a {\em strong $(p,q)$}
inequality such as~\eqref{eq:mu nu} are bi-Lipschitz invariants
that can be used to show that certain spaces $(X,d_X)$ must incur
large distortion in any bi-Lipschitz embedding into $(Y,d_Y)$. The
obvious {\em weak $(p,q)$} variant of~\eqref{eq:mu nu} is the
assertion that for every $u\in (0,\infty)$ and every $y_1,\ldots,y_n\in Y$ we have
\begin{multline}\label{eq:weak pq}
\mu\left(\left\{(i,j)\in \{1,\ldots,n\}^2:\ d_Y(y_i,y_j)\ge
u\right\}\right)^{\frac{1}{p}}\\\le
\frac{1}{u}\left(\sum_{i=1}^n\sum_{j=1}^n
d_Y(y_i,y_j)^q\nu(i,j)\right)^{\frac{1}{q}}.
\end{multline}
By definition, if a metric space $(X,d_X)$ admits a weakly bi-Lipschitz embedding with distortion $D$ into a metric space
$(Y,d_Y)$ satisfying~\eqref{eq:weak pq} then for every $n\in \N$,
any $x_1,\ldots,x_n\in X$ satisfy
\begin{multline*}\label{eq:weak pq-X weak embed}
\mu\left(\left\{(i,j)\in \{1,\ldots,n\}^2:\ d_X(x_i,x_j)\ge
u\right\}\right)^{\frac{1}{p}}\\\le
\frac{D}{u}\left(\sum_{i=1}^n\sum_{j=1}^n
d_X(x_i,x_j)^q\nu(i,j)\right)^{\frac{1}{q}}.
\end{multline*}

We will see below that one can prove nonlinear spectral gap
inequality such as~\eqref{eq:Q meta} with $\Psi$ linear by showing
that $(X,d_X)$ admits  a weakly bi-Lipschitz embedding into
$(Y,d_Y)$. To this end it suffices to localize the
condition~\eqref{eq:weak def} to balls of proportional scale, as
follows.  For $D\in [1,\infty)$ say that a metric space $(X,d_X)$
admits a {\em localized weakly bi-Lipschitz embedding} with
distortion $D$ into a metric space $(Y,d_Y)$ if for every $z\in X$
and $\Delta\in (0,\infty)$ there exists a non-constant Lipschitz
mapping $f^z_\Delta:X\to Y$ such that for every $x,y\in
B_X(z,32\Delta)$ we have
\begin{equation}\label{eq:localized weak def}
d_X(x,y)\ge \Delta\implies d_Y\left(f^z_\Delta(x),f^z_{\Delta}(y)\right)\ge \frac{\left\|f^z_{\Delta}\right\|_{\Lip}}{D} \Delta.
\end{equation}
The factor $32$ here was chosen to be convenient for the ensuing
arguments, but it is otherwise arbitrary.
\begin{proposition}\label{prop:weak embedding into Y}
Fix $n\in \N$ and $p,D\in [1,\infty)$. Suppose that $(X,d_X)$ is a
metric space that admits a localized weakly bi-Lipschitz embedding
with distortion $D$ into a metric space $(Y,d_Y)$. Then for every $x_1,\ldots,x_n\in X$ there is a nonconstant mapping $f:\{x_1,\ldots,x_n\}\to Z$, where $Z\in \{Y,\R\}$, such that
$$
\left(\frac{1}{n^2}\sum_{i=1}^n\sum_{j=1}^n d_Z(f(x_i),f(x_j))^p\right)^{\frac{1}{p}}\gtrsim \frac{\|f\|_{\Lip}}{D}\left(\frac{1}{n^2}\sum_{i=1}^n\sum_{j=1}^n d_X(x_i,x_j)^p\right)^{\frac{1}{p}}.
$$
\end{proposition}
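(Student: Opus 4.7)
Write $M^p \eqdef \frac{1}{n^2}\sum_{i,j} d_X(x_i,x_j)^p$, assume $M > 0$ (otherwise the statement is vacuous), and pick $c\in\n$ maximising $\sum_{j=1}^n d_X(x_c,x_j)^p$; averaging over $c$ forces this maximum to be at least $nM^p$. The strategy is a dichotomy driven by the auxiliary $1$-Lipschitz map $g\colon X\to\R$ given by $g(x)=d_X(x,x_c)$ and the quantity
\[V^p \eqdef \frac{1}{n^2}\sum_{i=1}^n\sum_{j=1}^n|g(x_i)-g(x_j)|^p.\]

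If $V\gtrsim M$, then taking $Z=\R$ and $f=g$ already yields the conclusion, since $\|f\|_{\Lip}=1$ and $D\ge 1$ give
\[
\Bigl(\tfrac{1}{n^2}\sum_{i,j}|f(x_i)-f(x_j)|^p\Bigr)^{1/p}=V \gtrsim M \gtrsim \tfrac{\|f\|_{\Lip}}{D}M.
\]

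Otherwise $V$ is small. In this regime I would apply Markov's inequality to the definition of $V^p$ to deduce that a positive fraction of the pairs $(i,j)$ satisfy $|g(x_i)-g(x_j)|\lesssim M$, and a pigeonholing then produces a window of width $\lesssim M$ containing at least $n/2$ of the values $\{g(x_j)\}$. Combining this with the lower bound $\frac{1}{n}\sum_j g(x_j)^p\ge M^p$ forces the centre of this window to be of magnitude $\asymp M$, so at least a constant fraction of the points $x_j$ must lie in some ball $B_X(x_c,RM)$ for a universal constant $R>0$. Setting $\Delta\eqdef RM/32$, taking $Z=Y$, and letting $f\eqdef f^{x_c}_{\Delta}$ be the map supplied by the hypothesis (with Lipschitz constant $L\eqdef\|f\|_{\Lip}$), $B_X(x_c,32\Delta)$ already contains a constant fraction of the points.

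To conclude in this second case, I would observe that by~\eqref{eq:localized weak def} every pair with $x_i,x_j\in B_X(x_c,32\Delta)$ and $d_X(x_i,x_j)\ge\Delta$ contributes at least $(L\Delta/D)^p\asymp (LM/D)^p$ to $\sum_{i,j}d_Y(f(x_i),f(x_j))^p$. Since $\Delta$ is a fixed fraction of $M$, the pairs with $d_X(x_i,x_j)<\Delta$ account for at most $n^2\Delta^p$ out of the total $p$-mass $n^2M^p$; after tuning the implicit constants, the pairs that simultaneously lie in $B_X(x_c,32\Delta)$ and have mutual distance at least $\Delta$ still form a family of size $\asymp n^2$, producing the required lower bound $\bigl(\tfrac{1}{n^2}\sum d_Y(f(x_i),f(x_j))^p\bigr)^{1/p}\gtrsim LM/D$.

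\textbf{Main obstacle.} The crux is the quantitative concentration step: one must show that smallness of $V$ in the $L^p$-average forces enough of the $g(x_j)$ into a single window of width $\lesssim M$ about a value of magnitude $\asymp M$, while simultaneously guaranteeing that a constant fraction of pairs both sit inside $B_X(x_c,32\Delta)$ and have mutual distance at least $\Delta$, with constants uniform in $p$. The padding factor $32$ in~\eqref{eq:localized weak def} provides exactly the slack needed to propagate, via the triangle inequality, the concentration of the radial distances $d_X(x_c,x_j)$ into usable lower bounds on the mutual distances $d_X(x_i,x_j)$.
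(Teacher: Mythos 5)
Your opening move — choosing $c$ to \emph{maximize} $\sum_{j}d_X(x_c,x_j)^p$ — is the opposite of what is needed, and this breaks the proof at the step you flag as the crux. The paper instead sets $r=\min_i\bigl(\tfrac1n\sum_j d_X(x_i,x_j)^p\bigr)^{1/p}$ (Lemma~\ref{lem:D triangle}), achieved at some $x_k$. The point of taking the minimum is twofold: Lemma~\ref{lem:D triangle} gives $r\asymp M$, and, crucially, Markov's inequality applied to $\tfrac1n\sum_j d_X(x_k,x_j)^p=r^p$ forces $|B_X(x_k,4r)|\ge(1-4^{-p})n$, so one knows in advance that almost all points land in a ball of radius $4r=32\Delta$ with $\Delta=r/8\asymp M$ around the chosen center. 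With your maximizer you only know $a^p\eqdef\tfrac1n\sum_jg(x_j)^p\ge M^p$; there is no upper bound on $a$ in terms of $M$, so no Markov-type concentration near $x_c$ is available.

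Concretely, your claim ``the lower bound $\tfrac1n\sum_jg(x_j)^p\ge M^p$ forces the centre of the window to be $\asymp M$'' gives only the one-sided estimate $a_0\gtrsim M$, and the window centre $a_0$ can in fact be much larger than $M$ while $V\ll M$. Take $x_c=(a_0,0,\dots,0)\in\R^d$ and $n-1$ points at uniformly random positions on the sphere of radius $\delta$ inside the hyperplane $\{y_1=0\}$, with $d$ large and $\delta=a_0/n^{1/(2p)}$. Then every $g(x_j)=\sqrt{a_0^2+\delta^2}\approx a_0$ for $j\ne c$ (the $g$-values are concentrated to a single point), one checks that $c$ does maximize $\sum_jd(x_c,x_j)^p$, and $M\asymp\delta$, $V\asymp a_0/n^{1/p}$, so $V/M\asymp n^{-1/(2p)}\to0$: your Case~2. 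But $a_0/M\asymp n^{1/(2p)}\to\infty$, and $B_X(x_c,32\Delta)=B_X(x_c,RM)$ with $R$ a universal constant contains no point other than $x_c$ once $n$ is large, so the localized weak embedding around $x_c$ is useless. In this example the paper's minimizer is a sphere point, and $B_X(x_k,4r)$ contains the entire sphere, so its argument goes through.

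A secondary structural difference: the paper's dichotomy is not on a radial $L^p$-variation $V$ but on $|M|=|\{(i,j)\in B\times B:d_X(x_i,x_j)\ge r/8\}|$ (Lemma~\ref{lem:AB dichotomy}), and its ``small $|M|$'' branch does not use the raw radial map $g$ but the truncated map $x\mapsto\max\{0,d_X(x,x_k)-2r\}$ (Lemma~\ref{lem:si good}). Your ``large $V$'' branch with $f=g$ does yield the conclusion when it applies, but even after correcting the center to the minimizer, passing from ``$V$ small'' to ``a constant fraction of pairs inside the ball are $\Delta$-separated'' is not immediate: $|g(x_i)-g(x_j)|$ small does not bound $d_X(x_i,x_j)$, and bounding the $p$-mass of close pairs by $n^2\Delta^p$ does not bound their number. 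The paper's $|M|$-dichotomy is tailored so that each branch delivers exactly the required estimate; if you wish to salvage a $V$-based dichotomy you would need to reprove an analogue of Lemma~\ref{lem:AB dichotomy}.
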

Observe that if $(Y,d_Y)$ contains an isometric copy of an interval
$[a,b]\subseteq \R$ (in particular if $Y$ is a Banach space) then
the conclusion of Proposition~\ref{prop:weak embedding into Y} can
be taken to be $Av_Y^{(p)}(X)\lesssim D$.

Lemma~3.5 in~\cite{ALN08} asserts that for every $\d\in (0,1)$,
every finite metric space $(X,d_X)$ admits a weakly bi-Lipschitz
embedding into $\ell_2$ with distortion $\zeta(X;\d)/\sqrt{\d}$.
Consequently, all the examples that arise from random padded
partitions as described in Section~\ref{sec:part} fall into the
framework of Proposition~\ref{prop:weak embedding into Y}, with the
only difference being that an application of
Proposition~\ref{prop:weak embedding into Y} rather than
Proposition~\ref{prop:into R} yields an embedding  into Hilbert
space rather than into the real line. This difference is discussed
further in Section~\ref{sec:dim} below. The following lemma shows
that Proposition~\ref{prop:weak embedding into Y} has wider
applicability than Proposition~\ref{prop:into R}: in combination
with Proposition~\ref{prop:weak embedding into Y} it yields a
different proof of the case $p\in (2,\infty)$ of~\eqref{eq:p>2} that
avoids the use of Theorem~\ref{thm:duality}.

\begin{lemma}\label{lem:no duality}
Suppose that $(X,\|\cdot\|_X)$ and $(Y,\|\cdot\|_Y)$ are Banach
spaces that satisfy the assumptions of Theorem~\ref{thm:ozawa-ineq}.
Suppose furthermore that there exists $K\in (0,\infty)$ such that
$\beta(t)=Kt$ for all $t\in [0,\infty)$. Then $(X,\|\cdot\|_X)$
admits a localized weakly bi-Lipschitz embedding with distortion
$2K/\alpha(1/32)$ into $(Y,\|\cdot\|_Y)$.
\end{lemma}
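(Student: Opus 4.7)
The plan is explicit: given $z\in X$ and $\Delta\in(0,\infty)$, I would define the mapping $f^z_\Delta:X\to Y$ by
$$
f^z_\Delta(x)\eqdef \phi\!\left(\rho\!\left(\frac{x-z}{32\Delta}\right)\right),
$$
where $\rho:X\to B_X$ is the canonical $2$-Lipschitz radial retraction defined in~\eqref{eq:def rho}. The scaling by $1/(32\Delta)$ is chosen so that $(x-z)/(32\Delta)$ lies in $B_X$ precisely for $x\in B_X(z,32\Delta)$, which is the only ball on which the definition of localized weak bi-Lipschitz embedding imposes a lower bound.

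First I would check the Lipschitz bound. Since $\beta(t)=Kt$ means $\phi|_{B_X}$ is $K$-Lipschitz, $\rho$ is $2$-Lipschitz, and the affine map $x\mapsto(x-z)/(32\Delta)$ has Lipschitz constant $1/(32\Delta)$, composition yields
$$
\|f^z_\Delta\|_{\Lip}\le \frac{2K}{32\Delta}=\frac{K}{16\Delta}.
$$
Next, for any $x,y\in B_X(z,32\Delta)$, both arguments of $\rho$ already lie in $B_X$, so $\rho$ acts as the identity there and the lower bound in~\eqref{eq;alpha beta} gives
$$
\|f^z_\Delta(x)-f^z_\Delta(y)\|_Y\ge \alpha\!\left(\frac{\|x-y\|_X}{32\Delta}\right).
$$
If in addition $\|x-y\|_X\ge \Delta$, monotonicity of $\alpha$ yields $\|f^z_\Delta(x)-f^z_\Delta(y)\|_Y\ge \alpha(1/32)$.

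Finally I would combine these two bounds. With $D\eqdef 2K/\alpha(1/32)$,
$$
\frac{\|f^z_\Delta\|_{\Lip}\,\Delta}{D}\le \frac{K}{16\Delta}\cdot\Delta\cdot\frac{\alpha(1/32)}{2K}=\frac{\alpha(1/32)}{32}\le \alpha(1/32)\le \|f^z_\Delta(x)-f^z_\Delta(y)\|_Y,
$$
verifying~\eqref{eq:localized weak def}. Nonconstancy of $f^z_\Delta$ is automatic once $\alpha(1/32)>0$ (and otherwise the conclusion is vacuous), since any two points in $B_X(z,32\Delta)$ at distance $\ge\Delta$ are mapped to distinct images by the above estimate. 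There is no serious obstacle here: the only point requiring attention is that by design the retraction $\rho$ is transparent on the localization ball $B_X(z,32\Delta)$, which is what allows the lower bound from~\eqref{eq;alpha beta} to pass through unchanged while keeping a globally Lipschitz extension to $X$.
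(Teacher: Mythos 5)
Your proof is correct and follows essentially the same approach as the paper's: define $f^z_\Delta$ by composing $\phi$ with the radial retraction and the affine shift, use the $2$-Lipschitz property of $\rho$ for the upper bound, and observe that $\rho$ acts as the identity on $B_X(z,32\Delta)$ to push the lower bound~\eqref{eq;alpha beta} through. The only (cosmetic) difference is that the paper rescales $f^z_\Delta$ by a factor of $32\Delta$ so that its Lipschitz constant is the dimensionless $2K$; since the condition~\eqref{eq:localized weak def} is homogeneous, your unrescaled version gives an identical verification.
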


\begin{proof}
Fix $\Delta\in (0,\infty)$ and a mapping $f:B_X\to Y$ that
satisfies~\eqref{eq;alpha beta}.  For $z\in X$ define
$f^z_\Delta:X\to Y$ by
$$
f^z_\Delta(x)\eqdef 32\Delta f\left(\rho\left(\frac{x-z}{32\Delta}\right)\right),
$$
where $\rho$ is given as in~\eqref{eq:def rho}. Since $\rho$ is
$2$-Lipschitz, $\|f^z_\Delta\|_{\Lip}\le 2K$. If $x,y\in z+32\Delta
B_X$ satisfy $\|x-y\|_X\ge \Delta$ then
\begin{equation*}
\left\|f^z_\Delta(x)-f^z_\Delta(y)\right\|_Y\ge \Delta\alpha\left(\frac{\|x-y\|_X}{32\Delta}\right)\ge \frac{\alpha\left(\frac{1}{32}\right)\|f^z_\Delta\|_{\Lip}}{2K}\Delta.\qedhere
\end{equation*}
\end{proof}

For $p\in [1,2)$, due to Lemma~\ref{lem:no cube} and
Proposition~\ref{prop:weak embedding into Y}, $\ell_p$ does not
admit a localized weakly bi-Lipschitz embedding into Hilbert
space (this can also be proved directly via a shorter argument).
Since finite subsets of $\ell_p$ embed isometrically into $\ell_1$
(see e.g.~\cite{DZ}), it follows from~\cite{ARV09} that every
$n$-point subset of $\ell_p$ admits a weakly bi-Lipschitz embedding
into $\ell_2$ with distortion $O(\sqrt{\log n})$. By~\cite{Lee05},
it is also true that every $n$-point subset of $\ell_p$ admits a
weakly bi-Lipschitz embedding into $\ell_2$ with distortion $O((\log
n)^{(2-p)/p^2})$, which is better than the $O(\sqrt{\log n})$ bound
of~\cite{ARV09} if $\sqrt{5}-1<p\le 2$. Therefore for every $n\in
\N$ and every $n$ by $n$ symmetric stochastic matrix $A$,
$$
p\in \left[1,\sqrt{5}-1\right]\implies \gamma(A,\|\cdot\|_{\ell_p}^2)\lesssim
\frac{\log n}{1-\lambda_2(A)},
$$
and
$$
p\in \left[\sqrt{5}-1,2\right]\implies \gamma(A,\|\cdot\|_{\ell_p}^2)\lesssim
\frac{(\log n)^{\frac{2(2-p)}{p^2}}}{1-\lambda_2(A)}.
$$
These are the currently best known bounds towards Question~\ref{Q:p
ARV}.

\subsection{Dimension reduction}\label{sec:dim}

As discussed in Section~\ref{sec:weak},  Proposition~\ref{prop:into
R} yields an average distortion embedding into the real line, while
Proposition~\ref{prop:weak embedding into Y}, when applied in the
context of spaces with random zero sets, yields an average
distortion embedding into Hilbert space. Here we briefly compare
these two notions. The following lemma is a simple application of
the classical Johnson-Lindenstrauss dimension reduction
lemma~\cite{JL84}.

\begin{lemma}\label{lem:JL}
If $(X,d_X)$ is an $n$-point metric space then
\begin{equation}\label{eq:apply JL}
\frac{Av_{\R}^{(2)}(X)}{\sqrt{\log n}}\le Av_{\ell_2}^{(2)}(X)\lesssim  Av_{\R}^{(2)}(X).
\end{equation}
\end{lemma}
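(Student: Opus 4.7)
The right inequality is immediate: since $\R$ sits isometrically inside $\ell_2$, any Lipschitz map $f:X\to \R$ testifying to the value of $Av_\R^{(2)}(X)$ is simultaneously a Lipschitz map into $\ell_2$ with the same Lipschitz constant and the same sum of squared distances, so $Av_{\ell_2}^{(2)}(X)\le Av_{\R}^{(2)}(X)$. The real content is the left inequality, which I would obtain by combining the Johnson--Lindenstrauss lemma~\cite{JL84} with a random direction projection.

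Write $D=Av_{\ell_2}^{(2)}(X)$. Fix $\eta>0$ and choose a Lipschitz mapping $f:X\to \ell_2$ with Lipschitz constant $L>0$ such that
$$
\left(\frac{1}{n^2}\sum_{i=1}^n\sum_{j=1}^n\|f(x_i)-f(x_j)\|_{\ell_2}^2\right)^{1/2}\ge \frac{L}{D+\eta}\left(\frac{1}{n^2}\sum_{i=1}^n\sum_{j=1}^n d_X(x_i,x_j)^2\right)^{1/2}.
$$
By the Johnson--Lindenstrauss lemma applied to the $n$ points $f(x_1),\ldots,f(x_n)$, there exist an integer $k\lesssim \log n$ and a linear mapping $T:\ell_2\to \ell_2^k$ such that $\frac12\|f(x_i)-f(x_j)\|_{\ell_2}\le \|T(f(x_i))-T(f(x_j))\|_{\ell_2^k}\le 2\|f(x_i)-f(x_j)\|_{\ell_2}$ for every $i,j\in \n$. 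Consequently the composition $T\circ f:\{x_1,\ldots,x_n\}\to \ell_2^k$ has Lipschitz constant at most $2L$, and
\begin{equation}\label{eq:JL step proposal}
\sum_{i=1}^n\sum_{j=1}^n \|T(f(x_i))-T(f(x_j))\|_{\ell_2^k}^2\ge \frac14 \sum_{i=1}^n\sum_{j=1}^n\|f(x_i)-f(x_j)\|_{\ell_2}^2.
\end{equation}

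Next I would project onto a random direction on the unit sphere of $\R^k$. Let $g$ be uniformly distributed on the unit sphere of $\R^k$, and define $F_g:\{x_1,\ldots,x_n\}\to \R$ by $F_g(x)\eqdef \langle g,T(f(x))\rangle$. Since $\|g\|=1$, we have the deterministic Lipschitz bound $\|F_g\|_{\Lip}\le \|T\circ f\|_{\Lip}\le 2L$. The standard identity $\E_g[\langle g,v\rangle^2]=\|v\|_{\ell_2^k}^2/k$ for every $v\in \R^k$ gives
$$
\E_g\left[\sum_{i=1}^n\sum_{j=1}^n |F_g(x_i)-F_g(x_j)|^2\right] =\frac{1}{k}\sum_{i=1}^n\sum_{j=1}^n \|T(f(x_i))-T(f(x_j))\|_{\ell_2^k}^2,
$$
so there exists a specific choice of $g$ for which $\sum_{i,j}|F_g(x_i)-F_g(x_j)|^2$ is at least the right hand side above. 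Combining this with~\eqref{eq:JL step proposal} and the choice of $f$, for this $g$ we have
$$
\left(\frac{1}{n^2}\sum_{i=1}^n\sum_{j=1}^n|F_g(x_i)-F_g(x_j)|^2\right)^{1/2}\ge \frac{L}{2(D+\eta)\sqrt{k}}\left(\frac{1}{n^2}\sum_{i=1}^n\sum_{j=1}^n d_X(x_i,x_j)^2\right)^{1/2},
$$
while $\|F_g\|_{\Lip}\le 2L$. Together these bounds show $Av_{\R}^{(2)}(X)\lesssim (D+\eta)\sqrt{\log n}$. Letting $\eta\to 0$ and rearranging yields the left inequality in~\eqref{eq:apply JL}.

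There is no real obstacle here; the only point requiring care is to choose the projection $g$ from the unit sphere (rather than a Gaussian) so as to obtain the upper bound on the Lipschitz constant deterministically, which lets one pick a single $g$ by a simple averaging argument instead of having to control two separate high-probability events simultaneously.
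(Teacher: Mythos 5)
Your proof is correct. The right inequality is handled identically to the paper. For the left inequality both you and the paper first invoke Johnson--Lindenstrauss to pass from a map $f:\{x_1,\ldots,x_m\}\to\ell_2$ witnessing $Av_{\ell_2}^{(2)}(X)$ to a map into $\R^k$ with $k\lesssim\log n$, and then reduce to a scalar-valued map at the cost of a $\sqrt{k}$ factor. The only place the two arguments diverge is in \emph{how} that $\sqrt{k}$ reduction is carried out: you project onto a uniformly random direction $g$ on the sphere $S^{k-1}$ and use the averaging identity $\E_g[\langle g,v\rangle^2]=\|v\|_{\ell_2^k}^2/k$ together with the deterministic bound $\|F_g\|_{\Lip}\le \|T\circ f\|_{\Lip}$; the paper instead observes that since $\sum_{s=1}^k\sum_{i,j}|f_s(x_i)-f_s(x_j)|^2=\sum_{i,j}\|f(x_i)-f(x_j)\|_{\ell_2^k}^2$, one of the $k$ coordinate functions $f_s$ already works, and each $f_s$ is $1$-Lipschitz whenever $f$ is. Your sphere-projection averaging is a valid alternative and your closing remark about using a unit vector rather than a Gaussian (to keep the Lipschitz bound deterministic) is exactly the right technical point; the coordinate-projection route in the paper is slightly more elementary since it sidesteps the sphere-averaging identity and any probabilistic language, but both arguments are short and both yield the same $\sqrt{\log n}$ factor.
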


\begin{proof}
The rightmost inequality in~\eqref{eq:apply JL} is trivial. Write
$D=Av_{\ell_2}^{(2)}(X)$ and take $x_1,\ldots,x_m\in X$. By the
Johnson-Lindenstrauss lemma~\cite{JL84} there exists $k\in \N$ such
that $k\lesssim \log n$ and there exists a $1$-Lipschitz function
$f=(f_1,\ldots,f_k):\{x_1,\ldots,x_m\}\to \R^k$ such that
$$
\sum_{i=1}^m\sum_{j=1}^m \|f(x_i)-f(x_j)\|_{2}^2\ge \frac{1}{2D^2}\sum_{i=1}^m\sum_{j=1}^m d_X(x_i,x_j)^2.
$$
Therefore there exists $s\in \{1,\ldots,k\}$ such that
$$
\sum_{i=1}^m\sum_{j=1}^m |f_s(x_i)-f_s(x_j)|^2\ge \frac{1}{2kD^2}\sum_{i=1}^m\sum_{j=1}^m d_X(x_i,x_j)^2.
$$
Since $f_s:\{x_1,\ldots,x_m\}\to \R$ is also $1$-Lipschitz, we
conclude that $Av_{\R}^{(2)}(X)\le\sqrt{2k}D\lesssim \sqrt{\log
n}D$.
\end{proof}

The following lemma shows that Lemma~\ref{lem:JL} is almost
asymptotically sharp.

\begin{lemma}\label{lem:sharp in l2}
For arbitrarily large $n\in \N$ there exists an $n$-point metric
space $X_n$ such that
\begin{equation}\label{eq:log log}
Av_{\R}^{(2)}(X_n)\gtrsim  Av_{\ell_2}^{(2)}(X_n)\cdot \sqrt{\frac{\log n}{\log\log n}}.
\end{equation}
\end{lemma}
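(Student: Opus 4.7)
My plan is to take $X_n$ to be a maximal $\epsilon$-separated subset of the unit sphere $S^{d-1}\subset \ell_2^d$ with $\epsilon \asymp 1/\sqrt d$ and $|X_n|=n$, where $d$ is chosen so that the volume packing estimate yields $n\asymp d^{(d-1)/2}$, equivalently $d\asymp \log n/\log\log n$. Since $X_n\subset \ell_2$ isometrically, the easy direction $Av_{\ell_2}^{(2)}(X_n)\le 1$ will be immediate.

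For the nontrivial direction $Av_\R^{(2)}(X_n)\gtrsim \sqrt d$, the starting point will be the observation that when $X_n$ is a \emph{maximal} $\epsilon$-separated subset of $S^{d-1}$, its spherical Voronoi cells $V_{x_i}\subset S^{d-1}$ automatically satisfy $\mu(V_{x_i})\asymp 1/n$ uniformly (where $\mu$ is the normalized surface measure): the lower bound comes from $B_{S^{d-1}}(x_i,\epsilon/2)\subset V_{x_i}$ by $\epsilon$-separation, the upper bound from $V_{x_i}\subset B_{S^{d-1}}(x_i,\epsilon)$ by maximality, combined with the spherical cap estimate $\mu(B_{S^{d-1}}(x,\epsilon))\asymp \epsilon^{d-1}\asymp 1/n$. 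Given any $1$-Lipschitz $f:X_n\to\R$, I will extend it via McShane to a $1$-Lipschitz $\widetilde f:S^{d-1}\to\R$ and apply L\'evy's spherical concentration inequality to get $\mathrm{Var}_\mu(\widetilde f)\lesssim 1/d$. Writing $\mathrm{Var}_\mu(\widetilde f)=\tfrac12\iint(\widetilde f(y)-\widetilde f(z))^2\,d\mu(y)\,d\mu(z)$, decomposing $S^{d-1}=\bigsqcup_i V_{x_i}$, and using the pointwise bound $|\widetilde f(y)-f(x_i)|\le \epsilon$ for $y\in V_{x_i}$ together with the elementary inequality $(a+b)^2\ge \tfrac12 a^2-2b^2$, I will convert this into
$$
\mathrm{Var}_\mu(\widetilde f) \ge c_1\,\mathrm{Var}_{\mathrm{unif}(X_n)}(f) - c_2\,\epsilon^2
$$
for constants $c_1,c_2>0$, whence $\mathrm{Var}_{\mathrm{unif}(X_n)}(f)\lesssim 1/d+\epsilon^2\lesssim 1/d$. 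Finally, the same volume packing estimate also implies that only an $\exp(-\Omega(d))$-fraction of pairs of points of $X_n$ can lie within Euclidean distance $\tfrac{1}{10}$ of each other, so $n^{-2}\sum_{i,j}\|x_i-x_j\|_2^2\asymp 1$, and combining the two bounds yields
$$
Av_\R^{(2)}(X_n)^2 \gtrsim \frac{n^{-2}\sum_{i,j}\|x_i-x_j\|_2^2}{\mathrm{Var}_{\mathrm{unif}(X_n)}(f)} \gtrsim d \asymp \frac{\log n}{\log\log n}.
$$

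The main technical care will be in executing the discrete-to-continuous variance transfer cleanly, but the ``balanced Voronoi cells'' property, which is automatic for maximal $\epsilon$-separated sets on the sphere, makes this step essentially routine; the rest of the argument is standard volume packing plus L\'evy's inequality.
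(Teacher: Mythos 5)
Your overall strategy --- decompose the sphere $S^{d-1}$ into small-diameter pieces, pick a representative in each, invoke spherical concentration (L\'evy/Poincar\'e) to bound the continuous variance of the Lipschitz extension by $O(1/d)$, transfer the variance bound back to the discrete point set up to an $\epsilon^2$ error, and then choose $\epsilon\asymp 1/\sqrt{d}$ so that $d\asymp \log n/\log\log n$ --- is the same as the paper's, and your arithmetic relating $n$, $\epsilon$, and $d$ checks out. However, there is a genuine gap at the single place where your construction deviates: the ``balanced Voronoi cells'' claim. You assert that a maximal $\epsilon$-separated set gives Voronoi cells with $\mu(V_{x_i})\asymp 1/n$ \emph{uniformly}. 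What is actually automatic is only $B(x_i,\epsilon/2)\subset V_{x_i}\subset B(x_i,\epsilon)$, and the spherical-cap volume ratio $\mu(B(x,\epsilon))/\mu(B(x,\epsilon/2))\asymp 2^{d-1}$ grows exponentially in $d$. Hence the cell measures $\nu_i\eqdef\mu(V_{x_i})$ are only pinned down up to a factor $2^{\Theta(d)}$, and some cells may have $\nu_i\asymp 2^{-(d-1)}/n$. Your discrete-to-continuous step really produces
$$
\mathrm{Var}_\mu(\widetilde f)\;\gtrsim\;\sum_{i,j}\nu_i\nu_j\bigl(f(x_i)-f(x_j)\bigr)^2-\epsilon^2,
$$
i.e.\ a lower bound in terms of the $\nu$-weighted discrete variance, not the uniform one; replacing $\nu$ by the uniform weights $1/n$ costs a factor as large as $4^{d-1}$, which completely destroys the intended conclusion $\mathrm{Var}_{\mathrm{unif}}(f)\lesssim 1/d$. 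The paper sidesteps exactly this issue by invoking \cite[Lem.~21]{FS02}, which supplies a partition of $S^{m-1}$ into $n$ sets of diameter at most $\epsilon$ and measure \emph{exactly} $1/n$ each --- the equal-measure property is the nontrivial input, and it is precisely what makes the variance transfer work with universal constants. If you want to keep your construction, you would need either to pass to a nonuniform $\nu$-weighted version of $Av_\R^{(2)}$ (which is not the quantity being bounded), or to replicate each $x_i$ with multiplicity proportional to $\nu_i$ and re-run the count (which changes $n$ by a factor $2^{\Theta(d)}$ and must be re-checked), or simply to quote the equal-measure partition as the paper does. A secondary, more minor point: your assertion that $n^{-2}\sum_{i,j}\|x_i-x_j\|_2^2\asymp 1$ is plausible but is stated as following immediately from volume packing; since the net points are not $\nu$-balanced either, this too deserves an actual argument (the paper's equal-measure cells make the analogous claim routine via~\eqref{eq:big variance}).
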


\begin{proof}
Fix $\e\in (0,1)$ and an integer $m\ge 2$. Let $\sigma$ denote the
normalized surface measure on the unit sphere $S^{m-1}\subseteq
\ell_2^m$. By Lemma~21 in~\cite{FS02} there exists a partition
$\{C_1,\ldots,C_n\}$ of $S^{m-1}$ into nonempty measurable sets such
that $\sigma(C_i)=1/n$ and  $\diam(C_i)\le \e$ for all $i\in
\{1,\ldots,n\}$, and $n\le (\kappa/\e)^m$ for some universal
constant $\kappa\in (0,\infty)$.

Choose an arbitrary point $x_i\in C_i$ and set
$X_n=\{x_1,\ldots,x_n\}\subseteq \ell_2^{m-1}$. Since $X_n$ is
isometric to a subset of Hilbert space, $Av_{\ell_2}^{(2)}(X_n)=1$.
Suppose that $f:\{x_1,\ldots,x_n\}\to \R$ is a $1$-Lipschitz
function. By the nonlinead Hahn-Banach theorem (see~\cite{BL}) we
can think of $f$ as the restriction to $X_n$ of a $1$-Lipschitz
function defined on all of $S^{m-1}$. The Poincar\'e inequality on
the Sphere $S^{m-1}$ (see e.g.~\cite{Cha84,Led01}) asserts that
\begin{multline}\label{eq:use poincare sphere}
\int_{S^{m-1}}\int_{S^{m-1}}
|f(x)-f(y)|^2d\sigma(x)d\sigma(y)\\\le
\frac{2}{m-1}\int_{S^{m-1}} \left\|\nabla
f(x)\right\|_2^2d\sigma(x)\le \frac{4}{m}.
\end{multline}
For every $i,j\in \{1,\ldots,n\}$ and every $(x,y)\in C_i\times C_j$
we have
\begin{align}\label{eq:use lip1}
\nonumber\frac{|f(x_i)-f(x_j)|^2}{3}&\le |f(x_i)-f(x)|^2+|f(x)-f(y)|^2+|f(y)-f(x_j)|^2\\\nonumber&\le |f(x)-f(y)|^2+\diam(C_i)^2+\diam(C_j)^2\\&\le |f(x)-f(y)|^2+2\e^2,
\end{align}
and similarly,
\begin{equation}\label{eq:use lip2}
\|x-y\|_{2}^2\le 3\|x_i-x_j\|_2^2+6\e^2.
\end{equation}
Consequently,
\begin{multline}\label{eq:integrate on product}
\frac{|f(x_i)-f(x_j)|^2-6\e^2}{n^2}=\sigma(C_i)\sigma(C_j)\left(|f(x_i)-f(x_j)|^2-6\e^2\right)\\
\stackrel{\eqref{eq:use lip1}}{\le}
3\int_{C_i}\int_{C_j}|f(x)-f(y)|^2d\sigma(x)d\sigma(y).
\end{multline}
and
\begin{equation}\label{eq:big variance}
\frac{\|x_i-x_j\|_2^2}{n^2}\ge \frac{1}{3}\int_{C_i}\int_{C_j}\|x-y\|_2^2d\sigma(x)d\sigma(y)-\frac{2\e^2}{n^2}.
\end{equation}
Hence,
\begin{multline*}
\frac{1}{n^2}\sum_{i=1}^n\sum_{j=1}^n|f(x_i)-f(x_j)|^2\\
\stackrel{\eqref{eq:integrate on product}}{\le}
3\int_{S^{m-1}}\int_{S^{m-1}}
|f(x)-f(y)|^2d\sigma(x)d\sigma(y)+6\e^2\stackrel{\eqref{eq:use
poincare sphere}}{\le} \frac{12}{m}+6\e^2,
\end{multline*}
and
\begin{eqnarray*}
\frac{1}{n^2}\sum_{i=1}^n\sum_{j=1}^n\|x_i-x_j\|_2^2&\stackrel{\eqref{eq:big variance}}{\ge} & \frac{1}{3}\int_{S^{m-1}}\int_{S^{m-1}}\|x-y\|_2^2d\sigma(x)d\sigma(y)-2\e^2\\&=&\frac23-2\e^2.
\end{eqnarray*}
By choosing $\e=1/(2\sqrt{m})$, we have shown that every
$1$-Lipschitz  function $f:X_n\to \R$ satisfies
$$
\sum_{i=1}^n\sum_{j=1}^n|f(x_i)-f(x_j)|^2\lesssim \frac{1}{m}\sum_{i=1}^n\sum_{j=1}^n\|x_i-x_j\|_2^2.
$$
Recalling that $n\le (\kappa/\e)^m=(2\kappa\sqrt{m})^m$, or
$m\gtrsim \sqrt{\log n/\log\log n}$, the proof of~\eqref{eq:log log}
is complete.
\end{proof}

\begin{remark}\label{rrem:spread constant} Given an $n$-point metric space $(X,d_X)$ let $\mathfrak{S}(X)$ denote the maximum of $\frac{1}{2n^2}\sum_{(x,y)\in X\times X} |f(x)-f(y)|^2$ over all $1$-Lipschitz functions $f:X\to \R$. The quantity $\mathfrak{S}(X)$ was introduced by Alon, Boppana and Spencer in~\cite{ABS98}, where they called it the {\em spread constant of $X$}. They proved that the spread constant of $X$ governs the asymptotic isoperimetric behavior of $\ell_1^n(X)$ as $n\to \infty$. They also state that ``The spread constant appears to be new and may well be of independent interest." We agree with this assertion. In particular, it would be worthwhile to investigate the computational complexity of the problem that takes as input an $n$-point metric space $(X,d_X)$ and is supposed to output in polynomial time a number that is guaranteed to be a good approximation of its spread constant. We are not aware of hardness of approximation results for this question. Let $\mathfrak{S}_{\ell_2}(X)$ denote the maximum of $\frac{1}{2n^2}\sum_{(x,y)\in X\times X} \|f(x)-f(y)\|_2^2$ over all $1$-Lipschitz functions $f:X\to \ell_2$. The quantity $\mathfrak{S}_{\ell_2}(X)$ can be computed in polynomial time with arbitrarily good precision, since (by definition) it can be cast as a semidefinite program (see~\cite{GLS93}). The proof of Lemma~\ref{lem:JL} can be viewed as a simple approximation algorithm to the spread constant, achieving an approximation guarantee of $O(\log n)$. Lemma~\ref{lem:sharp in l2} can be viewed as yielding an almost matching integrality gap lower bound for the semidefinite program. Note that the parameter $\mathfrak{S}_{\ell_2}(X)$ itself has also been studied in the literature in the context of the problem of finding the fastest mixing Markov process on a given graph; see~\cite{SBXD06}. See also the works~\cite{Fie89,GHW08,GHR12} that study this quantity in the context of the absolute algebraic connectivity of a graph. Clearly $(Av_{\R}^{(2)}(X))^2$ is closely related to $\mathfrak{S}(X)$: it amounts to finding the (multi)subset of $X$ with largest spread constant. The same can be said for the relation between $(Av_{\ell_2}^{(2)}(X))^2$ and $\mathfrak{S}_{\ell_2}(X)$.
\end{remark}

\subsection{Proofs of Proposition~\ref{prop:into R} and Proposition~\ref{prop:weak embedding into Y}}\label{sec:prop proofs}
We start by recording the following very simple lemma, whose proof is a
straightforward application of the triangle inequality.
\begin{lemma}\label{lem:D triangle}
Fix $p\in [1,\infty)$ and $n\in \N$. Let $(X,d_X)$ be a metric space
and $x_1,\ldots,x_n,\in X$. Define
\begin{equation}\label{eq:defD}
r\eqdef \min_{i\in \{1,\ldots,n\}} \left(\frac{1}{n}\sum_{j=1}^nd_X(x_i,x_j)^p\right)^{\frac1{p}}.
\end{equation}
Then
\begin{equation}\label{eq:Dis the moment up to 2}
r\le \left(\frac{1}{n^2}\sum_{i=1}^n\sum_{j=1}^n d_X(x_i,x_j)^p\right)^{\frac1{p}}\le 2r.
\end{equation}
\end{lemma}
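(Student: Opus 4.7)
The plan is a two-sided estimate where each direction is a one-line consequence of a standard inequality.

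For the left inequality, I would simply observe that the quantity $\bigl(\frac{1}{n^2}\sum_{i=1}^n\sum_{j=1}^n d_X(x_i,x_j)^p\bigr)^{1/p}$ is the $L_p$-average (with respect to the uniform probability measure on $\{1,\ldots,n\}$) of the function $i\mapsto \bigl(\frac{1}{n}\sum_{j=1}^n d_X(x_i,x_j)^p\bigr)^{1/p}$. The $L_p$-average of a nonnegative function is at least its minimum, and the minimum of this function is exactly $r$ by definition~\eqref{eq:defD}. Equivalently, writing $A_i\eqdef \frac{1}{n}\sum_{j=1}^n d_X(x_i,x_j)^p$, we have $\frac{1}{n}\sum_{i=1}^n A_i\ge \min_i A_i=r^p$, and taking $p$-th roots yields the left inequality in~\eqref{eq:Dis the moment up to 2}.

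For the right inequality, I would fix an index $i_0\in \{1,\ldots,n\}$ that attains the minimum in~\eqref{eq:defD}, so that $\frac{1}{n}\sum_{j=1}^n d_X(x_{i_0},x_j)^p=r^p$. By the triangle inequality in $(X,d_X)$ and convexity of $t\mapsto t^p$ on $[0,\infty)$, for every $i,j\in\{1,\ldots,n\}$ we have
$$
d_X(x_i,x_j)^p\le 2^{p-1}\bigl(d_X(x_i,x_{i_0})^p+d_X(x_{i_0},x_j)^p\bigr).
$$
Averaging over $i,j\in\{1,\ldots,n\}$ and using symmetry of $d_X$ in the two resulting sums gives
$$
\frac{1}{n^2}\sum_{i=1}^n\sum_{j=1}^n d_X(x_i,x_j)^p\le \frac{2^p}{n}\sum_{k=1}^n d_X(x_k,x_{i_0})^p=2^pr^p,
$$
and taking $p$-th roots yields the right inequality in~\eqref{eq:Dis the moment up to 2}.

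There is no real obstacle here; the lemma is essentially an elementary ``center versus diameter'' observation and both directions will be written out in a few lines. The only point worth emphasizing is that the constant $2$ in~\eqref{eq:Dis the moment up to 2} comes precisely from the triangle inequality routed through the minimizing index $i_0$, and this choice of routing is what makes it inessential to know anything about the geometry of $(X,d_X)$ beyond the triangle inequality.
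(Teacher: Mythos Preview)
Your proof is correct and matches the paper's own proof essentially line for line: the paper also fixes an index $k$ attaining the minimum in~\eqref{eq:defD}, derives the right inequality by averaging $d_X(x_i,x_j)^p\le 2^{p-1}d_X(x_i,x_k)^p+2^{p-1}d_X(x_k,x_j)^p$ over $i,j$, and says the left inequality follows from the definition of $r$.
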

\begin{proof}
Choose $k\in \{1,\ldots,n\}$ such that
\begin{equation}\label{eq:k choice}
r=\left(\frac{1}{n}\sum_{j=1}^nd_X(x_k,x_j)^p\right)^{\frac1{p}}.
\end{equation}
 The rightmost inequality in~\eqref{eq:Dis the moment up to 2} follows from averaging the estimate $d_X(x_i,x_j)^p\le 2^{p-1}d_X(x_i,x_k)^p+2^{p-1}d_X(x_k,x_j)^p$
 over $i,j\in \{1,\ldots,n\}$. The leftmost inequality in~\eqref{eq:Dis the moment up to 2} follows from the definition of $r$.
\end{proof}
\begin{lemma}\label{lem:AB dichotomy}
Continuing with the notation of the statement and proof of
Lemma~\ref{lem:D triangle}, in particular choosing $k\in
\{1,\ldots,n\}$ so as to satisfy~\eqref{eq:k choice}, write
\begin{equation}\label{eq:4D ball is B}
B\eqdef \left\{i\in \{1,\ldots,n\}:\ d_X(x_i,x_k)\le 4r\right\},
\end{equation}
and
\begin{equation}\label{eq:A choice}
M\eqdef \left\{(i,j)\in B\times B:\ d_X(x_i,x_j)\ge \frac{r}{8}\right\}.
\end{equation}
Then,
\begin{equation}\label{eq:AB implication}
|M|\le \frac{n^2}{2^{7p}}\implies \left(\frac{1}{n}\sum_{j\in \{1,\ldots,n\}\setminus B} d_X(x_j,x_k)^p\right)^{\frac1{p}}\ge \frac{r}{8}.
\end{equation}
\end{lemma}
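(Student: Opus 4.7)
The plan is to prove the contrapositive: assume $\frac{1}{n}\sum_{j\notin B} d_X(x_j,x_k)^p<(r/8)^p$ and deduce $|M|>n^2/2^{7p}$.

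My first step will be a quick Markov-inequality application to the identity $r^p=\frac{1}{n}\sum_j d_X(x_k,x_j)^p$, which, together with the fact that $d_X(x_k,x_j)>4r$ for every $j\notin B$, gives $|B^c|\le n/4^p$, and in particular $|B|\ge 3n/4$ for $p\ge 1$.

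The key step — and the one requiring a small insight — will be transferring the assumed outside-$B$ smallness from the specific center $x_k$ to an arbitrary $x_{k'}$ with $k'\in B$. I plan to exploit that for $j\notin B$ one has $d_X(x_k,x_j)>4r\ge d_X(x_{k'},x_k)$, which upgrades the ordinary triangle inequality to
$$
d_X(x_{k'},x_j)\le d_X(x_{k'},x_k)+d_X(x_k,x_j)\le 2d_X(x_k,x_j),
$$
and hence
$$
\frac{1}{n}\sum_{j\notin B}d_X(x_{k'},x_j)^p\le 2^p\cdot\frac{1}{n}\sum_{j\notin B}d_X(x_k,x_j)^p<\frac{r^p}{4^p}.
$$
Subtracting this from the minimality $\frac{1}{n}\sum_j d_X(x_{k'},x_j)^p\ge r^p$ will yield $\frac{1}{n}\sum_{j\in B}d_X(x_{k'},x_j)^p\ge r^p(1-4^{-p})\ge 3r^p/4$.

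The remaining steps are routine: I will split this inside-$B$ sum at scale $r/8$. Terms with $d_X(x_{k'},x_j)<r/8$ contribute at most $(r/8)^p\le r^p/8$ after averaging, while the pairs in $M$ (where $k',j\in B$, so distances are $\le 8r$) each contribute at most $(8r)^p$. This will give $\deg_M(k')\ge 5n/8^{p+1}$ for every $k'\in B$, where $\deg_M(k')$ is the number of $j\in B$ with $(k',j)\in M$; summing over $k'\in B$ and using $|B|\ge 3n/4$ yields $|M|\ge 15n^2/(32\cdot 8^p)$, which comfortably exceeds $n^2/2^{7p}$ for all $p\ge 1$ (the ratio is $15\cdot 2^{4p}/32\ge 15/2$).

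The main obstacle is precisely the sharpened triangle estimate in the second paragraph: the naive bound $d_X(x_{k'},x_j)^p\le 2^{p-1}((4r)^p+d_X(x_k,x_j)^p)$ gives a worthless outside-$B$ contribution of order $2^{p-1}r^p$, far larger than the target $r^p$, and causes the argument to collapse. The observation that $d_X(x_k,x_j)$ is itself already at least $4r$ for $j\notin B$ is what allows the $4r$ term to be absorbed into $d_X(x_k,x_j)$ with only a multiplicative loss of $2$.
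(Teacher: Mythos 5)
Your proof is correct. A few confirming checks: the Markov inequality giving $|B|\geq 3n/4$ is sound since $\frac{1}{n}\sum_{j\notin B}(4r)^p \leq \frac{1}{n}\sum_j d_X(x_k,x_j)^p = r^p$; the sharpened triangle inequality $d_X(x_{k'},x_j)\leq 2d_X(x_k,x_j)$ for $k'\in B$, $j\notin B$ is exactly the observation you flag as the crux; the degree bound $\deg_M(k')>5n/8^{p+1}$ follows from $3r^p/4 - (r/8)^p\geq 5r^p/8$ for $p\geq 1$; and summing over $k'\in B$ gives $|M|>\tfrac{15}{32}\cdot 2^{4p}\cdot n^2/2^{7p}\geq \tfrac{15}{2}\cdot n^2/2^{7p}$.

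Your route is genuinely different from the paper's, though it uses the same raw ingredients. You argue by contrapositive and make the minimality of $r$ work locally at every $k'\in B$: each such $k'$ must have $\frac{1}{n}\sum_j d_X(x_{k'},x_j)^p\geq r^p$, you cap the out-of-$B$ contribution to this sum using the sharpened triangle bound, and you conclude that each $k'\in B$ has many $M$-neighbors; summing degrees then bounds $|M|$ from below. The paper instead argues directly, applying minimality only once (through $r^p\leq \frac{1}{n^2}\sum_i\sum_j d_X(x_i,x_j)^p$, the leftmost inequality of the preceding lemma) and splitting the global double sum into the $B\times B$ piece (controlled by $|M|$ small plus $\diam\leq 8r$ on $B$) and the rest (controlled via the ordinary triangle inequality through $x_k$, with the extra $\frac{2^p(n-|B|)}{n}r^p$ term absorbed by the Markov bound $|B^c|/n\leq 4^{-p}$ rather than avoided). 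Because the paper's argument is global, it never needs the improved $d_X(x_{k'},x_j)\leq 2d_X(x_k,x_j)$; in your per-vertex argument that refinement is essential, as you correctly observe. Your version yields a slightly stronger structural conclusion — every point of $B$ has $\Omega(n/8^p)$ neighbors in $M$ — whereas the paper only gets the aggregate bound; the paper's version is a few lines shorter.
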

\begin{proof}
Suppose that
\begin{equation}\label{eq:A small assumption}
|M|\le \frac{n^2}{2^{7p}}.
\end{equation}
 Recalling~\eqref{eq:k choice} and~\eqref{eq:4D ball is B}, it follows from Markov's inequality that
\begin{equation}\label{eq:markov for B}
\frac{n-|B|}{n}\le \frac{1}{4^p}.
\end{equation}
Hence,
\begin{eqnarray}\label{eq:sum off B lower}
&&\!\!\!\!\!\!\!\!\!\!\!\!\!\!\!\!\!\!\!\!\!\!\!\!\!\!\!\!\nonumber\frac{2}{n^2}\sum_{i=1}^n \sum_{j\in \{1,\ldots,n\}\setminus B} d_X(x_i,x_j)^p\\&\le& \nonumber \frac{2^p}{n^2}\sum_{i=1}^n \sum_{j\in \{1,\ldots,n\}\setminus B}\left(d_X(x_i,x_k)^p+d_X(x_k,x_j)^p\right)\\
&\stackrel{\eqref{eq:k choice}}{=}&\frac{2^p(n-|B|)}{n}r^p+\frac{2^p}{n}\sum_{j\in \{1,\ldots,n\}\setminus B}d_X(x_k,x_j)^p\nonumber\\
&\stackrel{\eqref{eq:markov for B}}{\le}& \frac{r^p}{2^p}+\frac{2^p}{n}\sum_{j\in \{1,\ldots,n\}\setminus B}d_X(x_k,x_j)^p.
\end{eqnarray}
Since  every $i,j\in B$ satisfy $d_X(x_i,x_j)\le
d_X(x_i,x_k)+d_X(x_j,x_k)\le 8r$,
\begin{eqnarray}\label{eq:use A}
\nonumber\frac{1}{n^2}\sum_{(i,j)\in B\times B}d_X(x_i,x_j)^p&\stackrel{\eqref{eq:4D ball is B}\wedge \eqref{eq:A choice}}{\le} & \frac{8^pr^p|M|}{n^2}+\frac{|B|^2-|M|}{n^2}\cdot\frac{r^p}{8^p}\\&\stackrel{\eqref{eq:A small assumption}}{\le}& \frac{r^p}{2^{4p}}+\frac{r^p}{2^{3p}}.
\end{eqnarray}
It follows that
\begin{eqnarray*}
r^p&\stackrel{\eqref{eq:Dis the moment up to 2}}{\le}&\frac{1}{n^2}\sum_{i=1}^n\sum_{j=1}^n d_X(x_i,x_j)^p\\&\stackrel{\eqref{eq:sum off B lower}\wedge\eqref{eq:use A}}{\le}& \frac{r^p}{2^{4p}}+\frac{r^p}{2^{3p}}+\frac{r^p}{2^p}+\frac{2^p}{n}\sum_{j\in \{1,\ldots,n\}\setminus B}d_X(x_k,x_j)^p,
\end{eqnarray*}
which (since $p\ge 1$) implies that
\begin{equation*}
\frac{1}{n}\sum_{j\in \{1,\ldots,n\}\setminus B}d_X(x_k,x_j)^p\ge \frac{r^p}{8^p}.\qedhere
\end{equation*}
\end{proof}

\begin{lemma}\label{lem:si good}
Continuing with the notation of the statements and proofs of
Lemma~\ref{lem:D triangle} and Lemma~\ref{lem:AB dichotomy}, define
$$
\forall\, i\in \{i,\ldots,n\},\qquad s_i\eqdef \max\left\{0,d_X(x_i,x_k)-2r\right\}.$$ If $|M|\le n^2/2^{7p}$ then
$$
\sum_{i=1}^n\sum_{j=1}^n |s_i-s_j|^p\ge \frac{1}{2^{5p}}\sum_{i=1}^n\sum_{j=1}^n d_X(x_i,x_k)^p.
$$
\end{lemma}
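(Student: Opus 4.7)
\medskip

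\noindent\textbf{Proof proposal.} The strategy is to exploit the fact that $s_j$ vanishes on a large ball around $x_k$, while $s_i$ is comparable to $d_X(x_i,x_k)$ when $i\notin B$; the bulk of $\sum_i d_X(x_i,x_k)^p$ is concentrated on the complement of $B$ by Lemma~\ref{lem:AB dichotomy}, and pairing such indices $i$ with the many indices $j$ in a smaller inner ball will produce the required quadratic lower bound.

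First, I would introduce the auxiliary set
$$
B'\eqdef \left\{j\in\{1,\ldots,n\}:\ d_X(x_j,x_k)\le 2r\right\}\subseteq B.
$$
By the definition of $s_j$, for every $j\in B'$ we have $s_j=0$. Moreover, the choice of $k$ in~\eqref{eq:k choice} gives $\sum_{j=1}^n d_X(x_k,x_j)^p = nr^p$, so by Markov's inequality
$$
|\{1,\ldots,n\}\setminus B'|\le \frac{n}{2^p},\qquad\text{hence}\qquad |B'|\ge n\left(1-\tfrac{1}{2^p}\right)\ge \tfrac{n}{2}.
$$
Next, for $i\notin B$ we have $d_X(x_i,x_k)>4r$, and therefore
$$
s_i=d_X(x_i,x_k)-2r\ge \tfrac{1}{2}d_X(x_i,x_k).
$$
Note also that $\sum_{i=1}^n\sum_{j=1}^n d_X(x_i,x_k)^p = n\sum_{i=1}^n d_X(x_i,x_k)^p = n^2 r^p$, so the desired inequality is equivalent to $\sum_{i,j}|s_i-s_j|^p\ge n^2r^p/2^{5p}$.

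The main step is then to restrict the sum $\sum_{i,j}|s_i-s_j|^p$ to pairs $(i,j)\in (\{1,\ldots,n\}\setminus B)\times B'$ (together with the symmetric pairs), where we can compute
$$
|s_i-s_j|^p=s_i^p\ge \frac{d_X(x_i,x_k)^p}{2^p}.
$$
Summing and using $|B'|\ge n/2$ together with the conclusion of Lemma~\ref{lem:AB dichotomy}, namely $\sum_{i\notin B}d_X(x_i,x_k)^p\ge nr^p/8^p$, yields
$$
\sum_{i=1}^n\sum_{j=1}^n|s_i-s_j|^p\ge 2\sum_{\substack{i\notin B\\ j\in B'}}\frac{d_X(x_i,x_k)^p}{2^p}\ge 2\cdot\frac{n}{2}\cdot\frac{1}{2^p}\cdot\frac{nr^p}{8^p}=\frac{n^2r^p}{2^{4p}},
$$
which is in fact stronger than the claim by a factor of $2^p$.

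There is no serious obstacle here: the entire content of the statement is already packaged into Lemma~\ref{lem:AB dichotomy}, and what remains is the bookkeeping above. The only minor subtlety is to work with the inner ball $B'$ (of radius $2r$) rather than with $B$ (of radius $4r$): on $B$ one only has $s_j\le 2r$, which would give a bound of the form $s_i-s_j\ge d_X(x_i,x_k)-4r$ that degenerates for $i\notin B$ with $d_X(x_i,x_k)$ just above $4r$, whereas on $B'$ one has $s_j=0$ exactly, so the lower bound on $|s_i-s_j|$ in terms of $d_X(x_i,x_k)$ holds uniformly for all $i\notin B$.
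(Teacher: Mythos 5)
Your proof is correct and follows essentially the same route as the paper: both introduce the inner ball $B'$ of radius $2r$ where $s_j$ vanishes, use Markov's inequality to show $|B'|\ge n(1-2^{-p})$, observe that $s_i\ge\tfrac12 d_X(x_i,x_k)$ for $i\notin B$, and combine with the output of Lemma~\ref{lem:AB dichotomy} on the complement of $B$. The paper carries the factor $(1-2^{-p})$ through rather than rounding it down to $\tfrac12$, but the computation is otherwise identical.
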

\begin{proof}
Due to Lemma~\ref{lem:AB dichotomy} we know that
\begin{equation}\label{eq:use dichotomy lemma}
\frac{1}{n}\sum_{j\in \{1,\ldots,n\}\setminus B}d_X(x_k,x_j)^p\ge \frac{r^p}{8^p}
\end{equation}
Define $ B'\eqdef \left\{i\in \{1,\ldots,n\}:\ d_X(x_i,x_k)\le
2r\right\}$. If $i\in B'$ then $s_i=0$, and if $j\in
\{1,\ldots,n\}\setminus B$ then  $s_j=d_X(x_j,x_k)-2r\ge \frac12
d_X(x_j,x_k)$. Also, recalling~\eqref{eq:k choice}, it follows from
Markov's inequality that
\begin{equation}\label{eq:B' big}
\frac{n-|B'|}{n}\le \frac{1}{2^p}.
\end{equation}
Consequently,
\begin{align*}
&\!\!\!\!\!\!\!\!\!\!\!\!\!\!\!\!\!\!\!\!\!\sum_{i=1}^n\sum_{j=1}^n |s_i-s_j|^p\ge 2|B'|\sum_{j\in \{1,\ldots,n\}\setminus B} \frac{d_X(x_j,x_k)^p}{2^p}\\
&\stackrel{\eqref{eq:use dichotomy lemma}\wedge\eqref{eq:B' big}}{\ge} \frac{n\left(1-\frac{1}{2^p}\right)}{2^{p-1}}\cdot \frac{nr^p}{8^p}\stackrel{\eqref{eq:Dis the moment up to 2}}{\ge}
\frac{1}{2^{5p}}\sum_{i=1}^n\sum_{j=1}^n d_X(x_i,x_j)^p.
\qedhere
\end{align*}
\end{proof}

\begin{proof}[Proof of Proposition~\ref{prop:into R}]
Fix $n\in \N$. Suppose that $x_1,\ldots,x_n\in X$ and write
$S=\{x_1,\ldots,x_n\}\subseteq X$. Define $r\in (0,\infty)$ as
in~\eqref{eq:defD} and let $\mu$ be a probability distribution over
$2^X$ that satisfies~\eqref{eq:def zero set} with $\Delta=r/8$,
$\zeta=\zeta(S;\d)$ and $\delta=r$ . Let $M$ be defined as
in~\eqref{eq:A choice} and suppose that $|M|> n^2/2^{7p}$. Then
\begin{align}
\nonumber &\frac{1}{n^2}\sum_{i=1}^n\sum_{j=1}^n \int_{2^X}\left|d_X(x_i,Z)-d_X(x_j,Z)\right|^pd\mu(Z)\\
&\ge \frac{1}{n^2}\sum_{(i,j)\in M} \left(\frac{r}{8\zeta}\right)^p\mu\left(\left\{Z\in 2^X:\ x_i\in Z\ \wedge \ d_X(x_j,Z)\ge \frac{r}{8\zeta}\right\}\right)\nonumber\\
&
\ge\frac{|M|}{n^2}\cdot \frac{\d r^p}{8^p\zeta^p}\label{eq:use zero set axioms}\\
&\ge
\frac{\d}{2^{11p}\zeta^p}\cdot \frac{1}{n^2}\sum_{i=1}^n\sum_{j=1}^n
d_X(x_i,x_j)^p,\label{eq:use lower M}
\end{align}
where~\eqref{eq:use zero set axioms} uses~\eqref{eq:def zero set} and~\eqref{eq:A choice}, and~\eqref{eq:use lower M} uses~\eqref{eq:Dis the moment up to 2} and the assumption $|M|> n^2/2^{7p}$. It follows that there exists $Z\subseteq X$ such that the
$1$-Lipschitz function  $f:X\to \R$ given by $f(x)=d_X(x,Z)$
satisfies
$$
\sum_{i=1}^n\sum_{j=1}^n |f(x_i)-f(x_j)|^p\ge \frac{\d}{2^{11p}\zeta^p}\sum_{i=1}^n\sum_{j=1}^n d_X(x_i,x_j)^p.
$$
If, on the hand, $|M|\le n^2/2^{7p}$ then the desired estimate
follows by choosing $f(x)=\max\{0,d_X(x,x_k)-2r\}$ and applying
Lemma~\ref{lem:si good}.
\end{proof}

\begin{proof}[Proof of Proposition~\ref{prop:weak embedding into Y}]
Fix $n\in \N$ and choose $x_1,\ldots,x_n\in X$. Define $r\in
(0,\infty)$ as in~\eqref{eq:defD} and $k\in \{1,\ldots,n\}$ as
in~\eqref{eq:k choice}. Let $M$ be defined as in~\eqref{eq:A choice}
and suppose that $|M|> n^2/2^{7p}$. An application
of~\eqref{eq:localized weak def} with $\Delta=r/8$ and $z=x_k$ shows
that
\begin{multline*}
\sum_{i=1}^n\sum_{j=1}^n d_Y\left(f_\Delta^{x_k}(x_i),f_\Delta^{x_k}(x_j)\right)^p \ge \sum_{(i,j)\in M}d_Y\left(f_\Delta^{x_k}(x_i),f_\Delta^{x_k}(x_j)\right)^p\\
\ge |M|\frac{\|f\|_{\Lip}^p}{D^p}\cdot\frac{r^p}{8^p}\ge
\frac{n^2r^p\|f\|_{\Lip}^p}{2^{10p}D^p}\ge
\frac{\|f\|_{\Lip}^p}{2^{11p}D^p}\sum_{i=1}^n\sum_{j=1}^n
d_X(x_i,x_j)^p.
\end{multline*}
This yields the desired average distortion embedding into $Y$. If,
on the hand, $|M|\le n^2/2^{7p}$ then the existence of the desired
embedding into $\R$ follows by choosing
$f(x)=\max\{0,d_X(x,x_k)-2r\}$ and applying Lemma~\ref{lem:si good}.
\end{proof}

It is natural to ask how the quantities $Av_Y^{(p)}(X)$ and  $Av_Y^{(q)}(X)$ are related to each other for distinct $p,q\in [1,\infty)$ and two metric space $(X,d_X)$ and $(Y,d_Y)$. We shall now briefly address this matter.

Suppose that $D> Av_Y^{(q)}(X)$ and fix $x_1,\ldots,x_n\in X$. Then there exists a nonconstant mapping $f:\{x_1,\ldots,x_n\}\to Y$ such that
\begin{equation}\label{eq:av dist assumption q}
\sum_{i=1}^n\sum_{j=1}^n d_Y(f(x_i),f(x_j))^q\ge
\frac{\|f\|_{\Lip}^q}{D^q}\sum_{i=1}^n\sum_{j=1}^n d_X(x_i,x_j)^q.
\end{equation}
Suppose first that $q<p$, and continue using the notation of
Lemma~\ref{lem:D triangle} and Lemma~\ref{lem:AB dichotomy}. If $|M|> n^2/2^{7p}$ then
\begin{multline}\label{eq:on M q}
\left(\frac{1}{n^2}\sum_{i=1}^n\sum_{j=1}^n d_X(x_i,x_j)^q\right)^{\frac{1}{q}}\ge \left(\frac{1}{n^2}\sum_{(i,j)\in M} d_X(x_i,x_j)^q\right)^{\frac{1}{q}}\\\stackrel{\eqref{eq:A choice}}{\ge} \left(\frac{1}{2^{7p}}\cdot \frac{r^q}{8^q}\right)^{\frac{1}{q}}\stackrel{\eqref{eq:Dis the moment up to 2}}{\gtrsim} \frac{1}{2^{7p/q}}\left(\frac{1}{n^2}\sum_{i=1}^n\sum_{j=1}^n d_X(x_i,x_j)^p\right)^{\frac{1}{p}}.
\end{multline}
Consequently,
\begin{multline}\label{eq:q->p p>q}
\left(\frac{1}{n^2}\sum_{i=1}^n\sum_{j=1}^n d_Y(f(x_i),f(x_j))^p\right)^{\frac{1}{p}}\ge \left(\frac{1}{n^2}\sum_{i=1}^n\sum_{j=1}^n d_Y(f(x_i),f(x_j))^q\right)^{\frac{1}{q}}\\
\stackrel{\eqref{eq:av dist assumption q}\wedge \eqref{eq:on M q}}{\gtrsim} \frac{\|f\|_{\Lip}}{2^{7p/q}D}\left(\frac{1}{n^2}\sum_{i=1}^n\sum_{j=1}^n d_X(x_i,x_j)^p\right)^{\frac{1}{p}}.
\end{multline}
By using Lemma~\ref{lem:si good} if $|M|\le n^2/2^{7p}$, we deduce from~\eqref{eq:q->p p>q} that
\begin{equation}\label{eq:transference p>q}
q<p\implies Av^{(p)}_{Y\times \R}(X)\lesssim 2^{7p/q} Av^{(q)}_Y(X),
\end{equation}
where $Y\times \R$ is understood to be equipped with, say, the $\ell_1$-sum metric, i.e., $d_{Y\times \R}((x,s),(y,t))=d(x,y)+|s-t|$ for every $(x,s),(y,t)\in Y\times \R$ (as in Proposition~\ref{prop:weak embedding into Y}, we can conclude here that there exists an average distortion embedding into either $Y$ or $\R$, but we choose to work with $Y\times \R$ for notational simplicity).

If $p<q$ then the following argument establishes  an estimate analogous to~\eqref{eq:transference p>q}, but under an additional assumption. The Lipschitz extension constant for the pair of metric spaces $(X,Y)$, denoted $e(X,Y)$, is the infimum over those $K\in [1,\infty)$ such that for every $S\subset X$ every Lipschitz function $f:S\to Y$ admits an extension $F:X\to Y$ with $\|F\|_{\Lip}\le K\|f\|_{\Lip}$. If no such $K$ exists then set $e(X,Y)=\infty$. Suppose that $p<q$ and that $|M|> n^2/2^{7p}$. By the definition of $D$ there exists a nonconstant mapping $\f:B\to Y$ such that
\begin{equation}\label{eq:B times B q}
\sum_{(i,j)\in B\times B} d_Y(\f(x_i),\f(x_j))^q\ge
\frac{\|\f\|_{\Lip}^q}{D^q}\sum_{(i,j)\in B\times B} d_X(x_i,x_j)^q.
\end{equation}
Note that since for every $i,j\in B$ we have $d_X(x_i,x_j)\le 8r$,
\begin{multline}\label{eq:max out of sum}
\sum_{(i,j)\in B\times B} d_Y(\f(x_i),\f(x_j))^q\\
\le\left(8r\|\f\|_{\Lip}\right)^{q-p}\sum_{(i,j)\in B\times B} d_Y(\f(x_i),\f(x_j))^p.
\end{multline}
Also, arguing as in~\eqref{eq:on M q} we have
\begin{equation}\label{eq:q on B times B}
\frac{1}{n^2}\sum_{(i,j)\in B\times B} d_X(x_i,x_j)^q\gtrsim (cr)^q,
\end{equation}
where $c\in (0,\infty)$ is a universal constant. By substituting~\eqref{eq:max out of sum} and~\eqref{eq:q on B times B} into~\eqref{eq:B times B q} we therefore have
\begin{multline}\label{eq:average on B times B}
\left(\frac{1}{n^2}\sum_{(i,j)\in B\times B} d_Y(\f(x_i),\f(x_j))^p\right)^{\frac{1}{p}}\ge \frac{(c/8)^{q/p}\|\f\|_{\Lip}r}{D^{q/p}}\\
\stackrel{\eqref{eq:Dis the moment up to 2}}{\ge}\frac{(c/8)^{q/p}\|\f\|_{\Lip}}{D^{q/p}}\left(\frac{1}{n^2}\sum_{i=1}^n\sum_{j=1}^n d_X(x_i,x_j)^p\right)^{\frac{1}{p}}.
\end{multline}
By extending $\f$ to a mapping $\Phi:X\to Y$ with $\|\Phi\|_{\Lip}\lesssim e(X,Y)\|\f\|_{\Lip}$ we see that
\begin{align}\label{eq:case p<q}
\nonumber \left(\frac{1}{n^2}\sum_{i=1}^n\sum_{j=1}^n d_Y(\Phi(x_i),\Phi(x_j))^p\right)^{\frac{1}{p}}&\ge \left(\frac{1}{n^2}\sum_{(i,j)\in B\times B} d_Y(\f(x_i),\f(x_j))^p\right)^{\frac{1}{p}}\\
&\!\!\!\!\!\!\!\!\!\!\!\!\!\!\!\!\!\!\!\!\!\!\!\!\!\! \! \! \! \! \! \! \! \! \! \! \! \! \! \! \! \! \! \! \! \! \! \! \! \! \! \! \! \! \! \! \!  \stackrel{\eqref{eq:average on B times B}}{\ge} \frac{\|\Phi\|_{\Lip}}{(CD)^{q/p}e(X,Y)}\left(\frac{1}{n^2}\sum_{i=1}^n\sum_{j=1}^n d_X(x_i,x_j)^p\right)^{\frac{1}{p}},
\end{align}
where $C\in (0,\infty)$ is a universal constant. By using Lemma~\ref{lem:si good} if $|M|\le n^2/2^{7p}$, we deduce from~\eqref{eq:case p<q} that
\begin{equation}\label{eq:av a>p}
p<q\implies Av^{(p)}_{Y\times \R}(X)\lesssim e(X,Y) \left(CAv^{(q)}_Y(X)\right)^{q/p}.
\end{equation}

By~\cite{Ball,NPSS06} for $p\in [2,\infty)$ we have $e(\ell_p,\ell_2)\lesssim \sqrt{p}$. It therefore follows from~\eqref{eq:transference p>q} and~\eqref{eq:av a>p} combined with Corollary~\ref{coro:p av embed} that for every $p\in [2,\infty)$ and $q\in [1,\infty)$ we have
\begin{equation}\label{eq:av other powers}
Av_{\ell_2}^{(q)}(\ell_p)\lesssim \left\{\begin{array}{ll} 2^{4q}p&\mathrm{if}\ q\ge 2,\\
p^{\frac{2}{q}+\frac12} &\mathrm{if}\ q< 2. \end{array}\right.
\end{equation}
It seems unlikely that~\eqref{eq:av other powers} is sharp.

\section*{Appendix: a refinement of Markov type}

Below is an application of Theorem~\ref{thm:p bound gamma} that I
found in collaboration with Yuval Peres. I thank him for agreeing to
include it here.

Fix $n\in \N$ and an $n$ by $n$ symmetric stochastic matrix
$A=(a_{ij})$. Then for every $m\in \N$ and $x_1,\ldots,x_n\in \R$ we
have
\begin{equation}\label{eq:geometric sum markov type}
\frac{\sum_{i=1}^n\sum_{j=1}^n (A^m)_{ij}(x_i-x_j)^2}{\sum_{i=1}^n\sum_{j=1}^n
a_{ij}(x_i-x_j)^2} \le
\sum_{t=0}^{m-1}\lambda_2(A)^t.
\end{equation}
\eqref{eq:geometric sum markov type} becomes evident when one
expresses the vector $(x_1,\ldots,x_n)\in \R^n$ in an orthonormal
eigenbasis of $A$, showing also that the multiplicative factor
$1+\lambda_2(A)+\ldots+\lambda_2(A)^{m-1}$ is sharp. By using the
estimate $|\lambda_2(A)|\le 1$, it follows from~\eqref{eq:geometric
sum markov type} that Hilbert space has Markov type $2$ with
$M_2(\ell_2)=1$; this was Ball's original proof of this fact
in~\cite{Ball}.

Suppose that $p\in (2,\infty)$. In~\cite{NPSS06} it was shown that
$\ell_p$ has Markov type $2$, and in fact that $M_2(\ell_p)\lesssim
\sqrt{p}$. This is the same as asserting the following inequality,
which holds true for every $n$ by $n$ symmetric stochastic matrix
$A=(a_{ij})$ and every $x_1,\ldots,x_n\in \ell_p$.
\begin{equation}\label{NPSS quote again}
\forall\, m\in \N,\qquad \frac{\sum_{i=1}^n\sum_{j=1}^n (A^m)_{ij}\|x_i-x_j\|_{\ell_p}^2}{\sum_{i=1}^n\sum_{j=1}^n
a_{ij}\|x_i-x_j\|_{\ell_p}^2}\lesssim pm.
\end{equation}
It is natural to ask whether or not one can refine
inequality~\eqref{NPSS quote again} in the spirit
of~\eqref{eq:geometric sum markov type} so as to yield an estimate
in terms of $\lambda_2(A)$ that becomes~\eqref{NPSS quote again} if
one uses the a priori bound $|\lambda_2(A)|\le 1$. Below we will
show how a combination of~\cite{NPSS06} and Theorem~\ref{thm:p bound
gamma} yields the following estimate, thus answering this question
positively.
\begin{equation}\label{eq:refined lp our}
\frac{\sum_{i=1}^n\sum_{j=1}^n
(A^m)_{ij}\|x_i-x_j\|_{\ell_p}^2}{\sum_{i=1}^n\sum_{j=1}^n
a_{ij}\|x_i-x_j\|_{\ell_p}^2} \lesssim p\sum_{t=0}^{m-1}
\left(1-\frac{2}{p}\left(1-\lambda_2(A)\right)\right)^{t}.
\end{equation}

To explain the similarity of~\eqref{eq:refined lp our}
to~\eqref{eq:geometric sum markov type}, note that if $m\ge p/2$
then~\eqref{eq:refined lp our} has the following equivalent form.
\begin{equation}\label{eq:m ge p}
\frac{\sum_{i=1}^n\sum_{j=1}^n
(A^m)_{ij}\|x_i-x_j\|_{\ell_p}^2}{\sum_{i=1}^n\sum_{j=1}^n
a_{ij}\|x_i-x_j\|_{\ell_p}^2} \lesssim p^2\sum_{t=1}^{\lceil 2m/p\rceil}\lambda_2(A)^t.
\end{equation}
Also, if $\lambda_2(A)$ is positive and bounded away from $0$, say,
if $\lambda_2(A)\ge 1/2$, then inequality~\eqref{eq:refined lp
our} has the following equivalent form.
\begin{equation}\label{eq:lambda2 ge 1/2}
\frac{\sum_{i=1}^n\sum_{j=1}^n
(A^m)_{ij}\|x_i-x_j\|_{\ell_p}^2}{\sum_{i=1}^n\sum_{j=1}^n
a_{ij}\|x_i-x_j\|_{\ell_p}^2} \lesssim p\sum_{t=0}^{m-1} \lambda_2(A)^{2t/p}.
\end{equation}

The proof of Lemma~\ref{lem:almost monotonicity} below is a simple
variant of an unpublished argument of Mark Braverman (2009), who
proved the same statement for $p=1$; see Exercise~13.10
in~\cite{LP13}. The factor $2^p$ in inequality~\eqref{eq:2^p
monotonicity} below is asymptotically sharp as $n\to \infty$: this
follows mutatis mutandis from an unpublished argument of Oded
Schramm (2007), who proved the same statement when $p=1$; see
Exercise~13.10 in~\cite{LP13}.

\begin{lemma}\label{lem:almost monotonicity}
Fix $p\in [1,\infty)$ and a metric space $(X,d_X)$. Fix also $n\in \N$ and an $n$ by $n$ symmetric stochastic matrix $A=(a_{ij})$. Then for every $s,t\in \N$ with $t\ge s$ and every $x_1,\ldots,x_n\in X$ we have
\begin{equation}\label{eq:2^p monotonicity}
\sum_{i=1}^n\sum_{j=1}^n (A^{2s})_{ij}d_X(x_i,x_j)^p\le 2^p\sum_{i=1}^n\sum_{j=1}^n (A^t)_{ij}d_X(x_i,x_j)^p.
\end{equation}
and
\begin{equation}\label{eq:2^p monotonicity odd}
\sum_{i=1}^n\sum_{j=1}^n (A^{2s+1})_{ij}d_X(x_i,x_j)^p\le 3^p\sum_{i=1}^n\sum_{j=1}^n (A^t)_{ij}d_X(x_i,x_j)^p.
\end{equation}
\end{lemma}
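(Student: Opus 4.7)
The plan is to combine the triangle inequality for $d_X$ with a ``stochastic insertion'' trick that exploits the hypothesis $t\geq s$: namely, that $A^{t-s}$ is itself a symmetric stochastic matrix, commutes with $A^s$, and satisfies $A^s\cdot A^{t-s}=A^t$.

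For the even inequality~\eqref{eq:2^p monotonicity}, I would factor $A^{2s}=A^s\cdot A^s$ and, using the row-stochasticity of $A^{t-s}$ (i.e.\ $\sum_\ell (A^{t-s})_{k\ell}=1$), rewrite the left-hand side as
\[
\sum_{i,j,k}(A^s)_{ik}(A^s)_{kj}d_X(x_i,x_j)^p=\sum_{i,j,k,\ell}(A^s)_{ik}(A^s)_{kj}(A^{t-s})_{k\ell}d_X(x_i,x_j)^p.
\]
Applying $d_X(x_i,x_j)\leq d_X(x_i,x_\ell)+d_X(x_\ell,x_j)$ together with the convexity bound $(a+b)^p\leq 2^{p-1}(a^p+b^p)$ splits this into two sums. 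Each of them collapses using the row-stochasticity of $A^s$ to sum out one dangling factor, and then the identity $\sum_k(A^s)_{ik}(A^{t-s})_{k\ell}=(A^t)_{i\ell}$ (with symmetry of $A^s$ used for the second term) to $\sum_{a,b}(A^t)_{ab}d_X(x_a,x_b)^p$. The combined factor is $2\cdot 2^{p-1}=2^p$, as required.

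For the odd inequality~\eqref{eq:2^p monotonicity odd}, I would apply the same paradigm to the factorization $A^{2s+1}=A^s\cdot A\cdot A^s$, now introducing \emph{two} intermediate points via a pair of stochastic insertions at the two ``joints'' of this product. The three-term triangle inequality $d_X(x_i,x_j)\leq d_X(x_i,x_m)+d_X(x_m,x_{m'})+d_X(x_{m'},x_j)$ combined with $(a+b+c)^p\leq 3^{p-1}(a^p+b^p+c^p)$ produces three sums. The two outer sums should collapse to $\sum_{a,b}(A^t)_{ab}d_X(x_a,x_b)^p$ exactly as in the even case, and the middle sum (involving $d_X(x_m,x_{m'})^p$) should reduce to the same expression after contracting the internal indices, using symmetry and commutativity of $A^s$, $A^{t-s}$, and $A$; this produces the total factor $3\cdot 3^{p-1}=3^p$.

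The main technical obstacle is the combinatorial bookkeeping in the odd case, and specifically the middle term: the natural insertion of $A^{t-s}$ on both sides causes the central $A$ to get sandwiched between the two stochastic factors and produce a contracted matrix whose power is $2(t-s)+1$ rather than $t$. Overcoming this is the crux of the argument---either by choosing the two insertions to be different powers whose product together with the central $A$ collapses to $A^t$, or by iterating the even-case bound to control the resulting term. Once the insertions are set up so that all three pieces of the three-term triangle contribute $A^t$-weighted distance sums, both inequalities follow at once from the triangle inequality and convexity.
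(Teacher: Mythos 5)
Your argument for the even inequality~\eqref{eq:2^p monotonicity} is correct and is essentially the paper's: the paper multiplies $d_X(x_i,x_j)^p\le 2^{p-1}\bigl(d_X(x_i,x_k)^p+d_X(x_j,x_k)^p\bigr)$ by $(A^s)_{i\ell}(A^s)_{j\ell}(A^{t-s})_{k\ell}$ and sums over $i,j,k,\ell$, which is your argument with the roles of the dummy indices $k$ and $\ell$ interchanged. For the odd inequality the paper takes a different route from your three-term split: it uses the two-term triangle inequality through a single intermediate point with the unequal convexity weights $2/3$ and $1/3$, namely $d_X(x_i,x_j)^p\le(3/2)^{p-1}d_X(x_i,x_k)^p+3^{p-1}d_X(x_k,x_j)^p$, multiplies by $(A^{2s})_{ik}a_{kj}$, and sums; this produces $(3/2)^{p-1}\sum_{i,k}(A^{2s})_{ik}d_X(x_i,x_k)^p$, which is then handled by the even case, plus a leftover term $3^{p-1}\sum_{j,k}a_{kj}d_X(x_j,x_k)^p$.

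The obstacle you flag in the odd case is genuine, and neither of your two proposed repairs can close it: if the insertions are $A^u$ and $A^v$, the outer sums collapse to $\sum (A^{s+u})_{ij}d_X^p$ and $\sum (A^{s+v})_{ij}d_X^p$, which forces $u=v=t-s$ and hence the middle power $2(t-s)+1$; and that power is odd, so iterating the even-case bound does not apply to it. The deeper reason nothing works is that~\eqref{eq:2^p monotonicity odd} is false as stated: take $n=2$, $A=\left(\begin{smallmatrix}0&1\\1&0\end{smallmatrix}\right)$, $p=1$, $s=1$, $t=2$, and $x_1\neq x_2\in\R$; then $A^{3}=A$ and $A^{2}=I$, so the left-hand side equals $2|x_1-x_2|>0$ while the right-hand side is $0$. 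The paper's proof hits exactly the same wall you did, only one step later: its leftover term $3^{p-1}\sum_{j,k}a_{kj}d_X(x_j,x_k)^p$ carries the odd exponent $1$ on $A$, and bounding it by $3^{p-1}\sum(A^t)_{ij}d_X^p$ for $t\ge 2$ is precisely what fails in the example above. Only the even inequality holds in the generality claimed; the odd one needs an extra hypothesis (e.g.\ that $t$ has the same parity, or the $t\to\infty$ form used in Corollary~\ref{coro:compare to infinity}, which in fact follows from the simpler observation that for any doubly stochastic $B$ one has $\sum_{i,j}B_{ij}d_X(x_i,x_j)^p\le\frac{2^p}{n}\sum_{i,j}d_X(x_i,x_j)^p$, by averaging the two-term triangle inequality over a uniformly random intermediate index). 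So your instinct was right; the missing piece is not a cleverer insertion but a correction to the statement itself.
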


\begin{proof}
We have $d_X(x_i,x_j)^p\le 2^{p-1}d_X(x_i,x_k)^p+2^{p-1}d_X(x_j,x_k)^p$ for every $i,j,k\in \n$. For every $\ell\in \n$ multiply this inequality by $(A^s)_{i\ell}(A^s)_{j\ell}(A^{t-s})_{k\ell}$ and sum the resulting inequality over $i,j,k,\ell\in \n$, thus obtaining the following estimate.
\begin{align}\label{eq:st ell trick}
\nonumber &\sum_{i=1}^n\sum_{j=1}^n\sum_{k=1}^n\sum_{\ell=1}^n (A^s)_{i\ell}(A^s)_{j\ell}(A^{t-s})_{k\ell} d_X(x_i,x_j)^p\\\nonumber
&\le 2^{p-1} \sum_{i=1}^n\sum_{j=1}^n\sum_{k=1}^n\sum_{\ell=1}^n (A^s)_{i\ell}(A^s)_{j\ell}(A^{t-s})_{k\ell} d_X(x_i,x_k)^p\\&\quad +2^{p-1}\sum_{i=1}^n\sum_{j=1}^n\sum_{k=1}^n\sum_{\ell=1}^n (A^s)_{i\ell}(A^s)_{j\ell}(A^{t-s})_{k\ell} d_X(x_j,x_k)^p.
\end{align}
Since $A$ is symmetric and stochastic, \eqref{eq:st ell trick} is the same as~\eqref{eq:2^p monotonicity}.

To deduce~\eqref{eq:2^p monotonicity odd}, observe that the
convexity of $u\mapsto |u|^p$ implies that for every $i,j,k\in \n$
we have
\begin{equation}\label{eq:2/3-1/3}
 d_X(x_i,x_j)^p\le \left(\frac32\right)^{p-1}d_X(x_i,x_k)^p+3^{p-1} d_X(x_j,x_k)^p.
 \end{equation}
\eqref{eq:2^p monotonicity odd} now follows by multiplying~\eqref{eq:2/3-1/3} by $(A^{2s})_{ik}a_{kj}$, summing the resulting inequality over $i,j,k\in \n$, and using~\eqref{eq:2^p monotonicity}.
\end{proof}
\begin{corollary}\label{coro:compare to infinity} Fix $p\in [1,\infty)$ and $m,n\in \N$. Suppose that $A=(a_{ij})$ is an $n$ by $n$ symmetric stochastic matrix. Then for every metric space $(X,d_X)$ and every $x_1,\ldots,x_n\in X$ we have
$$
\sum_{i=1}^n\sum_{j=1}^n (A^m)_{ij} d_X(x_i,x_j)^p\le 3^p\gamma(A,d_X^p)\sum_{i=1}^n\sum_{j=1}^n a_{ij} d_X(x_i,x_j)^p.
$$
\end{corollary}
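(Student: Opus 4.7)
The plan is to combine Lemma~\ref{lem:almost monotonicity} with a Ces\`aro ergodic averaging of $A^t$ and then invoke the definition of $\gamma(A,d_X^p)$. First I would dispose of the trivial case $\gamma(A,d_X^p)=\infty$ and assume $\gamma(A,d_X^p)<\infty$, which is equivalent to $\lambda_2(A)<1$, i.e.\ to $A$ being ergodic.

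Setting $s=\lfloor m/2\rfloor$, Lemma~\ref{lem:almost monotonicity} applied to either $2s\le m$ or $2s+1=m$ gives, uniformly in $t\ge s$,
$$
\sum_{i=1}^n\sum_{j=1}^n (A^m)_{ij}d_X(x_i,x_j)^p\le 3^p\sum_{i=1}^n\sum_{j=1}^n (A^t)_{ij}d_X(x_i,x_j)^p,
$$
with the factor $2^p\le 3^p$ sufficing when $m$ is even. The key point is that $t$ is a free parameter, so I may Ces\`aro-average the right-hand side over $t\in\{s,s+1,\ldots,s+N-1\}$ and let $N\to\infty$. Since $A$ is symmetric stochastic with $\lambda_2(A)<1$, every eigenvalue other than $\lambda_1(A)=1$ lies in $[-1,1)$, and in both cases $|\lambda|<1$ and $\lambda=-1$ the Ces\`aro mean of $\lambda^t$ tends to $0$ (the geometric series, respectively the fact that $\sum_{t=0}^{N-1}(-1)^t\in\{0,1\}$). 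Consequently
$$
\lim_{N\to\infty}\frac{1}{N}\sum_{t=s}^{s+N-1}(A^t)_{ij}=\frac{1}{n}\qquad\text{for every } i,j\in\{1,\ldots,n\},
$$
so passing to the limit yields
$$
\sum_{i=1}^n\sum_{j=1}^n (A^m)_{ij}d_X(x_i,x_j)^p\le\frac{3^p}{n}\sum_{i=1}^n\sum_{j=1}^n d_X(x_i,x_j)^p.
$$

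Finally, the definition of $\gamma(A,d_X^p)$ in~\eqref{eq:def gamma}, multiplied through by $n$, gives
$$
\frac{1}{n}\sum_{i=1}^n\sum_{j=1}^n d_X(x_i,x_j)^p\le\gamma(A,d_X^p)\sum_{i=1}^n\sum_{j=1}^n a_{ij}d_X(x_i,x_j)^p,
$$
and the desired estimate follows on multiplying by $3^p$. The only subtle point in this plan is the possible presence of the eigenvalue $-1$ (e.g.\ when $A$ is the normalized adjacency matrix of a bipartite graph), which prevents $A^t$ from converging to $J/n$ directly; Ces\`aro averaging sidesteps this issue at no cost. Everything else is a routine consequence of the lemma and the definition of $\gamma$.
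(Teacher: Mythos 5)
Your argument follows the paper's route in outline: invoke Lemma~\ref{lem:almost monotonicity} to pass from $A^m$ to $A^t$ for all $t\ge\lfloor m/2\rfloor$ at the cost of a factor $3^p$, then push $t$ to infinity so that $(A^t)_{ij}$ becomes $1/n$, and finally apply the definition of $\gamma(A,d_X^p)$. The one substantive difference is the limiting step. The paper's proof simply asserts that $\gamma(A,d_X^p)<\infty$ (equivalently $\lambda_2(A)<1$) guarantees $\lim_{t\to\infty}(A^t)_{ij}=1/n$; this is not literally true when $A$ has eigenvalue $-1$, i.e.\ when the support graph is connected but bipartite, in which case $A^t$ oscillates and does not converge. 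Your Ces\`aro averaging over $t$ is exactly the right repair: the Ces\`aro mean of $\lambda^t$ vanishes for every eigenvalue $\lambda\in[-1,1)$ (absolute summability when $|\lambda|<1$, boundedness of the partial sums when $\lambda=-1$), while the simple eigenvalue $1$ contributes the projection onto constants, so indeed $\frac{1}{N}\sum_{t=s}^{s+N-1}(A^t)_{ij}\to 1/n$ for every $i,j$ regardless of periodicity. Your proof is thus correct, and on this one technical point it is more scrupulous than the paper's own argument.
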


\begin{proof}
We may assume without loss of generality that $\gamma(A,d_X^p)<\infty$, i.e., that $A$ is ergodic. In this case we have $\lim_{t\to \infty} (A^t)_{ij}=1/n$ for every $i,j\in \n$. Therefore by Lemma~\ref{lem:almost monotonicity} (with $t\to\infty$),
\begin{align*}
\sum_{i=1}^n\sum_{j=1}^n (A^m)_{ij} d_X(x_i,x_j)^p&\le \frac{3^p}{n}\sum_{i=1}^n\sum_{j=1}^n d_X(x_i,x_j)^p\\&\stackrel{\eqref{eq:def gamma}}{\le}  3^p\gamma(A,d_X^p)\sum_{i=1}^n\sum_{j=1}^n a_{ij} d_X(x_i,x_j)^p.\qedhere
\end{align*}
\end{proof}

The following corollary is an immediate consequence of
Corollary~\ref{coro:compare to infinity} and the definition of the Markov type $p$ constant $M_p(X;m)$.

\begin{corollary}\label{coro:truncated Markov type} Fix $p\in [1,\infty)$ and let $(X,d_X)$ be a
metric space with Markov type $p$, i.e., $M_p(X)<\infty$. Then for
every $m,n\in \N$, every $n$ by $n$ symmetric stochastic matrix
$A=(a_{ij})$ and every $x_1,\ldots,x_n\in X$,
\begin{equation*}
\frac{\sum_{i=1}^n\sum_{j=1}^n (A^m)_{ij}
d_X(x_i,x_j)^p}{\sum_{i=1}^n\sum_{j=1}^n a_{ij} d_X(x_i,x_j)^p} \le
\min\big\{M_p(X;m)^pm,3^p\gamma(A,d_X^p)\big\}.
\end{equation*}
\end{corollary}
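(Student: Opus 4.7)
The proof is essentially a two-line observation: each of the two quantities inside the $\min$ yields an upper bound on the ratio in question via a different mechanism, and we simply take whichever is smaller.

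The plan is as follows. First, I would recall the definition of the Markov type $p$ constant at time $m$ as given in~\eqref{eq:def mtype m}: by definition, $M_p(X;m)$ is the infimum of those $M \in (0,\infty)$ such that
$$
\sum_{i=1}^n\sum_{j=1}^n (A^m)_{ij}d_X(x_i,x_j)^p\le M^p m\sum_{i=1}^n\sum_{j=1}^n a_{ij}d_X(x_i,x_j)^p
$$
for every symmetric stochastic $A$ and every $x_1,\ldots,x_n\in X$. Consequently, the very definition of $M_p(X;m)$ gives the bound
$$
\frac{\sum_{i=1}^n\sum_{j=1}^n (A^m)_{ij} d_X(x_i,x_j)^p}{\sum_{i=1}^n\sum_{j=1}^n a_{ij} d_X(x_i,x_j)^p} \le M_p(X;m)^p m.
$$

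Second, I would invoke Corollary~\ref{coro:compare to infinity}, which has already been established in the text and directly asserts that the same ratio is at most $3^p\gamma(A,d_X^p)$. (Recall that this corollary in turn followed from Lemma~\ref{lem:almost monotonicity} by letting $t\to\infty$, using ergodicity of $A$ to replace $(A^t)_{ij}$ by $1/n$ and then applying the definition~\eqref{eq:def gamma} of $\gamma(A,d_X^p)$; if $A$ fails to be ergodic then $\gamma(A,d_X^p)=\infty$ and the bound holds vacuously.)

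Taking the minimum of these two upper bounds yields the claimed inequality. There is no genuine obstacle here: both ingredients are already in place, and the statement is simply the packaging of the Markov type definition together with Corollary~\ref{coro:compare to infinity} into a single uniform estimate that is useful in regimes where either $m$ is small (so $M_p(X;m)^p m$ dominates) or $m$ is very large (so the $\gamma$-bound, which is independent of $m$, takes over).
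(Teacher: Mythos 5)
Your proposal is correct and is precisely the paper's own (one-line) justification: the bound $M_p(X;m)^p m$ is the defining inequality~\eqref{eq:def mtype m} of the Markov type $p$ constant at time $m$, and the bound $3^p\gamma(A,d_X^p)$ is Corollary~\ref{coro:compare to infinity}; the corollary is just the conjunction of these two.
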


Assume from now on that $n\ge 3$, so that $\lambda_2(A)\ge -1/2$
(recall Lemma~\ref{lem:lower a priori}). Since
$1+(1-u)+\ldots+(1-u)^{m-1}\asymp \min\{m,1/u\}$ for every $u\in
(0,3/2]$ and $m\in \N$, inequality~\eqref{eq:refined lp our} is a
consequence of Corollary~\ref{coro:truncated Markov type} (with
$X=\ell_p$ and $p=2$), Theorem~\ref{thm:p bound gamma},
and~\eqref{NPSS quote again}.

To state two additional examples of consequences of this type, fix
$K\in [2,\infty)$ and let $(X,d_X)$ be a metric space that is
doubling with constant $K$. In~\cite{DLP13} it is shown that
$M_2(X)\lesssim \log K$, so in combination with
Corollary~\ref{cor:doubling} we deduce from
Corollary~\ref{coro:truncated Markov type} that
\begin{equation}\label{eq:doubling refined}
\frac{\sum_{i=1}^n\sum_{j=1}^n (A^m)_{ij}
d_X(x_i,x_j)^2}{\sum_{i=1}^n\sum_{j=1}^n a_{ij} d_X(x_i,x_j)^2}
\lesssim (\log K)^2\sum_{t=0}^{m-1}\lambda_2(A)^t.
\end{equation}
Similarly, it was shown that if $G$ is a connected planar graph then
$M_2(G,d_G)\lesssim 1$, so in combination with
Corollary~\ref{cor:doubling} we deduce from
Corollary~\ref{coro:truncated Markov type} that
\begin{equation}\label{eq:planar refined}
\frac{\sum_{i=1}^n\sum_{j=1}^n (A^m)_{ij}
d_X(x_i,x_j)^2}{\sum_{i=1}^n\sum_{j=1}^n a_{ij} d_X(x_i,x_j)^2}\lesssim
\sum_{t=0}^{m-1}\lambda_2(A)^t.
\end{equation}

Despite the validity of satisfactory spectral estimates such
as~\eqref{eq:refined lp our}, \eqref{eq:m ge p}, \eqref{eq:lambda2
ge 1/2}, \eqref{eq:doubling refined} and~\eqref{eq:planar refined},
they do not follow automatically only from the fact that the metric
space in question has Markov type $2$. Specifically, there exists a
metric space $(X,d_X)$ that has Markov type $2$, yet it is not true
that $\gamma(A,d_X^2)\lesssim_X 1/(1-\lambda_2(A))$ for every
symmetric stochastic matrix $A$. To see this, by
Theorem~\ref{thm:duality} it suffices to prove the following result.

\begin{theorem}\label{thm:direct sum tori}
There is a metric space $(X,d_X)$ with
$Av_{\ell_2}^{(2)}(X)=\infty$, yet $M_2(X)<\infty$, i.e., $X$ has
Markov type $2$.
\end{theorem}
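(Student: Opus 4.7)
The plan is to write $X$ as a pointed $\ell_2$-direct sum $X=\bigl(\bigoplus_{n=1}^\infty X_n\bigr)_{\ell_2}$, i.e.\ $X=\{(x_n):x_n\in X_n,\ \sum_n d_n(x_n,o_n)^2<\infty\}$ with $d_X(x,y)^2=\sum_n d_n(x_n,y_n)^2$, for an appropriately chosen sequence of pointed finite metric spaces $(X_n,d_n,o_n)$. I would arrange two properties: (i) $\sup_n M_2(X_n)<\infty$, and (ii) $Av_{\ell_2}^{(2)}(X_n)\to\infty$. Both sides of~\eqref{eq:def mtype m} are additive in $d^2$, so Markov type~$2$ tensorises across the $\ell_2$-sum, yielding $M_2(X)\le \sup_n M_2(X_n)<\infty$; meanwhile each $X_n$ sits isometrically in $X$, so $Av_{\ell_2}^{(2)}(X)\ge \sup_n Av_{\ell_2}^{(2)}(X_n)=\infty$, completing the theorem.

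The natural choice is $X_n=(\Z_{m_n}^n,\rho_n)$, the standard Cayley graph of $\Z_{m_n}^n$ with generators $\{\pm e_1,\dots,\pm e_n\}$ and its graph distance $\rho_n$, rescaled so that $\operatorname{diam}(X_n)=1$, with $m_n$ growing rapidly in $n$ (say $m_n\ge n^{10}$). Property~(ii) is an explicit Fourier computation: the random-walk matrix $A_n$ has $1-\lambda_2(A_n)\asymp (nm_n^2)^{-1}$, the mean squared graph distance is $\asymp (nm_n)^2$, and the per-edge squared distance is $1$; inserting these into~\eqref{eq:def gamma} gives $\gamma(A_n,\rho_n^2)\gtrsim n\cdot(1-\lambda_2(A_n))^{-1}$, and Corollary~\ref{coro:linear gap} yields $Av_{\ell_2}^{(2)}(X_n)\gtrsim \sqrt n\to\infty$.

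For property~(i), the proposed argument uses the abelian Fourier decomposition of $\Z_{m_n}^n$: the characters $\chi_k(x)=e^{2\pi i\langle k,x\rangle/m_n}$ are simultaneous eigenfunctions of $A_n$ with eigenvalues $\mu_k=\tfrac1n\sum_j\cos(2\pi k_j/m_n)$, and the Markov type~$2$ inequality for the metric $\rho_n$ on $X_n$ reduces (via a martingale decomposition combined with Plancherel on the invariant measure) to the scalar inequality $1-\mu^m\le C\,m(1-\mu)$ for $|\mu|\le 1$, which holds with an absolute constant. Choosing $m_n$ large enough ensures that the metric $\rho_n$ agrees — up to a bounded multiplicative factor — with the $\ell_2$ metric inherited from the character embedding on the pairs contributing to both sides of~\eqref{eq:def mtype m}, so the dimension-free constant survives the comparison.

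\textbf{Main obstacle.} The key difficulty is (i): proving uniform-in-$n$ Markov type~$2$ for the rescaled tori. Since $c_2(X_n)\asymp\sqrt n$, the naive route through Ball's theorem for subsets of Hilbert space only yields $M_2(X_n)\lesssim\sqrt n$, which is insufficient. The Fourier argument sketched above avoids bi-Lipschitz embedding entirely — it uses only the orthogonality of characters and the diagonalisation of $A_n$ — but transferring the mode-by-mode bound from Hilbert-valued functions to a statement about the \emph{metric} $\rho_n$ itself is subtle, because $\rho_n$ is not literally the $\ell_2$-norm of the character embedding. Getting these two metrics to agree up to a universal factor on the relevant pairs, through a careful choice of the scaling $m_n$, is the central technical point of the argument.
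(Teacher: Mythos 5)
Your overall framework — an $\ell_2$-direct sum of finite spaces $X_n$ with $\sup_n M_2(X_n)<\infty$ and $Av_{\ell_2}^{(2)}(X_n)\to\infty$ — is the same as the paper's, and the average-distortion lower bound via Corollary~\ref{coro:linear gap} (or, equivalently, the Poincar\'e inequality on the torus) is sound. The gap is entirely in your property~(i), and it is a genuine one, not a detail to be tidied up.

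First and most importantly, your Fourier argument cannot establish Markov type. The definition of $M_p(X;m)$ quantifies over \emph{every} symmetric stochastic matrix $A$, not merely the natural Cayley random walk $A_n$ on $\Z_{m_n}^n$. The characters $\chi_k$ diagonalize $A_n$ and nothing else; for a generic $B$ there is no Plancherel decomposition aligned with the group structure, so the reduction to the scalar inequality $1-\mu^m\le Cm(1-\mu)$ never gets off the ground. (That scalar inequality is exactly how one sees $M_2(\ell_2)=1$ — cf.~\eqref{eq:geometric sum markov type} in the appendix — but there one already has points in Hilbert space and an arbitrary $B$, and the diagonalization is of $B$ itself, not of some fixed group chain.) Second, even as a target the claim $\sup_n M_2(X_n)<\infty$ is doubtful with your choice of metric: the Cayley-graph distance on $\Z_{m_n}^n$ with generators $\pm e_1,\dots,\pm e_n$ is the $\ell_1$-torus metric, and it is precisely the $\ell_1$ geometry that makes dimension-free Markov type~$2$ unclear (whether $\ell_1$, or large $\ell_1^n$, has Markov type $2$ with bounded constant is not established). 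You cannot simultaneously have $\rho_n$ within an absolute factor of the $\ell_2$-norm of the character embedding on a positive-density set of pairs and have $Av_{\ell_2}^{(2)}(X_n)\gtrsim\sqrt n$; the $\sqrt n$ lower bound \emph{is} the failure of such agreement.

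The paper sidesteps all of this by working with continuous flat tori $\R^n/\Lambda_n^*$ equipped with the Riemannian (Euclidean quotient, i.e.\ $\ell_2$-type) metric. These are Aleksandrov spaces of nonnegative curvature, so a theorem of Ohta gives $M_2(\R^n/\Lambda_n^*)\le 1+\sqrt2$ uniformly in $n$ — a black box that replaces your entire Fourier sketch. The lattices $\Lambda_n$ are chosen with $r(\Lambda_n)\lesssim N(\Lambda_n)$ so that Lemmas~10 and~11 of~\cite{KN06} yield $Av_{\ell_2}^{(2)}(\R^n/\Lambda_n^*)\gtrsim\sqrt n$, and the $\ell_2$-product then inherits both the uniform Markov type~$2$ bound (coordinatewise additivity of $d^2$, as you observe) and the unbounded average distortion. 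If you want to salvage your discrete version, the right move is to pass to the $\ell_2$-torus metric on $\Z_{m_n}^n$ (not the graph metric), observe it is an $\e_n$-net in a nonnegatively curved Aleksandrov space, and invoke Ohta's theorem plus a straightforward approximation argument — not a direct Fourier computation.
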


\begin{proof} If $\Lambda\subset \R^n$ is a lattice of rank $n$ then denote the
length of the shortest nonzero vector in $\Lambda$ by $N(\Lambda)$.
Also, let  $r(\Lambda)$ denote the infimum over those $r\in
(0,\infty)$ such that Euclidean balls of radius $r$ centered at
$\Lambda$ cover $\R^n$. The dual lattice of $\Lambda$ is denoted
$\Lambda^*$; thus $\Lambda^*$ is the set of all $x\in \R^n$ such
that $\sum_{i=1}^n x_iy_i$ is an integer for every $y\in \Lambda$.

For every $n\in \N$ choose an arbitrary rank $n$ lattice
$\Lambda_n\subseteq \R^n$ that satisfies $r(\Lambda_n)\lesssim
N(\Lambda_n)$. See~\cite{Rog50} for the existence of such lattices.
Let $\R^n/\Lambda_n^*$ be the corresponding flat torus, equipped
with the natural Riemannian quotient metric
$d_{\R^n/\Lambda_n^*}(\cdot,\cdot)$. Also, let $\mu_n$ denote the
normalized Riemannian volume measure on $\R^n/\Lambda_n^*$.

Consider the $\ell_2$ product
$$
X\eqdef \left(\bigoplus_{n=1}^\infty \left(\R^n/\Lambda_n^*\right)\right)_2.
$$
 Thus $X$ consists of all the sequences $x= (x_n)_{n=1}^\infty\in \prod_{n=1}^\infty \left(\R^n/\Lambda_n^*\right)$
 such that $\sum_{n=1}^\infty d_{\R^n/\Lambda_n^*}(x_n,0)^2<\infty$, equipped with the
 metric given by $d_X\left(x,y\right)^2=\sum_{n=1}^\infty
 d_{\R^n/\Lambda_n^*}(x_n,y_n)^2$ for every $x,y\in X$.
Since $\R^n/\Lambda_n^*$ has vanishing sectional curvature, it is an
Aleksandrov space of nonnegative curvature
(see~\cite[Sec.~3]{Oht09-markov}), and therefore by a theorem of
Ohta~\cite{Oht09-markov} it has Markov type $2$ constant at most
$1+\sqrt{2}$. Being an $\ell_2$ product of spaces with uniformly
bounded Markov type $2$ constant, $X$ also has Markov type $2$.


Fix $\e \in (0,1)$. By covering the fundamental parallelepiped of
$\Lambda_n^*$ by homothetic copies of itself, we see that there
exists a finite measurable partition $\{U_1,\ldots,U_k\}$ of the
torus $\R^n/\Lambda_n^*$ into sets of diameter at most $\e$ and
$\mu_n(U_i)=1/k$ for every $i\in \{1,\ldots,k\}$.

Fix an arbitrary point $x_i\in U_i$ and suppose that
$f:\{x_1,\ldots,x_k\}\to \ell_2$ is $1$-Lipschitz. Since
$M_2(\R^n/\Lambda_n^*)\le 1+\sqrt{2}\le 3$, by Ball's extension
theorem~\cite{Ball} there exists $F: \R^n/\Lambda_n^*\to \ell_2$
that extends $f$ and satisfies $\|F\|_{\Lip}\le 3$. By arguing as in
the proof of Lemma~\ref{lem:sharp in l2}, we have
\begin{multline}\label{eq:to contrast F on torus}
\frac{1}{k^2}\sum_{i=1}^k\sum_{j=1}^k\|f(x_i)-f(x_j)\|_{\ell_2}^2\\\le
3\int_{\R^n/\Lambda_n^*}\int_{\R^n/\Lambda_n^*}
\|F(x)-F(y)\|_{\ell_2}^2 d\mu_n(x)d\mu_n(y)+54\e^2,
\end{multline}
and
\begin{multline}\label{eq:average xi on torus}
\frac{1}{k^2}\sum_{i=1}^kd_{\R^n/\Lambda_n^*}(x_i,x_j)\\\ge
\frac13\int_{\R^n/\Lambda_n^*}\int_{\R^n/\Lambda_n^*}
d_{\R^n/\Lambda_n^*}(x,y)^2 d\mu_n(x)d\mu_n(y)-2\e^2,
\end{multline}

By~\cite[Lem.~11]{KN06}, for every Lipschitz mapping
$g:\R^n/\Lambda_n^*\to \ell_2$,
\begin{equation}\label{eq:upper torus}
\int_{\R^n/\Lambda_n^*}\int_{\R^n/\Lambda_n^*}
\|g(x)-g(y)\|_{\ell_2}^2 d\mu_n(x)d\mu_n(y)\lesssim
\frac{n\|g\|_\Lip^2}{N(\Lambda_n^*)^2}.
\end{equation}
Also, by~\cite[Lem.~10]{KN06} we have
\begin{equation}\label{eq:lower torus}
\int_{\R^n/\Lambda_n^*}\int_{\R^n/\Lambda_n^*}
d_{\R^n/\Lambda_n^*}(x,y)^2 d\mu_n(x)d\mu_n(y)\gtrsim \frac{n^2}{r(\Lambda^*_n)^2}.
\end{equation}
Hence, by letting $\e\to 0$ in~\eqref{eq:to contrast F on torus}
and~\eqref{eq:average xi on torus}, it follows from~\eqref{eq:upper
torus} and~\eqref{eq:lower torus} that
\begin{equation}\label{eq:torus average lower each n}
 Av_{\ell_2}^{(2)}\left(\R^n/\Lambda_n^*\right)\gtrsim \frac{N(\Lambda_n^*)}{r(\Lambda^*_n)}\sqrt{n}\gtrsim \sqrt{n},
\end{equation}
where we used the assumption $r(\Lambda_n)\lesssim N(\Lambda_n)$.
Now $Av_{\ell_2}^{(2)}(X)=\infty$ follows from~\eqref{eq:torus
average lower each n} and the definition of $X$.
\end{proof}

\begin{remark}
The use of Ball's extension theorem in the proof of
Theorem~\ref{thm:direct sum tori} can be replaced by the use of
Kirszbraun's extension theorem~\cite{Kirsz34} combined with the
fact~\cite[Thm.~6]{KN06} (see also~\cite{HR13}) that
$c_2(\R^n/\Lambda_n^*)\lesssim_n 1$;
 in this case the factor $54$ in~\eqref{eq:to
contrast F on torus} would be replaced by a factor that depends on
$n$, but since we let $\e\to 0$ this does not affect the rest of the
proof (however, if one desires to bound $k$ as a function of $n$,
this approach would yield an inferior estimate).
\end{remark}

\bibliographystyle{alphaabbrvprelim}
\bibliography{spectral}

 \end{document}